\title{Computing discrete equivariant harmonic maps}
\author{
Jonah Gaster\footnote{McGill University, Department of Mathematics and Statistics. Montreal, QC H3A 0B9, Canada.\newline
E-mail: \url{jbgaster@gmail.com}}, ~
Brice Loustau\footnote{Rutgers University - Newark, Department of Mathematics. Newark, NJ 07105 USA; and
TU Darmstadt, Department of Mathematics. 64289 Darmstadt, Germany. 
E-mail: \url{loustau@mathematik.tu-darmstadt.de}}, ~and
Léonard Monsaingeon\footnote{IECL Université de Lorraine, Site de Nancy. F-54506 Vand{\oe}uvre-lès-Nancy Cedex, France; and 
GFM Universidade de Lisboa. 1749-016 Lisboa, Portugal. 
E-mail: \url{leonard.monsaingeon@univ-lorraine.fr}}}
\date{} 
\newcommand{\Harmony}{\texttt{Harmony}}
\begin{document}

\pdfbookmark[1]{Title page, abstract}{Title}
\hypersetup{pageanchor=false}
\begin{titlepage}
\newgeometry{top=0.09\paperheight, bottom=0.11\paperheight}
\maketitle
\thispagestyle{empty}
\centering
\includegraphics[width=90mm]{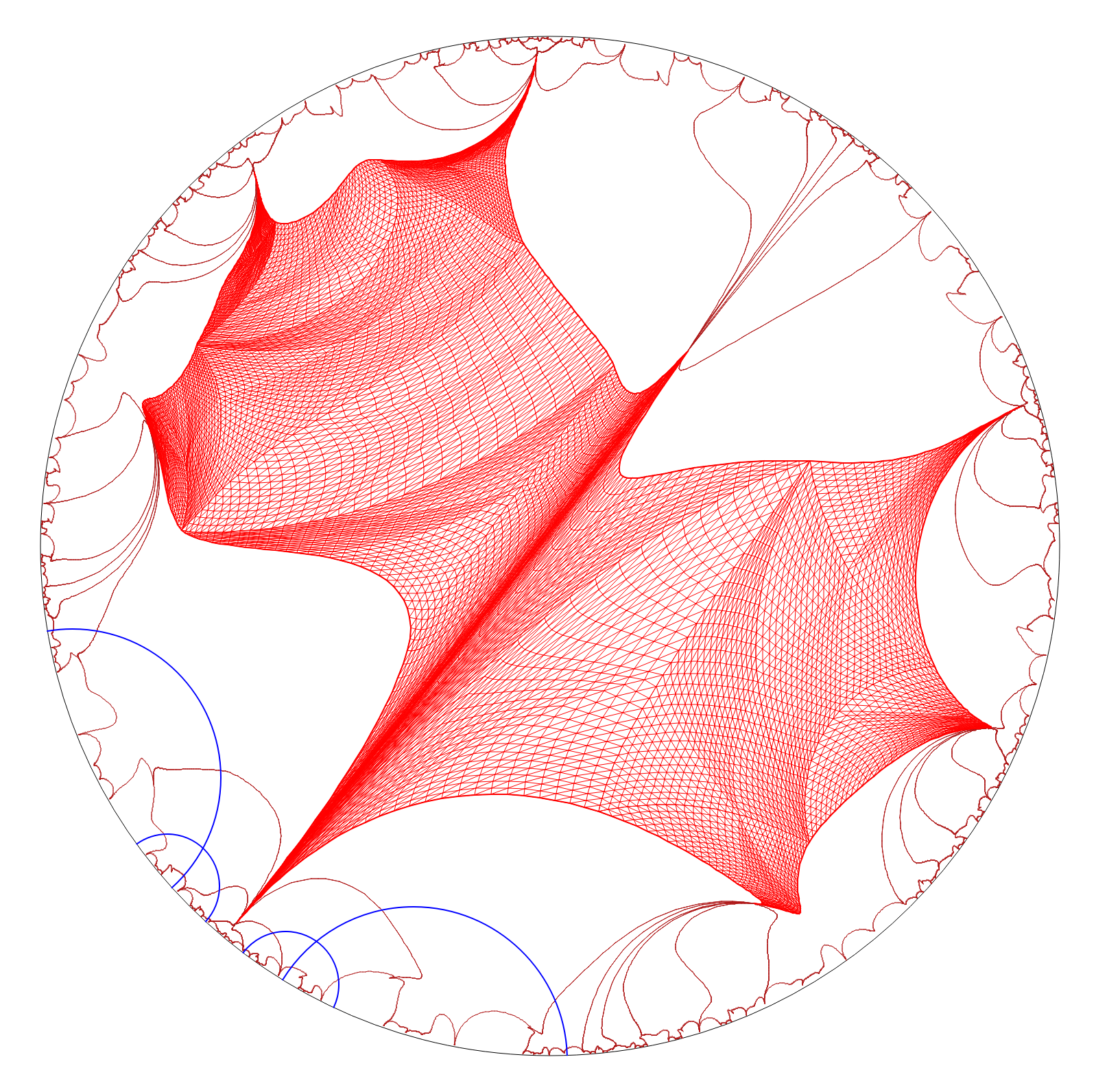}
\bigskip

\begin{abstract}

We present effective methods to compute equivariant harmonic maps from the universal cover of a surface into a nonpositively curved space. By discretizing the theory appropriately, we show that the energy functional is strongly convex and derive convergence of the discrete heat flow to the energy  minimizer, with explicit convergence rate. We also examine center of mass methods, after showing a generalized mean value property for harmonic maps. We feature a concrete illustration of these methods with \Harmony{}, a computer software that we developed in \Cpp{}, whose main functionality is to numerically compute and display equivariant harmonic maps.

\bigskip \bigskip

\noindent \textbf{Key words and phrases:}
Harmonic maps $\cdot$ Heat flow $\cdot$ Convexity $\cdot$ Gradient descent $\cdot$ Centers of mass $\cdot$ Discrete geometry  $\cdot$ Riemannian optimization $\cdot$ Mathematical software

\bigskip

\noindent \textbf{2000 Mathematics Subject Classification:} 
Primary: 
58E20; 
Secondary: 
53C43 $\cdot$ 
65D18 

\end{abstract}

\end{titlepage}
\hypersetup{pageanchor=true}

\thispagestyle{empty}
\setcounter{tocdepth}{2}
\pdfbookmark[1]{Contents}{Contents}  
{\tableofcontents}
\addtocounter{page}{1}
\restoregeometry

\cleardoublepage\phantomsection 
\section*{Introduction}
\addcontentsline{toc}{section}{Introduction}

The theory of harmonic maps has its roots in the foundations of Riemannian geometry and the essential work of Euler, Gauss, Lagrange, and Jacobi. 
It includes the study of real-valued harmonic functions, geodesics, minimal surfaces, and holomorphic maps between Kähler manifolds.

A harmonic map $f \colon M \to N$
is a critical point of the energy functional
\begin{equation}
 E(f) = \frac{1}{2} \int_M \Vert \upd f \Vert^2 \, \upd v_M~.
\end{equation}

The theory of harmonic maps was brought into a modern context for Riemannian manifolds with the seminal work of Eells-Sampson \cite{MR0164306} (also Hartman \cite{MR0214004} and Al'ber \cite{MR0230254}). Eells-Sampson studied the \emph{heat flow} associated to the energy, \ie{} the nonlinear parabolic PDE
\begin{equation}
 \ddt f_t = \tau(f_t)~,
\end{equation}
where $\tau(f)$ is the \emph{tension field} of $f$ (see \autoref{sec:HarmonicMaps}). The tension field can be described as minus 
the gradient of the energy functional on the infinite-dimensional Riemannian space $\cC^\infty(M,N)$, so that the heat flow is just 
the gradient flow for the energy. When $N$ is compact and nonpositively curved, the heat flow is shown to converge to an energy-minimizing map as $t\to \infty$.

The theory has since been developed and generalized to various settings where the domain or the target are not smooth manifolds
\cite{MR1215595,MR1266480,MR1340295,MR1451625,MR1848068, MR1938491, MR2394023}. In particular, Korevaar-Schoen developed an extensive Sobolev theory 
when the domain is Riemannian but the target is a nonpositively curved metric space \cite{MR1266480,MR1483983}.
Jost generalized further to a domain that is merely a measure space \cite{MR756629, MR1385525,MR1360608,MR1449406}. 
Both took a similar approach, 
constructing the energy functional $E$ as a limit
of approximate energy functionals. 

These tools have become powerful and widely used, with celebrated rigidity results \cite{MR584075,MR1215595,MR1775138,MR2827014,MR2174098} and 
dramatic implications for the study of deformation spaces and Teichmüller theory when the domain $M$ is a surface (see \eg \cite{MR2349668}),
especially via the nonabelian Hodge correspondence \cite{MR887285,MR887284,MR965220,MR982185,MR1049845,MR1159261}.

This paper and its sequel \cite{Gaster-Loustau-Monsaingeon2} are concerned with effectiveness
of methods for finding harmonic maps. 
In addition to the mathematical content, we feature \Harmony{}, a computer program that we developed
in \Cpp{} whose main functionality is to numerically compute equivariant harmonic maps.  
Our project was motivated by the question: is it possible to study the nonabelian Hodge correspondence experimentally? 
Though the heat flow is constructive to some extent, it does not provide qualitative information about convergence in general. 
We show that one can design entirely effective methods to compute harmonic maps by discretizing appropriately.

Some of the existing literature treats similar questions, though seldom in the Riemannian setting. Notably, Bartels applies finite element methods on submanifolds of $\bR^n$ to nonlinear PDEs such as the Euler-Lagrange equations for minimizing the energy \cite{MR2177142,MR2629993,MR3309171}. There is also an extensive literature on 
discrete energy functionals of the form we consider in this paper \cite{MR1151746,MR1246481,MR1721305,MR1783793,MR2346504,MR1775138,MR2174098,MR1848068,MR2471368,MR3423736}.

However, several features set our work apart. Firstly, we study sequences of arbitrarily fine discretizations 
as opposed to one fixed approximation of the manifold, 
particularly in the sequel paper \cite{Gaster-Loustau-Monsaingeon2}.
Moreover, we follow the maxim of Bobenko-Suris \cite[p.~xiv]{MR2467378}:
\begin{center}
\emph{Discretize the whole theory, not just the equations}.
\end{center}
Our discrete structures record two independent systems of weights on a given triangulation of the domain manifold $M$, on the set of vertices and on the set of edges respectively.
The vertex weights are a discrete record of the volume form, while the 
edge weights generalize the well-known ``cotangent weights'' popularized by Pinkall-Polthier in the Euclidean setting \cite{MR1246481}. In the $2$-dimensional case,
the edge weights can be thought of as a discrete record of the conformal structure.
This discretization endows the space of discrete maps with a finite-dimensional Riemannian structure, approximating the $\upL^2$ metric on $\cC^\infty(M,N)$. 
We obtain the right setting for a study of the convexity of the discrete energy, 
and for the definitions of the discrete energy density, discrete tension field, and discrete heat flow. 
Among the practical benefits, we find that the discrete energy satisfies stronger convexity properties than those known to hold in the smooth setting.

The theory of harmonic maps is also intimately related to centers of mass.
In the Euclidean setting, harmonic functions satisfy the well-known mean value property.
While the latter no longer holds \emph{stricto sensu} in the more general Riemannian setting, 
we prove a generalization in terms of centers of mass of harmonic maps
on very small balls. 
More generally, averaging a function 
on small balls offers a viable method in order to decrease its energy, 
a viewpoint well adapted to Jost's theory of generalized harmonic maps \cite{MR1385525, MR1360608, MR1449406, MR1451625}. As an alternative to the discrete heat flow, we pursue a discretization of the theory of Jost by analyzing discrete center of mass methods. 

For both heat flow and center of mass methods, the present paper focuses on a fixed discretization of the domain, while the sequel paper \cite{Gaster-Loustau-Monsaingeon2} analyzes convergence of the discrete theory back to the smooth one as we take finer and finer discretizations approximating a smooth domain. 
While this paper focuses on two dimensional hyperbolic manifolds, and chooses the equivariant setting, the second paper embraces the general Riemannian setting for the most part, and drops the equivariant formulation.

\bigskip
Now let us describe more precisely some of the main theorems of the paper. After discussing harmonic maps in \autoref{sec:HarmonicMaps} and developing a discretized theory in \autoref{sec:Discretization}, we study the convexity of the energy functionals in \autoref{sec:StrongConvexity}. We show:
\begin{theorem*}[\autoref{thm:StrongConvexityEnergyGraphGeneral}]
Let $\cS$ be a discretized surface of negative Euler characteristic and let $N$ be a compact surface of nonzero Euler characteristic with nonpositive sectional curvature. 
The discrete energy functional is strongly convex 
in any homotopy class of nonzero degree.
\end{theorem*}
\noindent 
We stress that while the convexity of the energy comes for free, the content of the theorem above is a positive modulus of convexity, \ie{} a uniform lower bound for the Hessian.
We actually show a more general version of this theorem involving equivariant maps and the notion of \emph{biweighted triangulated graph} which we introduce in \autoref{sec:Discretization}. See \autoref{thm:StrongConvexityEnergyGraphGeneral} for the precise statement.

When the target $N$ is specialized to a hyperbolic surface, we find explicit bounds for the Hessian of the energy functional (see \autoref{thm:StrongConvexityEnergyGraphH2}). 
We achieve this through detailed calculations in the hyperbolic plane, which we then generalize to negatively curved target surfaces using $\CAT(k)$-type comparisons.
Roughly speaking, the key idea is that if the energy of some function has a very small second variation, then one can construct an almost parallel
vector field on the image of the surface; however this is not possible for topological reasons.
\autoref{sec:StrongConvexity} is concerned with the significant work of making this argument precise and quantitative.

In \autoref{sec:DiscreteHeatFlow} we study gradient descent methods in Riemannian manifolds and specialize to the convergence of the discrete heat flow. Fully leveraging strong convexity of the energy, we show:
\begin{theorem*} [\autoref{thm:ConvergenceDiscreteHeatFlowGeneral}]
Let $\cS$ be a discretized surface of negative Euler characteristic and let $N$ be a compact surface of nonzero Euler characteristic with nonpositive curvature. There exists a unique discrete harmonic map $f^* \colon \cS \to N$
in any homotopy class of nonzero degree. Moreover, for any initial value $f_0:\cS\to N$ and for any sufficiently small $t>0$, the discrete heat flow with fixed stepsize $t$ converges to $f^*$ with 
exponential convergence rate.
\end{theorem*}
\noindent This is again a simplified version of the theorem we show; see \autoref{thm:ConvergenceDiscreteHeatFlowGeneral} for the precise statement.

Next we discuss center of mass methods in \autoref{sec:CenterOfMass}. 
An interesting alternative to the heat flow consists in averaging $f$ on balls of small radius $r>0$, producing a new map $B_r f \colon M \to N$. Repeating this process potentially produces energy-minimizing sequences for an approximate version of the energy $E_r$, a phenomenon that has been explored by Jost \cite{MR1385525}. The central theorem we prove in \autoref{subsec:GeneralizedMeanValueProperty}
is that in the Riemannian setting, this iterative process is almost the same as a fixed stepsize time-discretization of the heat flow. 
See \autoref{thm:AverageMap} for a precise statement. 

We prove in particular the following generalized mean value property for harmonic maps:
\begin{theorem*}[\autoref{thm:GeneralizedMeanValueProperty}]
Let $f \colon M \to N$ be a smooth map between Riemannian manifolds. Then $f$ is harmonic if and only if for all $x\in M$, we have as $r\to 0$:
\begin{equation}
d(f(x), B_r f(x)) = \bigO(r^4)\,.
\end{equation} 
\end{theorem*}
Under suitable conditions, we show that the center of mass method converges to a minimizer of the approximate energy $E_r$ (see \autoref{thm:CenterOfMassMethodSmooth}), recovering a theorem of Jost \cite[\S3]{MR1385525}. Jost's result is more general, but our conclusion is slightly stronger.

In the space-discretized setting, the approximate energy coincides with the discrete energy, making the discrete center of mass method
an appropriate alternative to the discrete heat flow.

\begin{theorem*}[\autoref{thm:CenterOfMassMethodDiscrete}]
Let $\cS$ be a discretized surfaceand $N$ be a compact manifold of negative sectional curvature. In any  $\pi_1$-injective homotopy class of maps $\cS\to N$, the center of mass method from any initial map converges to the unique discrete harmonic map.
\end{theorem*}

\bigskip
As a concrete demonstration of the effectiveness of our algorithms, we present in \autoref{sec:Harmony} our own computer implementation of a harmonic map solver: \Harmony{} is a freely available computer software with a graphical user interface written in \Cpp{} code, using the Qt framework. This program takes as input the Fenchel-Nielsen coordinates for a pair of Fuchsian representations $\rho_\tL, \rho_\tR \colon \pi_1 S \to \Isom(\bH^2)$ and computes and visualizes 
the unique equivariant harmonic map. \Harmony{}'s main user interface is illustrated in \autoref{fig:HarmonyUserInterface}.

In future development of \Harmony{}, we plan to compute and visualize harmonic maps for more general target representations that are not necessarily discrete, and for more general target spaces, such as $\bH^3$ and other nonpositively curved symmetric spaces. 

We have implemented both the discrete heat flow method, with fixed and optimal stepsizes separately, and the $\cosh$-center of mass method, a clever variant of the center of mass method suggested to us by Nicolas Tholozan that is better suited for computations in hyperbolic space (discussed in \autoref{subsec:CoshCenterOfMass}). In practice, the $\cosh$-center of mass method is the most effective, both in number of iterations and execution time (see \autoref{subsec:ExperimentalComparison}). 

\phantomsection 
\subsection*{Acknowledgments}
\addcontentsline{toc}{section}{Acknowledgments}

The authors wish to thank
Tarik Aougab, 
Benjamin Beeker,
Alexander Bobenko,
David Dumas,
John Loftin,
Jean-Marc Schlenker, 
Nicolas Tholozan, and
Richard Wentworth
for valuable conversations and correspondences pertaining to this work. 
We especially thank David Dumas for 
his extensive advice and support with the mathematical content and the development of \Harmony{}, 
and Nicolas Tholozan for sharing key ideas that contributed to 
\autoref{thm:StrongConvexityEnergyGraphGeneral} and \autoref{thm:GeneralizedMeanValueProperty}.

The first two authors gratefully acknowledge research support from the NSF Grant DMS1107367 \emph{RNMS: GEometric structures And Representation varieties} (the \emph{GEAR Network}).
The third author was partially supported by the Portuguese Science Foundation FCT trough grant PTDC/MAT-STA/0975/2014 \emph{From Stochastic Geometric Mechanics to Mass Transportation Problems}.

\section{Harmonic maps} \label{sec:HarmonicMaps}


\subsection{Energy functional and harmonic maps} \label{subsec:EnergyFunctionalHarmonicMaps}

Let $(M,g)$ and $(N,h)$ be two smooth Riemannian manifolds. Assuming $M$ is compact, the \emph{energy} of a smooth map $f \colon M \to N$ is:
\begin{equation} \label{eq:EnergyFunctionalSmooth}
 E(f) = \frac{1}{2} \int_M \Vert \upd f \Vert^2 \, \upd v_g
\end{equation}
where $v_g$ is the volume density of the metric $g$. Note that $\upd f$ is a smooth section of the bundle $\upT^*M \otimes f^* \upT N$ over $M$, which admits
a natural metric induced by $g$ and $h$, giving sense to $\Vert \upd f \Vert$. This is the so-called \emph{Hilbert-Schmidt norm} of $\upd f$,
which is also described as $\Vert \upd f \Vert^2 = \tr_g(f^* h)$.

\begin{definition} \label{def:HarmonicMap}
 A map $f \colon M \to N$ is \emph{harmonic} if it is a critical point of the energy functional \eqref{eq:EnergyFunctionalSmooth}.
\end{definition}
\noindent This means concretely that:
\begin{equation}
 \ddt\evalat{t=0} E(f_t) = 0
\end{equation}
for any smooth deformation $(f_t) \colon (-\delta, \delta) \times M \to N$ of $f = f_0$. 
Note that one should work with compactly supported deformations when $M$ is not compact, as the energy could be infinite.

A more tangible characterization of harmonicity is given by the Euler-Lagrange equation for $E$, which
takes the form $\tau(f) = 0$ where $\tau(f)$ is the
\emph{tension field} of $f$: this is an immediate consequence of the first variational formula below (\autoref{prop:FirstVariationalFormulaSmooth}).
First we define the tension field.
Note that the bundle $\upT^*M \otimes f^* \upT N$ admits a natural connection $\nabla$ induced by the Levi-Civita connections of $g$
and $h$. Hence one can take the covariant derivative $\nabla (\upd f) \in \Gamma(\upT^*M \otimes \upT^*M \otimes f^* \upT N)$ (we use the notation $\Gamma$ for the space of smooth sections), also denoted $\nabla^2 f$. It is easily shown to be symmetric in the first two factors.

\begin{definition}
The \emph{vector-valued Hessian} of $f$ is
\begin{equation}
 \nabla^2 f \coloneqq \nabla(\upd f) \in \Gamma(\upT^*M \otimes \upT^*M \otimes f^* \upT N)~.
\end{equation}
The contraction (trace) of $\nabla^2 f$ on its first two indices using the metric $g$ is the \emph{tension field} of $f$:
\begin{equation}
 \tau(f) \coloneqq \tr_g(\nabla^2 f) \in \Gamma(f^* \upT N)~.
\end{equation}
\end{definition}

Note that the vector-valued Hessian generalizes both the usual Hessian (when $N = \bR$) and the (vector-valued) second fundamental form (when $f$ is an isometric immersion). 
Accordingly, the tension field generalizes both the Laplace-Beltrami operator and the (vector-valued) mean curvature.

\begin{proposition}[First variational formula for the energy] \label{prop:FirstVariationalFormulaSmooth}
Let $f \colon (M,g) \to (N, h)$ be a smooth map and let $(f_t)$ be a smooth deformation of $f$. 
Denote by $V \in \Gamma(f^* \upT N)$ the associated infinitesimal deformation defined as $V_x = \ddt\evalat{t=0} f_t(x)$. Then
\begin{equation} \label{eq:FirstVariationalFormula}
 \ddt\evalat{t=0} E(f_t) = -\int_M \langle \tau(f)_x, V_x\rangle \, \upd v_g(x)
\end{equation}
where $h = \langle \cdot , \cdot \rangle$ is the Riemannian metric in $N$.
\end{proposition}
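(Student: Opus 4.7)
The plan is to differentiate under the integral sign, use the torsion-free property of the induced connection to swap $\partial_t$ and the spatial derivatives, and then integrate by parts to move one derivative off $V$ and onto $df$, producing the tension field.

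More concretely, I would consider the smooth map $F \colon (-\delta, \delta) \times M \to N$ defined by $F(t,x) = f_t(x)$, and pull back $\upT N$ to a bundle over $(-\delta,\delta)\times M$ equipped with the pullback connection $\widetilde\nabla$ coming from the Levi-Civita connection of $h$. Writing $\partial_t$ and a local frame $\{X_i\}$ on $M$, the compatibility of $\widetilde\nabla$ with $h$ gives
\begin{equation}
\ddt \Vert \upd f_t \Vert^2 = 2 \sum_i \bigl\langle \widetilde\nabla_{\partial_t} \upd F(X_i), \upd F(X_i) \bigr\rangle.
\end{equation}
Since the Levi-Civita connection of $h$ is torsion-free, the induced connection on $F^*\upT N$ satisfies $\widetilde\nabla_{\partial_t} \upd F(X_i) = \widetilde\nabla_{X_i} \upd F(\partial_t)$. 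Evaluating at $t=0$ and using $\upd F(\partial_t)|_{t=0} = V$, this yields
\begin{equation}
\ddt\evalat{t=0} \tfrac{1}{2}\Vert \upd f_t \Vert^2 = \langle \nabla V , \upd f\rangle
\end{equation}
where the right-hand side is the pointwise inner product on $\upT^*M \otimes f^*\upT N$.

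Next I would introduce the $1$-form $\alpha \in \Omega^1(M)$ defined by $\alpha(X) = \langle V, \upd f(X)\rangle$. A direct computation using the Leibniz rule for the tensor product connection on $\upT^*M\otimes f^*\upT N$ gives
\begin{equation}
\tr_g \nabla \alpha = \langle \nabla V, \upd f\rangle + \langle V, \tr_g \nabla(\upd f)\rangle = \langle \nabla V, \upd f\rangle + \langle V, \tau(f)\rangle.
\end{equation}
Since $\tr_g \nabla \alpha$ is (up to sign) the divergence of the vector field $\alpha^\sharp$ dual to $\alpha$, integrating against $\upd v_g$ and applying the divergence theorem kills this term: the boundary contribution vanishes because $M$ is compact (or, if $M$ is noncompact, because the deformation is assumed compactly supported). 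Rearranging yields the claimed formula
\begin{equation}
\ddt\evalat{t=0} E(f_t) = \int_M \langle \nabla V, \upd f \rangle \, \upd v_g = -\int_M \langle \tau(f), V\rangle \, \upd v_g.
\end{equation}

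The only delicate point is the bookkeeping of connections on the pullback bundles $F^*\upT N$ and $\upT^*M \otimes f^*\upT N$, and the justification that $\widetilde\nabla_{\partial_t} \upd F(X) = \widetilde\nabla_X \upd F(\partial_t)$; once that symmetry is in hand, the computation is essentially a covariant integration by parts, and there are no hard analytic obstacles since everything in sight is smooth and compactly supported.
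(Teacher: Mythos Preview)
Your argument is correct and is the standard derivation of the first variation formula: differentiate under the integral, use the symmetry $\widetilde\nabla_{\partial_t}\,\upd F(X)=\widetilde\nabla_X\,\upd F(\partial_t)$ coming from torsion-freeness, and integrate by parts via the divergence theorem. The only cosmetic point is that your local frame $\{X_i\}$ should be taken orthonormal (or else the contraction with $g^{ij}$ made explicit) for the pointwise identity $\Vert \upd f_t\Vert^2=\sum_i\langle \upd F(X_i),\upd F(X_i)\rangle$ to hold as written.

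As for comparison: the paper does not give its own proof of this proposition. It is stated as a classical result due to Eells--Sampson \cite{MR0164306}, and the second variation formula that follows is likewise quoted without proof. So there is nothing to compare against; your write-up simply supplies the standard argument that the paper omits.
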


One can introduce a natural $\upL^2$ inner product of two infinitesimal deformations $V, W \in \Gamma(f^* \upT N)$ (also called \emph{vector fields along $f$}):
\begin{equation}
\label{eq:InnerProductSmooth}
\langle V, W \rangle = \int_M \langle V_x, W_x\rangle \, \upd v_g(x)~.
\end{equation}
There is in fact a natural smooth structure on $\cC^\infty(M,N)$, making it
an infinite-dimensional manifold, which identifies the tangent space at $f$ as
\begin{equation} \label{eq:TangentSpaceMapSmooth}
 \upT_f \cC^\infty(M,N) = \Gamma(f^* \upT N)~,
\end{equation}
we refer to \cite[Chapter IX]{MR1471480} for details.
With respect to this smooth structure, \eqref{eq:InnerProductSmooth} defines a Riemannian metric on $\cC^\infty(M,N)$, and \eqref{eq:FirstVariationalFormula}
can simply be put:
\begin{equation} \label{eq:GradientEnergy}
  \grad E(f) = -\tau(f)~.
\end{equation}
Next we compute the second variation of the energy (like the first variation, this is already in \cite{MR0164306}):
\begin{proposition}[Second variational formula for the energy] \label{prop:SecondVariationalFormulaSmooth}
Let $(f_{st}) \colon (-\delta, \delta)^2 \times M \to N$ be a smooth deformation of $f = f_{00}$ .
Denote $V=\frac{\partial f}{\partial s}\evalat{s=0}$ and $W=\frac{\partial f}{\partial t}\evalat{t=0}$.
Then
\begin{equation}
 \frac{\partial^2 E(f_{st})}{\partial s \partial t}\evalat{s=t=0}
 = \int_M \left(\left \langle \nabla V, \nabla W \right \rangle - \tr_g \left \langle R^N(\upd f,V) W,\upd f \right\rangle
+ \left \langle \nabla_{\frac \partial {\partial t}} \frac{\partial f}{\partial s} , \tau(f) \right\rangle \right)  \, \upd v_g
\end{equation}
where $R^N$ is the Riemann curvature tensor\footnote{For us the curvature tensor is $R(X,Y)Z = \nabla^2_{X, Y} Z - \nabla^2_{Y, X} Z$. Some authors' convention differs in sign, \eg \cite{MR2088027}.} on $N$.
\end{proposition}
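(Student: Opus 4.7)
The plan is to start from the first variational formula (\autoref{prop:FirstVariationalFormulaSmooth}) applied in the $s$-direction and differentiate in $t$. Setting $V_{st} = \partial_s f_{st}$, we have
\begin{equation*}
\frac{\partial E(f_{st})}{\partial s} = -\int_M \langle \tau(f_{st}), V_{st}\rangle\, \upd v_g.
\end{equation*}
Taking $\partial/\partial t$ at $s=t=0$ and using metric compatibility of the pullback connection on $f^*\upT N$ splits the answer into a term involving $\nabla_{\partial_t}\partial_s f$ paired with $\tau(f)$, which directly contributes the third term in the statement, and a term involving the $t$-derivative of the tension field, namely $-\int_M \langle V, \nabla_{\partial_t}\tau(f_t)\!\evalat{t=0}\rangle\, \upd v_g$. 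The latter is where the real work lies.

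To handle it, I would expand $\tau(f_t) = \sum_\alpha [\nabla_{e_\alpha}(\upd f_t(e_\alpha)) - \upd f_t(\nabla^M_{e_\alpha} e_\alpha)]$ in a local $g$-orthonormal frame $\{e_\alpha\}$ on $M$ and differentiate in $t$. Two identities carry the computation. First, since $[\partial_t, e_\alpha] = 0$, the torsion-freeness of the pullback connection yields $\nabla_{\partial_t}\upd F(e_\alpha) = \nabla_{e_\alpha} \upd F(\partial_t) = \nabla_{e_\alpha} W$. Second, commuting covariant derivatives on $F^*\upT N$ introduces the curvature of the pullback connection, which equals $R^N(\upd F(\partial_t), \upd F(e_\alpha)) = R^N(W, \upd f(e_\alpha))$ at the origin. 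Together they give
\begin{equation*}
\nabla_{\partial_t}\tau(f_t)\!\evalat{t=0} = \Delta^M W + \sum_\alpha R^N(W, \upd f(e_\alpha))\upd f(e_\alpha),
\end{equation*}
where $\Delta^M W \coloneqq \tr_g \nabla^2 W$ is the rough Laplacian of $W$ viewed as a section of $f^*\upT N$.

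Plugging back, compactness of $M$ lets us integrate by parts on $-\int_M \langle V, \Delta^M W\rangle\, \upd v_g$ to obtain the $\int_M \langle \nabla V, \nabla W\rangle\, \upd v_g$ term of the statement. The remaining curvature integral $-\int_M \sum_\alpha \langle V, R^N(W, \upd f(e_\alpha))\upd f(e_\alpha)\rangle\, \upd v_g$ is then rewritten as a $g$-trace using the pair-swap symmetry $\langle R^N(X,Y)Z, U\rangle = \langle R^N(Z,U)X, Y\rangle$ together with skew-symmetry in each pair, producing the claimed $-\tr_g\langle R^N(\upd f, V)W, \upd f\rangle$. The main obstacle is organizational rather than conceptual: handling the various covariant derivatives on $F^*\upT N$ and correctly juggling the curvature symmetries admits many sign errors. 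Working throughout in a fixed orthonormal frame, and verifying against a scalar one-parameter example (e.g.\ $M = S^1$, $N = \bR$ with $f_0$ non-harmonic), will be the natural safeguards.
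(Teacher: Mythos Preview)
The paper does not actually prove this proposition; it simply attributes the formula to Eells--Sampson. Your plan---differentiate the first variation in the second parameter, commute covariant derivatives on $F^*\upT N$ to produce the curvature of $N$, and integrate the rough Laplacian by parts---is precisely the standard argument, and your identification $\nabla_{\partial_t}\tau(f_t)\big|_{t=0}=\Delta^M W+\sum_\alpha R^N(W,\upd f(e_\alpha))\,\upd f(e_\alpha)$ is correct with the paper's curvature convention.

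One concrete warning, since you already anticipate sign pitfalls: when you carry out the differentiation of $-\int_M\langle\tau(f_{st}),V_{st}\rangle\,\upd v_g$ in $t$, the product-rule term pairing $\tau(f)$ with $\nabla_{\partial_t}\partial_s f$ comes out with a \emph{minus} sign (equivalently, it is $\upd E$ evaluated on the acceleration of the variation, and $\grad E=-\tau$). The displayed formula in the proposition shows it with a plus. This has no effect downstream, since only the Hessian formula \eqref{eq:HessianEnergySmooth}---obtained after the third term is dropped for geodesic variations---is ever used; but do not contort your computation to match the stated sign.
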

\noindent When $(f_{st})$ is a geodesic variation, \ie{} $f_{st}(x) = \exp_{f(x)}(s V_x + tW_x)$, the third term in the integral vanishes.
This yields the formula for the Hessian of the energy functional:
\begin{equation} \label{eq:HessianEnergySmooth}
 \Hess(E)\evalat{f}(V, W) = \int_M \left(\left \langle \nabla V, \nabla W \right \rangle - \tr_g \left \langle R^N(V, \upd f) \upd f, W \right\rangle \right)  \, \upd v_g~.
\end{equation}
When $M$ is closed, this can also be written $\Hess(E)\evalat{f}(V, W) = \langle J(V), W \rangle$ using the $\upL^2$ Riemannian metric \eqref{eq:InnerProductSmooth},
where $J(V) = -\tr_g(\nabla^2 V + R^N(V, \upd f) \upd f)$ is the \emph{Jacobi operator}.

\subsection{\texorpdfstring{Energy functional on $\upL^2(M,N)$ and more general spaces}{Energy functional on L2(M,N) and more general spaces}}
\label{subsec:JostEnergy}

The energy functional can be extended to maps that are merely in $\upL^2(M,N)$. 
First let us define this function space. Assume $M$ is compact.
The \emph{$L^2$-distance} between two measurable maps $f_1,f_2 \colon M \to N$ is
\begin{equation} \label{eq:L2DistanceSmooth}
 d(f_1, f_2) = \left(\int_M d( f_1(x), f_2(x))^2 \, \upd v_g(x)\right)^{\frac 12}~.
\end{equation}
If $f_1, f_2$ are both smooth, this is the distance induced by the 
$\upL^2$ Riemannian metric \eqref{eq:InnerProductSmooth}, provided there exists a geodesic between $f_1$ and $f_2$. 
A measurable map $f \colon M \to N$ is declared in $\upL^2(M,N)$ when it is within finite distance of a constant map. For $r> 0$, one can then 
define an \emph{approximate $r$-energy} of $f \in \upL^2(M,N)$:
\begin{equation} \label{eq:ApproximateEnergyJost}
 E_r(f) =  \frac{1}{2} \int_M \int_M  \eta_r(x,y) \, {d(f(x), f(y))}^2 \, \upd v_g(y) \, \upd v_g (x)
\end{equation}
where $\eta_r(x,y)$ is a \emph{kernel} that may be chosen $\eta_r(x,y) = \frac{\mathbf{1}_r(x,y)}{r^2 V_m(r)}$,
where $V_m(r)$ is the volume of a ball of radius $r$ in a Euclidean space of dimension $m = \dim M$
and $\mathbf{1}_r(x,y)$ is the characteristic function of $\{(x,y) \in M^2 : d(x,y) < r\}$ in $M \times M$
(see \cite[\S 4.1]{MR1451625} for a discussion of the choice of kernel).
One can show that the functional $E_r$ is continuous on $\upL^2(M,N)$. Moreover, the limit:
\begin{equation} \label{eq:EnergyFunctionalJost}
 E(f) \coloneqq \lim_{r \to 0} E_r(f)
\end{equation}
exists in $[0, \infty]$ for every $f\in \upL^2(M,N)$. The resulting energy functional $E$ is lower semi-continuous on $\upL^2(M,N)$
and coincides with \eqref{eq:EnergyFunctionalSmooth} on $\cC^\infty(M,N)$.
A measurable map $f \colon M \to N$ is declared in the Sobolev space $\upH^1(M,N)$ if it is in $\upL^2(M,N)$ and has finite energy. 
The spaces $\upL_{\text{loc}}^2(M,N)$ and $\upH_{\text{loc}}^1(M,N)$ are similarly defined by restricting to compact sets.
One can then define a (weakly) harmonic map as a critical point of the energy functional
in $\upH_{\text{loc}}^1(M,N)$. 
Any continuous weakly harmonic map is smooth \cite[Theorem 9.4.1]{MR3726907}  (the continuity assumption can be dropped when $M$ and $N$ are compact and $N$ has nonpositive curvature: \cite[Corollary 9.6.1]{MR3726907}). 

In addition to opening the way for tools from functional analysis, this approach can be generalized to much more general spaces than Riemannian manifolds. Indeed, assume $M = (M,\mu)$ is a measure space and $N = (N,d)$ is a metric space. The space $\upL^2(M,N)$ may be defined as before, and given a choice of kernel $\eta_r$ for $r>0$, one can define energy functionals $E_r \colon \upL^2(M,N) \to \bR$ using \eqref{eq:ApproximateEnergyJost}. For a suitable choice of $\eta_r$ and of a sequence $r_n \to 0$, the energy functional is $E = \lim_{n \to +\infty} E_{r_n}$. More precisely, one must ensure that $E$ is the $\Gamma$-limit of the functionals $E_{r_n}$. We refer to \cite[Chap. 4]{MR1451625} for details and \cite{MR1201152} for the theory of $\Gamma$-convergence. 
$\Gamma$-convergence is adequate here because it ensures that minimizers of $E_r$ converge to minimizers of $E$. This point of view on the theory of harmonic maps was developed by Jost \cite{MR1385525, MR1360608, MR1449406, MR1451625}. A similar approach was developed by Korevaar-Schoen \cite{MR1266480, MR1483983}.

\subsection{The heat flow} \label{subsec:SmoothHeatFlow}

Going back to the smooth setting, assume that $M$ is compact and $N$ is complete and has nonpositive curvature. The formula for the Hessian of the energy \eqref{eq:HessianEnergySmooth} shows that it is nonnegative, in other words $E$ is a convex function on $\mathcal{C}^\infty(M,N)$ with respect to the $\upL^2$ Riemannian metric. This makes it reasonable to expect existence and in certain cases uniqueness of harmonic maps, which are necessarily energy-minimizing in this setting
(we discuss this further in \autoref{subsec:ConvexityEnergyExistenceHarmonic}). A natural approach to minimize a convex function is the gradient flow, called \emph{heat flow} in this setting: given $f_0 \in \mathcal{C}^\infty(M,N)$, consider the initial value problem $\ddt f_t = -\grad E(f_t)$, that is in light of \eqref{eq:GradientEnergy}:
\begin{equation}
 \ddt f_t = \tau(f_t)~.
\end{equation}
This flow exists for all $t \geqslant 0$. Moreover, if the range of $f_t$ remains in some fixed compact subset of $N$,
then $f_t$ converges to a harmonic map as $t \to \infty$, uniformly and in $\upL^2(M,N)$ (in fact, in $\mathcal{C}^\infty(M,N)$). Otherwise, 
there exists no harmonic map homotopic to $f$. In particular, when $N$ is compact, any $f_0 \in \mathcal{C}^\infty(M,N)$ is homotopic to a smooth energy-minimizing harmonic map. Moreover, such a harmonic map is unique, unless it is constant or maps into a totally geodesic flat submanifold of $N$ in which case non-uniqueness is realized by translating $f$ in the flat. These foundational results are due to Eells-Sampson \cite{MR0164306} and Hartman \cite{MR0214004}.

\subsection{Equivariant harmonic maps} \label{subsec:EquivariantHarmonicMaps}

Instead of working with maps between compact manifolds, it can be useful to study their equivariant lifts to the universal covers. 
Indeed, up to being careful with basepoints, any continuous map $f \colon M \to N$ lifts to a unique $\rho$-equivariant map $\tilde{f} \colon \tilde{M}\to \tilde{N}$,
where $\rho \colon \pi_1 M \to \pi_1 N$ is the group homomorphism induced by $f$. Note that $\rho$ only depends on the homotopy class of $f$,
and if $N$ is aspherical (\ie{} $\tilde{N}$ is contractible), then conversely any $\rho$-equivariant continuous map $M \to N$ is the lift of some continuous map $M \to N$ homotopic to $f$.

This approach enables the following generalization: let $X$ and $Y$ be two Riemannian manifolds, denote $\Isom(X)$ and $\Isom(Y)$ their groups of isometries. Let $\Gamma$ be a discrete group. Given group homomorphisms
$\rho_\tL \colon \Gamma \to \Isom(X)$ and $\rho_\tR \colon \Gamma \to \Isom(Y)$, a map $f \colon X \to Y$ is called $(\rho_\tL, \rho_\tR)$-equivariant if:
\begin{equation}
 f \circ \rho_\tL(\gamma) = \rho_\tR(\gamma) \circ f
\end{equation}
for all $\gamma \in \Gamma$. Note that the quotients $X/\rho_\tL(\Gamma)$ and $Y/\rho_\tR(\Gamma)$ can be pathological, but
the space of equivariant maps $X \to Y$ remains ripe for study.

The heat flow approach of Eells-Sampson to show existence of harmonic maps between compact Riemannian manifolds when the target is nonpositively curved has been
successfully adapted to the equivariant setting by various authors. The adequate condition for guaranteeing existence of equivariant harmonic maps is the \emph{reductivity} of the target representation. More precisely:

\begin{theorem}[\cite{MR1049845}] \label{thm:Labourie}
Let $M$ and $N$ be Riemannian manifolds, assume $N$ is Hadamard. Denote by $\rho_\tL \colon \pi_1M \to \Isom(\tilde{M})$ the action by deck transformations
and let $\rho_\tR \colon \pi_1M \to \Isom(N)$ be any group homomorphism. If $\rho_\tR$ is reductive, then there exists a $(\rho_\tL, \rho_\tR)$-equivariant harmonic map $\tilde{M} \to N$. The converse also holds provided $N$ is without flat half-strips.
\end{theorem}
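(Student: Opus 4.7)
The plan is to construct the equivariant harmonic map as the $t \to \infty$ limit of the equivariant heat flow. Since $N$ is contractible, a smooth $(\rho_\tL, \rho_\tR)$-equivariant initial map $\tilde f_0 \colon \tilde M \to N$ exists: define it on a fundamental domain $K \subset \tilde M$ and extend by equivariance. Naturality of the Levi-Civita connection under isometries guarantees that the heat equation $\partial_t \tilde f_t = \tau(\tilde f_t)$ preserves equivariance, and the energy $E(\tilde f_t|_K)$ is finite and nonincreasing in $t$. The classical Eells-Sampson-Hartman machinery then applies: the Bochner formula combined with $K_N \le 0$ makes the energy density parabolically subharmonic, which, together with energy monotonicity, yields uniform $C^\infty$ bounds on compact subsets of $\tilde M$ as long as the images $\tilde f_t(K)$ remain inside a bounded region of $N$. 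Under this bounded-image hypothesis, Ascoli extraction along a sequence $t_n \to \infty$ produces a smooth $(\rho_\tL, \rho_\tR)$-equivariant limit $\tilde f_\infty$ with $\tau(\tilde f_\infty) = 0$.

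The main technical obstacle is ruling out escape to infinity, which is precisely where reductivity must enter. Monotonicity of the energy implies uniform bounds on $d(\tilde f_t(x_0), \rho_\tR(\gamma) \tilde f_t(x_0))$ for $\gamma$ in a fixed generating set of $\pi_1 M$. If a subsequence $\tilde f_{t_n}(x_0)$ were to escape to some $\xi \in \partial_\infty N$, these bounded displacements would force every $\rho_\tR(\gamma)$ to fix $\xi$ in the visual boundary. Reductivity of $\rho_\tR$---interpreted, in the Hadamard setting, as the absence of a common fixed point at infinity not accompanied by an invariant totally geodesic subspace on which the displacement function is bounded---then provides a nonempty $\rho_\tR$-invariant closed convex subset $C \subseteq N$ with bounded displacement. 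Replacing $\tilde f_t$ by its composition with the nearest-point projection $N \to C$ does not increase energy and confines the image to $C$, restoring the bounded-image hypothesis needed to close the argument. Packaging this projection argument rigorously, and in particular handling parabolic elements in $\rho_\tR(\pi_1 M)$, is where I expect the main difficulty.

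For the converse, suppose $\tilde f$ is an equivariant harmonic map while $\rho_\tR$ is not reductive, so there exists a common fixed point $\xi \in \partial_\infty N$ with no compensating invariant geodesic. The Busemann function $b_\xi$ is convex, and the chain rule together with $\tau(\tilde f) = 0$ yields $\Delta(b_\xi \circ \tilde f) = \nabla^2 b_\xi(\upd \tilde f, \upd \tilde f) \ge 0$. Since $b_\xi$ transforms by an additive cocycle under $\rho_\tR(\pi_1 M)$, the $1$-form $\upd(b_\xi \circ \tilde f)$ is $\pi_1 M$-invariant and descends to the compact quotient $M$; Stokes' theorem then forces $\Delta(b_\xi \circ \tilde f) \equiv 0$, hence $\upd \tilde f$ lands pointwise in $\ker \nabla^2 b_\xi$. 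The parallel directions in $\ker \nabla^2 b_\xi$ sweep out flat half-strips in $N$, contradicting the no-flat-half-strips hypothesis and proving reductivity.
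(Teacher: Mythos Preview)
The paper does not prove this theorem; it is quoted from Labourie \cite{MR1049845} as background, with only the surrounding remark that ``the heat flow approach of Eells-Sampson \ldots\ has been successfully adapted to the equivariant setting by various authors.'' There is therefore no proof in the paper to compare your proposal against.

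For what it is worth, your sketch follows the correct broad strategy---equivariant heat flow plus Eells--Sampson estimates, reductivity to control escape to infinity, Busemann functions for the converse---which is indeed the line taken by Corlette and Labourie. One point that would need repair in an actual proof: composing $\tilde f_t$ with the nearest-point projection onto an invariant convex set $C$ is not how the boundedness is obtained, since projection neither commutes with the heat flow nor preserves the heat equation. The genuine argument is a dichotomy: either the flow stays bounded (and one extracts a limit), or the displacement bounds force a common fixed point $\xi \in \partial_\infty N$, whereupon reductivity supplies an invariant geodesic or flat to which the problem is reduced. Your converse sketch is closer to correct, though the passage from ``$\upd \tilde f$ takes values in $\ker \nabla^2 b_\xi$'' to an actual flat half-strip requires integrating the parallel directions along geodesic rays toward $\xi$, which is where the ``no flat half-strips'' hypothesis is genuinely invoked.
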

Less general versions of this theorem had previously been established by Donaldson \cite{MR887285} (for $N = \bH^3$) and Corlette \cite{MR965220} (for $N$ a Riemannian symmetric space of noncompact type).
The notion of being \emph{reductive} for a group homomorphism $\rho \colon \pi_1M \to G$ 
can be described algebraically when $N = G/K$ is a Riemannian symmetric space of noncompact type\footnote{When $G$ is an algebraic group, a subgroup $H \subseteq G$ is  \emph{completely reducible} if,
for every parabolic subgroup $P \subseteq G$ containing $H$, there is a Levi subgroup of $P$ containing $H$. 
Equivalently, the identity component of the algebraic closure of $H$ is a reductive subgroup (with trivial unipotent radical). A $G$-valued group homomorphism $\rho$ is called reductive (or completely reducible) when its image is a completely reducible subgroup. Refer to \cite{MR2931326} for details.}. Labourie \cite{MR1049845} generalized it to Hadamard manifolds. When $N$ has negative curvature, $\rho$ is reductive if and only if it fixes no point on the Gromov boundary $\partial_\infty N$ or it preserves a geodesic in $N$.

Wang \cite{MR1775138} and Izeki-Nayatani \cite{MR2174098} generalized \autoref{thm:Labourie} to Hadamard metric spaces using Jost's extended notion of reductivity \cite[Def. 4.2.1]{MR1451625}. Less general or different versions were previously established by \cite{MR1215595}, \cite[Thm 4.2.1]{MR1451625}, \cite{MR1483983}.

\subsection{Harmonic maps from surfaces}
\label{subsec:harmonicMapsSurfaces}

When $M = S$ is a surface, \ie{} $\dim M = 2$, it is easy to check that the energy density element 
$e(f) \, \upd v_g \coloneqq \frac{1}{2} \Vert \upd f \Vert^2 \, \upd v_g$ 
is invariant under conformal changes of the metric $g$.
Thus the energy functional only depends on the conformal class of $g$, as does the harmonicity of a map $S \to N$. 
A conformal structure on an oriented surface is equivalent to a complex structure (this follows from a result going back to Gauss \cite{Gauss1825}
on the existence of conformal coordinates). Hence one may talk about the energy and harmonicity of maps $X \to (N,h)$ where $X$ is a Riemann surface. 
Note however that the $L^2$ metric \eqref{eq:InnerProductSmooth} does change under conformal changes of $g$, therefore the tension field $\tau(f)$ does too,
as does the modulus of strong convexity of the energy (see \autoref{subsec:ConvexityRiemannian}).

One can see directly that the energy density element only depends on the complex structure $X$ on $S$ by writing the pullback metric $f^* h$ on $X$. 
Splitting it into types, one finds that 
\begin{equation}
 f^* h = \varphi_f + g_{f} + \bar{\varphi_f} 
\end{equation}
where $\varphi_f = (f^*h)^{(2,0)}$ is a complex quadratic differential on $X$, called the \emph{Hopf differential} of $f$, and $g_f = (f^*h)^{(1,1)}$ 
is $e(f)$
(more precisely, $g_f$ is the conformal metric with volume density $e(f) \, \upd v_g$).

The Hopf differential $\varphi_f$ plays an important role in Teichmüller theory.
First note that $f$ is conformal if and only if $\varphi_f = 0$. A key fact is that if $f$ is harmonic, then $\varphi_f$ is a holomorphic quadratic differential on $X$.
Wolf \cite{MR982185} proved that the Teichmüller space of $X$ is diffeomorphic to the vector space of holomorphic quadratic differentials
on $X$ by taking Hopf differentials of harmonic maps $X \to (S,h)$, where $h$ is a hyperbolic metric on $S$ (see \autoref{subsec:HighEnergy}) . 
We refer to \cite{MR2349668} for a beautiful review of the connections between harmonic maps and Teichmüller theory.

On a closed surface $S$ of negative Euler characteristic, it is convenient to choose the \emph{Poincaré metric} within a conformal class of metrics: it is the unique metric of constant curvature $-1$ (its existence is precisely the celebrated \emph{uniformization theorem}). 
This provides an identification of $\tilde{S}$ with the hyperbolic plane $\bH^2$ and an action of $\pi_1 S$ on $\bH^2$ by isometries.
Turning this identification around, whenever a Fuchsian (\ie{} faithful and discrete) representation $\rho_\tL \colon \pi_1 S \to \Isom^+(\bH^2)$ is chosen, we obtain  a hyperbolic surface $ \bH^2 / \rho_\tL(\pi_1 S) \approx S$.

\section{Discretization}
\label{sec:Discretization}

We fix some notation: for the remainder of the paper, $S$ is a smooth, closed, oriented surface of negative Euler characteristic (genus $\geqslant 2$).
We denote $\pi_1S$ the fundamental group of $S$ with respect to some basepoint that can be safely ignored.

\begin{remark}
 While this paper specializes the discretization to $2$-dimensional hyperbolic surfaces, most of the definitions can be generalized to higher-dimensional
 Riemannian manifolds. The sequel paper \cite{Gaster-Loustau-Monsaingeon2} treats the general Riemannian setting (while dropping the equivariant formulation, mostly for comfort).
\end{remark}

In \autoref{subsec:Meshes} we explain how to approach the energy minimization problem for smooth equivariant maps $\bH^2 \to N$
in order to allow effective computation by introducing meshes and subdivisions, discrete equivariant maps, and discrete energy.
Several of these notions are further discussed in \cite{Gaster-Loustau-Monsaingeon2}.
In the present paper they serve as a preamble to the more formal setting we develop in \autoref{subsec:Graphs} 
and they justify the choices made in the software \Harmony{}.

\subsection{Meshes and discrete harmonic maps} \label{subsec:Meshes}

Let us fix a hyperbolic structure on $S$ given by a Fuchsian representation $\rho_\tL$,
\ie{} an injective group homomorphism $\pi_1S \to \Isom^+(\bH^2)$ with discrete image.
This setup can be easily generalized to any Riemannian metric on $S$, but the hyperbolic metric is
best suited for computations.

\subsubsection*{Meshes and subdivisions}

Given a group homomorphism $\rho_\tR \colon \pi_1S \to \Isom(N)$ where $N$ is a Riemannian manifold (or a metric space), 
we would like to discretize $(\rho_\tL, \rho_\tR)$-equivariant maps $\bH^2 \to N$. To this end, we start by discretizing the 
domain hyperbolic surface with the notion of invariant mesh:

\begin{definition}
\label{def:Mesh}
A \emph{$\rho_\tL$-invariant mesh} of $\bH^2$ is an embedded graph $\cM$ in $\bH^2$ such that:
\begin{enumerate}[(i)]
 \item The vertex set $\cM^{(0)} \subset \bH^2$ (set of \emph{mesh points}) is invariant under 
 a cofinite action of $\rho_\tL(\pi_1S)$. 
 \item Every edge $e \in \cM^{(1)}$ is an embedded geodesic segment in $\bH^2$.
 \item The complementary components are triangles.
\end{enumerate}
\end{definition}

For the purpose of approximating smooth maps, we will need to take finer and finer meshes. This will be discussed in detail
in \cite{Gaster-Loustau-Monsaingeon2}, but let us describe the strategy that we have implemented
in the software \Harmony. A natural way to obtain a finer mesh from a given one is via geodesic subdivision. 
We indicate below an edge of $\cM$ with endpoints $x,y\in \cM^{(0)}$ by $e_{xy}$, 
and let $m(x,y) \in \bH^2$ be the midpoint of $x$ and $y$. It is easy to see that the following is well-defined:

\begin{definition}
\label{def:Refinement}
The \emph{refinement} of  a $\rho_\tL$-invariant mesh $\cM$ is the $\rho_\tL$-invariant mesh $\cM'$ such that:
\begin{enumerate}[(i)]
\item The vertices of 
$\cM'$ are the vertices of $\cM$ plus all midpoints of edges of $\cM$.
\item The edges of 
$\cM'$ are given by $x\sim m(x,y)$ and $y\sim m(x,y)$ for each edge $e_{xy}$, 
and $m(x,y) \sim m(x,z)$ for each triple of vertices $x,y,z$ that span a triangle in $\cM$.
\end{enumerate}
\end{definition}   

Evidently, this refinement may be iterated. See \autoref{fig:meshes} for an illustration of a $\rho_\tL$-invariant mesh and its refinement generated by the software
\Harmony{}.

\begin{figure}
\centering
\begin{subfigure}{.5\textwidth}
  \centering
  \includegraphics[width=.98\textwidth]{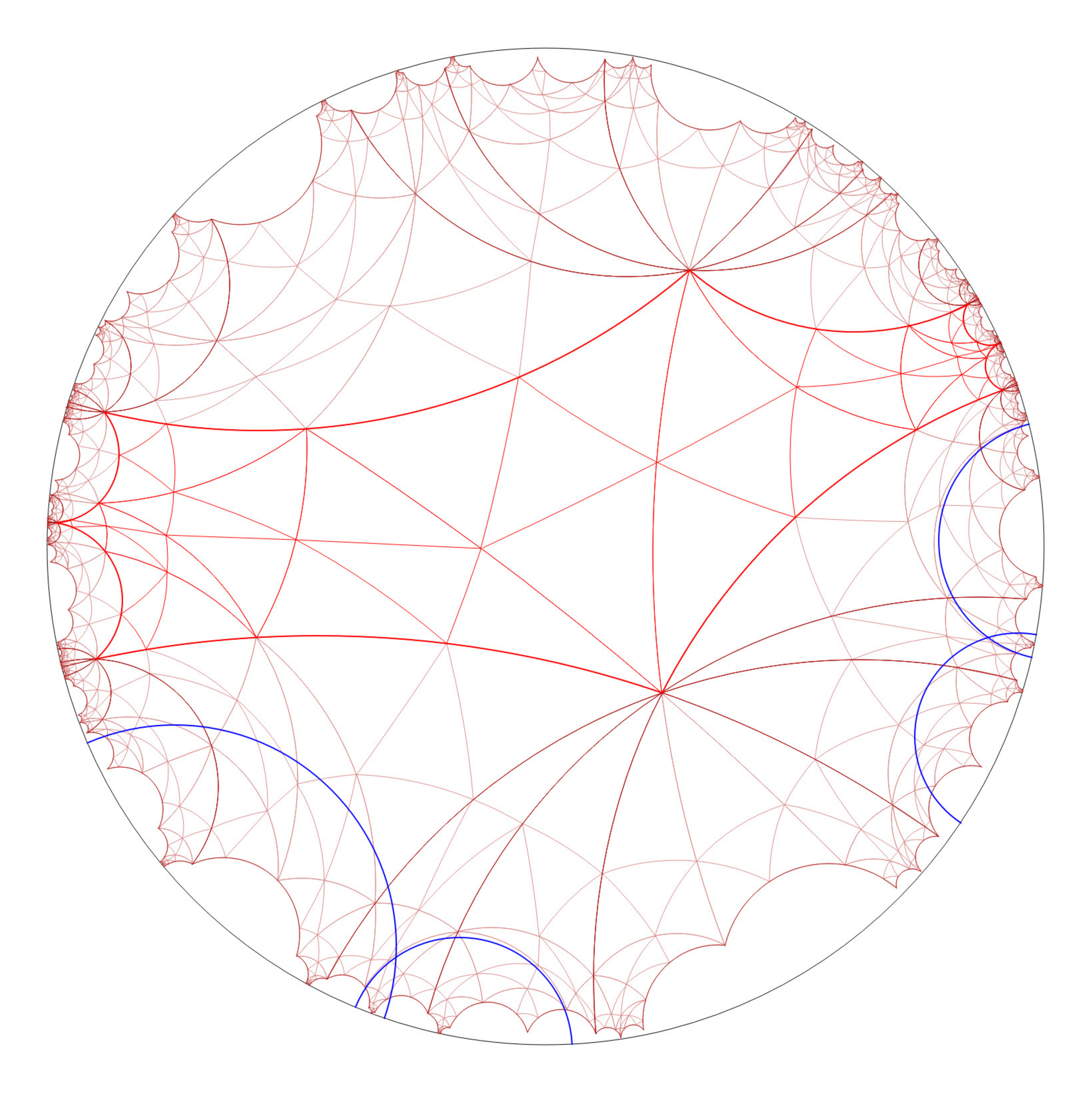}
  \caption{An invariant mesh of $\bH^2$}
  \label{fig:mesh0}
\end{subfigure}%
\begin{subfigure}{.5\textwidth}
  \centering
  \includegraphics[width=.98\textwidth]{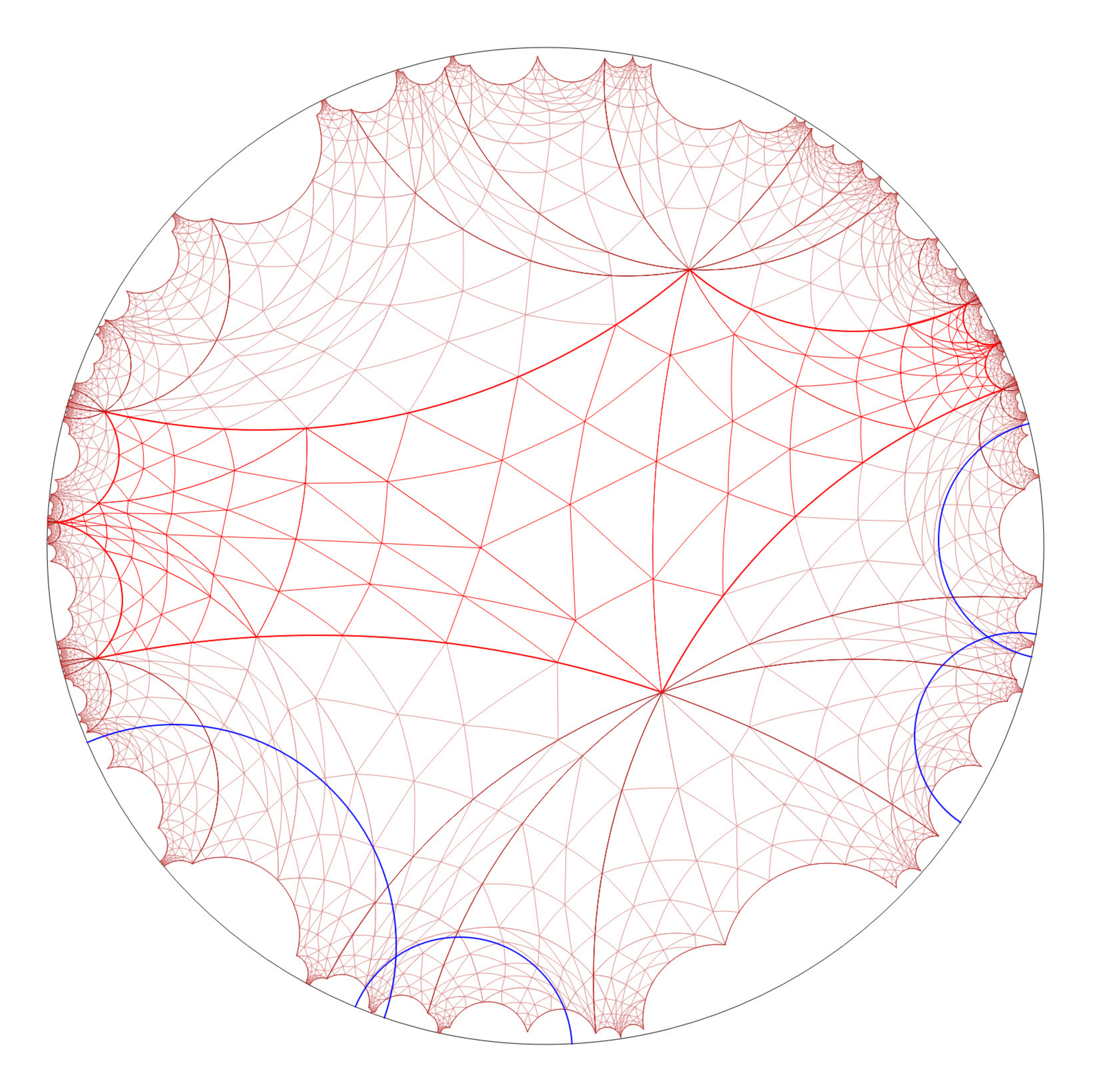}
  \caption{Refinement of order $1$}
  \label{fig:mesh1}
\end{subfigure}
\caption{An invariant mesh of the Poincaré disk model of $\bH^2$ on the left, its refinement of order 1 on the right.
The brighter central region is a fundamental domain. The blue circle arcs are the axes of the generators of $\rho_\tL(\pi_1S)$.
Pictures generated by \Harmony{}.}
\label{fig:meshes}
\end{figure}


\subsubsection*{Discrete equivariant maps}

Given a $\rho_\tL$-invariant mesh $\cM$ and a group homomorphism $\rho_\tR \colon \pi_1S \to \Isom(N)$,
we call \emph{discrete equivariant map $\bH^2 \to N$ along $\cM$} a $(\rho_\tL, \rho_\tR)$-equivariant map from the vertex set $\cM^{(0)}$ to $N$.
We denote $\Map_{\cM}(\bH^2, N)$ the set of discrete equivariant maps $\bH^2 \to N$ along $\cM$.
Note that $\Map_{\cM}(\bH^2, N) \approx N^{V}$ where $V = \cM^{(0)}/\rho_\tL(\pi_1S)$ is the set of equivalence classes of meshpoints, which is finite. Therefore:
\begin{proposition} \label{prop:EquivMapsSmoothManifold1}
If $N$ is a finite-dimensional smooth manifold, then so is $\Map_{\cM}(\bH^2, N)$.
\end{proposition}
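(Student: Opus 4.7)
The plan is to exhibit an explicit bijection $\Map_{\cM}(\bH^2, N) \cong N^V$, then transport the smooth manifold structure from the finite product $N^V$, and finally check that the resulting structure does not depend on the auxiliary choices made in setting up the bijection. The main technical ingredient is that the action of $\rho_\tL(\pi_1 S)$ on $\cM^{(0)}$ is free: since $S$ is a closed surface of genus $\geqslant 2$, the group $\pi_1 S$ is torsion-free, and $\rho_\tL$ being Fuchsian (discrete and faithful) ensures that $\rho_\tL(\pi_1 S)$ acts freely on $\bH^2$, hence on the invariant subset $\cM^{(0)}$.

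First, I would fix a set-theoretic section $s \colon V \to \cM^{(0)}$ of the quotient map, so that $V$ is identified with a finite subset $\{v_1, \ldots, v_n\} \subset \cM^{(0)}$ meeting each orbit exactly once. Consider the restriction map
\begin{equation}
\Phi_s \colon \Map_{\cM}(\bH^2, N) \longrightarrow N^V, \qquad f \longmapsto (f(v_1), \ldots, f(v_n))~.
\end{equation}
Injectivity of $\Phi_s$ follows directly from the definition of equivariance: any mesh point $x \in \cM^{(0)}$ has the form $x = \rho_\tL(\gamma) v_i$ for some $\gamma \in \pi_1 S$ and some $i$, and then the equivariance condition forces $f(x) = \rho_\tR(\gamma) f(v_i)$, so $f$ is determined by its values on $V$. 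For surjectivity, given $(y_1, \ldots, y_n) \in N^V$, define $f(\rho_\tL(\gamma) v_i) \coloneqq \rho_\tR(\gamma) y_i$. Here freeness of the $\rho_\tL$-action is essential: the expression $x = \rho_\tL(\gamma) v_i$ determines $\gamma$ (and $i$) uniquely, so $f$ is well-defined, and equivariance is immediate from the construction.

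Via the bijection $\Phi_s$, I would transport the canonical smooth manifold structure of $N^V$ (a finite product of copies of $N$) to $\Map_{\cM}(\bH^2, N)$, giving it the structure of a smooth manifold of dimension $n \dim N$. To check this structure is intrinsic, I would compare two sections $s, s'$: any two representatives of the same orbit differ by $s'(v_i) = \rho_\tL(\gamma_i) s(v_i)$ for some $\gamma_i \in \pi_1 S$, so the transition map $\Phi_{s'} \circ \Phi_s^{-1} \colon N^V \to N^V$ is the product of the isometries $\rho_\tR(\gamma_i)$, which are diffeomorphisms of $N$. Hence the smooth structure is independent of $s$.

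The only real subtlety, which I flagged above, is the freeness of the action at mesh vertices; without it, one would need to restrict to $N^V$ cut out by fixed-point equations under the vertex stabilizers, which could fail to be a manifold. Since this obstacle is avoided by the Fuchsian hypothesis, the rest of the argument is essentially bookkeeping and the proposition follows.
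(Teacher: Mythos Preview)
Your argument is correct and follows exactly the approach the paper takes: the paper simply observes in the sentence preceding the proposition that $\Map_{\cM}(\bH^2, N) \approx N^{V}$ with $V$ finite, and states the result as an immediate consequence without further proof. Your write-up supplies the details (explicit section, well-definedness from freeness, independence of the chosen representatives) that the paper leaves implicit.
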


Denoting $\cC_\text{eq}^0(\bH^2,N)$ the space of continuous equivariant maps $\bH^2 \to N$, 
the forgetful map 
\begin{equation} \label{eq:ForgetfulMap}
\cC_\text{eq}^0(\bH^2,N) \to \Map_{\cM}(\bH^2, N)
\end{equation}
consists in restricting a continuous function to the set of meshpoints $\cM^{(0)}$. 

\begin{definition} \label{def:InterpolationScheme}
We shall call a right inverse of the forgetful map \eqref{eq:ForgetfulMap} an \emph{interpolation scheme}.
\end{definition}

While there is one most natural way to interpolate
discrete maps between Euclidean spaces (affine interpolation), there is no preferred way for arbitrary
Riemannian manifolds. Even in the case where both domain and target manifolds are the hyperbolic plane $\bH^2$, 
there are several reasonable interpolations to consider such as the barycentric interpolation and the harmonic interpolation.
However, these are not explicit, and with \Harmony{}
we work with a neat variant, the $\cosh$-center of mass interpolation (see \autoref{subsec:CoshCenterOfMass}).

\subsubsection*{Discretization of energy}

For a smooth $(\rho_\tL,\rho_\tR)$-equivariant map $f:\bH^2\to N$ where $N$ is a Riemannian manifold,
one defines the total energy of $f$ as:
\begin{equation}  \label{eq:EnergyFunctionalSmoothEquivariant}
 E(f) = \int_D \Vert \upd f \Vert^2 \, \upd v_g
\end{equation}
where $D \subset \bH^2$ is any fundamental domain for the action of $\pi_1S$. If $D$ is picked so that it coincides with a union of triangles
defined by $\cM$, then the energy can be written as a finite sum of energy integrals over each triangle.
When $f$ is discretized along $\cM$, only the values of $f$ on the meshpoints are recorded. Thus, 
a natural discretization of $E$ is obtained if one knows how to define the energy of a map from a triangle
whose values are only known at the vertices. Given an interpolation scheme (cf \autoref{def:InterpolationScheme}),
one can simply take the energy of the interpolated map.


Another approach consists in defining the energy \emph{à la} Jost / Korevaar-Schoen as in \autoref{subsec:JostEnergy}.
One can take the graph defined by $\cM$ with metric induced from $\bH^2$ as a domain metric space, introduce a measure
that approximates the area density of $\bH^2$, and choose an appropriate kernel $\eta(x,y)$.

A third approach consists in choosing a discrete energy that is a weighted sum of distances squared as in \autoref{def:DiscreteEnergyMesh}
and \autoref{def:EnergyFunctionalGraph}. This approach provides a natural extension of the classical notion of real-valued harmonic functions defined on graphs 
\cite{MR2882891,MR1421568,MR1829620}, that is, functions whose value at any vertex is the average of the values on the neighbors.

It turns out that all three approaches can be made to coincide (\autoref{prop:JostEnergyGraph}), or almost coincide for fine meshes, for the appropriate choices involved in the different definitions. This is thoroughly discussed in the sequel paper \cite{Gaster-Loustau-Monsaingeon2}. 



\begin{definition} \label{def:DiscreteEnergyMesh}
Let $\cM$ be a $\rho_\tL$-invariant mesh in $\bH^2$ such that all the complementary triangles are acute.
The \emph{discrete energy} of a discrete equivariant map $f\in \Map_\cM(\bH^2, N)$ is defined by
\begin{equation} \label{eq:DiscreteEnergyMesh}
 E_{\cM}(f) = \frac{1}{2} \sum_{e = e_{xy} \in \cE} \omega_{xy} \,  d(f(x),f(y))^2
\end{equation}
where:
\begin{itemize}
 \item $\cE \subset \cM^{(1)}$ is any fundamental domain for the action of $\pi_1S$ on the set of edges.
 \item Inside the sum, $x$ and $y$ are the vertices connected by the edge $e$.
 \item $\omega_{xy}$ is the half-sum of the cotangents of two angles: one for each of the two triangles sharing the edge $e=e_{xy}$, in which we take the angle 
of the vertex facing the edge $e$.
\end{itemize}
\end{definition}

This definition is a generalization of the energy considered by Pinkall-Polthier \cite{MR1246481}, 
for whom the domain is a triangulated surface with a piecewise Euclidean metric and $N = \bR^n$. 
In their setting, the discrete energy coincides with the energy relative to the linear interpolation.
In \cite{Gaster-Loustau-Monsaingeon2} we show that the discrete energy $E_\cM$ converges to the smooth energy $E$ under iterated refinement of the mesh $\cM$ 
in a suitable function space. 

Of course, we can now define a \emph{discrete equivariant harmonic map} $f\in \Map_\cM(\bH^2, N)$ as a critical point of the discrete energy functional $E_\cM$.
While several authors have shown the existence and uniqueness of minimizers of the discrete energy in various contexts (\eg{} \cite{MR1775138,MR1848068,MR1938491}), 
in this paper we analyze its strong convexity, which makes it better suited for effective minimization.
Our approach requires a Riemannian metric on $\Map_\cM(\bH^2, N)$ (cf.~\autoref{subsec:ConvexityRiemannian}), which should approach the $\upL^2$ Riemannian metric
of $\mathcal{C}^\infty(M,N)$ (cf.~\autoref{subsec:EnergyFunctionalHarmonicMaps}). 
In the next subsection, we develop a more general framework where these ideas apply.

\subsection{Equivariant harmonic maps from graphs} \label{subsec:Graphs}

The definition of the discrete energy functional $E_\cM$ (\autoref{def:DiscreteEnergyMesh}) is easily generalized to any system of positive weights
indexed by the edges of $\cM$. On the other hand, the Riemannian structure of $\Map_\cM(\bH^2, N)$ requires a measure on the domain:
while in the smooth case 
one has the volume density of the Riemannian metric, in the discrete case it can be recorded by a system of weights on the vertices. 
All of this information can be captured using only the graph structure of $\cM$. 

\subsubsection*{\texorpdfstring{$\tilde{S}$-triangulated graphs}{S-triangulated graphs}}

Recall that a triangulation $\cT$ of a surface is the data of a simplicial complex $K$ and a homeomorphism $h$ from $K$ to the surface. 
Lifting $\cT$ to the universal cover, we find a triangulation whose underlying graph $\cG$ (\ie{} 1-skeleton) is \emph{locally cyclic}, meaning that the open neighborhood of any vertex (subgraph induced on the neighbors) is a cycle\footnote{
Note that $\cG$ has the property that every face ($2$-simplex) of the triangulation is a triangle ($3$-vertex complete subgraph) in $\cG$; 
when the converse is also true one says that $\cT$ is a \emph{Whitney triangulation}. In other words,
a Whitney triangulation can be recovered as the flag complex spanned by $\cG$. Let us cite \cite{MR2002076} here: \emph{Whitney triangulations are quite amenable for graph-theoretical considerations because they are determined by their underlying graph: the two-dimensional faces are just the triangles of the graph. In other words, we can think of a Whitney triangulation as an object wearing two hats: on one hand it is just a graph, and on the other hand it is a 2-dimensional simplicial complex which in turn can be considered either as a purely combinatorial object or as a topological surface with a fixed simplicial decomposition.}
It can be shown (\cite[Prop. 14]{MR2002076}) that a simple graph $\cG$ is the underlying graph of some Whitney triangulation of a surface
if and only if $\cG$ is locally cyclic.}.
This motivates the following definition:

\begin{definition}
\label{def:TriangulatedGraph}
Given a topological surface $S$, an \emph{$\tilde{S}$-triangulated graph} is a locally cyclic graph $\cG$ 
with a free, cofinite action of $\pi_1 S$ by graph automorphisms.
\end{definition}

$\tilde{S}$-triangulated graphs are precisely the graphs that arise as $1$-skeleta of triangulations. 
When $\cG$ is an $\tilde{S}$-triangulated graph, we denote the associated group action on the set of vertices 
by $\rho_\tL \colon \pi_1S \to \Aut(\cG^{(0)})$. Let $N$ be a metric space and $\rho_\tR \colon \pi_1S \to \Isom(N)$ a group homomorphism.

\begin{definition}
Given $\rho_\tL$ and $\rho_\tR$ as above, we call a $(\rho_\tL, \rho_\tR)$-equivariant map $\cG^{(0)} \to N$ an \emph{equivariant map from $\cG$ to $N$} .
The space of such equivariant maps will be denoted $\Map_{\text{eq}}(\cG, N)$.
\end{definition}

As in \autoref{prop:EquivMapsSmoothManifold1} we have:

\begin{proposition} \label{prop:MapGammaManifold}
If $N$ is a finite-dimensional smooth manifold, so is $\Map_{\text{eq}}(\cG, N)$.
\end{proposition}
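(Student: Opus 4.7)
The plan is to exhibit an explicit bijection between $\Map_{\text{eq}}(\cG,N)$ and a finite Cartesian power of $N$, then transport the smooth manifold structure. This mirrors the argument implicit in \autoref{prop:EquivMapsSmoothManifold1}: the two key ingredients are cofiniteness of the $\pi_1 S$-action, which guarantees finite-dimensionality, and freeness, which ensures there are no consistency constraints obstructing the extension of a map from a fundamental domain.

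First I would fix a set $V \subset \cG^{(0)}$ of representatives for the $\pi_1 S$-orbits; by cofiniteness $V$ is finite. Freeness of the action means that every vertex $w \in \cG^{(0)}$ admits a unique expression $w = \rho_\tL(\gamma)\cdot v$ with $v \in V$ and $\gamma \in \pi_1 S$. If $f \in \Map_{\text{eq}}(\cG,N)$, then equivariance forces $f(w) = \rho_\tR(\gamma)\cdot f(v)$, so $f$ is entirely determined by its restriction $f|_V$. Conversely, given any $\varphi \colon V \to N$, the same formula $\tilde \varphi(\rho_\tL(\gamma)\cdot v) \coloneqq \rho_\tR(\gamma)\cdot \varphi(v)$ defines an equivariant extension $\tilde \varphi \in \Map_{\text{eq}}(\cG,N)$; well-definedness of $\tilde \varphi$ rests squarely on the uniqueness of the pair $(v,\gamma)$, i.e.\ on freeness.

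This yields a bijection $\Map_{\text{eq}}(\cG, N) \approx N^V$, and I would declare the smooth structure on $\Map_{\text{eq}}(\cG,N)$ by transport of structure from $N^V$, which is a smooth manifold of dimension $|V|\cdot\dim N$. To see that the result does not depend on auxiliary choices, one checks that a different fundamental domain $V'$ gives rise to a transition map $N^V \to N^{V'}$ which, coordinate by coordinate, is a composition of an index permutation with evaluation of isometries $\rho_\tR(\gamma)$; such a map is a diffeomorphism.

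I do not anticipate a genuine obstacle here: the proposition is a direct descent argument. The only subtle point worth flagging in the proof is the role of freeness, without which $\tilde \varphi$ would need to satisfy compatibility equations $\rho_\tR(\gamma)\cdot\varphi(v)=\varphi(v)$ on stabilizers and the bijection with $N^V$ would fail.
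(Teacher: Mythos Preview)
Your proposal is correct and follows exactly the approach the paper takes: the paper simply notes (as for \autoref{prop:EquivMapsSmoothManifold1}) that $\Map_{\text{eq}}(\cG, N) \approx N^V$ where $V$ is a finite fundamental domain for the $\pi_1S$-action on $\cG^{(0)}$, and leaves the details implicit. Your write-up supplies those details, including the role of freeness and the independence of the choice of $V$, which the paper does not spell out.
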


\subsubsection*{Edge-weighted graphs and the energy functional}

\begin{definition} \label{def:EdgeWeightedGraph}
 Let $\cG$ be an $\tilde{S}$-triangulated graph. We say that $\cG$ is \emph{edge-weighted}
 if it is given a system of \emph{edge weights}, \ie{} a family of positive real numbers 
 $(\omega_e)_{e \in \cG^{(1)}}$ indexed by the set of edges $\cG^{(1)}$, that is invariant
 under the action of $\pi_1S$.
\end{definition}

Clearly, the data of a system of edge weights is equivalent to the data of a function
\begin{equation}
 \eta_0 \colon \cG^{(0)} \times \cG^{(0)} \to [0, +\infty)
\end{equation}
that is symmetric, invariant under the diagonal action of $\pi_1 S$, and such that $\eta_0(x,y) > 0$
if and only if $x$ and $y$ are adjacent. 
\begin{definition} \label{def:Prekernel}
A function $\eta_0$ as above is called a \emph{pre-kernel} on the $\tilde{S}$-triangulated graph $\cG$.
\end{definition}
The motivation for introducing this notion will become clear in \autoref{def:Kernel} and \autoref{prop:JostEnergyGraph}.
We are now ready to define the energy functional:
\begin{definition}
\label{def:EnergyFunctionalGraph}
Let $\cG$ be an $\tilde{S}$-triangulated graph with a system of edge weights 
 $(\omega_e)_{e \in \cG^{(1)}}$, and let $\rho_\tR \colon \pi_1S \to \Isom(N)$ be a group homomorphism where
 $N$ is a metric space. The \emph{energy functional} $E_\cG \colon \Map_{\text{eq}}(\cG, N) \to \bR$ is defined by
\begin{equation}
\label{eq:EnergyFunctionalGraph}
 E_{\cG}(f) = \frac{1}{2} \sum_{e = e_{xy} \in \cE} \omega_{xy} \, d(f(x),f(y))^2
\end{equation}
where $\cE \subset \cG^{(1)}$ is any fundamental domain for the action of $\pi_1S$.
\end{definition}

When $N$ is a Hadamard manifold\footnote{A \emph{Hadamard manifold} is a complete, simply connected Riemannian manifold of nonpositive curvature.
On a Hadamard manifold the distance squared function to a fixed point is smooth, while in general it may not be differentiable on the cut locus.}, $E_{\cG}$ is a 
smooth function on the manifold $\Map_{\text{eq}}(\cG, N)$. Of course we now call a map $f \in \Map_{\text{eq}}(\cG, N)$ \emph{harmonic} when it is a critical point of the energy functional $E_\cG$. When $N$ is not a Hadamard manifold but merely a metric space, one can still define (locally) energy-minimizing harmonic maps.

Note that, taking $\omega_e = 1$ for all $e \in \cG^{(1)}$ and $N = \bR$, a harmonic map from $\cG$ to $\bR$ in the sense above coincides with the classical notion of harmonicity for real-valued functions on graphs. The well-known mean value property of harmonic functions is generalized: 

\begin{proposition} \label{prop:HarmonicCenterOfMassGraph1}
Let $\cG$ be an edge-weighted triangulated graph and let $N$ be a metric space. If
$f \in \Map_{\text{eq}}(\cG, N)$ is an energy-minimizing harmonic map 
then $f(x)$ is a center of mass of the weighted system of points $\{(f(y), \omega_{xy})\}_{y \sim x}$ in $N$ for every $x \in \cG^{(0)}$.
\end{proposition}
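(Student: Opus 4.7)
The plan is to deduce the center of mass condition at each vertex $x \in \cG^{(0)}$ by testing the energy-minimizing property of $f$ against equivariant variations supported on the single $\pi_1 S$-orbit $\pi_1 S \cdot x$. Concretely, for any $p \in N$ I would define an equivariant map $f_p \in \Map_{\text{eq}}(\cG, N)$ by
\[
f_p(\gamma \cdot x) \coloneqq \rho_\tR(\gamma) \cdot p \quad \text{for all } \gamma \in \pi_1 S, \qquad f_p(y) \coloneqq f(y) \quad \text{for } y \notin \pi_1 S \cdot x.
\]
This is well-defined and $(\rho_\tL, \rho_\tR)$-equivariant because $\pi_1 S$ acts freely on $\cG^{(0)}$, and clearly $f_{f(x)} = f$.

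The next step is to express $E_\cG(f_p)$ explicitly as a function of $p$. By $\pi_1 S$-invariance of the edge weights and equivariance of $f_p$, each $\pi_1 S$-orbit of edges incident to $\pi_1 S \cdot x$ admits a representative of the form $e_{xy}$ with $y \sim x$, and contributes $\omega_{xy}\, d(p, f_p(y))^2$ to the energy sum regardless of which orbit representative is chosen for the fundamental domain $\cE$. Choosing $\cE$ to contain exactly such edges among those touching $\pi_1 S \cdot x$, and noting that every remaining edge of $\cE$ contributes a constant in $p$, I obtain
\[
E_\cG(f_p) \;=\; \frac{1}{2} \sum_{y \sim x} \omega_{xy} \, d(p, f_p(y))^2 \;+\; C.
\]
In the generic situation where no neighbor of $x$ belongs to the orbit $\pi_1 S \cdot x$, we have $f_p(y) = f(y)$ for every $y \sim x$, so the right-hand side coincides (up to the additive constant $C$) with the weighted mean-square distance function $p \mapsto \sum_{y \sim x}\omega_{xy}\, d(p, f(y))^2$ from the definition of center of mass. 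Since $f$ minimizes $E_\cG$, the minimum of $p \mapsto E_\cG(f_p)$ is attained at $p = f(x)$, and the claimed center of mass property follows.

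The main obstacle is the combinatorial bookkeeping in the degenerate situation where $x$ is adjacent to some nontrivial translate $\gamma \cdot x$, so that a single equivariant variation moves both endpoints of the corresponding edge simultaneously; in that case $E_\cG(f_p)$ picks up extra terms of the form $\tfrac{1}{2}\omega_{x, \gamma \cdot x}\, d(p, \rho_\tR(\gamma) p)^2$ that must be separately reconciled with the center of mass formula. This degeneracy does not arise for meshes obtained by sufficiently iterated refinements (cf.\ \autoref{def:Refinement}), so in practice one may assume each vertex-orbit is ``simplicially embedded'' and the complication disappears.
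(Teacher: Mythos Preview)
Your argument is essentially the paper's: the one-sentence proof there says that if $f(x)$ were not a center of mass of its neighbors, one could strictly decrease the energy by moving $f(x)$ to a center of mass while leaving all other values unchanged---precisely your equivariant variation $f_p$. You have simply written this out more carefully.

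The degenerate case you flag (some neighbor of $x$ lying in the orbit $\pi_1 S \cdot x$) is in fact excluded by the standing hypotheses. An $\tilde S$-triangulated graph arises as the $1$-skeleton of the lift of a \emph{simplicial} triangulation of $S$ (see the discussion preceding \autoref{def:TriangulatedGraph}), so the quotient graph $\cG/\pi_1 S$ is simple: no loops, hence no vertex is adjacent to a nontrivial translate of itself. Your ``generic situation'' is therefore the only situation, and the proof is complete without the caveat in your final paragraph.
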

Refer to \autoref{sec:CenterOfMass} for the definition and elementary properties of centers of mass.

\begin{proof}
If $f(x)$ was not the center of mass of its neighbors, then the part of \eqref{eq:EnergyFunctionalGraph} that involves $x$ could be decreased by replacing
$f(x)$ by the center of mass while leaving the other values unchanged.
\end{proof}

\subsubsection*{Vertex weighted-graphs and the Riemannian structure}

\begin{definition} \label{def:VertexWeightedGraph}
 Let $\cG$ be an $\tilde{S}$-triangulated graph. We say that $\cG$ is \emph{vertex-weighted}
 if it is given a system of \emph{vertex weights}, \ie{} a family of positive real numbers 
 $(\mu_v)_{v \in \cG^{(0)}}$ indexed by the set of vertices $\cG^{(0)}$, that is invariant
 under the action of $\pi_1S$.
\end{definition}

We think of a system of vertex weights as a $\pi_1S$-invariant Radon measure $\mu$ on $\cG^{(0)}$. Of course,
this is simply a $\pi_1S$-invariant function $\mu \colon \cG^{(0)} \to (0, +\infty)$, but our viewpoint for discretization is to approximate the smooth theory where $\mu$ is the volume density of a Riemannian manifold.

Assume now that $N$ is a finite-dimensional Riemannian manifold and let $\rho_\tR \colon \pi_1 S \to \Isom(N)$ be a group homomorphism.
We saw (\autoref{prop:MapGammaManifold}) that $\Map_{\text{eq}}(\cG, N)$ is a smooth manifold. Moreover, it is easy to describe its tangent space.
\begin{proposition} \label{prop:MapGammaTangentSpace}
The tangent space at $f \in \Map_{\text{eq}}(\cG, N)$ is:
\begin{equation} \label{eq:MapGammaTangentSpace1}
 \upT_f \Map_{\text{eq}}(\cG, N) = \Gamma_{\text{eq}}(f^* \upT N)
\end{equation}
where $f^* \upT N$ is the pullback of the tangent bundle $\upT N$ to $\cG^{(0)}$ and $\Gamma_{\text{eq}}(f^* \upT N)$ is its space
of $\pi_1S$- equivariant smooth sections
. Equivalently, if $\cV \subseteq \cG^{(0)}$ is any fundamental domain for the action of $\pi_1S$,
\begin{equation} \label{eq:MapGammaTangentSpace2}
 \upT_f \Map_{\text{eq}}(\cG, N)  = \bigoplus_{x \in \cV} \upT_{f(x)} N~.
\end{equation}
\end{proposition}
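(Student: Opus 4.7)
The plan is to exploit the bijection between equivariant maps and their restrictions to a fundamental domain, which already underlies the smooth structure on $\Map_{\text{eq}}(\cG, N)$ asserted in Proposition~\ref{prop:MapGammaManifold}. Fix any fundamental domain $\cV \subseteq \cG^{(0)}$. Because the action of $\pi_1 S$ on $\cG^{(0)}$ is free and cofinite, $\cV$ is finite and meets every orbit exactly once, so the restriction map
\[
\Phi \colon \Map_{\text{eq}}(\cG, N) \to N^{\cV}, \qquad f \mapsto (f(x))_{x \in \cV},
\]
is a bijection: any $(p_x)_{x \in \cV} \in N^{\cV}$ extends uniquely by equivariance via $f(\gamma \cdot x) \coloneqq \rho_\tR(\gamma)(p_x)$, which is well-defined because the action is free. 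This bijection is precisely the diffeomorphism that confers the manifold structure on $\Map_{\text{eq}}(\cG, N)$, so differentiating at $f$ immediately yields the identification \eqref{eq:MapGammaTangentSpace2}:
\[
d\Phi_f \colon \upT_f \Map_{\text{eq}}(\cG, N) \xrightarrow{\sim} \bigoplus_{x \in \cV} \upT_{f(x)} N.
\]

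For \eqref{eq:MapGammaTangentSpace1}, I would observe that a section $V$ of the pullback bundle $f^* \upT N$ over the discrete set $\cG^{(0)}$ is simply a choice of $V_x \in \upT_{f(x)} N$ for each vertex $x$, with smoothness automatic in this combinatorial setting. The equivariance condition reads
\[
V_{\gamma \cdot x} = d(\rho_\tR(\gamma))_{f(x)}(V_x) \qquad \forall\, \gamma \in \pi_1 S, \ \forall\, x \in \cG^{(0)},
\]
which is a consistent linear constraint because $\rho_\tR$ is a homomorphism and $f$ is itself equivariant. Such a $V$ is therefore determined by its restriction to $\cV$, and this restriction can be prescribed arbitrarily, giving a linear isomorphism $\Gamma_{\text{eq}}(f^* \upT N) \cong \bigoplus_{x \in \cV} \upT_{f(x)} N$. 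Composing with $d\Phi_f^{-1}$ yields \eqref{eq:MapGammaTangentSpace1}. Alternatively, one can see \eqref{eq:MapGammaTangentSpace1} directly by differentiating a smooth path $f_t$ at $t=0$: the equivariance of $f_t$ differentiates to exactly the equivariance condition above on $V_x = \frac{d}{dt}|_{t=0} f_t(x)$.

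There is no serious obstacle here: the proposition is essentially bookkeeping from the definitions, made easy by freeness and cofiniteness of the $\pi_1 S$-action which reduce both sides to a finite direct sum indexed by $\cV$. The only nontrivial point is checking that extension by equivariance is well-defined in both arguments (map and section), and this is immediate from freeness of the action.
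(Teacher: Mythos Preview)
Your proof is correct. The paper does not actually supply a proof for this proposition: it is stated immediately after the remark that ``it is easy to describe its tangent space,'' and the text moves on. Your argument fills in exactly the bookkeeping the authors leave implicit---the diffeomorphism $\Map_{\text{eq}}(\cG,N)\cong N^{\cV}$ coming from freeness and cofiniteness, differentiated pointwise---and correctly handles both formulations.
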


Notice of course the similarity of \eqref{eq:MapGammaTangentSpace1} with \eqref{eq:TangentSpaceMapSmooth}. Using the measure $\mu$,
one can define a natural $\upL^2$ Riemannian metric on $\Map_{\text{eq}}(\cG, N)$ analogous to \eqref{eq:InnerProductSmooth}:

\begin{definition} \label{def:L2RiemannianMetricGraph}
 Let $(\cG, \mu)$ be an $\tilde{S}$-triangulated vertex-weighted graph, and let $\rho_\tR \colon \pi_1S \to \Isom(N)$ where
 $N$ is a Riemannian manifold. The $\upL^2$ Riemannian metric on $\Map_{\text{eq}}(\cG, N)$ is given by:
\begin{equation} \label{eq:L2RiemannianMetricGraph1}
\langle V, W \rangle = \int_\cV \langle V_x, W_x\rangle \, \upd \mu(x)
\end{equation}
where $V, W \in \Gamma_{\text{eq}}(f^* \upT N)$ and $\cV \subseteq \cG^{(0)}$ is any fundamental domain for the action of $\pi_1S$.
\end{definition}

Of course, one can write more concretely:
\begin{equation} \label{eq:L2RiemannianMetricGraph2}
\langle V, W \rangle = \sum_{x \in \cV} \mu(x) \langle V_x, W_x\rangle~.
\end{equation}

One can easily derive that the unit speed geodesics in $\Map_{\text{eq}}(\cG, N)$ are the one-parameter families of functions
$(f_t(x))_{x \in \cG^{(0)}}$ given by $f_t(x) = \exp(t V_x)$, where $V \in \Gamma_{\text{eq}}(f^* \upT N)$ is a unit vector,
and, provided $N$ is connected, the Riemannian distance in $\Map_{\text{eq}}(\cG, N)$ is simply given by 
\begin{equation} \label{eq:L2DistanceDiscrete}
 d(f,g)^2 = \sum_{x \in \cV} \mu(x) \, d(f(x), g(x))^2~,
\end{equation}
where on the right hand-side $d$ is the Riemannian distance in $N$. Of course notice that \eqref{eq:L2DistanceDiscrete} is just the discretization of \eqref{eq:L2DistanceSmooth}.

\subsubsection*{Biweighted graphs}

\begin{definition} \label{def:BiweightedGraph}
 Let $\cG$ be an $\tilde{S}$-triangulated graph (\autoref{def:TriangulatedGraph}). We say that $\cG$ is \emph{biweighted}
 if it is both edge-weighted (\autoref{def:EdgeWeightedGraph}) and vertex-weighted (\autoref{def:VertexWeightedGraph}).
\end{definition}

From the discussion of the previous paragraph, when $\cG$ is an $\tilde{S}$-triangulated biweighted graph and $N$ is a Riemannian manifold 
with a group homomorphism $\rho_\tR \colon \pi_1S \to N$, the space of equivariant maps $\Map_{\text{eq}}(\cG, N)$ is a Riemannian manifold
and the energy is a continuous function $E_\cG \colon \Map_{\text{eq}}(\cG, N) \to \bR$. Moreover $E_\cG$ is smooth when $N$ is Hadamard.
In \autoref{sec:StrongConvexity} we show that $E_\cG$ is strongly convex under suitable restrictions on $\rho_\tR$, with an explicit bound on the modulus of strong convexity (\autoref{thm:StrongConvexityEnergyGraphGeneral}). 
This implies that there exists a unique equivariant harmonic map $\cG \to N$ that can be computed effectively through gradient descent (\autoref{sec:DiscreteHeatFlow}).

We pause to point out that our definition of the energy functional and harmonic maps in this setting coincides with Jost's theory
briefly described in \autoref{subsec:JostEnergy} (we refer to \cite{MR1449406, MR1451625} for details). 
First we introduce the kernel function associated to a biweighted graph:
\begin{definition} \label{def:Kernel}
The kernel function associated to a biweighted graph $\cG$ is the function 
 \begin{equation}
 \begin{split}
  \eta \colon \cG^{(0)} \times \cG^{(0)}  \to \bR\\
  (x,y) \mapsto \frac{\eta_0(x,y)}{2 \mu(x) \mu(y)}
 \end{split}
\end{equation}
where $\eta_0$ is the pre-kernel associated to the underlying edge-weighted graph (cf.~\autoref{def:Prekernel}) and $\mu$
is the measure on $\cG^{(0)}$ giving the vertex weights.
\end{definition}

The next proposition is trivial but conceptually significant:
\begin{proposition} \label{prop:JostEnergyGraph}
 The energy functional on $\Map_{\text{eq}}(\cG, N)$ is given by
 \begin{equation}
  E_\cG(f) = \frac{1}{2} \iint_\cV \eta(x,y)\, {d(f(x), f(y))}^2 \, \upd \mu(y) \, \upd \mu (x)
 \end{equation}
 where $\cV \subseteq \cG^{(0)} \times \cG^{(0)}$ is a fundamental domain for the diagonal action of $\pi_1S$.
\end{proposition}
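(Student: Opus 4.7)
The plan is to unwind the definitions: the double integral on the right is really a double sum over a fundamental domain for the diagonal action on $\cG^{(0)} \times \cG^{(0)}$, and after substituting the definition of $\eta$, the weights $\mu(x)\mu(y)$ cancel, leaving a sum involving only the pre-kernel $\eta_0$, which is supported on adjacent pairs. The remaining $\tfrac{1}{2}$ in the definition of $\eta$ is there to convert a sum over ordered adjacent pairs into a sum over (unordered) edges.

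First I would choose a convenient fundamental domain $\cV$ for the diagonal action on $\cG^{(0)} \times \cG^{(0)}$. The cleanest choice is $\cV = \cV_0 \times \cG^{(0)}$, where $\cV_0$ is a fundamental domain for the $\pi_1S$-action on $\cG^{(0)}$; since the action is free and cofinite on vertices, this is indeed a fundamental domain for the diagonal action on pairs (every orbit has a unique representative with first coordinate in $\cV_0$). Integrating against the product measure $\mu \otimes \mu$ then reduces to
\begin{equation*}
\iint_{\cV} \eta(x,y) \, d(f(x),f(y))^2 \, \upd\mu(y) \, \upd\mu(x) = \sum_{x \in \cV_0} \sum_{y \in \cG^{(0)}} \mu(x)\mu(y) \cdot \frac{\eta_0(x,y)}{2\mu(x)\mu(y)} \, d(f(x),f(y))^2.
\end{equation*}
The $\mu$ factors cancel, and since $\eta_0(x,y) > 0$ only when $x$ and $y$ are adjacent, with value $\omega_{xy}$, the double sum collapses to $\tfrac{1}{2} \sum_{x \in \cV_0} \sum_{y \sim x} \omega_{xy} \, d(f(x),f(y))^2$.

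The last step is to verify that this ordered-pair sum equals the edge sum $\sum_{e = e_{xy} \in \cE} \omega_{xy} d(f(x),f(y))^2$ appearing in \autoref{def:EnergyFunctionalGraph} (after accounting for the factor of $\tfrac{1}{2}$ already present there). The map sending an ordered pair $(x,y)$ with $x \in \cV_0$ and $y \sim x$ to the $\pi_1 S$-orbit of the edge $\{x,y\}$ is, generically, two-to-one: each oriented edge $\{x,y\}$ with both endpoints in $\cV_0$ is counted once from $(x,y)$ and once from $(y,x)$, while an edge with $y \notin \cV_0$ is counted once from $(x,y)$ and once (via the equivariance of $\omega$ and the equivariance of $d(f(\cdot),f(\cdot))$) from the $\pi_1S$-translate putting its other endpoint in $\cV_0$. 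Either way, each edge orbit contributes exactly twice, cancelling against the $\tfrac{1}{2}$ just produced and reproducing the formula of \autoref{def:EnergyFunctionalGraph}.

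The only subtle point is the counting in this last step, and in particular handling the (non-generic) case where an edge might have both endpoints in the same $\pi_1S$-orbit. Since the $\pi_1S$-action is free on vertices and the graph is simple, this does not cause fixed points in the action on edges, so the two-to-one count holds uniformly; all other steps are direct substitutions.
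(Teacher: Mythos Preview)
Your proof is correct and matches what the paper intends: the paper itself calls the proposition ``trivial but conceptually significant'' and gives no argument, so your unwinding of the definitions is exactly what is needed. One small remark on the counting in your last paragraph: freeness on vertices together with simplicity of the graph does not by itself rule out a group element swapping the two endpoints of an edge (think of an involution acting on a single edge by exchanging its endpoints); what actually saves you here is that $\pi_1 S$ is torsion-free for genus $\geqslant 2$, so any such swap would square to the identity and hence be trivial.
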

\autoref{prop:JostEnergyGraph} implies that, choosing $\eta_r = \eta$ for all $r > 0$, the Jost energy functional
$E = \lim_{r\to 0} E_r$ (compare with \eqref{eq:EnergyFunctionalJost}) coincides with the energy functional $E_\cG$. In particular, our notion of harmonic maps
from graphs is a specialization of Jost's \emph{generalized harmonic maps}.

Next we observe that the Riemannian structure of $\Map_{\text{eq}}(\cG, N)$ allows us to define the discrete tension field as:
\begin{definition}
The \emph{tension field} of $f \in \Map_{\text{eq}}(\cG, N)$ is the vector field along $f$ denoted $\tau_\cG(f) \in \Gamma_{\text{eq}}(f^* \upT N)$
given by:
\begin{equation}
 \tau_\cG(f)\evalat{x} = \frac{1}{\mu(x)} \sum_{y \sim x} \omega_{xy} \exp_{f(x)}^{-1}(f(y))
\end{equation}
where we have denoted $\omega_{xy}$ the weight of the edge connecting $x$ and $y$.
\end{definition}

We have the discrete version of the first variational formula for the energy (\autoref{prop:FirstVariationalFormulaSmooth}):
\begin{proposition} \label{prop:FirstVariationalFormulaGraph}
The tension field is minus the gradient of the energy functional:
\begin{equation}
 \tau_\cG(f) = -\grad E_\cG(f)
\end{equation}
for any $f \in \Map_{\text{eq}}(\cG, N)$.
\end{proposition}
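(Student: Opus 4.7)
The plan is to compute $dE_\cG(f)(V)$ directly from the definition of the energy and then identify it with $-\langle \tau_\cG(f), V\rangle$ via the $\upL^2$ Riemannian metric \eqref{eq:L2RiemannianMetricGraph2}. Pick $V\in\Gamma_{\text{eq}}(f^*\upT N)$ and a smooth variation $(f_t)$ with $f_0=f$ and $\partial_t f_t|_{t=0}=V$. For each edge $e_{xy}$ the classical first variation formula for squared distance on a Riemannian manifold (valid whenever $f(x)$ and $f(y)$ are not in each other's cut locus, which we may assume since $N$ is Hadamard in the setting where the statement is smooth) gives
\begin{equation*}
\tfrac{1}{2}\,\tfrac{d}{dt}\bigr|_{t=0} d(f_t(x),f_t(y))^2 = -\langle \exp_{f(x)}^{-1}(f(y)), V_x\rangle - \langle \exp_{f(y)}^{-1}(f(x)), V_y\rangle.
\end{equation*}
Summing over $e_{xy}\in\cE$ and inserting the edge weights yields
\begin{equation*}
dE_\cG(f)(V) = -\sum_{e_{xy}\in\cE}\omega_{xy}\Bigl(\langle \exp_{f(x)}^{-1}(f(y)), V_x\rangle + \langle \exp_{f(y)}^{-1}(f(x)), V_y\rangle\Bigr).
\end{equation*}

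The core step is then to convert this edge-indexed sum into a vertex-indexed sum, which is where the equivariance bookkeeping happens. Introduce the function $g(x,y)=\omega_{xy}\langle\exp_{f(x)}^{-1}(f(y)),V_x\rangle$ on ordered adjacent pairs. Since $\omega$, $f$, and $V$ are $\pi_1 S$-equivariant and $\rho_\tR$ acts by isometries, $g$ is invariant under the diagonal action of $\pi_1 S$. The two sums above together reassemble as one sum over ordered-edge orbits, and each such orbit has a unique representative $(x,y)$ with $x\in\cV$ (here $\cV$ is a vertex fundamental domain). Hence
\begin{equation*}
dE_\cG(f)(V) = -\sum_{x\in\cV}\sum_{y\sim x}\omega_{xy}\langle \exp_{f(x)}^{-1}(f(y)), V_x\rangle.
\end{equation*}

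Now factor $V_x$ out of the inner sum and multiply and divide by $\mu(x)$ to match the shape of \eqref{eq:L2RiemannianMetricGraph2}:
\begin{equation*}
dE_\cG(f)(V) = -\sum_{x\in\cV}\mu(x)\left\langle \frac{1}{\mu(x)}\sum_{y\sim x}\omega_{xy}\exp_{f(x)}^{-1}(f(y)),\ V_x\right\rangle = -\langle \tau_\cG(f), V\rangle,
\end{equation*}
using the definition of $\tau_\cG(f)$. Since $V$ was arbitrary, the defining property $dE_\cG(f)(V)=\langle\grad E_\cG(f),V\rangle$ of the gradient gives $\grad E_\cG(f)=-\tau_\cG(f)$, as claimed.

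The only delicate point is the orbit-counting step that turns the edge sum (over $\cE$) into the vertex sum (over $\cV$) with the correct combinatorial factor; this is handled cleanly by viewing the pairing $(x,y)\mapsto g(x,y)$ as defined on ordered edges modulo $\pi_1 S$. Everything else is a direct application of the squared-distance first variation and the definitions of the $\upL^2$ metric \eqref{eq:L2RiemannianMetricGraph2} and of $\tau_\cG$.
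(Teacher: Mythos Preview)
Your proof is correct and follows the same approach as the paper, which also rests on the identity $\grad_x\bigl(\tfrac{1}{2}d(x_0,x)^2\bigr)=-\exp_x^{-1}(x_0)$. The paper's proof is terse and leaves the equivariant edge-to-vertex bookkeeping implicit, whereas you spell it out carefully via the two fundamental domains for oriented edges; this is a welcome elaboration rather than a different route.
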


\begin{proof}
 In a Riemannian manifold $N$, 
 when $x_0 \in N$ is chosen such that $\exp_{x_0}$ is a diffeomorphism (any $x_0$ works when $N$ is Hadamard),
 the function $g \colon x \mapsto \frac{1}{2} d(x_0,x)^2$ is smooth and its gradient is given by $\grad g(x) = -\exp^{-1}_x(x_0)$.
\end{proof}

It follows, of course, that an equivariant map $\cG \to N$ is harmonic if and only if its tension field is zero,
and we obtain a characterization of discrete harmonic maps:

\begin{proposition} \label{prop:HarmonicCenterOfMassGraph2}
Let $\cG$ be an edge-weighted triangulated graph and let $N$ be a Hadamard manifold. Then
$f \in \Map_{\text{eq}}(\cG, N)$ is a harmonic map if and only if $f(x)$ is a center of mass of the weighted system of points $\{(f(y), \omega_{xy})\}_{y \sim x}$ in $N$ for every $x \in \cG^{(0)}$.
\end{proposition}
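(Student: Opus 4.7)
The plan is to derive this as a direct consequence of \autoref{prop:FirstVariationalFormulaGraph} together with the variational characterization of centers of mass on a Hadamard manifold. I would first unpack what it means for $f$ to be harmonic via the gradient description of the energy, then recognize that the tension field at a vertex is, up to a positive factor, the gradient of a local squared-distance function whose unique critical point is the center of mass.

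More concretely, by \autoref{prop:FirstVariationalFormulaGraph} and the decomposition \eqref{eq:MapGammaTangentSpace2}, the condition $\grad E_\cG(f) = 0$ is equivalent to
\begin{equation}
\tau_\cG(f)\evalat{x} = \frac{1}{\mu(x)} \sum_{y \sim x} \omega_{xy} \exp_{f(x)}^{-1}(f(y)) = 0
\end{equation}
for every $x \in \cG^{(0)}$ (it suffices to check this on a fundamental domain by equivariance). For fixed $x \in \cG^{(0)}$, consider the local energy function
\begin{equation}
F_x \colon N \to \bR, \qquad p \mapsto \frac{1}{2} \sum_{y \sim x} \omega_{xy} \, d(p, f(y))^2 .
\end{equation}
Using the identity $\grad_p \bigl[\tfrac{1}{2} d(p,q)^2\bigr] = -\exp_p^{-1}(q)$ (recalled in the proof of \autoref{prop:FirstVariationalFormulaGraph}), we obtain $\grad F_x(p) = -\sum_{y \sim x} \omega_{xy}\, \exp_p^{-1}(f(y))$. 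Hence $\tau_\cG(f)\evalat{x} = 0$ if and only if $f(x)$ is a critical point of $F_x$.

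It remains to identify critical points of $F_x$ with the center of mass of $\{(f(y), \omega_{xy})\}_{y \sim x}$. On a Hadamard manifold, each summand $p \mapsto \tfrac{1}{2} d(p, f(y))^2$ is smooth and strictly convex along geodesics (a standard consequence of nonpositive curvature, recorded in \autoref{sec:CenterOfMass}), so $F_x$ is strictly convex; it is also proper since the sum is positive and goes to infinity at infinity. Therefore $F_x$ has a unique critical point, which is its global minimizer; this minimizer is by definition the center of mass of the weighted family $\{(f(y), \omega_{xy})\}_{y \sim x}$. Combining these equivalences over all $x$ gives the statement.

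The only step requiring any care is the invocation of strict/strong convexity of squared distance on a Hadamard manifold, which ensures that a critical point of $F_x$ is actually the minimizer rather than just a stationary point; this is exactly where the Hadamard hypothesis is used (beyond what \autoref{prop:HarmonicCenterOfMassGraph1} already gave for energy minimizers in a general metric space). Everything else is bookkeeping: the equivariance of $\tau_\cG(f)$ and the product structure of the tangent space let one check the vanishing vertex-by-vertex on a fundamental domain.
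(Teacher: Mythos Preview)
Your proof is correct and follows essentially the same approach as the paper: the proposition is presented there as an immediate consequence of \autoref{prop:FirstVariationalFormulaGraph}, with the identification of the vanishing tension field condition at each vertex with the implicit equation \eqref{eq:ImplicitEquationForCenterOfMass} characterizing centers of mass. You have simply spelled out explicitly the strict convexity argument ensuring that a critical point of $F_x$ is the unique minimizer, which the paper leaves implicit by referring forward to \autoref{sec:CenterOfMass}.
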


\bigskip
We conclude this section by looping back to \autoref{subsec:Meshes} and the approximation problem. The point is that when 
$S$ is equipped with a hyperbolic structure (or more generally any nonpositively curved metric), a mesh
in the sense of \autoref{def:Mesh} induces a biweighted graph structure:

\begin{definition} \label{def:MeshToGraph}
Let $\rho_\tL \colon \pi_1S \to \Isom^+(\bH^2)$ be a Fuchsian representation and let $\cM$ be an invariant mesh (cf.~\autoref{def:Mesh}).
The biweighted graph underlying $\cM$ is the biweighted graph $\cG$ such that:
\begin{itemize}
 \item $\cG$ is the abstract graph underlying $\cM$ (which is evidently $\tilde{S}$-triangulated).
 \item The edge weights are the $\omega_e$ as in \autoref{def:DiscreteEnergyMesh}.
 \item The vertex weights are given by, for every vertex $x$:
\begin{equation}
 \mu(x) = \frac{1}{3} \sum_T \Area(T) 
\end{equation}
 where the sum is taken over all triangles incident to the vertex $x$.
\end{itemize}
\end{definition}
Clearly, any discrete equivariant map along $\cM$ from $\bH^2$ to a Riemannian manifold $N$ induces an equivariant map $\cG \to N$,
and the energy $E_\cM$ agrees with the energy $E_\cG$. Of course, the systems of weights are chosen so that
the discrete energy functional $E_\cG$ 
approximates the smooth energy functional \eqref{eq:EnergyFunctionalSmoothEquivariant},
and the Riemannian structure of $\Map_\text{eq}(\cG, N)$ 
approximates the $\upL^2$ Riemannian metric on $\cC^\infty(M,N)$ (or $\upL^2(M,N)$),
with finer approximation when one takes finer meshes. The analysis of this phenomenon is treated in \cite{Gaster-Loustau-Monsaingeon2}.

\begin{remark}
 With this construction in mind, biweighted triangulated graphs can be roughly thought of as follows: the edge weights are a discrete record of the conformal structure of $S$, and the vertex weights, the area form. Note that the metric structure can be recovered from both, a phenomenon specific to dimension 2. 
This is reminiscent of \cite{MR3375525}-- though distinct-- in which two graphs with edge weights are considered \emph{conformally equivalent} if there is a function of the vertices that scales one set of weights to another.
 \end{remark}

%

\section{Strong convexity of the energy}
\label{sec:StrongConvexity}

In this section we study the convexity of the discrete energy functional $E_\cG : \Map_{\text{eq}}(\cG,\bH^2) \to \bR$ introduced in the previous section (\autoref{def:EnergyFunctionalGraph}).
In \autoref{subsec:ConvexityRiemannian} we recall basics about convexity and strong convexity in Riemannian manifolds.
In \autoref{subsec:ConvexityEnergyExistenceHarmonic} we review the convexity of the energy functional for nonpositively curved target spaces.
Next we turn to proving the strong convexity of the discrete energy when the target space is $\bH^2$ with a Fuchsian representation: 
we first perform some preliminary computations in the hyperbolic plane in \autoref{subsec:H2computations}, and then prove the main theorem in \autoref{subsec:StrongConvexityH2}. In \autoref{subsec:StrongConvexityGeneral} we extend this result to Hadamard spaces with negative curvature.

\subsection{Convexity in Riemannian manifolds} \label{subsec:ConvexityRiemannian}

The classical notion of convexity in Euclidean vector spaces naturally extends to the Riemannian setting---as Udri\c{s}te puts it \cite[Chapter 1]{MR1326607}, 
\emph{Riemannian geometry is the natural frame for convexity}.

We first give a definition for metric spaces. Recall that geodesics in a metric space $(M,d)$ are harmonic maps from intervals
of the real line; more concretely, a curve $\gamma \colon I \subseteq \bR \to M$ in $(M,d)$ is a geodesic if and only if $d(\gamma(t_1), \gamma(t_2)) = v |t_2 - t_1|$ for any sufficiently close $t_1, t_2 \in I$,
where $v$ is a positive constant. A real-valued function on $M$ is then called (geodesically) convex when it is convex along geodesics. More precisely:

\begin{definition} \label{def:ConvexFunction}
Let $(M,d)$ be a metric space. A function $f \colon M \to \bR$ is \emph{convex} if,
for every geodesic $\gamma \colon [a, b] \to M$ and for all $t \in [0,1]$:
\begin{equation}
 f(\gamma((1-t)a + tb)) \leqslant  (1-t)f(\gamma(a)) + t f(\gamma(b))~.
\end{equation}
When the inequality is strict for all $t \in (a,b)$, $f$ is called \emph{strictly convex}. Furthermore $f$ is called \emph{$\alpha$-strongly convex}, where $\alpha >0$, if:
\begin{equation}
 f(\gamma((1-t)a + tb)) \leqslant  (1-t)f(\gamma(a)) + t f(\gamma(b)) - \alpha \frac{t(1-t)}{2} l(\gamma)^2
\end{equation}
where $l(\gamma)$ is the length of $\gamma$. 
The largest such $\alpha$ is called the \emph{modulus of strong convexity} of $f$.
\end{definition}

When $M = (M,g)$ is a Riemannian manifold and $f$ is $\cC^2$, one can quickly characterize convex functions in terms of the positivity of their Hessian as a quadratic form.
Recall that the Hessian of a $\cC^2$ function $f \colon M \to \bR$ is the symmetric $2$-covariant tensor field on $M$ defined by $\Hess(f) = \nabla (\upd f)$.
\begin{proposition} \label{prop:CharacConvexFunction}
Let $f \colon M \to \bR$ be a $\cC^2$ function on a Riemannian manifold $(M,g)$. Then:
\begin{itemize}
 \item $f$ is convex if and only if it has positive semidefinite Hessian everywhere.
 \item $f$ is strictly convex if 
 it has positive definite Hessian everywhere.
 \item $f$ is $\alpha$-strongly convex if and only if it has $\alpha$-coercive Hessian everywhere: 
 \begin{equation}
  \forall v \in\upT M \quad  \Hess(f)(v,v) \geqslant \alpha \Vert v \Vert^2
 \end{equation}
\end{itemize}
\end{proposition}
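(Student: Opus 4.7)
The plan is to reduce each statement to a one-dimensional calculus fact by restricting $f$ to geodesics. Given a geodesic $\gamma \colon [a,b] \to M$, I would set $h(t) \coloneqq f(\gamma(t))$ and compute the second derivative. Using the defining property of the Hessian together with the geodesic equation $\nabla_{\dot\gamma} \dot\gamma = 0$, I obtain
\begin{equation*}
 h''(t) = \Hess(f)(\dot\gamma(t), \dot\gamma(t))~.
\end{equation*}
Since geodesics are parametrized proportionally to arc length, $\Vert \dot\gamma(t) \Vert$ is constant equal to $v \coloneqq l(\gamma)/(b-a)$. This identity is the workhorse of the proof: it converts every condition on $\Hess(f)$ into a condition on $h''$, and every condition on $f$ along geodesics into a condition on $h$ on an interval.

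From there each item follows from the standard one-variable characterization. For the first, convexity of $h$ on $[a,b]$ is equivalent to $h'' \geqslant 0$; since for every $x \in M$ and every $w \in \upT_x M$ the curve $\gamma(t) = \exp_x(tw)$ is a geodesic with $\dot\gamma(0) = w$, positive semidefiniteness of $\Hess(f)$ everywhere is equivalent to $h'' \geqslant 0$ along every geodesic, hence to convexity of $f$. The second item is similar but only in one direction: positive definiteness of $\Hess(f)$ gives $h'' > 0$ pointwise, which implies strict convexity of $h$ on every geodesic, hence of $f$ (the converse can fail, as in the Euclidean example $x^4$). For the third, $\alpha$-coercivity of $\Hess(f)$ is equivalent to $h''(t) \geqslant \alpha v^2$ everywhere along every geodesic; integrating this inequality twice on $[a,b]$ yields
\begin{equation*}
 h((1-t)a + tb) \leqslant (1-t) h(a) + t h(b) - \alpha \, \frac{t(1-t)}{2} v^2 (b-a)^2
\end{equation*}
for all $t \in [0,1]$, and $v^2 (b-a)^2 = l(\gamma)^2$, which is exactly the strong convexity condition. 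The converse is obtained by taking $a = 0$, $b \to 0$, and $\gamma(t) = \exp_x(tw)$, which recovers $\Hess(f)\evalat{x}(w,w) \geqslant \alpha \Vert w \Vert^2$ at an arbitrary $x \in M$ and $w \in \upT_x M$.

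There is no real obstacle here: the only content is the chain-rule computation of $h''$, which is standard once one recalls that the Hessian is the covariant derivative of $\upd f$ and that $\dot\gamma$ is parallel along $\gamma$. The subtlety worth flagging in the write-up is simply the one-sided nature of the strict convexity statement, which mirrors the Euclidean situation and explains why the second bullet is stated as a sufficient condition only.
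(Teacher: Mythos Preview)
The paper does not actually prove this proposition: it is stated without proof as standard material, with a reference to Udri\c{s}te's book \cite[Chap.~3]{MR1326607}. Your argument is the standard one and is correct; the reduction to $h(t) = f(\gamma(t))$ with $h''(t) = \Hess(f)(\dot\gamma(t),\dot\gamma(t))$ is exactly the route any proof would take, and your handling of each bullet (including the one-sided nature of the strict convexity claim) is fine. One cosmetic point: for the converse in the third item, the phrase ``taking $a=0$, $b\to 0$'' is a bit opaque; cleaner is to note that $\alpha$-strong convexity of $f$ means $t \mapsto h(t) - \tfrac{\alpha v^2}{2} t^2$ is convex along every geodesic, whence by the first item its second derivative is nonnegative, giving $h''(t) \geqslant \alpha v^2$ and hence $\Hess(f)(w,w) \geqslant \alpha \Vert w\Vert^2$.
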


Convex functions enjoy several attractive properties. Among them, we highlight the straightforward fact that any sublevel set of a convex function is totally convex
(\ie{} it contains any geodesic whose endpoints belong to it). \autoref{def:ConvexFunction} and \autoref{prop:CharacConvexFunction}
work when $M$ is an infinite-dimensional Riemannian manifold (\eg{} $\cC^\infty(M,N)$ as in \autoref{subsec:EnergyFunctionalHarmonicMaps}),
however note that a convex function is not necessarily continuous in that case, whereas it is always locally Lipschitz in finite dimension.
We refer to \cite[Chap. 3]{MR1326607} for convex functions on finite-dimensional Riemannian manifolds. 

\subsection{Convexity of the energy functional}
\label{subsec:ConvexityEnergyExistenceHarmonic}

We review the convexity of the energy functional when the target is nonpositively curved, whether in the Riemannian sense or in the sense of Alexandrov, and we also address the possibility of strict or strong convexity in these settings.

\begin{remark}
While strict convexity of the energy is a clear-cut way to prove uniqueness of harmonic maps and strong convexity their existence, neither are necessary. The existence and uniqueness of harmonic maps has been properly characterized both in the smooth case and in more general spaces: see \autoref{subsec:SmoothHeatFlow} and \autoref{subsec:EquivariantHarmonicMaps}.
\end{remark}

\subsubsection*{Convexity of the energy in the smooth setting}

The second variation of the energy functional in the smooth context was first calculated by Eells-Sampson \cite{MR0164306} (cf.~\autoref{prop:SecondVariationalFormulaSmooth}). 
The next proposition follows immediately from \eqref{eq:HessianEnergySmooth}:
\begin{proposition} \label{prop:SmoothEnergyHessianInequality}
Let $M$ be and $N$ be smooth Riemannian manifolds. If $N$ has nonpositive sectional curvature, then the Hessian of the energy functional satisfies:
\begin{equation} \label{eq:HessianEnergySmoothNegativeCurvature}
 \forall V\in \Gamma(f^*\upT N) \quad \Hess(E)\evalat{f}(V, V) \geqslant \int_M \Vert \nabla V \Vert^2 \, \upd v_g~.
\end{equation}
\end{proposition}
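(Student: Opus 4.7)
The plan is to apply the second variational formula \eqref{eq:HessianEnergySmooth} with $V=W$, so that
\[
\Hess(E)\evalat{f}(V,V) = \int_M \Vert \nabla V \Vert^2 \, \upd v_g - \int_M \tr_g \langle R^N(V, \upd f)\upd f, V\rangle \, \upd v_g~.
\]
The proposition then reduces to showing that the curvature integrand is pointwise nonpositive under the nonpositive sectional curvature hypothesis on $N$.

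To see this, I would fix a point $x\in M$, choose a $g$-orthonormal frame $(e_i)$ of $\upT_xM$, and expand
\[
\tr_g \langle R^N(V,\upd f)\upd f, V\rangle\evalat{x} = \sum_i \langle R^N(V_x, \upd f(e_i))\upd f(e_i), V_x\rangle~.
\]
With the paper's curvature convention $R(X,Y)Z = \nabla^2_{X,Y}Z - \nabla^2_{Y,X}Z$, the sectional curvature of the plane spanned by two vectors $u,v\in \upT_{f(x)}N$ reads
\[
K(u,v) = \frac{\langle R^N(u,v)v,u\rangle}{\Vert u\Vert^2 \Vert v\Vert^2 - \langle u,v\rangle^2}~,
\]
so each summand is $K(V_x, \upd f(e_i))\bigl(\Vert V_x\Vert^2\Vert \upd f(e_i)\Vert^2 - \langle V_x, \upd f(e_i)\rangle^2\bigr)$. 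Cauchy--Schwarz makes the second factor nonnegative, and the sign hypothesis on $K^N$ makes the first factor nonpositive, so each summand is $\leqslant 0$.

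Summing over $i$ and integrating against $\upd v_g$ yields the desired inequality. The only subtlety to double-check is the sign convention: since the paper explicitly fixes $R(X,Y)Z = \nabla^2_{X,Y}Z - \nabla^2_{Y,X}Z$ and uses $\langle R^N(V,\upd f)\upd f, V\rangle$ in \eqref{eq:HessianEnergySmooth}, one should verify that this matches the numerator of the sectional curvature as written above; this is a straightforward symmetry check of the Riemann tensor. There is no real obstacle here; the result is indeed an immediate consequence of \autoref{prop:SecondVariationalFormulaSmooth} together with the definition of sectional curvature.
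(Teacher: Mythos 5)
Your proposal is correct and follows exactly the paper's route: the paper states the proposition as an immediate consequence of \eqref{eq:HessianEnergySmooth}, with the pointwise nonpositivity of the curvature term via the sectional-curvature identity $\langle R^N(V,\upd f(E_i))\upd f(E_i),V\rangle = K^N(V,\upd f(E_i))\bigl(\Vert V\Vert^2\Vert\upd f(E_i)\Vert^2-\langle V,\upd f(E_i)\rangle^2\bigr)$ spelled out in the proof of \autoref{prop:StrictConvexitySmoothEnergy}. Your sign-convention check is consistent with the convention the paper fixes in the footnote to \autoref{prop:SecondVariationalFormulaSmooth}.
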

Recall that the Hessian of the energy is taken with respect to the $L^2$ Riemannian structure on the infinite dimensional manifold $\cC^\infty(M,N)$.

In particular, \eqref{eq:HessianEnergySmoothNegativeCurvature} makes it clear that the energy functional is convex. It is tempting to try and get more
out of \eqref{eq:HessianEnergySmoothNegativeCurvature}: is $E$ strictly convex? Is it strongly convex? 
Neither can be true without some obvious restrictions: if $f$ maps into a flat (a totally geodesic submanifold of zero sectional curvature), then the energy is constant along the path
that consists in translating $f$ along some constant vector field on the flat. 
Even when $N$ has negative sectional curvature, whence
it has no flats of dimension $>1$, this issue remains for constant maps and maps into a curve. 

However, one can restrict to a connected component 
of $\cC^\infty(M,N)$ that does not contain such maps, and there the question becomes interesting. 
For example when $M$ is compact and $\dim N = \dim M > 1$, the degree of maps is an invariant on the components of $\cC^\infty(M,N)$,
and any component of nonzero degree contains only surjective map. 
When the target is negatively curved,  \eqref{eq:HessianEnergySmooth} does guarantee strict 
convexity:

\begin{proposition} \label{prop:StrictConvexitySmoothEnergy}
Let $M$ be a Riemannian manifold, let $N$ be a Riemannian manifold of negative sectional curvature. Then the energy functional is strictly convex on 
any connected component of $\cC^\infty(M,N)$ that does not contain any map of rank everywhere $\leqslant 1$.
\end{proposition}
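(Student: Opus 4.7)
The plan is to apply the second variation formula \eqref{eq:HessianEnergySmooth} along any nontrivial $\upL^2$-geodesic $(f_t)$ lying in the specified component of $\cC^\infty(M,N)$, and to show that the smooth function $\phi(t) := E(f_t)$ satisfies $\phi''(t) > 0$ for every $t$. Since $\phi''(t) = \Hess(E)\evalat{f_t}(V_t, V_t)$ with $V_t := \dot{f_t} \in \Gamma(f_t^* \upT N)$, and nonnegativity already follows from \autoref{prop:SmoothEnergyHessianInequality}, the task reduces to upgrading this inequality to strict positivity along the geodesic.

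I would argue by contradiction, supposing $\phi''(t_0) = 0$ at some $t_0$. Under negative sectional curvature in $N$ both integrands of \eqref{eq:HessianEnergySmooth} are pointwise nonnegative, so I would derive two simultaneous consequences: (a) $\nabla V_{t_0} \equiv 0$ as a section of $\upT^* M \otimes f_{t_0}^* \upT N$, and (b) at every $x \in M$ and for every $w \in \upT_x M$, the vectors $V_{t_0}(x)$ and $\upd f_{t_0}(w)$ are linearly dependent. The point in (b) is that expanding the curvature term in an orthonormal frame $(e_i)$ of $\upT_x M$ produces a sum of terms of the form $K^N(V_{t_0}(x), \upd f_{t_0}(e_i))$ times a nonnegative parallelogram area squared, and strict negativity of $K^N$ forces each term to vanish only on a degenerate plane.

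I would then invoke a parallel-transport dichotomy: from (a), $V_{t_0}$ is parallel along $f_{t_0}$, so its norm is constant on $M$, whence $V_{t_0}$ either vanishes identically or is nowhere zero. Identical vanishing forces $(f_t)$ to be a constant geodesic, contradicting nontriviality. Non-vanishing combined with (b) implies $\mathrm{Im}(\upd f_{t_0}\evalat{x}) \subseteq \bR V_{t_0}(x)$ at every $x$, so $f_{t_0}$ has rank everywhere $\leqslant 1$; this contradicts the hypothesis on the component, since $\upL^2$-geodesics remain in a single connected component of $\cC^\infty(M,N)$.

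I expect the principal subtlety to be the bookkeeping in the equality case of \eqref{eq:HessianEnergySmooth}: the two nonnegative integrands must be handled simultaneously, and the parallel-transport dichotomy from (a) must be combined with the rank-one conclusion from (b) in just the right way. This interplay is exactly what upgrades the merely convex statement of \autoref{prop:SmoothEnergyHessianInequality} to strict convexity under the present hypotheses, and explains why both the strictness of negative curvature and the rank restriction on the component are essential---either failing allows $\phi$ to be linear on an interval, either through a constant-direction variation in a flat or through a variation that translates along the one-dimensional image of a low-rank map.
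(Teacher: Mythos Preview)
Your proposal is correct and follows essentially the same approach as the paper: both arguments use the second variation formula \eqref{eq:HessianEnergySmooth}, observe that vanishing of the Hessian forces both $\nabla V \equiv 0$ and the collinearity of $V$ with each $\upd f(E_i)$, and then combine constant norm of $V$ with the collinearity to conclude that $f$ has rank everywhere $\leqslant 1$. Your framing in terms of a geodesic $(f_t)$ and the explicit dichotomy ($V$ identically zero versus nowhere zero) is slightly more detailed than the paper's, which simply shows positive definiteness of the Hessian at each $f$ in the component and appeals to \autoref{prop:CharacConvexFunction}, but the substance is the same.
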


\begin{proof}
Let $(E_i)$ be a local orthonormal frame in $M$. The integrand for the Hessian of the energy functional $\eqref{eq:HessianEnergySmooth}$ is:
\begin{equation}
 \Vert \nabla V \Vert^2 - \sum_{i=1}^n \left\langle R^N(V, \upd f(E_i)) \upd f(E_i), V \right\rangle
\end{equation}
Each term $\left\langle R^N(V, \upd f(E_i)) \upd f(E_i), V \right\rangle$ is nonpositive, and is nonzero unless 
$V$ and $\upd f(E_i)$ are collinear. Indeed, when $V$ and $\upd f(E_i)$ are not collinear:
\begin{equation}
\left\langle R^N(V, \upd f(E_i)) \upd f(E_i), V \right\rangle = K^N(V, \upd f(E_i)) \left(\Vert V \Vert^2 \, \Vert \upd f(E_i) \Vert^2 - \langle V, \upd f(E_i)\rangle^2\right) < 0
\end{equation}
where $K^N(V, \upd f(E_i))$ denotes the sectional curvature of the plane spanned by $V$ and $\upd f(E_i)$. 
If $\Hess(E)\evalat{f}(V, V)$ vanishes, then the integrand must vanish everywhere, so that (1) $\nabla V = 0$ everywhere, and (2) $\upd f(E_i)$ and $V$ must be collinear for every $i$. From (1) it follows that $V$ has constant length, 
and from (2) and the fact
that $V_x \neq 0$ it follows that $\upd_x f$ maps into $\operatorname{span}(V_x)$ for every $x \in M$. In particular, $f$ has rank $\leqslant 1$ everywhere.
\end{proof}

As far as the authors are aware, no sufficient conditions for strong convexity of the energy functional are known in the smooth setting. 
We believe that a quantitative refinement of the previous proof combined with a Poincar\'e-type inequality should guarantee:

\begin{conjecture} \label{conj:StrongConvexitySmooth}
Strong convexity holds in the setting of \autoref{prop:StrictConvexitySmoothEnergy}.
\end{conjecture}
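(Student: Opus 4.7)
The plan is to make the pointwise argument of \autoref{prop:StrictConvexitySmoothEnergy} quantitative: where that proof only used that each integrand term is nonpositive and vanishes only when $V$ is collinear with $\upd f(E_i)$, I would extract an explicit lower bound on the curvature contribution in terms of the singular values of $\upd f$. Fix a uniform upper bound $K^N \leqslant -\kappa < 0$ on the image of $f$, available since $f(M)$ is compact. For an orthonormal frame $(E_i)$ of $\upT_xM$ and the linear map $A \coloneqq \upd f_x \colon \upT_xM \to \upT_{f(x)}N$, the sectional curvature estimate gives
\begin{equation}
-\sum_i \langle R^N(V, A E_i) A E_i, V\rangle \geqslant \kappa\bigl(\|V\|^2 \|A\|_{HS}^2 - \langle A A^* V, V\rangle\bigr)\,.
\end{equation}
Diagonalizing $AA^*$ with eigenvalues $\sigma_1^2 \geqslant \sigma_2^2 \geqslant \cdots \geqslant 0$ (the squared singular values of $A$, padded by zeros) and writing $v_i$ for the coordinates of $V$ in the eigenbasis, the right-hand side equals $\kappa \sum_i v_i^2 \tau_i$ with $\tau_i \coloneqq \sum_{j \neq i} \sigma_j^2 \geqslant \sigma_2^2(A)$. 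This yields the clean Schr\"odinger-type lower bound
\begin{equation} \label{eq:HessianBoundProposal}
\Hess(E)\evalat{f}(V, V) \geqslant \int_M \bigl(\|\nabla V\|^2 + \kappa\, \sigma_2^2(\upd f_x)\, \|V\|^2\bigr)\, \upd v_g\,.
\end{equation}

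The second step is to exploit \eqref{eq:HessianBoundProposal} via a Poincar\'e-type inequality for sections of $f^* \upT N$. By hypothesis $f$ has rank $\geqslant 2$ at some point, so by continuity there is a non-empty open set $U \subseteq M$ and $c > 0$ with $\sigma_2(\upd f_x) \geqslant c$ on $U$. The operator $L_f \coloneqq \nabla^* \nabla + \kappa\, \sigma_2^2(\upd f)$ on $f^* \upT N$ is self-adjoint, nonnegative, and elliptic on compact $M$, so it has discrete spectrum with strictly positive bottom $\lambda_0(f)$: any $V$ in its kernel would satisfy $\nabla V = 0$ globally and $V \equiv 0$ on $U$, forcing $V \equiv 0$ by parallel transport along paths in connected $M$. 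Equivalently, a Poincar\'e-type inequality
\begin{equation}
\|V\|_{\upL^2(M)}^2 \leqslant C_f \bigl( \|\nabla V\|_{\upL^2(M)}^2 + \|V\|_{\upL^2(U)}^2 \bigr)
\end{equation}
combines with \eqref{eq:HessianBoundProposal} to yield a positive modulus of convexity $\alpha(f) > 0$ at every $f$ in the component.

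The hard part, and the reason the statement remains conjectural, is \emph{uniformity} of $\alpha(f)$ over the whole component. Several constants must be controlled uniformly in $f$: the curvature bound $\kappa$ (problematic if $N$ is noncompact and $f(M)$ escapes to regions of slack curvature), the Poincar\'e constant $C_f$ (depending on the geometry of $f^* \upT N$ with its pullback connection), and most delicately the pair $(U, c)$ measuring how far $f$ sits from the stratum of rank-$\leqslant 1$ maps. On an energy-bounded subset of the component one might hope to obtain uniform control through a compactness argument, but the component is not $\cC^\infty$-closed in general and $f$ can approach the rank-$\leqslant 1$ stratum without leaving it. Resolving this obstruction likely requires supplementary hypotheses preventing such degeneration---for instance $\dim N = \dim M$ combined with a nonzero-degree constraint, as in \autoref{thm:StrongConvexityEnergyGraphGeneral}, where topology forbids the rank from collapsing.
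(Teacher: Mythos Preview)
The paper does not prove this statement: it is explicitly left as a conjecture, with only a one-line hint that ``a quantitative refinement of the previous proof combined with a Poincar\'e-type inequality'' should work. Your proposal is precisely a careful unpacking of that hint, and the pointwise estimate you extract---bounding the curvature term below by $\kappa\,\sigma_2^2(\upd f_x)\,\|V\|^2$ via the singular-value decomposition of $\upd f_x$---is correct and is exactly the kind of quantitative refinement the authors allude to. Your subsequent use of the positivity of the bottom of the spectrum of $\nabla^*\nabla + \kappa\,\sigma_2^2(\upd f)$ is likewise the natural Poincar\'e-type step.

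You have also correctly located the genuine obstruction: everything you wrote gives a positive modulus of convexity $\alpha(f)$ at each individual $f$, but uniformity of $\alpha(f)$ over the whole connected component is not established, and can fail as $f$ drifts toward the rank-$\leqslant 1$ stratum (or, when $N$ is noncompact, toward regions where the curvature bound degenerates). The paper does not resolve this either; indeed the discrete analogue \autoref{thm:StrongConvexityEnergyGraphGeneral} requires exactly the supplementary topological hypotheses you mention (equal dimension, nonzero degree) to rule out such degeneration. So your proposal is not a proof, but it is an accurate and honest expansion of the authors' own sketch, and it names the missing ingredient as clearly as the paper does.
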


\subsubsection*{Convexity of the energy for more general spaces}

Defining the energy \emph{à la } Jost as in \autoref{subsec:JostEnergy}, it is straightforward 
that the energy functional is convex when the target space is negatively curved in a suitable sense.
Indeed, let $(M, \mu)$ be a measure space and let $(N,d)$ be a \emph{Hadamard metric space}, \ie{} a complete $\CAT(0)$ metric space. Recall that a $\CAT(0)$ space is a geodesic metric space where any geodesic triangle $T$ is `thinner' than the triangle $T'$ with same side lengths in the Euclidean plane---more precisely, the comparison map $T \to T'$ is distance nonincreasing. In a Hadamard space the distance squared function
\begin{equation}
 d^2 \colon N \times N \to \bR
\end{equation}
is convex (see \cite{MR1744486} for details). It follows easily that for any choice of nonnegative symmetric kernel $\eta_r$ (cf \autoref{subsec:JostEnergy}), the energy functional $E_r$ is convex on $\upL^2(M,N)$. Furthermore if the energy functional $E$ on $\upL^2(M,N)$ is obtained as a $\Gamma$-limit of $E_r$,
then it must also be convex \cite[Thm 11.1]{MR1201152}. In particular, this applies to our energy functional $E_\cG$ by way of \autoref{prop:JostEnergyGraph}:

\begin{proposition} \label{prop:EnergyGraphConvex}
Let $\cG$ be any $\tilde{S}$-triangulated biweighted graph (\autoref{def:BiweightedGraph}) and let $N$ be a Hadamard metric space.
The energy functional $E_\cG \colon \Map_{\text{eq}}(\cG, N) \to \bR$ (\autoref{def:EnergyFunctionalGraph}) is convex.
\end{proposition}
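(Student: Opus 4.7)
The plan is to reduce convexity of $E_\cG$ to the fundamental convexity property of the squared distance in $\CAT(0)$ spaces, using the product structure on $\Map_{\text{eq}}(\cG, N)$ inherited from $N$. First I would observe that choosing a fundamental domain $\cV \subseteq \cG^{(0)}$ for the vertex action identifies $\Map_{\text{eq}}(\cG, N) \cong N^{\cV}$, and equipping this finite product with the $\mu$-weighted $\ell^2$ metric from \eqref{eq:L2DistanceDiscrete} makes it a Hadamard metric space whose geodesics are precisely the coordinatewise (pointwise) geodesics. In particular, a constant-speed geodesic $(f_t)$ in $\Map_{\text{eq}}(\cG, N)$ is exactly the data of a constant-speed geodesic $t \mapsto f_t(x)$ in $N$ for each $x \in \cV$, extended equivariantly.

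Next I would invoke the standard $\CAT(0)$ fact that if $\alpha, \beta \colon [0,1] \to N$ are two constant-speed geodesics in a Hadamard space, then $t \mapsto d(\alpha(t), \beta(t))^2$ is a convex function of $t$ (see \cite{MR1744486}); this is the direct consequence of the $\CAT(0)$ comparison inequality applied to the Euclidean comparison quadrilateral, and is precisely the sharpening of the convexity of $d^2$ invoked in the excerpt preceding the statement. Specializing to the geodesics $t \mapsto f_t(x)$ and $t \mapsto f_t(y)$ for each edge $e_{xy} \in \cE$ yields that $t \mapsto d(f_t(x), f_t(y))^2$ is convex.

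The conclusion follows formally by writing
\begin{equation*}
E_\cG(f_t) = \frac{1}{2} \sum_{e = e_{xy} \in \cE} \omega_{xy} \, d(f_t(x), f_t(y))^2
\end{equation*}
and observing that this is a nonnegative linear combination of convex functions of $t$, hence convex. Equivariance poses no obstacle since $\rho_\tR$ acts by isometries and the weights $\omega_e$ are $\pi_1 S$-invariant, so the sum over $\cE$ is well-defined and independent of the choice of fundamental domain.

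The main point requiring any care is the first step, namely confirming that $\Map_{\text{eq}}(\cG, N)$ carries a well-defined metric and geodesic structure when $N$ is only assumed to be a Hadamard metric space, as opposed to a Riemannian manifold as in \autoref{def:L2RiemannianMetricGraph}; this reduces to the routine verification that a finite weighted $\ell^2$-product of Hadamard spaces is Hadamard. Alternatively, as the excerpt suggests, one could bypass the metric discussion entirely by applying stability of convexity under $\Gamma$-convergence together with \autoref{prop:JostEnergyGraph}, but the direct pointwise-geodesic argument is cleaner and avoids the detour through a limiting procedure.
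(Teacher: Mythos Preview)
Your proposal is correct and follows essentially the same approach as the paper: the paper's argument is the paragraph immediately preceding the proposition, which observes that $d^2 \colon N \times N \to \bR$ is convex in a Hadamard space and that $E_\cG$ is a nonnegative combination of such terms (via \autoref{prop:JostEnergyGraph}). Your version spells this out more carefully---in particular the identification of geodesics in $\Map_{\text{eq}}(\cG,N)$ with pointwise geodesics---and you are right that the direct argument is cleaner than the $\Gamma$-limit detour the paper also mentions.
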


We stress that the convexity is relative to a metric structure on $\Map_{\text{eq}}(\cG, N)$ which depends on a system of vertex weights
(see \autoref{def:VertexWeightedGraph}), but the fact that the energy is convex
(respectively strictly or strongly convex) does not depend on the choice of such vertex weights. 

We will examine conditions that ensure $E_\cG$ is strongly convex, first for $N = \bH^2$ (\autoref{thm:StrongConvexityEnergyGraphH2}), then in Hadamard manifolds with negative curvature (\autoref{thm:StrongConvexityEnergyGraphGeneral}).

We highlight some important context: Korevaar-Schoen 
obtained yet another form of convexity of the energy when the domain $M$ is Riemannian. Their energy functional $E$, which coincides with Jost's for suitable choices \cite{MR2348841}, satisfies the convexity inequality
\begin{equation} \label{eq:KorevaarSchoenConvexity}
E(f_t) \leqslant (1-t) E(f_0) + tE(f_1) - t(1-t) \int_M \Vert \nabla d(f_0,f_1) \Vert^2~,
\end{equation}
where $(f_t) \in \upL^2(M,N)$ is a geodesic, \ie{} $f_t(x)$ is a geodesic in $N$ for all $x\in M$.
This is a weaker analog of \autoref{prop:SmoothEnergyHessianInequality}. 
It is again tempting to 
investigate strong convexity when $N$ has negative curvature bounded away from $0$ and $f$ does not have rank everywhere $\leqslant 1$, but 
work remains to be done. 

Mese \cite{MR1938491} claims without proof both an improvement of the convexity statement \eqref{eq:KorevaarSchoenConvexity} and
the strict convexity of the energy functional at maps of rank $\leqslant 1$ as in \autoref{prop:StrictConvexitySmoothEnergy}, 
however neither of these claims are explained as far as we can tell. We also note that there is a mistake in the curvature term of \cite[eq.~(1)]{MR1938491}. 
(In fairness, this lack of explanation is probably due to Mese's focus on the task of extending the uniqueness of Korevaar-Schoen to the setting where $\partial M=\emptyset$.)

Although we prove strong convexity for biweighted graph domains under appropriate restrictions, we suspect that a more general version of this theorem is true, namely an analog of \autoref{conj:StrongConvexitySmooth} for singular spaces. 
In fact, one can 
further explore extensions to the equivariant setting, with a suitable condition on the target representation strengthening reductivity.

\subsection{Convexity estimates in the hyperbolic plane}
\label{subsec:H2computations}

In order to study the second variation of the discrete energy for $\bH^2$-valued equivariant maps, 
we first need some convexity estimates in the hyperbolic plane.
The strategy in \autoref{subsec:StrongConvexityH2} will be to reach a contradiction under the assumption that the second variation of the energy is too small;
here we derive necessary consequences of a small second variation of the energy in the the elementary cases consisting of two and three vertices.
We start with a formula for quadrilaterals in $\bH^2$.

\begin{center}
\begin{figure}[!ht]
\begin{minipage}{.47\textwidth}
\begin{center}
\begin{tikzpicture}[scale=1.0]
\draw (-3,0) node[left] {$A$} node {$\bullet$};
\draw (3,0) node[right] {$B$} node {$\bullet$};
\draw (1,3) node[above] {$C$} node {$\bullet$};
\draw (-2,4) node[above] {$D$} node {$\bullet$};
\draw [thick] (-3, 0) -- (3, 0) ;
\draw [thick] (3, 0) -- (1, 3) ;
\draw [thick] (1, 3) -- (-2, 4) ;
\draw [thick] (-2, 4) -- (-3, 0) ;
\draw [thick, ->, >=latex, blue] (-2,0) arc (0:atan(4):1) ;
\draw [blue] ({-3 + cos(atan(4)/2)} , {0 + sin(atan(4)/2)} ) node[above right] {$\alpha$} ;

\draw [thick, ->, >=latex, blue] ({3 + cos(pi r - atan(3/2))}, {0 + sin(pi r - atan(3/2))}) arc ({pi r - atan(3/2)}:{pi r}:1) ;
\draw [blue] ({3 + cos(pi r - atan(3/2)/2))} , {0 + sin(pi r - atan(3/2)/2)} ) node[above left] {$\beta$} ;
\end{tikzpicture} \caption{} \label{fig:quadrilateral1}
\end{center}
\end{minipage} \hspace{5mm}
\begin{minipage}{.47\textwidth}
\begin{center}
\begin{tikzpicture}[scale=1.0]
\draw (-3,0) node[left] {$A$} node {$\bullet$};
\draw (3,0) node[right] {$B$} node {$\bullet$};
\draw (1,3) node[above] {$C$} node {$\bullet$};
\draw (-2,4) node[above] {$D$} node {$\bullet$};
\draw [thick] (-3, 0) -- (3, 0) ;
\draw [thick] (3, 0) -- (1, 3) ;
\draw [thick] (1, 3) -- (-2, 4) ;
\draw [thick] (-2, 4) -- (-3, 0) ;
\draw (-3, 0) -- (1, 3) ;

\draw [thick, ->, >=latex, blue] (-2,0) arc (0:atan(3/4):1) ;
\draw [blue] ({-3 + cos(atan(3/4)/2)} , {0 + sin(atan(3/4)/2)} ) node[above right] {$\alpha_1$} ;

\draw [thick, ->, >=latex, blue] ({-3 + cos(atan(3/4)},{0 + sin(atan(3/4)}) arc (atan(3/4):atan(4):1) ;
\draw [blue] ({-3 + cos(atan(3/4)/2 + atan(4)/2)} , {0 + sin(atan(3/4)/2 + atan(4)/2)} ) node[above  right] {$\alpha_2$} ;

\draw [thick, ->, >=latex, blue] ({3 + cos(pi r - atan(3/2))}, {0 + sin(pi r - atan(3/2))}) arc ({pi r - atan(3/2)}:{pi r}:1) ;
\draw [blue] ({3 + cos(pi r - atan(3/2)/2))} , {0 + sin(pi r - atan(3/2)/2)} ) node[above left] {$\beta$} ;
\end{tikzpicture} \caption{} \label{fig:quadrilateral2}
\end{center}
\end{minipage}
\end{figure}
\end{center}

\begin{proposition} \label{prop:HyperbolicQuadrilateral}
 Let $A$, $B$, $C$, $D$ be four points in the hyperbolic plane. 
 Let $\alpha$ and $\beta$ denote the oriented angles as shown in \autoref{fig:quadrilateral1}. 
 Then:
\begin{equation}
\begin{aligned}
 \cosh (DC) &= \cosh (AB) \big[\cosh (DA) \cosh (BC) + \sinh (DA) \sinh (BC) \cos \alpha \cos \beta\big] \\
          &\quad - \sinh (AB) \big[\cosh (DA) \sinh (BC) \cos \beta + \sinh (DA) \cosh (BC) \cos \alpha\big] \\
          &\quad - \sinh (DA) \sinh (BC) \sin \alpha \sin \beta~.
\end{aligned}
\end{equation}
\end{proposition}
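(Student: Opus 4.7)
The natural strategy is to triangulate the quadrilateral by drawing the diagonal $AC$, reducing the problem to two applications of the hyperbolic law of cosines together with one application of the hyperbolic law of sines. Writing $\alpha = \alpha_1 + \alpha_2$ where $\alpha_2 = \angle BAC$ and $\alpha_1 = \angle DAC$ as in \autoref{fig:quadrilateral2}, the formula for $\cosh(DC)$ will emerge from combining a computation in triangle $ABC$ (which determines $AC$ and $\alpha_2$ from $AB$, $BC$, $\beta$) with a computation in triangle $ACD$ (which determines $DC$ from $AD$, $AC$, $\alpha_1$).

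\textbf{Step 1: Triangle $ABC$.} The hyperbolic law of cosines at vertex $B$ yields
\begin{equation*}
\cosh(AC) = \cosh(AB)\cosh(BC) - \sinh(AB)\sinh(BC)\cos\beta.
\end{equation*}
The hyperbolic law of sines gives $\sinh(AC)\sin\alpha_2 = \sinh(BC)\sin\beta$, while solving the law of cosines at $A$ for $\cos\alpha_2$ and simplifying using $\cosh^2(AB)-1 = \sinh^2(AB)$ produces
\begin{equation*}
\sinh(AC)\cos\alpha_2 = \sinh(AB)\cosh(BC) - \cosh(AB)\sinh(BC)\cos\beta.
\end{equation*}

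\textbf{Step 2: Triangle $ACD$.} Applying the law of cosines at $A$ in triangle $ACD$,
\begin{equation*}
\cosh(DC) = \cosh(DA)\cosh(AC) - \sinh(DA)\sinh(AC)\cos\alpha_1.
\end{equation*}
Expanding $\cos\alpha_1 = \cos(\alpha - \alpha_2) = \cos\alpha\cos\alpha_2 + \sin\alpha\sin\alpha_2$ and substituting the three expressions from Step~1, the right-hand side becomes a polynomial in $\cosh(AB), \sinh(AB), \cosh(BC), \sinh(BC), \cos\alpha, \sin\alpha, \cos\beta, \sin\beta$ whose terms regroup exactly into the claimed identity.

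\textbf{Expected obstacles.} There is no conceptual difficulty; the work is purely algebraic. The only point requiring care is the orientation convention on the angles $\alpha$ and $\beta$: the decomposition $\alpha = \alpha_1 + \alpha_2$ is geometrically valid when $C$ and $D$ lie on the same side of $AB$ and $C$ lies inside the angular sector $\angle DAB$, as in the figure. In the degenerate or opposite orientation cases, the same formula continues to hold by analytic continuation, since both sides are smooth (indeed real-analytic) functions of the position of the four points.
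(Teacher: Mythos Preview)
Your proposal is correct and follows essentially the same approach as the paper: triangulate along the diagonal $AC$, apply the hyperbolic law of cosines in each triangle, and use the law of sines together with the ``projection'' identity $\sinh(AC)\cos\alpha_2 = \sinh(AB)\cosh(BC) - \cosh(AB)\sinh(BC)\cos\beta$ to eliminate $AC$ and $\alpha_2$. The only cosmetic difference is that your labels $\alpha_1$ and $\alpha_2$ are swapped relative to the paper's Figure~2, and your remark on orientation and analytic continuation mirrors the paper's post-statement observation that the formula holds without sign restrictions on $\alpha$ and $\beta$.
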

\begin{remark}
This equation holds without restriction on $\alpha$ and $\beta$; they may be negative or obtuse.
\end{remark}

\begin{proof}

Referring to \autoref{fig:quadrilateral2}, the hyperbolic law of cosines implies:
\begin{equation} \label{eq:coshACD}
 \cosh(DC) = \cosh(DA)\cosh(AC) - \sinh(DA) \sinh(AC) \cos(\alpha_2)~.
\end{equation}

The hyperbolic laws of sines and cosines in the triangle $ABC$ give
\begin{align} 
  \cosh(AC) & = \cosh(AB)\cosh(BC) - \sinh(AB) \sinh(BC) \cos(\beta)~, \ \text{ and } \label{eq:coshABC} \\
  \sinh(AC)  \cos(\alpha_2) &= \sinh(AC) \cos(\alpha_1 - \alpha)\\
  &= \sinh(AC) \cos (\alpha_1 ) \cos(\alpha) + \sinh(AC) \sin(\alpha_1) \sin(\alpha) \\
  &= \sinh(AC) \cos (\alpha_1 ) \cos(\alpha) + \sinh(BC) \sin(\beta) \sin (\alpha)~.
  \label{eq:alphasumh}
\end{align}
Moreover, it is a 
consequence of the two forms of the hyperbolic law of cosines (see \eg \cite[p.~82]{MR2249478}) in the triangle $ABC$ that we have
\begin{equation} \label{eq:sinhACcosalpha1}
 \sinh(AC) \cos(\alpha_1) = \sinh(AB) \cosh(BC) - \sinh(BC) \cosh(AB) \cos(\beta)~.
\end{equation}

Equation \eqref{eq:sinhACcosalpha1} allows us to rewrite equation \eqref{eq:alphasumh} as:
\begin{equation} \label{eq:ACcosalphatwoh}
 \begin{split}
  \sinh(AC)  \cos(\alpha_2) &= \big(\sinh(AB) \cosh(BC) - \sinh(BC) \cosh(AB) \cos(\beta)\big) \cos(\alpha) \\
  &\quad + \sinh(BC) \sin(\beta) \sin(\alpha)~.
 \end{split}
\end{equation}
Together \eqref{eq:coshABC} and \eqref{eq:ACcosalphatwoh} and \eqref{eq:coshACD} imply the desired equation.
\end{proof}

Next we study the convexity of the energy for two points, which amounts to analyzing the second variation of the half-distance squared function
$ \frac{d^2}{2} \colon \bH^2 \times \bH^2 \to \bR$. We perform this computation in two stages: first we study instead the function $(\cosh d) -1 \colon \bH^2 \times \bH^2 \to \bR$, as it is better suited to computations, and then we relate the second variation of the two functions.

\begin{proposition} \label{prop:DerivativesF}
 Let $A$ and $B$ be two points in the hyperbolic plane at distance $D$. Let $\vec{u}$ and $\vec{v}$ be tangent vectors at $A$ and $B$ respectively. Let $A_t = \exp_A (t \vec{u})$ and $B_t = \exp_B (t \vec{v})$ for $t \in \bR$, and consider the function $F_{AB}(t) = \cosh\left(d(A_t, B_t)\right) - 1$.
 Then:
 \begin{align}
\frac{\upd}{\upd t} \evalat{t=0} F_{AB}(t) & = -\sinh(D) \big( \Vert \vec{u} \Vert \cos \alpha - \Vert \vec{v} \Vert \cos \beta \big) \\
 \frac{\upd^2}{\upd t^2}\evalat{t=0} F_{AB}(t) & = 
 \cosh (D) \big(\Vert \vec{u} \Vert^2 + \Vert \vec{v} \Vert ^2 - 2 \Vert \vec{u} \Vert \Vert \vec{v} \Vert \cos \alpha \cos \beta\big) -2 \Vert \vec{u} \Vert \Vert \vec{v} \Vert \sin \alpha \sin \beta~.
 \end{align}
 where $\alpha$ (resp. $\beta$) is the oriented angle between the geodesic $AB$
 and the vector $\vec{u}$ (resp. $\vec{v}$).
\end{proposition}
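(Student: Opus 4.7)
The plan is to apply the hyperbolic quadrilateral identity of \autoref{prop:HyperbolicQuadrilateral} to the four points $A$, $B$, $B_t$, $A_t$, and then extract the first and second derivatives of $F_{AB}$ at $t=0$ via Taylor expansion.

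Concretely, with the correspondence $(A,B,C,D) \leftrightarrow (A,B,B_t,A_t)$, the quadrilateral has side lengths $d(A,B)=D$, $d(A,A_t)=t\Vert \vec{u}\Vert$, $d(B,B_t)=t\Vert \vec{v}\Vert$, and the remaining side is $d(A_t,B_t)$---precisely the quantity needed to compute $F_{AB}(t)$. The first step is to reconcile the angle conventions: in \autoref{fig:quadrilateral1} the angle at $B$ is measured between the sides $\overrightarrow{BA}$ and $\overrightarrow{BC}$, while the statement of the proposition measures both $\alpha$ and $\beta$ from the oriented geodesic $\overrightarrow{AB}$. The angle at $A$ matches up directly with $\alpha$, but at $B$ we must substitute $\beta_{\mathrm{quad}} = \pi - \beta$, so that $\cos\beta_{\mathrm{quad}} = -\cos\beta$ and $\sin\beta_{\mathrm{quad}} = \sin\beta$.

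With these substitutions, \autoref{prop:HyperbolicQuadrilateral} yields a closed-form expression for $\cosh d(A_t,B_t)$ in terms of the fixed data $D,\alpha,\beta$ and the hyperbolic trigonometric functions of $t\Vert \vec{u}\Vert$ and $t\Vert \vec{v}\Vert$. The next step is a second-order Taylor expansion using
\begin{equation*}
\cosh(ts) = 1 + \tfrac{1}{2}t^2 s^2 + O(t^4), \qquad \sinh(ts) = ts + O(t^3).
\end{equation*}
Collecting the resulting terms by powers of $t$, the constant term gives $\cosh(D)$, so $F_{AB}(0) = \cosh(D)-1$; the coefficient of $t$ works out to $-\sinh(D)\bigl(\Vert \vec{u}\Vert\cos\alpha - \Vert \vec{v}\Vert\cos\beta\bigr)$, matching $F'_{AB}(0)$; and twice the coefficient of $t^2$ gives the claimed expression for $F''_{AB}(0)$. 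The combination $\cosh(D)\bigl(\Vert\vec{u}\Vert^2+\Vert\vec{v}\Vert^2 - 2\Vert\vec{u}\Vert\Vert\vec{v}\Vert\cos\alpha\cos\beta\bigr) - 2\Vert\vec{u}\Vert\Vert\vec{v}\Vert\sin\alpha\sin\beta$ is exactly what one obtains after the sign reconciliation.

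No step presents a serious analytical difficulty: the main obstacle is purely bookkeeping, namely the careful propagation of signs through the angle conversion $\beta_{\mathrm{quad}} \mapsto \pi-\beta$ and the identification of Taylor coefficients. As sanity checks, specializing to $\vec{v}=0$ reduces the computation to the hyperbolic law of cosines for the triangle $AA_tB$ and reproduces the formulas with $v=0$; and taking $\alpha=0$, $\beta=\pi$ (both vectors pointing toward each other along the geodesic $AB$) reduces $F_{AB}(t)$ to $\cosh\bigl(D - t(\Vert\vec{u}\Vert+\Vert\vec{v}\Vert)\bigr)-1$, whose derivatives at $0$ indeed agree with $-\sinh(D)(\Vert\vec{u}\Vert+\Vert\vec{v}\Vert)$ and $\cosh(D)(\Vert\vec{u}\Vert+\Vert\vec{v}\Vert)^2$ respectively.
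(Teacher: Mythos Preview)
Your proposal is correct and follows essentially the same approach as the paper: apply \autoref{prop:HyperbolicQuadrilateral} to the quadrilateral $A,B,B_t,A_t$, make the angle substitution $\beta_{\mathrm{quad}}=\pi-\beta$ at $B$, and read off the first two derivatives at $t=0$. The paper's version is terser (it omits the Taylor bookkeeping and sanity checks), but the argument is identical.
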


\begin{proof}
Consider the quadrilateral given by the four points $A$, $B$, $C = B_t$, $D = A_t$. 
Note that the angle $\beta$ here
corresponds to the angle $\pi - \beta$
of \autoref{prop:HyperbolicQuadrilateral}. By direct application of \autoref{prop:HyperbolicQuadrilateral},
\begin{equation}
\begin{aligned}
 1 + F_{AB}(t) &= \cosh (D) \big[\cosh (t \Vert \vec{u} \Vert) \cosh (t \Vert \vec{v} \Vert) - \sinh (t \Vert \vec{u} \Vert) \sinh (t \Vert \vec{v} \Vert) \cos \alpha \cos \beta\big] \\
          & \quad - \sinh (D) \big[- \cosh (t \Vert \vec{u} \Vert) \sinh (t \Vert \vec{v} \Vert) \cos \beta + \sinh (t \Vert \vec{u} \Vert) \cosh (t \Vert \vec{v} \Vert) \cos \alpha\big] \\
          &\quad - \sinh (t \Vert \vec{u} \Vert) \sinh (t \Vert \vec{v} \Vert) \sin \alpha \sin \beta~.
\end{aligned}
\end{equation}
 The result follows immediately by taking the first and 
 second derivatives at $t=0$.
\end{proof}

\begin{proposition} \label{prop:DerivativesE}
We keep the same setup as \autoref{prop:DerivativesF}, and let $E_{AB}(t) = \frac{1}{2} d(A_t, B_t)^2$.
 Then:
 \begin{align}
 \frac{\upd^2}{\upd t^2}\evalat{t=0} E_{AB}(t) & = 
 a + b \; D \tanh (D/2) + c  \left(D \coth D -D\tanh(D/2) \right)  ~,
 \end{align}
where $a$, $b$, and $c \geqslant 0$ are given by
\begin{align}
a & = \left( \Vert \vec{u} \Vert \cos \alpha - \Vert\vec{v} \Vert\cos \beta \right)^2~, \\
b & =   \Vert \vec{u} \Vert^2 \sin^2 \alpha + \Vert\vec{v} \Vert^2 \sin^2 \beta ~, \text{ and } \\
c &=   \left( \Vert \vec{u} \Vert \sin \alpha - \Vert\vec{v} \Vert\sin \beta \right)^2~.
\end{align}
\end{proposition}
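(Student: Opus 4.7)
The plan is to reduce the computation of $E_{AB}''(0)$ to that of $F_{AB}''(0)$, which has already been carried out in \autoref{prop:DerivativesF}. The starting point is that both $E_{AB}$ and $F_{AB}$ are functions of the single quantity $D(t):=d(A_t,B_t)$:
\begin{equation}
E_{AB}(t)=\tfrac12 D(t)^2,\qquad F_{AB}(t)=\cosh D(t)-1.
\end{equation}
Differentiating twice at $t=0$ and writing $D=D(0)$, one gets
\begin{equation}
F_{AB}''(0)=\cosh(D)\,D'(0)^2+\sinh(D)\,D''(0),\qquad E_{AB}''(0)=D'(0)^2+D\,D''(0).
\end{equation}
Solving the first relation for $D''(0)$ and substituting into the second yields the master formula
\begin{equation}
E_{AB}''(0)=D'(0)^2\bigl(1-D\coth D\bigr)+\frac{D}{\sinh D}\,F_{AB}''(0).
\end{equation}

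The first step, then, is to identify $D'(0)^2$ with $a$: from the first derivative formula in \autoref{prop:DerivativesF} I read off $\sinh(D)D'(0)=-\sinh(D)\bigl(\|\vec u\|\cos\alpha-\|\vec v\|\cos\beta\bigr)$, so $D'(0)^2=a$.

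The second step is a purely algebraic rewriting of $F_{AB}''(0)$ in terms of $a$, $b$, $c$. The two identities I need are
\begin{equation}
\|\vec u\|^2+\|\vec v\|^2-2\|\vec u\|\|\vec v\|\cos\alpha\cos\beta=a+b,\qquad 2\|\vec u\|\|\vec v\|\sin\alpha\sin\beta=b-c,
\end{equation}
both obtained by expanding the squares in $a$ and $c$. Plugging these into the expression for $F_{AB}''(0)$ from \autoref{prop:DerivativesF} gives
\begin{equation}
F_{AB}''(0)=\cosh(D)\,a+\bigl(\cosh(D)-1\bigr)b+c.
\end{equation}

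The final step is bookkeeping: insert this into the master formula and use the elementary identities
\begin{equation}
\frac{\cosh D-1}{\sinh D}=\tanh(D/2),\qquad \frac{1}{\sinh D}=\coth D-\tanh(D/2).
\end{equation}
The $a$-contribution telescopes to $a$, the $b$-contribution becomes $b\,D\tanh(D/2)$, and the $c$-contribution becomes $c\bigl(D\coth D-D\tanh(D/2)\bigr)$, giving exactly the announced formula. Non-negativity of $c$ is immediate as it is a square. There is no genuine obstacle here: the proposition is essentially a change-of-variables exercise, the only mild subtlety being the algebraic identity $2\|\vec u\|\|\vec v\|\sin\alpha\sin\beta=b-c$ which explains where the coefficient $D\coth D-D\tanh(D/2)$ originates.
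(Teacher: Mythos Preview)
Your proof is correct and follows essentially the same approach as the paper. The paper composes $E_{AB}=\phi\circ F_{AB}$ with $\phi(x)=\tfrac12(\operatorname{arcosh}(1+x))^2$ and applies the chain rule, which is equivalent to your introduction of the intermediate variable $D(t)$; the subsequent algebraic reorganization (the paper writes $E''(0)=a+bD\coth D+(c-b)D\operatorname{csch}D$ and then uses $\coth D-\operatorname{csch}D=\tanh(D/2)$) is the same as yours up to the order of simplifications.
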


\begin{proof}
For ease in notation, we leave the subscripts $AB$ from $E_{AB}$ and $F_{AB}$ in what follows.
We have $E(t) = \phi \circ F(t)$, where $\phi(x) = \frac 12 \left( \arcosh (1+x) \right)^2$. It is straightforward to check that 
\begin{align}
\phi'(\cosh x -1) = \frac{x}{\sinh x}~, \ \ \ \text{ and } \ \ \ \phi''(\cosh x-1) = \frac{\sinh x- x\cosh x}{\sinh^3 x}~.
\end{align}
Since we have $E''(0) = \phi''(F(0)) \left( F'(0) \right)^2 + \phi'(F(0)) F''(0)$, using \autoref{prop:DerivativesF} we find that
\begin{align}
E''(0)  & =  \frac{\sinh D - D\cosh D}{\sinh^3 D} \cdot \sinh^2D  \left(\Vert\vec{u}\Vert\cos \alpha - \Vert\vec{v}\Vert \cos \beta\right)^2 \\
& \quad + \frac{D}{\sinh D} \left( \cosh D \left( \Vert\vec{u}\Vert^2+ \Vert\vec{v}\Vert^2 -2\Vert\vec{u}\Vert\Vert\vec{v}\Vert\cos \alpha\cos \beta\right) - 2\Vert\vec{u}\Vert\Vert\vec{v}\Vert\sin\alpha\sin\beta\right) \\
& =  \left(\Vert\vec{u}\Vert\cos \alpha - \Vert\vec{v}\Vert \cos \beta\right)^2 + D\coth D \left(  -\left(\Vert\vec{u}\Vert\cos \alpha - \Vert\vec{v}\Vert \cos \beta\right)^2 +  \right. \\
& \quad \left. \left( \Vert\vec{u}\Vert^2+ \Vert\vec{v}\Vert^2 -2\Vert\vec{u}\Vert\Vert\vec{v}\Vert\cos \alpha\cos \beta\right) \right)  - D \csch D \left( 2\Vert\vec{u}\Vert\Vert\vec{v}\Vert\sin\alpha\sin\beta\right) \\
& = a + b \; D\coth D 
+(c-b)\;  D \csch D ~.
\end{align}
To finish, note that $\coth D - \csch D = \tanh(D/2)$.
\end{proof}

%
%

The following quantitative control is at the core of 
strong convexity for $E_\cG$:

\begin{proposition} \label{prop:SecondDerivativeHyperbolicEstimates}
Let $E_{AB}(t)$ be the function as in \autoref{prop:DerivativesE}. 
We have 
\begin{equation} 
\frac{\upd^2}{\upd t^2}\evalat{t=0} E_{AB}(t) \geqslant \Vert \vec{u} - P_{[BA]}\vec{v} \Vert^2~,
\end{equation}
where $P_{[BA]}\vec{v}$ denotes the parallel transport of $\vec{v}$ along the geodesic segment $BA$.
\end{proposition}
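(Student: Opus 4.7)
The plan is to first compute the right-hand side $\|\vec{u}-P_{[BA]}\vec{v}\|^2$ explicitly, then compare it term by term with the formula from \autoref{prop:DerivativesE}.

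Fix the orthonormal frame $(e_1,e_2)$ at $A$ with $e_1$ pointing toward $B$ along the geodesic, and let $(e_1',e_2')$ be its parallel transport to $B$. Since parallel transport along a geodesic in a Riemannian manifold preserves the tangent of the geodesic and the orthogonal complement, and since the angles $\alpha$, $\beta$ are measured relative to the oriented geodesic $AB$, I would write $\vec{u}=\|\vec{u}\|(\cos\alpha\,e_1+\sin\alpha\,e_2)$ and $\vec{v}=\|\vec{v}\|(\cos\beta\,e_1'+\sin\beta\,e_2')$, so that $P_{[BA]}\vec{v}=\|\vec{v}\|(\cos\beta\,e_1+\sin\beta\,e_2)$. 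Expanding the squared norm gives the clean identity
\begin{equation}
\|\vec{u}-P_{[BA]}\vec{v}\|^2=\bigl(\|\vec{u}\|\cos\alpha-\|\vec{v}\|\cos\beta\bigr)^2+\bigl(\|\vec{u}\|\sin\alpha-\|\vec{v}\|\sin\beta\bigr)^2=a+c,
\end{equation}
using the notation of \autoref{prop:DerivativesE}.

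Next, I would simplify the coefficient of $c$ in \autoref{prop:DerivativesE} via the elementary identity
\begin{equation}
\coth D-\tanh(D/2)=\frac{\cosh D-(\cosh D-1)}{\sinh D}=\frac{1}{\sinh D},
\end{equation}
so that the claimed inequality reduces to showing
\begin{equation}
b\,D\tanh(D/2)+c\!\left(\frac{D}{\sinh D}-1\right)\geqslant 0.
\end{equation}
Since the second term is negative, the key observation is to relate $b$ and $c$. Writing $p=\|\vec{u}\|\sin\alpha$ and $q=\|\vec{v}\|\sin\beta$, one has $b=p^2+q^2$ and $c=(p-q)^2$, whence $2b=c+(p+q)^2$. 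Substituting yields
\begin{equation}
b\,D\tanh(D/2)+c\!\left(\frac{D}{\sinh D}-1\right)=c\cdot g(D)+\frac{(p+q)^2}{2}\,D\tanh(D/2),
\end{equation}
where $g(D):=\tfrac{1}{2}D\tanh(D/2)+D/\sinh D-1$.

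The final step is to show $g(D)\geqslant 0$ for all $D\geqslant 0$. Using $\tanh(D/2)=(\cosh D-1)/\sinh D$ and then $\cosh D+1=2\cosh^2(D/2)$, $\sinh D=2\sinh(D/2)\cosh(D/2)$, I would rewrite
\begin{equation}
g(D)=\frac{D(\cosh D+1)}{2\sinh D}-1=\frac{D/2}{\tanh(D/2)}-1,
\end{equation}
which is nonnegative since $\tanh x\leqslant x$ for $x\geqslant 0$. Since the second term $(p+q)^2 D\tanh(D/2)/2$ is manifestly nonnegative and $c\geqslant 0$, this completes the proof.

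The only delicate point is really the bookkeeping of the angles and the orientation convention for parallel transport (especially reconciling it with the $\pi-\beta$ convention used in the proof of \autoref{prop:DerivativesF}); once $\|\vec{u}-P_{[BA]}\vec{v}\|^2=a+c$ is established, the rest is straightforward algebra plus the one-variable inequality $\tanh x\leqslant x$.
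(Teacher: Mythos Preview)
Your proof is correct and follows essentially the same route as the paper: both identify $\|\vec{u}-P_{[BA]}\vec{v}\|^2=a+c$, use the relation $2b\geqslant c$ (you write the exact identity $2b=c+(p+q)^2$, the paper just quotes the inequality), and reduce to the one-variable fact $\tfrac{D}{2}\coth(D/2)\geqslant 1$, equivalently $\tanh x\leqslant x$. The only cosmetic difference is that the paper simplifies via $D\coth D-\tfrac{D}{2}\tanh(D/2)=\tfrac{D}{2}\coth(D/2)$ directly, while you first rewrite $\coth D-\tanh(D/2)=1/\sinh D$; both arrive at the same inequality.
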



\begin{proof}
 By rewriting $E_{AB}''(0)$ using \autoref{prop:DerivativesE}, and noting that $2b\geqslant c$ by the Cauchy-Schwarz inequality, we find
\begin{align}
\label{eq:abcEAB}
E_{AB}''(0) & = a + b \; D  \tanh \frac D2 + c \left(D\coth D - D\tanh \frac D2\right) \\ 
& \geqslant a + c\left( D\coth D - \frac D2\tanh \frac D2 \right) 
 = a + c \cdot \frac D2 \coth \frac D2~.
\end{align}
Because $x\coth x\geqslant1 $, we find that $E_{AB}''(0) \geqslant a+c$. 
%
%
The proof is completed by checking that the quantity $a+c$ is precisely $\Vert  \vec{u} - P_{[BA]}\vec{v} \Vert^2$:
\begin{align*}
a +c & = \left( \Vert \vec{u} \Vert \cos \alpha - \Vert\vec{v} \Vert\cos \beta \right)^2 +  \left( \Vert \vec{u} \Vert \sin \alpha - \Vert\vec{v} \Vert\sin \beta \right)^2~ \\
&= \Vert \vec{u} \Vert^2 - 2\Vert \vec{u} \Vert \Vert \vec{v} \Vert \cos(\alpha-\beta) + \Vert \vec{v} \Vert^2 \\
&= \langle \vec{u} ,\vec{u} \rangle  - 2\langle \vec{u} , P_{[BA]}\vec{v} \rangle + \langle \vec{v},\vec{v} \rangle \\
& = \Vert  \vec{u} - P_{[BA]}\vec{v} \Vert^2~. \qedhere
\end{align*}
\end{proof}

\subsection{\texorpdfstring{Strong convexity of the discrete energy in $\bH^2$}{Strong convexity of the discrete energy in H2}}
\label{subsec:StrongConvexityH2}

Let $\cG$ be any $\tilde{S}$-triangulated biweighted graph (\autoref{def:BiweightedGraph}) and let $\rho_\tR \colon \pi_1S \to \Isom(\bH^2)$ be a Fuchsian representation.
We are ready to prove strong convexity of the discrete energy functional $E_\cG \colon \Map_{\text{eq}}(\cG, \bH^2) \to \bR$ introduced in \autoref{def:EnergyFunctionalGraph}.

Choose once and for all a fundamental domain for the action of $\pi_1S$ on $\cG^{(0)}$, consisting of vertices $\{p_1, \dots, p_n\} \subseteq \cG^{(0)}$. 
Recall that $\cG$ is equipped with vertex and edge weights; 
these are completely determined by the weights $\mu_i = \mu(p_i)$ and $\omega_{ij}=\omega(e_{p_i p_j})$
for $i,j \in \{1, \dots, n\}$.

Fix an equivariant map $f\in\Map_{\text{eq}}(\cG,\bH^2)$, recorded by the tuple $(x_1,\dots, x_n) \in (\bH^2)^n$
where $x_i \coloneqq f(p_i)$.
Also fix a tangent vector $ \vec{v} \in \upT_f\Map_{\text{eq}}(\cG, \bH^2)$, given by $\vec{v} = (\vec{v}_1,\dots,\vec{v}_n)$ where $v_i \in \upT_{x_i}\bH^2$
as in \eqref{eq:MapGammaTangentSpace2}. We assume that $\vec{v}$ is a unit tangent vector: by \autoref{def:L2RiemannianMetricGraph} this means
$\sum_i \mu_i \Vert \vec{v_i} \Vert^2 = 1$.

We want to compute the second derivative at $t=0$ of $E_\cG(t) \coloneqq E_\cG \circ \exp_{f}(t \vec{v})$. Let us denote 
\begin{equation}
E_{ij}(t) \coloneqq  \frac 12  d\;(\exp_{x_i}(t\vec{v_i}) , \exp_{x_j}(t\vec{v_j}))^2
\end{equation} 
for each edge $e_{ij}$ between points $p_i$ and $p_j$.

First we observe that if the second variation of $E_\cG$ is small, then \autoref{prop:SecondDerivativeHyperbolicEstimates} implies that the tangent vectors $\vec{v_i}$ all have approximately the same length. Precisely,

\begin{lemma}
\label{lem:meshVectorsEqualLength}
For all $i$ we have 
\begin{equation}
\left| \; \Vert \vec{v_i} \Vert - \frac{1}{\sqrt A} \; \right| < 3 \sqrt{E_\cG''(0) \ \frac{ D}{\Omega}}~,
\end{equation}
where $D$ is the diameter of the quotient graph $\cG/\pi_1S$, $A=\sum_i \mu_i$, and $\Omega=\min \omega_{ij}$.
\end{lemma}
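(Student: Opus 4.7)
The plan is to combine three ingredients: the edge-level estimate of \autoref{prop:SecondDerivativeHyperbolicEstimates}, path-connectedness of the quotient graph $\cG / \pi_1 S$, and the unit-norm normalization $\sum_i \mu_i c_i^2 = 1$ where $c_i := \Vert \vec{v_i} \Vert$. The first step controls how much the $c_i$ can differ between adjacent vertices; the second propagates the control to arbitrary pairs of vertex classes; the third uses the normalization to pin down the common near-value as $1/\sqrt{A}$.

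For the edge-level step, \autoref{prop:SecondDerivativeHyperbolicEstimates} applied to an edge $e = e_{xy}$ of $\cG$ gives $\Vert \vec{v}_x - P_{[yx]}\vec{v}_y \Vert^2 \leq E_e''(0)$, where $E_e''(0)$ denotes the unweighted contribution of $e$ to $E_\cG''(0)$. Since parallel transport is an isometry, the reverse triangle inequality yields $|c_x - c_y| \leq \Vert \vec{v}_x - P_{[yx]}\vec{v}_y \Vert$. To compare $c_i$ and $c_j$ for any two indices, I would choose a shortest path in $\cG / \pi_1 S$ between the classes of $p_i$ and $p_j$ and lift it to a path $p_i = q_0, q_1, \ldots, q_k = \gamma \cdot p_j$ in $\cG$, with $k \leq D$ and $\Vert \vec{v}_{\gamma p_j} \Vert = c_j$ by equivariance. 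Because the projection of this path to the quotient is shortest, its edges $e_0, \ldots, e_{k-1}$ lie in pairwise distinct $\pi_1 S$-orbits, so $\sum_\ell \omega_{e_\ell} E_{e_\ell}''(0) \leq E_\cG''(0)$. A telescoping triangle inequality combined with a weighted Cauchy--Schwarz then gives
\begin{equation}
|c_i - c_j| \leq \sum_{\ell=0}^{k-1} \Vert \vec{v}_{q_\ell} - P\vec{v}_{q_{\ell+1}}\Vert \leq \Bigl(\sum_\ell \tfrac{1}{\omega_{e_\ell}}\Bigr)^{\!1/2} \Bigl(\sum_\ell \omega_{e_\ell} E_{e_\ell}''(0)\Bigr)^{\!1/2} \leq \sqrt{\tfrac{D\, E_\cG''(0)}{\Omega}} =: \epsilon.
\end{equation}

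For the final step, from $|c_i - c_j| \leq \epsilon$ for all $i, j$ and $\sum_j \mu_j c_j^2 = 1$, the bound $c_j \leq c_i + \epsilon$ yields $1 \leq A(c_i + \epsilon)^2$, hence $1/\sqrt{A} - c_i \leq \epsilon$. When $c_i \geq \epsilon$, the symmetric bound $c_j \geq c_i - \epsilon \geq 0$ gives $c_i - 1/\sqrt{A} \leq \epsilon$ in the same way; in the remaining case $c_i < \epsilon$, the first bound forces $1/\sqrt{A} < 2\epsilon$, so $|c_i - 1/\sqrt{A}| < 2\epsilon$ by the triangle inequality alone. In every case $|c_i - 1/\sqrt{A}| < 2\epsilon < 3\epsilon$, as claimed. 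The subtlest point to establish is the distinctness of $\pi_1 S$-orbits along the lifted shortest path, without which the sum would acquire a combinatorial multiplicity and the dependence on $D$ would degrade.
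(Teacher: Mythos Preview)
Your proof is correct and follows essentially the same three-step approach as the paper: the edge-level bound from \autoref{prop:SecondDerivativeHyperbolicEstimates}, a path plus Cauchy--Schwarz argument to get $|c_i - c_j| \leq \epsilon$, and then the normalization $\sum_i \mu_i c_i^2 = 1$ to pin down $1/\sqrt{A}$. Your weighted Cauchy--Schwarz and your case analysis in the last step are minor variations that in fact yield the slightly sharper constant $2$ in place of the paper's $1+\sqrt{2}$, and your explicit attention to distinctness of edge orbits along a shortest path in the quotient makes rigorous a point the paper leaves implicit.
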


\begin{proof}
Let $\varepsilon_{ij} = E_{ij}''(0)$ for each edge $e_{ij}\in\cE$, so that 
$
E_\cG''(0) = \sum_{e_{ij}\in\cE} \omega_{ij} \varepsilon_{ij}
$. 
Observe that for each edge $e_{ij}\in\cE$, one finds that 
$|\Vert\vec{v_i}\Vert - \Vert\vec{v_j}\Vert|< \sqrt{\varepsilon_{ij}}$ by \autoref{prop:SecondDerivativeHyperbolicEstimates}.
For any pair of points $p_i$ and $p_j$, choose a 
path $p_i= p_{i_0}, p_{i_1}, \dots, p_{i_r} = p_j$ 
and observe that by the triangle inequality and the Cauchy-Schwarz inequality we have:
\begin{align}
\left| \Vert\vec{v_i}\Vert - \Vert \vec{v_j} \Vert \right| & \leqslant \sum_{k=1}^r \sqrt{\varepsilon_{i_{k-1}i_k} } 
\leqslant \left( \sum_{k=1}^r \varepsilon_{i_{k-1}i_k}\right)^{1/2} \cdot \left( \sum_{k=1}^r 1\right)^{1/2}
~. 
\end{align}
Now the path may be taken with length $r\leqslant D$, and $ \Omega \sum \varepsilon_{ij} \leqslant E_\cG''(0)$, so the above inequality 
implies
\begin{equation}
\label{eq:ViVj}
\left| \; \Vert\vec{v_i}\Vert - \Vert \vec{v_j} \Vert \; \right|  <  \sqrt{\frac{E_\cG''(0) \ D}{\Omega}}~.
\end{equation}

Let $\delta=\sqrt{ E_{\cG}''(0) \ D /\Omega}$. As $| \|\vec{v_i}\| - \| \vec{v_j}\| | < \delta$ for all $i,j$,
\begin{align*}
| \|\vec{v_i}\|^2 - \|\vec{v_j}\|^2 | &= | \|\vec{v_i}\| - \|\vec{v_j}\| | \cdot | \|\vec{v_i}\| + \|\vec{v_j}\| | \\
& \le  | \|\vec{v_i}\| - \|\vec{v_j}\| | \cdot (  | \|\vec{v_i}\| - \|\vec{v_j}\| | + 2\|\vec{v_i}\|) \\
& < \delta^2 + 2 \delta \|\vec{v_i}\|~.
\end{align*}
As $\vec{v}$ is a unit tangent vector and $A = \sum_j \mu_j$, we have
\begin{align*}
1 &= \sum_j \mu_j \|\vec {v_j}\|^2 = A \|\vec{v_i}\|^2 + \sum_j \mu_j(\|\vec{v_j}\|^2 - \|\vec{v_i}\|^2) ~.
\end{align*}
Rearranging and taking absolute values we find
\[
| A \|\vec{v_i}\|^2 - 1 | \leqslant \sum_j \mu_j | \|\vec{v_i}\|^2 - \|\vec{v_j}\|^2 | < A (\delta^2 + 2 \delta \|\vec{v_i}\|)~.
\]
Dividing by $A$ we obtain
\[
\left| \|\vec{v_i}\|^2 - \frac 1A \right| < \delta^2 + 2 \delta \|\vec{v_i}\|~.
\]
This implies that we have
\begin{align*}
\|\vec{v_i}\|^2 - 2 \delta \|\vec{v_i}\| - \delta^2 &< \frac 1 A~, \text{ and} \\
\|\vec{v_i}\|^2 +2 \delta \|\vec{v_i}\| + \delta^2 &> \frac 1 A~.
\end{align*}
In other words,
\begin{align*}
(\|\vec{v_i}\| - \delta )^2 &< \frac 1 A + 2\delta^2~, \text{ and} \\
(\|\vec{v_i}\| + \delta )^2 &< \frac 1 A ~.
\end{align*}
We conclude that $\frac1{\sqrt{A}} -\delta < \|\vec{v_i}\| < \delta + \sqrt{ \frac 1A + 2\delta^2 } <\frac1 {\sqrt{A}} + (1+\sqrt2)\delta < \frac1 {\sqrt{A}} + 3\delta$, as desired.
\end{proof}

Together with \autoref{prop:SecondDerivativeHyperbolicEstimates}, this is enough to produce a lower bound for the second variation:
\begin{proposition}
\label{prop:SecondDerivativeEstimateBelow}
We have 
\begin{equation}
\label{eq:SecondDerivativeLowerBound1}
\frac{\upd^2}{\upd t^2}\evalat{t=0}
E_\cG (t) \geqslant \frac1{9A\left(1+\sqrt{\frac{D}{\Omega}}\right)^2}~,
\end{equation}
where $D$ is the diameter of the quotient graph $\cG/\pi_1S$, $A=\sum_i \mu_i$, and $\Omega = \min \omega_{ij}$ is the minimum edge weight.
\end{proposition}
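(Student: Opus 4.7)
The plan is to argue by contradiction, combining the pointwise bound of \autoref{prop:SecondDerivativeHyperbolicEstimates} with the uniform norm estimate of \autoref{lem:meshVectorsEqualLength} and the unit-norm condition $\sum_i\mu_i\|\vec{v}_i\|^2=1$, and then invoking the Fuchsian hypothesis on $\rho_\tR$ to close the argument.

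First I would sum the pointwise inequality $E_{ij}''(0)\geq \|\vec{v}_i - P_{[x_jx_i]}\vec{v}_j\|^2$ from \autoref{prop:SecondDerivativeHyperbolicEstimates} against the edge weights to obtain
\[
h\coloneqq E_\cG''(0)\;\geq\;\sum_{e_{ij}\in\cE}\omega_{ij}\,\|\vec{v}_i-P_{[x_jx_i]}\vec{v}_j\|^2,
\]
which, via $\|\vec{v}_i-P\vec{v}_j\|\geq \bigl|\|\vec{v}_i\|-\|\vec{v}_j\|\bigr|$ and the chaining argument of \autoref{lem:meshVectorsEqualLength}, yields $|\|\vec{v}_i\|-1/\sqrt{A}|<3\delta$ for every $i$, with $\delta=\sqrt{hD/\Omega}$.

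Next I would split into two cases according to the size of $\delta$. In \emph{Case A}, $\delta\geq 1/(3\sqrt{A})$, i.e.\ $h\geq \Omega/(9AD)$: an elementary algebraic check shows $\Omega/(9AD)\geq 1/[9A(1+\sqrt{D/\Omega})^2]$, so the desired inequality holds immediately. In \emph{Case B}, $\delta<1/(3\sqrt{A})$: the key observation is that the target inequality $h\leq 1/[9A(1+\sqrt{D/\Omega})^2]$ is equivalent (after squaring and rearranging) to $1/\sqrt{A}-3\delta\geq 1/[\sqrt{A}(1+\sqrt{D/\Omega})]$. Thus, assuming for contradiction $h<1/[9A(1+\sqrt{D/\Omega})^2]$, \autoref{lem:meshVectorsEqualLength} forces
\[
\|\vec{v}_i\|\;>\;\frac{1}{\sqrt{A}\bigl(1+\sqrt{D/\Omega}\bigr)}\;>\;0\qquad\text{for every }i.
\]
Combined with the small upper bound on $\sum\omega_{ij}\|\vec{v}_i-P\vec{v}_j\|^2$, this produces a vector field $\vec{v}$ that is simultaneously uniformly nonzero and almost parallel along every edge, resembling a nowhere-vanishing $\rho_\tR$-equivariant parallel vector field on $\bH^2$. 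Since $\rho_\tR$ is Fuchsian, no such field exists (chaining parallel transports around a nontrivial loop in $\cG/\pi_1S$ must reproduce the derivative $d\rho_\tR(\gamma)$, while almost-parallelness forces this composite to be close to the identity).

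The main obstacle is making the Case~B contradiction \emph{quantitative} so that it yields precisely the stated constant $1/[9A(1+\sqrt{D/\Omega})^2]$. Since this bound depends only on the combinatorial data $A$, $D$, $\Omega$ and not on $\rho_\tR$, the contradiction cannot be extracted from a soft compactness argument. A natural refinement is to retain the nonnegative terms $bD\tanh(D/2)$ and $cD/\sinh D$ from the exact formula in \autoref{prop:DerivativesE} (discarded when passing to \autoref{prop:SecondDerivativeHyperbolicEstimates}) and chain these strengthened edgewise estimates around a shortest nontrivial loop in $\cG/\pi_1S$. Together with the fact that $\|\vec{v}_i\|$ is trapped in a narrow window around $1/\sqrt{A}$, this should convert the topological obstruction into the claimed universal lower bound, with the Fuchsian hypothesis entering only through the existence of a loop on which $d\rho_\tR$ is nontrivial.
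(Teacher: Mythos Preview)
Your overall strategy is correct and matches the paper's: assume the bound fails, use \autoref{lem:meshVectorsEqualLength} to force all the $\vec{v}_i$ to be uniformly nonzero, and then derive a topological contradiction from the ``almost parallel'' condition. However, you explicitly identify your own gap, and it is a genuine one: you do not have a mechanism that converts ``almost parallel and nonvanishing'' into a contradiction yielding \emph{exactly} the constant $1/[9A(1+\sqrt{D/\Omega})^2]$. Your proposed fix --- retaining the extra terms from \autoref{prop:DerivativesE} and chaining around a shortest homotopically nontrivial loop to compare with $d\rho_\tR(\gamma)$ --- goes in the wrong direction. Those extra terms are not used, and the obstruction is not about holonomy matching the representation.

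The paper's missing ingredient is a \emph{winding number} argument at the level of individual triangles. From $\|\vec{v}_i - P_{[x_jx_i]}\vec{v}_j\| \leq \delta_{ij}$ one bounds the angle $\theta_{ij}$ between $\vec{v}_i$ and $P_{[x_jx_i]}\vec{v}_j$ by $\pi\,\delta_{ij}/\|\vec{v}_i\|$. The contradiction hypothesis is arranged precisely so that this ratio is strictly less than $1/3$, giving $|\theta_{ij}| < \pi/3$ on every edge. Now interpolate the $\vec{v}_i$ linearly along edges and compute the winding number of this field around the boundary of each triangle $T$: it equals $\frac{1}{2\pi}(\theta_{ij}+\theta_{jk}+\theta_{ki}) + \frac{1}{2\pi}(\text{sum of exterior angles of }T)$. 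The first sum lies in $(-\pi/2,\pi/2)$ and the second in $[\pi,3\pi]$ (Gauss--Bonnet in nonpositive curvature), so the winding number is strictly between $0$ and $2$, hence equals $1$ for every triangle. This lets one extend $\vec{v}$ to a nonvanishing section of $f^*\upT N$ over the whole surface, contradicting $e(f^*\upT N) = \deg f \cdot \chi(N) \neq 0$. The constant $9A(1+\sqrt{D/\Omega})^2$ falls out exactly from the requirement $|\theta_{ij}| < \pi/3$; no further analytic refinement is needed.
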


\begin{remark}
\label{rem:collectingConstants}
Observe that this demonstrates that there is a constant $C>0$ independent of $\cG$ so that 
the modulus of convexity of $E_\cG$ is at least $ \frac{C \Omega}{AD}$. When $\cG$ is the biweighted graph underlying a Fuchsian representation as in \autoref{def:MeshToGraph}, the modulus of convexity is at least $\frac{C \; \Omega}{|\chi(S)|D}$.
\end{remark}

\begin{proof}

Suppose towards contradiction that $E_\cG''(0) < \frac19 A^{-1} \left(1+\sqrt{\frac{D}{\Omega}}\right)^{-2}$, i.e.~
\begin{equation}\label{eq:contradiction}
\sqrt{E_\cG''(0)} < \frac1{3\sqrt A} - \sqrt{E_\cG''(0)\cdot \frac{D}{\Omega}}~.
\end{equation}
Rearranging, we find that
\begin{equation}
\frac{\sqrt{E_\cG''(0)}}{\frac1{\sqrt{A}} - 3\sqrt{E_\cG''(0)\cdot \frac{D}{\Omega}}} < \frac13~.
\end{equation}

Now consider an edge $x_i\sim x_j$ in $\cG$, let $\delta_{ij} = \| \vec{v_i} - P_{[x_jx_i]} \vec{v_j} \|$, and let $\theta_{ij}$ indicate the principal value of the angle formed between $\vec{v_i}$ and $P_{[x_jx_i]}\vec{v_j}$. Evidently, $\| \vec{v_i} \| |\theta_{ij}|$ is bounded by $\frac \pi 2$ times the distance between $ \vec{v_i} $ and $\frac{\|\vec{v_i}\|}{\|\vec{v_j}\|} P_{[x_jx_i]}\vec{v_j}$, and the latter distance is bounded by $\delta_{ij} + \left| \|\vec{v_i}\| - \|\vec{v_j}\|\right| \leqslant 2\delta_{ij}$. Therefore we have 
\begin{equation}
\| \vec{v_i} \|| \theta_{ij}| \leqslant  \pi \cdot \delta_{ij} ~.
\end{equation}
It is not hard to see that \eqref{eq:contradiction} implies $3(E_\cG''(0)\cdot D/\Omega)^{1/2} < \frac 1{\sqrt{A}}$, so by \autoref{lem:meshVectorsEqualLength} we conclude that $ \|\vec{v_i}\| \geqslant \frac1{\sqrt{A}} - 3(E_\cG''(0)\cdot D/\Omega)^{1/2}>0$. In that case we may rearrange the above to find that
\begin{equation}\label{eq:bound angle}
|\theta_{ij}| \leqslant \pi \frac{\delta_{ij}}{\|\vec{v_i}\|} \leqslant \pi \cdot \frac{\sqrt{E_\cG''(0)}}{\frac1{\sqrt{A}} - 3\sqrt{E_\cG''(0)\cdot D/\Omega}} < \frac\pi 3~,
\end{equation}
by our assumption \eqref{eq:contradiction}.

Now extend the vectors $\vec{v_i}$ at the vertices of $\cG$ over each of the edges of the graph $\cG \subset Y$ by interpolating linearly.
%
%
Consider a triangle $T$ in $\cG$ with vertices $x_i,x_j,x_k$ and boundary contour $\gamma$, oriented counterclockwise. The winding number of $\vec{V}$ with respect to $\gamma$, denoted $\omega_\gamma(\vec{V})$, is given by 
\begin{equation}
\omega_\gamma(\vec{V}) = \frac{1}{2\pi}\left( \theta_{ij}+\theta_{jk}+\theta_{ki} + \theta_i + \theta_j + \theta_k \right)~,
\end{equation} 
where $\theta_i,\theta_j,\theta_k$ are the exterior angles of $T$ at $x_i,x_j,x_k$, respectively. 
Evidently, the sum of the exterior angles is at least $\pi$, and at most $3\pi$.
Together with \eqref{eq:bound angle}, we find that $0<\omega_\gamma(\vec{V}) <2$, so it follows that $\omega_\gamma(\vec{V})=1$ for every triangle $T$. Now $\vec{V}$ can be extended over the triangles to a nonvanishing section of $f^*\upT Y$, contradicting \autoref{lem:no sections}.   
\end{proof}

\begin{lemma}
\label{lem:no sections}
Suppose that $f:S \to Y$ is a map between surfaces with $\chi(Y)\ne0$ and $\deg f \ne 0$. Then $f^*\upT Y$ has no nonvanishing sections.
\end{lemma}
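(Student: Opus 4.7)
The plan is to use the Euler class of the pullback bundle $f^*\upT Y$. Since $Y$ is a surface and $f^*\upT Y$ is a rank-2 oriented real vector bundle over the closed oriented surface $S$, a standard fact in obstruction theory says that $f^*\upT Y$ admits a nonvanishing section if and only if its Euler class $e(f^*\upT Y) \in H^2(S;\bZ) \cong \bZ$ vanishes. So it suffices to show this Euler class is nonzero.

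Next I would invoke naturality of the Euler class: $e(f^*\upT Y) = f^*\, e(\upT Y)$. Pairing with the fundamental class $[S]$ and using the definition of degree,
\begin{equation}
\langle e(f^*\upT Y), [S]\rangle = \langle f^* e(\upT Y), [S]\rangle = \langle e(\upT Y), f_*[S]\rangle = \deg(f)\cdot \langle e(\upT Y), [Y]\rangle.
\end{equation}
By the Poincaré--Hopf theorem, $\langle e(\upT Y), [Y]\rangle = \chi(Y)$. Thus the Euler number of $f^*\upT Y$ equals $\deg(f)\cdot \chi(Y)$, which is nonzero by hypothesis. Hence $e(f^*\upT Y)\neq 0$ and no nonvanishing section exists.

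A minor technical point to dispatch is orientability: if $Y$ is nonorientable then $\upT Y$ has no global orientation, but one can either pass to the orientation double cover of $Y$ (noting $\chi$ doubles) or work with $\bZ/2$ Euler classes and replace $\deg$ by the mod-$2$ degree. For the setting of the paper, $Y$ is a surface of nonzero Euler characteristic and the relevant examples are orientable (hyperbolic surfaces), so the argument above applies directly. I do not expect any serious obstacle; the lemma is essentially a one-line application of the Poincaré--Hopf theorem combined with naturality of the Euler class.
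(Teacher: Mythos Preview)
Your proposal is correct and is essentially identical to the paper's proof: the paper also computes $\langle e(f^*\upT Y),[S]\rangle = \langle f^*e(\upT Y),[S]\rangle = \langle e(\upT Y), f_*[S]\rangle = \deg f \cdot \chi(Y)$ and concludes that the Euler class is nonzero. Your additional remarks on obstruction theory, Poincar\'e--Hopf, and orientability simply make explicit what the paper leaves tacit.
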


\begin{proof}
Let $e$ indicate the \emph{Euler class}. We have
\begin{equation}
\label{eq:euler cl}
\langle e(f^*\upT Y),[S] \rangle = \langle f^*e(\upT Y) , [S] \rangle = \langle e(\upT Y) , f_*[S] \rangle = \deg f \cdot \chi(Y)~,
\end{equation}
which is nonzero by assumption. Therefore $e(f^*\upT Y) \ne 0$, and $f^*\upT Y$ has no nonvanishing sections.
\end{proof}

The discrete heat flow will also require an upper bound for the Hessian of $E_\cG$ (see \autoref{sec:DiscreteHeatFlow}). 
\begin{proposition}
\label{prop: second derivative estimate above}
Suppose that $E_\cG(0) \leqslant E_0$. Then  
\begin{equation}
\label{eq:SecondDerivativeUpperBound}
\frac{\upd^2}{\upd t^2}\evalat{t=0}
E_\cG (t)  \leqslant \frac{2 V W}{ U}\left( 1+ \sqrt{\frac{E_0}{\Omega}} \coth \sqrt{\frac{E_0}{\Omega}}\right) \eqqcolon \beta
\end{equation}
where $V$ is the maximum valence of vertices of $\cG$, $U= \min\{\mu_i\}$ is the minimum vertex weight, 
$\Omega=\min\{\omega_{ij}\}$ is the minimum edge weight, and $W=\max\{\omega_{ij}\}$ is the maximum edge weight.
\end{proposition}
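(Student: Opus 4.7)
The strategy is to produce an edge-by-edge upper bound on $E_{ij}''(0)$ using \autoref{prop:DerivativesE}, then sum over a fundamental domain for edges and control the result in terms of the valence, the vertex weights, and the edge weights.

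First I would fix a single edge $e_{ij}\in\cE$, whose endpoints are mapped to $x_i,x_j\in\bH^2$ at distance $D_{ij}$. Rewriting the formula from \autoref{prop:DerivativesE} using the identity $D\coth D-D\tanh(D/2)=D\csch D$, one has $E_{ij}''(0)=a+bD_{ij}\tanh(D_{ij}/2)+cD_{ij}\csch D_{ij}$, where $a,b,c\geqslant 0$ are quadratic expressions in $\Vert\vec{v}_i\Vert,\Vert\vec{v}_j\Vert$. Cauchy-Schwarz yields $a\leqslant 2(\Vert\vec{v}_i\Vert^2+\Vert\vec{v}_j\Vert^2)$, $b\leqslant\Vert\vec{v}_i\Vert^2+\Vert\vec{v}_j\Vert^2$, and $c\leqslant 2b$; combined with the hyperbolic identity $\tanh(D/2)+2\csch D=\coth(D/2)$ and the comparison $D\coth(D/2)\leqslant 2D\coth D$ (which reduces to the elementary $\cosh^2(D/2)\leqslant\cosh D$), this gives the per-edge bound
\[ E_{ij}''(0)\leqslant 2(\Vert\vec{v}_i\Vert^2+\Vert\vec{v}_j\Vert^2)(1+D_{ij}\coth D_{ij}). \]

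Next, the hypothesis $E_\cG(f)\leqslant E_0$ combined with $E_\cG(f)=\frac{1}{2}\sum\omega_{ij}D_{ij}^2$ gives $D_{ij}\leqslant\sqrt{2E_0/\Omega}$ for every edge. Since $x\mapsto x\coth x$ is monotone increasing on $(0,\infty)$ (easily verified via series expansion at zero and the behavior at infinity), the factor $1+D_{ij}\coth D_{ij}$ is uniformly controlled by an expression depending only on $E_0$ and $\Omega$. Summing the per-edge estimate over $\cE$ and using $\omega_{ij}\leqslant W$:
\[ E_\cG''(0)=\sum_{e_{ij}\in\cE}\omega_{ij}E_{ij}''(0)\leqslant 2W(1+D_{\max}\coth D_{\max})\sum_{e_{ij}\in\cE}(\Vert\vec{v}_i\Vert^2+\Vert\vec{v}_j\Vert^2). \]
Reorganizing the vertex sum, each $v\in\cV$ appears in at most $V$ edges, so the right-hand sum is bounded by $V\sum_{v\in\cV}\Vert\vec{v}\Vert^2$; the unit-norm constraint $1=\sum_v\mu_v\Vert\vec{v}\Vert^2\geqslant U\sum_v\Vert\vec{v}\Vert^2$ from \autoref{def:L2RiemannianMetricGraph} then yields $\sum_v\Vert\vec{v}\Vert^2\leqslant 1/U$, producing the advertised prefactor $\frac{2VW}{U}$.

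The only delicate point is matching the constants in the statement exactly: my rough bounds on $a$ and on $D\coth(D/2)$ introduce extra factors of $2$, so the argument of $\coth$ would naively come out as $\sqrt{2E_0/\Omega}$ instead of $\sqrt{E_0/\Omega}$. Sharper bookkeeping on $a$ (splitting into radial and tangential components along the edge geodesic, exploiting the parallel transport identity used in the proof of \autoref{prop:SecondDerivativeHyperbolicEstimates}) together with the sharper inequality $\tanh(D/2)\leqslant\coth D$ in place of $\coth(D/2)\leqslant 2\coth D$ should recover the exact bound. These refinements are purely computational and do not affect the structure of the argument---per-edge trigonometric formula, elementary hyperbolic inequalities, and edge-to-vertex reorganization using the valence and the weights.
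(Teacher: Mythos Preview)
Your approach is essentially identical to the paper's: you derive the per-edge bound $E_{ij}''(0)\leqslant 2(\Vert\vec{v}_i\Vert^2+\Vert\vec{v}_j\Vert^2)(1+D_{ij}\coth D_{ij})$ (which the paper simply asserts as ``not hard to see from \autoref{prop:DerivativesE}''), then sum over $\cE$, replace $\omega_{ij}$ by $W$, reorganize the edge sum into a vertex sum using the valence $V$, and invoke the unit-norm constraint with the minimum vertex weight $U$. Your detailed derivation of the per-edge bound via the identities $\tanh(D/2)+2\csch D=\coth(D/2)$ and $\cosh^2(D/2)\leqslant\cosh D$ is correct and in fact more explicit than the paper's.

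Your worry in the last paragraph is misplaced: your per-edge bound already matches the paper's exactly, so no ``sharper bookkeeping'' on $a,b,c$ is needed. The factor-of-$2$ discrepancy you noticed is real and lies elsewhere: from $E_\cG(0)=\tfrac{1}{2}\sum\omega_{ij}D_{ij}^2\leqslant E_0$ one only gets $\Omega L^2\leqslant 2E_0$, not $\Omega L^2\leqslant E_0$ as the paper writes. So the argument of $\coth$ should indeed be $\sqrt{2E_0/\Omega}$; the paper appears to have silently dropped this factor. This is a harmless constant that does not affect any downstream use of $\beta$, and your argument is complete as it stands.
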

 
\begin{proof}
It is not hard to see from \autoref{prop:DerivativesE} that we have
\begin{equation}
\label{eq:HessianUpperBound}
\frac{\upd^2}{\upd t^2}\evalat{t=0} E_{ij}(t) \leqslant 2 \left( \Vert\vec{v_i}\Vert^2 + \Vert\vec{v_j}\Vert^2\right) \left(1+ d(x_i,x_j) \coth d(x_i,x_j) \right)~.
\end{equation}
Letting $L=\max\{ d(x_i,x_j)\}$
we find 
\begin{align}
\frac{\upd^2}{\upd t^2}\evalat{t=0} E_\cG(t) &= \sum_{e_{ij}\in\cE} \omega_{ij} \frac{\upd^2}{\upd t^2}\evalat{t=0} E_{ij}(t)  
\leqslant 2W \left(1+ L \coth L\right) \sum_{e_{ij}\in\cE} \left(\Vert \vec{v_i}\Vert^2 + \Vert \vec{v_j}\Vert^2 \right) \\
& \leqslant  \frac {2W}{U} \left(1+ L \coth L\right) \sum_{e_{ij}\in\cE} \left( \mu_i \Vert \vec{v_i}\Vert^2 + \mu_j \Vert \vec{v_j}\Vert^2 \right) \\
& \leqslant  \frac{2 VW}{U} \left(1 + L \coth L\right) \sum_i \mu_i \Vert\vec{v_i}\Vert^2 =   \frac{2  VW}{U} \left(1 + L \coth L\right)~.
\end{align}
The assumption $E_\cG(0) \leqslant E_0$ implies that $\Omega L^2 \leqslant E_0$, so we are done.
\end{proof}

Together, \autoref{prop: second derivative estimate above} and \autoref{prop:SecondDerivativeEstimateBelow} imply:

\begin{theorem}\label{thm:StrongConvexityEnergyGraphH2}
 Suppose that $\cG$ is a biweighted triangulated graph, $N = \bH^2$ is the hyperbolic plane and $\rho = \rho_\tR \colon \pi_1S\to \Isom^+(\bH^2)$ is Fuchsian. Then the energy functional $E_\cG : \Map_{\text{eq}}(\cG, N) \to \bR$ is strongly convex. More precisely,
\begin{equation}
 \forall \vec{v} \quad 
   \alpha \Vert \vec{v} \Vert^2 \leqslant \Hess(E_\cG)(\vec{v},\vec{v})
\end{equation}
where $\alpha$ is given explicitly by \eqref{eq:SecondDerivativeLowerBound1}. Moreover, on the compact convex set $\{E \leqslant E_0\} \subseteq \Map_{\text{eq}}(\cG, N)$,
\begin{equation}
 \forall \vec{v} \quad \Hess(E_\cG)(\vec{v},\vec{v})
 \leqslant 
 \beta \Vert \vec{v} \Vert^2
\end{equation}
where $\beta$ is given explicitly by \eqref{eq:SecondDerivativeUpperBound}.
\end{theorem}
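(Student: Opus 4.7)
The statement is essentially a packaging of the two preceding propositions together with elementary properties of the sublevel sets of $E_\cG$. The strategy is to translate the second-derivative bounds established along curves $t \mapsto \exp_f(t\vec{v})$ into Hessian bounds in the $L^2$-Riemannian structure, and then to verify the compactness and convexity claim separately.

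The key identification is that by \autoref{def:L2RiemannianMetricGraph}, the geodesics in the Riemannian manifold $\Map_{\text{eq}}(\cG, \bH^2)$ are given pointwise by $f_t(p_i) = \exp_{f(p_i)}(t \vec{v_i})$. Consequently, for any $f$ and any tangent vector $\vec{v}$, the Hessian satisfies
\begin{equation*}
\Hess(E_\cG)\evalat{f}(\vec{v},\vec{v}) = \frac{\upd^2}{\upd t^2}\evalat{t=0} E_\cG(\exp_f(t\vec{v})),
\end{equation*}
which is precisely the quantity analyzed in \autoref{prop:SecondDerivativeEstimateBelow} and \autoref{prop: second derivative estimate above}.

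For the lower bound, I would apply \autoref{prop:SecondDerivativeEstimateBelow} to a unit tangent vector (in the $L^2$-norm coming from \eqref{eq:L2RiemannianMetricGraph2}) and then extend to arbitrary $\vec{v}$ by the quadratic homogeneity of the Hessian. For the upper bound on $\{E_\cG \leqslant E_0\}$, the same normalization trick reduces the problem to \autoref{prop: second derivative estimate above}; the hypothesis $E_\cG(f) \leqslant E_0$ gives the bound $\Omega L^2 \leqslant 2 E_0$ on the maximum edge length $L$, which is exactly what is used in the proof of that proposition. Strong convexity of $E_\cG$ in the sense of \autoref{def:ConvexFunction} then follows from the lower Hessian bound via \autoref{prop:CharacConvexFunction}.

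The remaining task is to justify that $\{E_\cG \leqslant E_0\}$ is compact and convex. Convexity is immediate from \autoref{prop:EnergyGraphConvex}, since $\bH^2$ is Hadamard. Compactness is where I expect to do the only nontrivial work: since $\Map_{\text{eq}}(\cG, \bH^2) \cong (\bH^2)^n$ is noncompact, one must rule out escape to infinity, and this is where the Fuchsian hypothesis enters. The argument I would use exploits that for Fuchsian $\rho_\tR$ there exist $\gamma, \gamma' \in \pi_1 S$ whose images generate a non-elementary subgroup, so the combined displacement function $x \mapsto d(x, \rho_\tR(\gamma)x) + d(x, \rho_\tR(\gamma')x)$ is proper on $\bH^2$; equivariance $f(\rho_\tL(\gamma) p_i) = \rho_\tR(\gamma) f(p_i)$ expresses these displacements as sums of distances along edge-paths in $\cG$, each controlled by $\sqrt{E_0/\Omega}$ via \autoref{def:EnergyFunctionalGraph}. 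This forces each $f(p_i)$ to lie in a bounded region of $\bH^2$, and closedness of the sublevel set (from continuity of $E_\cG$) yields compactness.
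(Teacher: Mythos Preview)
Your proposal is correct and matches the paper's approach: the theorem is simply the conjunction of \autoref{prop:SecondDerivativeEstimateBelow} and \autoref{prop: second derivative estimate above}, translated into Hessian language via the identification of geodesics in $\Map_{\text{eq}}(\cG,\bH^2)$ with pointwise geodesics in $\bH^2$. The paper does not spell out any of the packaging you describe; it states the theorem immediately after the two propositions with the sentence ``Together, \autoref{prop: second derivative estimate above} and \autoref{prop:SecondDerivativeEstimateBelow} imply:''.

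The one place you do more work than the paper is the compactness of $\{E_\cG \leqslant E_0\}$. Your properness argument via non-elementary displacement is valid, but there is a shorter route available once the lower bound is in hand: the manifold $\Map_{\text{eq}}(\cG,\bH^2)$ is, via \eqref{eq:L2DistanceDiscrete}, isometric to a weighted product $(\bH^2)^n$ and hence complete Hadamard; an $\alpha$-strongly convex function on such a space has a unique minimizer $f^*$ and satisfies $E_\cG(f) \geqslant E_\cG(f^*) + \tfrac{\alpha}{2} d(f,f^*)^2$, so every sublevel set is a closed bounded subset of a finite-dimensional complete manifold, hence compact. This avoids invoking the Fuchsian hypothesis a second time. (Minor quantitative point: your bound $\Omega L^2 \leqslant 2E_0$ is the correct one given the $\tfrac12$ in \autoref{def:EnergyFunctionalGraph}; the paper writes $\Omega L^2 \leqslant E_0$, which is off by that factor but inconsequential for the form of $\beta$.)
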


\subsection{More general target spaces}
\label{subsec:StrongConvexityGeneral}

Some of the steps in the proof of \autoref{thm:StrongConvexityEnergyGraphH2} actually hold in much greater generality. 
For instance, \autoref{prop:SecondDerivativeHyperbolicEstimates} holds verbatim if we replace $\bH^2$ with any
Hadamard manifold. Keeping the same setup as in \autoref{subsec:H2computations}, let $A,B\in N$, let $\vec{u}$ and $\vec{v}$ be vectors in $\upT_AN$ and $\upT_BN$, respectively, let $A_t= \exp_A(t\vec{u})$ and $B_t=\exp_B(t\vec{v})$, and let $E_{AB}(t) = \frac 12 d_N(A_t,B_t)^2$.

\begin{proposition}
\label{prop:SecondDerivativeGeneralEstimates}
Suppose that $N$ is a 
Hadamard manifold. 
Then we have 
\begin{equation} 
\frac{\upd^2}{\upd t^2}\evalat{t=0} E_{AB}(t) \geqslant \Vert \vec{u} - P_{[BA]}\vec{v} \Vert^2~.
\end{equation}
\end{proposition}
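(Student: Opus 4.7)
The plan is to deduce the inequality from the Riemannian second variation formula for the energy of a geodesic, combined with the nonpositivity of sectional curvature and a Cauchy--Schwarz estimate for vector fields along a curve. This generalizes the explicit computation of \autoref{prop:SecondDerivativeHyperbolicEstimates} by replacing the hyperbolic identities with intrinsic Jacobi-field calculus.

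First I would set up the variation. Since $N$ is Hadamard, any two points are joined by a unique geodesic depending smoothly on the endpoints, so we obtain a smooth variation
\begin{equation*}
H \colon [0,1] \times (-\varepsilon, \varepsilon) \to N
\end{equation*}
by letting $H(\cdot, t) = \gamma_t$ be the constant-speed geodesic from $A_t$ to $B_t$ parametrized on $[0,1]$. Each $\gamma_t$ has constant speed $\Vert \gamma_t'\Vert \equiv d(A_t, B_t)$, so
\begin{equation*}
E_{AB}(t) = \frac{1}{2} d(A_t, B_t)^2 = \frac{1}{2}\int_0^1 \Vert\gamma_t'\Vert^2\, ds = E(\gamma_t),
\end{equation*}
the energy of the curve $\gamma_t$. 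Moreover $H(0, t) = A_t$ and $H(1, t) = B_t$ are themselves geodesics in $t$, so $\nabla_{\partial_t}\partial_t H$ vanishes at $s \in \{0, 1\}$ for every $t$.

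Next I would apply the second variation of energy to this geodesic variation. Let $V(s) = \partial_t H(s, 0)$; since each $\gamma_t$ is a geodesic, $V$ is the Jacobi field along $\gamma_0$ with $V(0) = \vec u$ and $V(1) = \vec v$. The second variation formula, together with the vanishing of the boundary terms noted above, yields
\begin{equation*}
E_{AB}''(0) = \int_0^1 \bigl(\Vert V'\Vert^2 - \langle R(V, \gamma_0')\gamma_0', V\rangle\bigr)\, ds,
\end{equation*}
where $V' = \nabla_{\partial_s} V$. As $N$ has nonpositive sectional curvature the curvature term is nonpositive, so $E_{AB}''(0) \geqslant \int_0^1 \Vert V'\Vert^2\, ds$.

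Finally I would transport $V$ into the single tangent space $\upT_A N$: set $\tilde V(s) \coloneqq P_{\gamma_0(s) \to A}\,V(s)$. Since parallel transport is isometric and intertwines covariant with ordinary differentiation, $\tilde V'(s) = P_{\gamma_0(s) \to A}\,V'(s)$ and $\Vert\tilde V'\Vert = \Vert V'\Vert$. Cauchy--Schwarz then gives
\begin{equation*}
\Vert\vec u - P_{[BA]}\vec v\Vert^2 = \Vert\tilde V(0) - \tilde V(1)\Vert^2 = \left\Vert\int_0^1 \tilde V'(s)\, ds\right\Vert^2 \leqslant \int_0^1 \Vert V'(s)\Vert^2\, ds,
\end{equation*}
which combined with the previous inequality gives the desired bound. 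The only subtle point in this plan is the precise identification of the boundary terms in the second variation formula: the hypothesis that $A_t$ and $B_t$ are themselves geodesics (and not just arbitrary smooth curves through $A$ and $B$) is exactly what makes those contributions vanish, which is crucial because an uncontrolled boundary term could easily swamp the integral one wants to keep.
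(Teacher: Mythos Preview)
Your proof is correct, and it takes a genuinely different route from the paper. The paper argues via $\CAT(0)$ comparison geometry: it builds a comparison quadrilateral in $\bR^2$ for the four points $A_{-\delta},B_{-\delta},B_\delta,A_\delta$, uses the explicit Euclidean formula $E_{AB}''=\Vert\vec u-\vec v\Vert^2$ together with the $\CAT(0)$ distance inequality to obtain a discrete convexity estimate, then invokes Alexandrov's angle comparison theorem and lets $\delta\to 0$. Your argument instead stays entirely inside smooth Riemannian geometry: you realise $E_{AB}(t)$ as the energy of a geodesic variation, apply the second variation formula (with the boundary terms killed precisely because $t\mapsto A_t$ and $t\mapsto B_t$ are geodesics), drop the curvature term by nonpositivity, and finish with a Cauchy--Schwarz / parallel-transport estimate on the Jacobi field. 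Your approach is shorter and more intrinsic to the smooth setting, and it makes transparent exactly where the hypotheses ``Hadamard'' and ``$A_t,B_t$ are geodesics'' enter. The paper's approach, by contrast, is closer in spirit to metric geometry and would adapt more readily to $\CAT(0)$ targets without a smooth structure; it also ties back naturally to the explicit hyperbolic computation of \autoref{prop:SecondDerivativeHyperbolicEstimates}.
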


The proof below 
builds on \autoref{subsec:H2computations}. 
\begin{proof}
We start by noting that the computations for $E''_{AB}(0)$ (\autoref{prop:DerivativesF} and \autoref{prop:DerivativesE}) simplify dramatically 
when $N=\bR^2$. 
In this case we have
\begin{equation}
\label{eq:secondDerivEuclidean}
 \frac{\upd^2 }{\upd t^2} E_{AB}(t)  = \frac 12\Vert \vec{u} - \vec{v} \Vert^2 ~.
 \end{equation}
In particular, if $\vec{u}\ne \vec{v}$ then $E_{AB}(t)$ is $\frac 12 \Vert \vec{u} - \vec{v} \Vert^2$-strongly convex.

Now we turn to the general case. Let $r,s\in \bR$, and consider the quadrilateral through $A_{r}$, $B_{r}$, $B_{s}$, and $A_{s}$. Because Hadamard manifolds are $\CAT(0)$, this quadrilateral has a comparison quadrilateral in $\bR^2$ with vertices $ A_{r}'$, $ B_{r}'$, $ B_{s}'$, $ A_{s}'$. For any $t\in\bR$, as a consequence of \eqref{eq:secondDerivEuclidean} we have 
\begin{align}
\label{eq:comparisonQuad}
d(A_{(1-t)r+ts},B_{(1-t)r+ts})^2 & \leqslant d(A_{(1-t)r+ts}',B_{(1-t)r+ts}')^2 \\
& \leqslant (1-t) d(A_{r}',B_{r}')^2  + t d(A_{s}',B_{s}')^2 - \Vert\vec{u}' - \vec{v}'\Vert^2 \frac{ t(1-t)}{4} |r-s|^2 \\
& \leqslant (1-t) d(A_{r},B_{r})^2  + t d(A_{s},B_{s})^2 - \Vert\vec{u}' - \vec{v}'\Vert^2 \frac{ t(1-t)}{4} |r-s|^2~,
\end{align}
where $\vec{u}'=A_{s}'-A_{r}'$ and $\vec{v}'=B_{s}'-B_{r}'$ in $\bR^2$, respectively.
In terms of $E_{AB}$ we have
\begin{equation}
\label{eq:EABinequality1}
E_{AB}((1-t)r+ts)  \leqslant (1-t) E_{AB}(r) + t E_{AB}(s) - \Vert\vec{u}' - \vec{v}'\Vert^2 \frac{ t(1-t)}{4}|r-s|^2  ~.
\end{equation}

This is almost an equivalent formulation of the strong convexity of $E_{AB}$ (see \autoref{def:ConvexFunction}), however the term $ \Vert\vec{u}' - \vec{v}'\Vert^2 $ depends on both $r$ and $s$. In fact, this detail is essential, as $E_{AB}$ may fail to be strongly convex. 
For $\delta>0$, taking $r=s=\delta$ and $t=\frac 12$, one finds
\begin{equation}
E_{AB}(0)  = E_{AB}\left(\frac 12 \delta + \frac 12(-\delta) \right) \leqslant \frac 12 E_{AB}(\delta) + \frac 12 E_{AB}(-\delta) -  \Vert\vec{u}'-\vec{v}'\Vert^2 
\frac{\delta^2}{2}~.
\end{equation}
(We stress the dependence of $ \Vert\vec{u}'-\vec{v}'\Vert^2$ on $\delta$.)
Rearranging we find that 
\begin{equation}
\label{eq:EABinequality2}
 \Vert\vec{u}'-\vec{v}'\Vert^2 \leqslant \frac{ E_{AB}(\delta) - 2E_{AB}(0) + E_{AB}(-\delta)}{\delta^2}~.
\end{equation}
As $\delta\to 0$, the right-hand side approaches $\frac{\upd^2}{\upd t^2}\evalat{t=0} E_{AB}(t)$.
As for the left-hand side, we may rewrite
\begin{align}
\Vert\vec{u}' - \vec{v}'\Vert^2 & = \Vert\vec{u}'\Vert^2 + \Vert\vec{v}'\Vert^2 - 2\Vert\vec{u}'\Vert \Vert\vec{v}'\Vert \cos (\alpha' -\beta') \\
& =\Vert\vec{u}_{\delta}\Vert^2 + \Vert\vec{v}_{\delta}\Vert^2 - 2\Vert\vec{u}_{\delta}\Vert \Vert\vec{v}_{\delta}\Vert \cos (\alpha' -\beta')~,
\end{align}
where $\vec{u}_{\delta} = \exp_{A_{-\delta}}^{-1}(A_\delta)$ and $\vec{v}_{\delta}=\exp_{B_{-\delta}}^{-1}(B_\delta)$ (so that $\Vert\vec{u}_{\delta}\Vert=\Vert\vec{u}'\Vert$ and $\Vert\vec{v}_{\delta}\Vert=\Vert\vec{v}'\Vert$), and $\alpha'$ (resp.~$\beta'$) are the oriented angles (measured in $\bR^2$) between the geodesic $A_{-\delta}B_{-\delta}$ and $\vec{u}'$ (resp.~$\vec{v}'$).
By a well-known theorem of Alexandrov (see \cite{MR0049584} or \cite{MR1744486}), the interior angles of a quadrilateral of $N$ are smaller than those of the model, and we conclude that $\alpha_\delta \leqslant \alpha'$ and $\pi-\beta_\delta \leqslant \pi- \beta'$, where $\alpha_\delta$ and $\beta_\delta$ are the interior angles at $A_{-\delta}$ and $B_{-\delta}$, respectively, of the quadrilateral with vertices $A_{-\delta}$, $B_{-\delta}$, $B_{\delta}$, and $A_\delta$. 
Thus $\alpha_\delta - \beta_\delta \leqslant \alpha'-\beta'$, and we may conclude that 
\begin{equation}
\Vert\vec{u}' - \vec{v}'\Vert^2 \geqslant \Vert\vec{u}_{\delta}\Vert^2 + \Vert\vec{v}_{\delta}\Vert^2 - 2\Vert\vec{u}_{\delta}\Vert \Vert\vec{v}_{\delta}\Vert \cos (\alpha_\delta -\beta_\delta)~.
\end{equation}

Using the latter for the lefthand side of \eqref{eq:EABinequality2} and taking the limit as $\delta\to 0$, we find that 
\begin{equation}
\label{eq:EABinequality3}
\Vert\vec{u}\Vert^2 + \Vert\vec{v}\Vert^2 - 2\Vert\vec{u}\Vert \Vert\vec{v}\Vert \cos (\alpha -\beta)  \leqslant \frac{\upd^2}{\upd t^2}\evalat{t=0} E_{AB}(t)~.
\end{equation}
%
%
%
%
%
The lefthand side here is precisely $ \Vert  \vec{u} - P_{[BA]}\vec{v} \Vert^2$, as in the proof of \autoref{prop:SecondDerivativeHyperbolicEstimates}.
\end{proof}

This in turn makes a generalization of \autoref{thm:StrongConvexityEnergyGraphH2} 
possible:

\begin{theorem} \label{thm:StrongConvexityEnergyGraphGeneral}
Let $\cG$ be a biweighted $\tilde{S}$-triangulated graph, let $N$ be a two-dimensional closed Riemannian manifold of nonzero Euler 
characteristic and 
nonpositive sectional curvature
and suppose that $\rho \colon \pi_1S\to \pi_1 N$ is a group homomorphism induced by a homotopy class of maps $S \to N$ of nonzero degree.
Then the energy functional $E_\cG : \Map_{\text{eq}}(\cG, \tilde{N}) \to \bR$ is strongly convex 
with modulus of convexity given by \eqref{eq:SecondDerivativeLowerBound1}.
\end{theorem}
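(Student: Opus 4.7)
The plan is to replay the proof of Theorem \ref{thm:StrongConvexityEnergyGraphH2} in the Hadamard setting. Since $N$ has nonpositive sectional curvature, the universal cover $\tilde N$ is a Hadamard manifold, so Proposition \ref{prop:SecondDerivativeGeneralEstimates} supplies the per-edge lower bound $E_{ij}''(0) \geqslant \Vert \vec v_i - P_{[x_jx_i]}\vec v_j \Vert^2$ that replaces Proposition \ref{prop:SecondDerivativeHyperbolicEstimates}. This is the only analytic input used in Lemma \ref{lem:meshVectorsEqualLength} and in the length-comparison step of Proposition \ref{prop:SecondDerivativeEstimateBelow}, so both generalize verbatim: a unit tangent vector $\vec v \in \upT_f \Map_{\text{eq}}(\cG, \tilde N)$ with $E_\cG''(0)$ below the threshold \eqref{eq:SecondDerivativeLowerBound1} must have vertex components $\vec v_i$ of approximately equal length, and moreover each angle $\theta_{ij}$ between $\vec v_i$ and $P_{[x_jx_i]}\vec v_j$ in $\upT_{x_i}\tilde N$ must satisfy $|\theta_{ij}|<\pi/3$.

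I would then carry out the winding-number contradiction inside $\tilde N$. Interpolating along each image geodesic edge by parallel transport yields a nonvanishing vector field $\vec V$ on the $1$-skeleton of the image graph---well-defined because the bound $|\theta_{ij}|<\pi/3$ prevents cancellation. For each oriented geodesic triangle with boundary $\gamma$, one defines $\omega_\gamma(\vec V)$ intrinsically via parallel transport around $\gamma$. The key geometric fact is that the exterior angles of a geodesic triangle in $\tilde N$ sum to a value in $[2\pi, 3\pi]$: the upper bound $3\pi$ is automatic, while the lower bound $2\pi$ follows from the Gauss--Bonnet theorem, since nonpositive curvature forces interior angles to sum to at most $\pi$. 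Combined with $|\theta_{ij}+\theta_{jk}+\theta_{ki}|<\pi$, these estimates yield $0<\omega_\gamma(\vec V)<2$, hence $\omega_\gamma(\vec V)=1$ on every triangle. As in the hyperbolic case, $\vec V$ then extends to a nowhere-vanishing section of $f^*\upT N$ over $S$, contradicting Lemma \ref{lem:no sections} since $\chi(N)\neq 0$ and $\deg f\neq 0$.

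The main obstacle I anticipate is verifying that the winding-number formula retains its topological meaning in the curved setting, where $\upT\tilde N$ is no longer globally trivial as it was for $\bH^2$. One must check that $\omega_\gamma(\vec V)$, computed by parallel transporting all participating vectors to a common base tangent space, genuinely detects the obstruction to extending $\vec V$ nonvanishingly across the interior of each triangle. This is a standard application of obstruction theory for rank-two bundles over surfaces, and the nonpositive curvature hypothesis enters cleanly only through the Gauss--Bonnet estimate on the sum of exterior angles. Since the explicit constant in \eqref{eq:SecondDerivativeLowerBound1} depends only on $\cG$, the quantitative bound on the modulus of convexity carries over unchanged.
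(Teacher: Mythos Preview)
Your approach is essentially identical to the paper's: substitute Proposition~\ref{prop:SecondDerivativeGeneralEstimates} for Proposition~\ref{prop:SecondDerivativeHyperbolicEstimates}, after which Lemma~\ref{lem:meshVectorsEqualLength} and the winding-number argument of Proposition~\ref{prop:SecondDerivativeEstimateBelow} go through verbatim, the exterior-angle bound being justified (as you note) by Gauss--Bonnet in nonpositive curvature. One minor correction: your concern that $\upT\tilde N$ is ``no longer globally trivial'' is misplaced---$\tilde N$ is Hadamard, hence contractible, so $\upT\tilde N$ is trivial exactly as for $\bH^2$, and no additional obstruction-theoretic work is needed.
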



\begin{proof}
\autoref{prop:SecondDerivativeGeneralEstimates} may take the place of \autoref{prop:SecondDerivativeHyperbolicEstimates}, and we see that \autoref{lem:meshVectorsEqualLength} holds for any nonpositively curved target.
Now the proof of \autoref{prop:SecondDerivativeEstimateBelow} holds verbatim. (The reader can observe that the fourth sentence of the last paragraph still holds in the setting of nonpositive curvature: the sum of exterior angles of a geodesic triangle is between $\pi$ and $3\pi$.)
\end{proof}

\section{Discrete heat flow}
\label{sec:DiscreteHeatFlow}

\subsection{Gradient descent in Riemannian manifolds}
\label{subsec:Gradient descent}

The area of mathematics concerned with methods for finding the minima of a convex function $F \colon \Omega \to \bR$, called \emph{convex optimization}, has been intensely developed in the last few decades and finds countless applications. 
The majority of the existing literature deals with the classical case where $\Omega$ is a 
subset of a Euclidean space; 
the Riemannian setting has been far less explored although it is a 
natural and useful extension. Udri\c{s}te's book \cite{MR1326607} is a good standard reference for Riemannian convex optimization (see \eg{} \cite{MR2364186}, \cite{ZhangSra2016} for more recent developments). Our goal is to present a simple but effective method that can be implemented to find the minimum of the discrete energy functional, with a rigorous proof of convergence and explicit control of the convergence rate. Of course, there are more advanced and faster algorithms in practice. For example, the C++ library \emph{ROPTLIB} \cite{ROPTLIB} was developed for this purpose. 

\smallskip

A \emph{gradient descent method} is an iterative algorithm for minimizing a function $F \colon \Omega \subseteq \bR^N \to \bR$ which produces a sequence $(x_k)_{k \geqslant 0}$ of points in $\Omega$, defined inductively by:
\begin{equation} \label{eq:GradientDescentMethodEuclidean}
 x_{k+1} = x_k - t_k \grad F(x_k)~.
\end{equation}
In this relation, $t_k \geqslant 0$ is a chosen \emph{stepsize}. 
If $F$ has good convexity properties such as being strongly convex, then a small enough stepsize $t_k = t > 0$ guarantees convergence of the sequence $(x_k)$ to a minimum of $F$, with explicit control of the convergence rate.

This method naturally extends to the Riemannian setting, \ie{} when $F \colon \Omega \subseteq M \to \bR$ is defined on a subset $\Omega$ of a Riemannian manifold $M$, in which case \eqref{eq:GradientDescentMethodEuclidean} should be understood as
\begin{equation} \label{eq:GradientDescentMethodRiemannian}
  x_{k+1} = \exp_{x_k}( - t_k \grad F(x_k))~.
\end{equation}
As in the Euclidean setting, $-\grad F(x_k)$ is the direction of steepest descent 
for $F$ at $x_k$, so it is natural to look for $x_{k+1}$ in the geodesic ray based at $x_k$ in this direction. 
Note that the gradient descent method can simply be described as Euler's method for the gradient flow ODE:
\begin{equation}
 x'(t) = -\grad F(x(t))~.
\end{equation}

\subsubsection*{Gradient descent method with fixed stepsize for strongly convex functions}

The gradient descent method with fixed stepsize remains valid for $\cC^2$ strongly convex functions on Riemannian manifolds:
\begin{theorem}[{\cite[Chap. 7, Theorem 4.2]{MR1326607}}] \label{thm:GradientDescentFixedStepRiemannian}
 Let $(M,g)$ be a complete Riemannian manifold and let $F \colon M \to \bR$ be a function of class $\cC^2$. 
 Assume that there exists $\alpha, \beta >0$ such that: 
 \begin{equation}
  \forall v \in \upT M \quad  \alpha \, \Vert v\Vert^2 \leqslant (\Hess F)(v,v) \leqslant \beta \, \Vert v\Vert^2
 \end{equation}
 Then $F$ has a unique minimum $x^*$. Furthermore, for $t \in (0, \frac{1}{\beta}]$, the gradient descent method with fixed stepsize $t_k = t$
converges to $x^*$ with a linear 
convergence rate:
\begin{equation} \label{eq:ConvergenceRateFixedStepsize}
 d(x_k, x^*) \leqslant c \, q^k
\end{equation}
The constants $c \geqslant 0$ and $q \in [0,1)$ are given by: 
\begin{equation}\label{eq:ConstantsConvergenceRate}
 c = \sqrt{\frac{2}{\alpha}(F(x_0)-F(x^*))} \qquad  q = \sqrt{1 - \frac{t}{2} \alpha\left(1 + \frac{\alpha}{\beta}\right)}~.
\end{equation}
\end{theorem}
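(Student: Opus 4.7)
The plan is to adapt the standard Euclidean proof of linear convergence for gradient descent on strongly convex functions, carefully replacing straight lines by geodesics and using completeness to invoke the Hopf–Rinow theorem whenever a minimizing geodesic between two points is needed.

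First, I would dispatch existence and uniqueness of $x^*$. Strong convexity along any geodesic $\gamma$ implies $F\circ\gamma$ grows at least quadratically, so for any fixed $x_0$ one has $F(x)\ge F(x_0)+\langle\grad F(x_0),\gamma'(0)\rangle d(x,x_0)+\tfrac{\alpha}{2}d(x,x_0)^2$ along the minimizing geodesic from $x_0$ to $x$. Hence sublevel sets are bounded, and by completeness they are compact, so $F$ attains its infimum; strict convexity (which follows from $\Hess F\succeq \alpha g>0$, see \autoref{prop:CharacConvexFunction}) forces uniqueness. Setting $x_0=x^*$ in the same inequality and using $\grad F(x^*)=0$ gives the lower bound
\begin{equation}\label{eq:planlower}
\tfrac{\alpha}{2}\,d(x,x^*)^2 \;\le\; F(x)-F(x^*)\,,
\end{equation}
which will be used at the end to convert energy estimates into distance estimates.

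Second, I would prove a Riemannian Polyak–\L ojasiewicz inequality
\begin{equation}\label{eq:planPL}
F(x)-F(x^*) \;\le\; \tfrac{1}{2\alpha}\,\Vert\grad F(x)\Vert^2 \,.
\end{equation}
For this, let $\gamma$ be the unit-speed minimizing geodesic from $x$ to $x^*$ of length $L=d(x,x^*)$ and set $\phi(s)=F(\gamma(s))$. Strong convexity gives $\phi(L)\ge \phi(0)+L\phi'(0)+\tfrac{\alpha}{2}L^2$, i.e.\ $F(x^*)\ge F(x)+L\langle\grad F(x),\gamma'(0)\rangle+\tfrac{\alpha}{2}L^2$. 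Using Cauchy–Schwarz to bound the inner product below by $-\Vert\grad F(x)\Vert$ and then minimizing the resulting quadratic in $L$ yields \eqref{eq:planPL}. Third, I would establish the one-step descent estimate by applying the second-order Taylor formula to $s\mapsto F(\exp_{x_k}(-s\grad F(x_k)))$ with the upper bound $\Hess F\le\beta g$. This gives
\begin{equation}
F(x_{k+1}) \;\le\; F(x_k) - t\Vert\grad F(x_k)\Vert^2 + \tfrac{\beta t^2}{2}\Vert\grad F(x_k)\Vert^2\,,
\end{equation}
so for $t\in(0,1/\beta]$ one has $F(x_{k+1})-F(x_k)\le -\tfrac{t}{2}\Vert\grad F(x_k)\Vert^2$.

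Combining \eqref{eq:planPL} with the descent estimate produces a contraction $F(x_{k+1})-F(x^*)\le(1-t\alpha)(F(x_k)-F(x^*))$, iterating which and inserting \eqref{eq:planlower} delivers a geometric decay of $d(x_k,x^*)$ with the predicted constant $c=\sqrt{2(F(x_0)-F(x^*))/\alpha}$. The main obstacle is getting the sharper rate stated in the theorem, namely $q^2=1-\tfrac{t}{2}\alpha(1+\alpha/\beta)$, which is strictly better than the naive $1-t\alpha$ coming from the argument above. I expect this improvement comes from retaining the full term $t(1-\beta t/2)\Vert\grad F(x_k)\Vert^2$ in the descent inequality and splitting $\Vert\grad F(x_k)\Vert^2$ into two parts: one bounded below via \eqref{eq:planPL} (contributing the $t\alpha$ term), and one bounded below using the co-coercivity-type inequality $\Vert\grad F(x)\Vert^2\ge\alpha\beta\,d(x,x^*)^2$ obtained by pairing the upper and lower Hessian bounds, contributing the $t\alpha^2/\beta$ correction after combining with \eqref{eq:planlower}. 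The delicate point is that the Riemannian analogue of co-coercivity requires transporting gradients along geodesics; I would verify it by integrating the Hessian bounds along the minimizing geodesic from $x_k$ to $x^*$ and applying Cauchy–Schwarz carefully on parallel-transported vector fields. Once this refined estimate is in hand, the advertised constants in \eqref{eq:ConstantsConvergenceRate} follow by an elementary optimization in the descent recursion.
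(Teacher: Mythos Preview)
Your approach is essentially the same three--ingredient argument the paper sketches (in the proof of \autoref{thm:OptimalStepRiemannian}, which summarizes Udri\c{s}te's proof): the quadratic lower bound \eqref{eq:Observation1}, a gradient--energy inequality \eqref{eq:Observation2}, and the one--step descent \eqref{eq:Observation3}. Your items \eqref{eq:planlower}, \eqref{eq:planPL}, and the descent estimate match these exactly in spirit, so the overall route is correct.

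There is, however, a genuine misconception in your last paragraph. You write that the theorem's rate $q^2 = 1 - \tfrac{t}{2}\alpha(1+\alpha/\beta)$ is ``strictly better than the naive $1-t\alpha$''. It is not: since $\alpha \leqslant \beta$ one has $\tfrac{1}{2}(1+\alpha/\beta) \leqslant 1$, so
\[
1 - t\alpha \;\leqslant\; 1 - \tfrac{t}{2}\alpha\Bigl(1+\tfrac{\alpha}{\beta}\Bigr),
\]
and your ``naive'' contraction factor is already \emph{at least as small} as the one stated in the theorem (strictly smaller when $\alpha<\beta$). Equivalently, your Polyak--\L ojasiewicz inequality $\Vert\grad F(x)\Vert^2 \geqslant 2\alpha\,(F(x)-F(x^*))$ is a sharper version of the paper's observation (ii), which only asserts $\Vert\grad F(x)\Vert^2 \geqslant \alpha(1+\alpha/\beta)(F(x)-F(x^*))$. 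So you have already proved the theorem with a better constant, and the entire final paragraph about co--coercivity, parallel transport of gradients, and ``splitting $\Vert\grad F(x_k)\Vert^2$ into two parts'' is unnecessary; you can simply drop it.
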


\begin{remark}
 A key step in the proof of \autoref{thm:GradientDescentFixedStepRiemannian} is that $(F(x_k) - F(x^*))$ is nonincreasing and limits to $0$ with a linear convergence rate. In particular, $(F(x_k))_{k \geqslant 0}$ is nonincreasing, therefore any sublevel set of $F$ is stable under the gradient descent. Moreover, such a set is convex and compact by strong convexity of $F$. 
Thus, the gradient descent method is valid even if the Hessian of is not bounded above: one can restrict to a sublevel set, where the Hessian of $F$ is bounded. 
\end{remark}

\subsubsection*{Gradient descent method with optimal stepsize for strongly convex functions}

There are many variants of the gradient descent method that can be more or less useful depending on the context (see \eg{} \cite{ZhangSra2016}, \cite{FerreiraLouzeiroPrudente}). One of them is the \emph{optimal stepsize gradient descent}, an instance of the gradient descent method \eqref{eq:GradientDescentMethodRiemannian} where one performs a \emph{line search} in order to determine a stepsize $t_k$ that minimizes $F(x_{k+1})$.
Clearly, when $F$ is strongly convex, such a $t_k$ exists, is unique, and is $> 0$ unless $x_k = x^*$.
When the Hessian of $F$ is known analytically, Newton's method offers a very fast line search.

\begin{theorem} \label{thm:OptimalStepRiemannian}
 Let $F \colon M \to \bR$ as in \autoref{thm:GradientDescentFixedStepRiemannian}.
 The optimal stepsize gradient descent has a linear convergence rate at least as
 fast as that of \autoref{thm:GradientDescentFixedStepRiemannian}, for any choice of the fixed stepsize.
\end{theorem}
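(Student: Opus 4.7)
The plan is to extract from the proof of \autoref{thm:GradientDescentFixedStepRiemannian} the underlying \emph{per-step} decrease estimate and observe that this estimate automatically transfers to the optimal-stepsize iterates. The first step is to recall that \autoref{thm:GradientDescentFixedStepRiemannian} is proved by establishing a pointwise inequality of the form: for every $y \in M$ and every admissible fixed stepsize $t \in (0, 1/\beta]$,
\begin{equation}
F\bigl(\exp_y(-t\,\grad F(y))\bigr) - F(x^*) \,\leqslant\, q^2\bigl(F(y) - F(x^*)\bigr),
\end{equation}
with $q \in [0,1)$ given by \eqref{eq:ConstantsConvergenceRate}. Iterating this inequality along the fixed-stepsize trajectory and applying strong convexity in the form $F(y) - F(x^*) \geqslant \tfrac{\alpha}{2}\, d(y, x^*)^2$ yields the linear rate $d(x_k, x^*) \leqslant c\, q^k$ announced in \autoref{thm:GradientDescentFixedStepRiemannian}.

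Now fix an admissible $t \in (0, 1/\beta]$ and let $(x_k^o)_{k\geqslant 0}$ denote the optimal-stepsize iterates with $x_0^o = x_0$. By definition of the optimal stepsize $t_k^o$, we have
\begin{equation}
F(x_{k+1}^o) \,=\, \min_{s \geqslant 0} F\bigl(\exp_{x_k^o}(-s\,\grad F(x_k^o))\bigr) \,\leqslant\, F\bigl(\exp_{x_k^o}(-t\,\grad F(x_k^o))\bigr).
\end{equation}
Applying the pointwise estimate from Step 1 at the point $y = x_k^o$ gives
\begin{equation}
F(x_{k+1}^o) - F(x^*) \,\leqslant\, q^2 \bigl(F(x_k^o) - F(x^*)\bigr).
\end{equation}
An easy induction produces $F(x_k^o) - F(x^*) \leqslant q^{2k}(F(x_0) - F(x^*))$, and a second use of strong convexity yields $d(x_k^o, x^*) \leqslant c\, q^k$ with exactly the same constants $c, q$ as in \eqref{eq:ConstantsConvergenceRate}. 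Since this argument works for every admissible fixed stepsize $t$, the convergence rate of the optimal-stepsize method is bounded by the best of those rates (in particular it is at least as fast as the fixed-stepsize method with any choice of $t$).

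There is no real obstacle here: the heart of the matter is just the observation that the fixed-stepsize analysis produces a \emph{pointwise} per-step contraction, not merely a statement about one particular trajectory. The only subtle point is that the optimal-stepsize and fixed-stepsize trajectories diverge after the first iteration, so one cannot compare them termwise; the inequality must instead be applied iteratively along the optimal trajectory, using the fixed-stepsize estimate as a benchmark at each step. If one wishes to quote a specific rate, taking $t = 1/\beta$ and combining gives the uniform bound $q = \sqrt{1 - \tfrac{\alpha}{2\beta}(1 + \alpha/\beta)}$ for the optimal method.
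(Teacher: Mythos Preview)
Your proposal is correct and follows essentially the same approach as the paper: both extract from the proof of \autoref{thm:GradientDescentFixedStepRiemannian} the pointwise per-step contraction $F(x^+(t)) - F(x^*) \leqslant q^2\bigl(F(x) - F(x^*)\bigr)$, observe that the optimal step does at least as well as the fixed step $t$ at each iterate, and conclude by induction and strong convexity. The paper merely spells out in more detail how that pointwise inequality arises from three elementary observations (Taylor-type bounds on $F$ and on $\Vert \grad F \Vert$), but the logical structure is identical.
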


\begin{proof}
\autoref{thm:OptimalStepRiemannian} is derived from a careful analysis of the proof of \autoref{thm:GradientDescentFixedStepRiemannian} which can be found in 
\cite[Chapter 7, Theorem 4.2]{MR1326607}. This proof is a combination of three observations:
\begin{enumerate}[(i)]
 \item For any $x\in M$:
 \begin{equation} \label{eq:Observation1}
  \frac{\alpha}{2} d(x, x^*)^2 \leqslant F(x) - F(x^*) \leqslant \frac{\beta}{2} d(x, x^*)^2~.
  \end{equation}
  This follows from a Taylor expansion of $F$ at $x^*$ along the geodesic $[x^*, x]$.
 \item For any $x\in M$:
 \begin{equation} \label{eq:Observation2}
  \Vert \grad F(x) \Vert^2 \geqslant \alpha\left(1 + \frac{\alpha}{\beta}\right)(F(x) - F(x^*))~.
 \end{equation}
 This follows from a Taylor expansion of $F$ at $x$ along the geodesic $[x, x^*]$ and from \eqref{eq:Observation1}.
 \item For any $x\in M$ and for any $t \in [0, \frac{1}{\beta}]$:
 \begin{equation} \label{eq:Observation3}
  F(x) - F(x^+(t)) \geqslant \frac{t}{2}  \Vert \grad F(x) \Vert^2
 \end{equation}
 where $x^+(t) \coloneqq \exp_x(-t \grad F(x))$. This follows from a Taylor formula for $F$ along $[x, x^+(t)]$.
\end{enumerate}
It follows immediately from these three observations that for any $x\in M$ and for any $t \in [0, \frac{1}{\beta}]$:
\begin{equation} \label{eq:Obs4}
 F(x^+(t)) - F(x^*) \leqslant Q(t) \left(F(x) - F(x^*)\right)
\end{equation}
where $Q(t) = 1 - \frac{t}{2} \alpha(1 + \frac{\alpha}{\beta}) = q^2$. 

When one performs the gradient descent method with fixed stepsize $t$, by assumption $x_{k+1} = x_k^+(t)$. 
\autoref{thm:OptimalStepRiemannian} is then easily concluded by finding $F(x_k) - F(x^*) \leqslant Q^k \left(F(x_0) - F(x^*)\right)$ from \eqref{eq:Obs4}
(with an obvious induction) and making one last use of \eqref{eq:Observation1}. 

If instead one performs an optimal stepsize gradient descent,
then $x_{k+1} = x_k^+(t_k)$, where $t_k$ is the optimal step. Fix $t \in [0, \frac{1}{\beta}]$. By definition of the optimal step, $F(x_k^+(t_k)) \leqslant F(x_k^+(t))$,
so $F(x_{k+1}) - F(x_k) \leqslant  F(x_k^+(t)) - F(x_k)$. Therefore we can derive from \eqref{eq:Obs4} that 
\begin{equation}
 F(x_{k+1}) - F(x^*) \leqslant Q(t) \left(F(x_k) - F(x^*)\right)
\end{equation}
and the conclusion follows like before.
\end{proof}

\subsection{Convergence of the discrete heat flow}\label{subsec:ConvergenceOfDiscreteHeatFlow}

The discrete heat flow can be described as a discretization both in time and space of the heat flow on $\cC^\infty(M,N)$ .
Recall that the smooth heat flow is the gradient flow of the smooth energy functional:
\begin{equation} \label{eq:HeatFlowODE2}
 \ddt f_t = \tau(f_t)
\end{equation}
where $\tau(f_t) = - \grad E(f_t)$ is the tension field of $f_t$ (cf \autoref{subsec:EnergyFunctionalHarmonicMaps}, \autoref{subsec:SmoothHeatFlow}). 

We recall the setup of our discretization:
Let $\cG$ be a biweighted $\tilde{S}$-triangulated graph
(\autoref{def:BiweightedGraph}), let $N$ be a Riemannian manifold, and let $\rho \colon \pi_1S \to \Isom(N)$ be a group homomomorphism. 
Recall that the discrete energy is a function $E_\cG \colon \Map_{\text{eq}}(\cG, N) \to \bR$ (\autoref{def:EnergyFunctionalGraph}), 
where $\Map_{\text{eq}}(\cG, N)$ is the space of $\rho$-equivariant maps $\cG \to N$ . 
The latter space has a natural Riemannian structure with respect to which the gradient of the energy is minus the discrete tension field $\tau_\cG$ (\autoref{def:L2RiemannianMetricGraph} and \autoref{prop:FirstVariationalFormulaGraph}). 
Thus we define the discrete heat flow:

\begin{definition} \label{def:DiscreteHeatFlow}
The \emph{discrete heat flow} is the iterative algorithm which, given $f_0 \in \Map_{\text{eq}}(\cG, N)$, produces the sequence $(f_k)_{k \in \bN}$ in $\Map_{\text{eq}}(\cG, N)$
defined inductively by the relation
\begin{equation} \label{eq:DiscreteHeatFlow1}
f_{k+1}(x) = \exp_{f_k(x)}\left(t_k (\tau_\cG f_k)_x\right)~,
\end{equation}
where $t_k \in \bR$ is a chosen stepsize. 
\end{definition}

The main theorem of this section is an immediate application of \autoref{thm:StrongConvexityEnergyGraphGeneral} and \autoref{thm:GradientDescentFixedStepRiemannian}:

\begin{theorem} \label{thm:ConvergenceDiscreteHeatFlowGeneral}
Let $\cG$ be a biweighted $\tilde{S}$-triangulated graph. Let $N$ be two-dimensional Hadamard manifold of nonpositive curvature 
and $\rho \colon \pi_1S\to \Isom(N)$ is a faithful representation whose image is contained in a discrete subgroup of $\Isom(N)$ acting freely and properly. 
Then there exists a unique $\rho$-equivariant harmonic map $f^* \colon \cG \to N$. 
Moreover, for any $f_0 \in \Map_{\text{eq}}(\cG, N)$
and for any sufficiently small $t>0$, the discrete heat flow with initial value $f_0$ and fixed stepsize $t$ converges to $f^*$ with a linear convergence rate:
\begin{equation} \label{eq:ConvergenceRateDHFGeneral}
 d(f_k, f^*) \leqslant c q^k
\end{equation}
where $c>0$ and $q \in [0, 1)$ are constants, and $d(f_k, f^*)$ is the $\upL^2$ distance in $\Map_{\text{eq}}(\cG, N)$.
\end{theorem}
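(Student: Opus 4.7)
The plan is to combine the strong convexity of $E_\cG$ established in \autoref{thm:StrongConvexityEnergyGraphGeneral} with the generic Riemannian gradient descent result \autoref{thm:GradientDescentFixedStepRiemannian}. The first observation is that $\Map_{\text{eq}}(\cG, N)$ is itself a complete Hadamard manifold: choosing a finite fundamental domain $\cV \subseteq \cG^{(0)}$ for the $\pi_1 S$-action identifies it isometrically with the finite weighted $\upL^2$ product of copies of $N$, which inherits nonpositive curvature and completeness from $N$. Combined with \autoref{prop:FirstVariationalFormulaGraph} --- which gives $\grad E_\cG(f) = -\tau_\cG(f)$ --- and the fact that geodesics in $\Map_{\text{eq}}(\cG, N)$ are pointwise geodesics in $N$ (see \autoref{def:L2RiemannianMetricGraph}), this identifies the iteration \eqref{eq:DiscreteHeatFlow1} exactly with the Riemannian gradient descent \eqref{eq:GradientDescentMethodRiemannian} for $E_\cG$ with stepsize $t_k$.

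To apply \autoref{thm:StrongConvexityEnergyGraphGeneral} I would pass to the quotient: the hypothesis that $\rho$ is faithful and has image inside a discrete subgroup of $\Isom(N)$ acting freely and properly implies that $\rho(\pi_1 S)$ itself acts freely and properly on $N$. The quotient $N' \coloneqq N / \rho(\pi_1 S)$ is then a $2$-manifold with fundamental group $\pi_1 S$ and contractible universal cover; by the classification of aspherical surfaces this forces $N' \approx S$, so $N'$ is closed with $\chi(N') = \chi(S) \ne 0$ and the natural classifying map $S \to N'$ has degree $\pm 1$. \autoref{thm:StrongConvexityEnergyGraphGeneral} then provides a positive lower bound $\alpha > 0$ on the Hessian of $E_\cG$, explicit via \eqref{eq:SecondDerivativeLowerBound1}. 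Strong convexity combined with completeness of $\Map_{\text{eq}}(\cG, N)$ yields a unique minimizer $f^*$, which by definition is the unique equivariant discrete harmonic map.

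For the linear convergence rate an upper bound $\beta$ on the Hessian is also needed, and this is not available globally. The standard remedy is to restrict to a sublevel set: since the gradient descent iteration is energy-decreasing, the entire sequence $(f_k)$ stays inside $\Omega_0 \coloneqq \{f : E_\cG(f) \leqslant E_\cG(f_0)\}$, which is compact and totally convex by strong convexity. On $\Omega_0$ an analog of \autoref{prop: second derivative estimate above} furnishes an explicit upper bound $\beta$; the extension from $\bH^2$ to a general Hadamard target is routine using the $\CAT(0)$ comparisons already developed in the proof of \autoref{prop:SecondDerivativeGeneralEstimates}. With $\alpha$ and $\beta$ in hand, \autoref{thm:GradientDescentFixedStepRiemannian} applied on $\Map_{\text{eq}}(\cG, N)$ delivers linear convergence of $(f_k)$ to $f^*$ in the $\upL^2$ metric for any fixed $t \in (0, 1/\beta]$, with constants as in \eqref{eq:ConstantsConvergenceRate}. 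The only real subtlety lies in the bookkeeping: transferring the hypotheses into the form required by \autoref{thm:StrongConvexityEnergyGraphGeneral} and extending the upper Hessian bound to Hadamard targets; once these two points are dispatched, the rest of the proof is a direct citation of the theorems already proved.
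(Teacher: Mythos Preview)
Your proposal is correct and follows exactly the approach the paper takes: the paper states the theorem as ``an immediate application of \autoref{thm:StrongConvexityEnergyGraphGeneral} and \autoref{thm:GradientDescentFixedStepRiemannian}'' without further detail, and you have simply filled in the translation of hypotheses (passing to the closed quotient $N/\rho(\pi_1S)\approx S$ to match the setup of \autoref{thm:StrongConvexityEnergyGraphGeneral}) and the sublevel-set trick for the upper Hessian bound (which the paper records in the Remark following \autoref{thm:GradientDescentFixedStepRiemannian}). One small overreach: for a general Hadamard target the paper does not claim an \emph{explicit} upper bound $\beta$ --- that is reserved for $N=\bH^2$ via \autoref{prop: second derivative estimate above} --- so it is cleaner to obtain $\beta$ simply from continuity of $\Hess(E_\cG)$ on the compact sublevel set rather than by extending the $\bH^2$ computation.
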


Of course, it also follows from \autoref{thm:OptimalStepRiemannian} that the discrete heat flow with optimal stepsize converges to $f^*$ as well,
with a linear convergence rate as least as fast as \eqref{eq:ConvergenceRateDHFGeneral}.

We emphasize that in our favorite setting where $N = \bH^2$ and $\rho \colon \pi_1S \to \Isom^+(\bH^2)$ is Fuchsian, \autoref{thm:StrongConvexityEnergyGraphH2}
enables explicit estimates on the constants $c$ and $q$ in \eqref{eq:ConvergenceRateDHFGeneral}: the expressions of $c$ and $q$ are given
by \eqref{eq:ConstantsConvergenceRate}, in which $\alpha$ is given by \eqref{eq:SecondDerivativeLowerBound1} and $\beta$ is given by \eqref{eq:SecondDerivativeUpperBound} with $E_0 = E(f_0)$.

\subsection{Experimental comparison of convergence rates}
\label{subsec:ExperimentalComparison}

In \autoref{fig:graphConstants} and \autoref{fig:graphComparison} we present numerical experiments performed with the software $\Harmony$. 

\subsubsection*{Comparison of different fixed stepsizes}

In the first experiment (\autoref{fig:graphConstants}) we observe the number of iterations required for the discrete heat flow with fixed stepsize to converge 
as a function of the stepsize.

Let $S$ be a closed oriented surface of genus $2$. We fix a domain Fuchsian representation $\rho_\tL \colon \pi_1S \to \Isom^+(\bH^2)$: the representation pictured on the left in \autoref{fig:InitialFunctionPic}, and let $\Harmony$ construct an invariant mesh (depth $4$, $1921$ vertices). We let the target Fuchsian representation $\rho_\tR$ vary, 
taking Fenchel-Nielsen lengths $(2,2,\ell)$ and twists $(-1.5,2,0.5)$, where $\ell \in \{2.5, 1.5, 0.5, 0.2\}$. 

We observe that the plotted points resemble in profile functions of the form $-C_1\left( \log(1-C_2t) \right)^{-1}$, which is precisely the type of function predicted by \autoref{thm:ConvergenceDiscreteHeatFlowGeneral}.

\begin{figure}
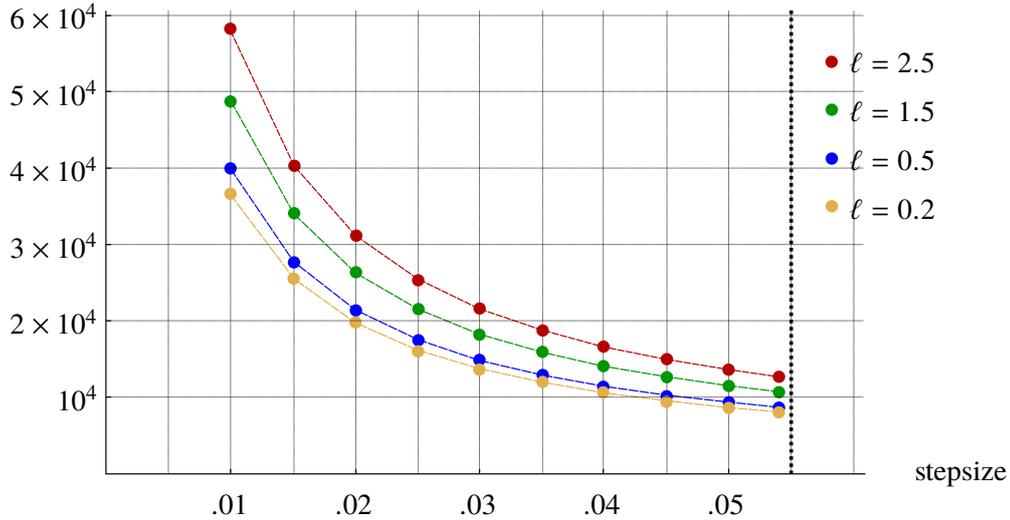

\centering
\begin{lpic}{graphConstantStepPerformance2(10cm)}
	\lbl[]{29,-7;$.01$}
	\lbl[]{58,-7;$.02$}
	\lbl[]{87,-7;$.03$}
	\lbl[]{116,-7;$.04$}
	\lbl[]{145,-7;$.05$}
	\lbl[]{-6.5,18;$10^4$}
	\lbl[]{-12,36;$2\times 10^4$}
	\lbl[]{-12,54;$3\times 10^4$}
	\lbl[]{-12,72;$4\times 10^4$}
	\lbl[]{-12,90;$5\times 10^4$}
	\lbl[]{-12,108;$6\times 10^4$}
	\lbl[]{184,97;$\ell=2.5$}
	\lbl[]{184,85.5;$\ell=1.5$}
	\lbl[]{184,74;$\ell=0.5$}
	\lbl[]{184,62.5;$\ell=0.2$}
	\lbl[]{-10,125;number of iterations}
	\lbl[]{200,0;stepsize}
\end{lpic}
\vspace{.8cm}
\caption{Number of iterations against stepsize in the discrete heat flow with fixed stepsize performed by \Harmony{}.}
\label{fig:graphConstants}
\end{figure}

\subsubsection*{Comparison of our three methods}

\begin{figure}
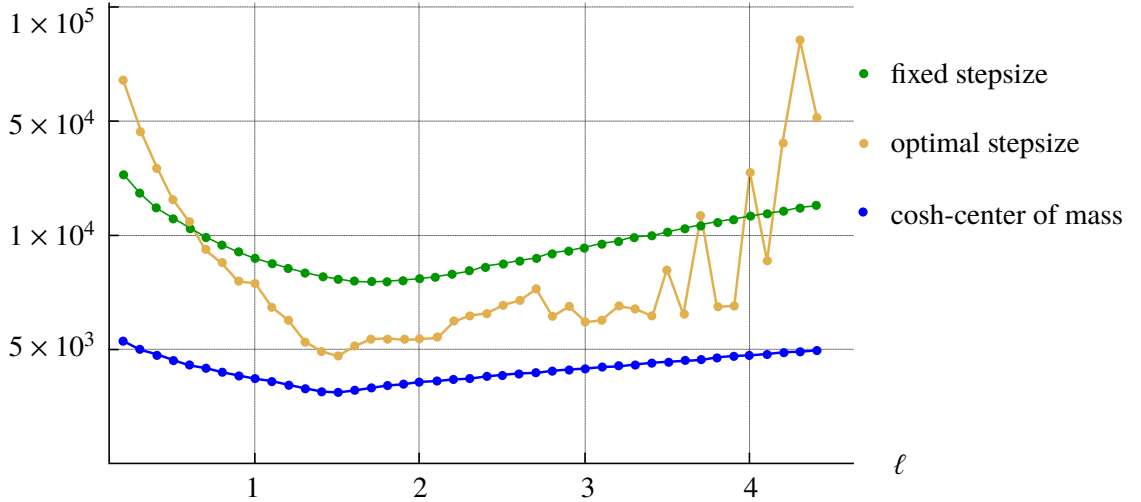

\centering
\begin{lpic}{graphComparison2(10cm)}
	\lbl[]{-6,115;number of iterations}
	\lbl[]{-12,26;$5\times 10^3$}
	\lbl[]{-12,50;$1\times 10^4$}
	\lbl[]{-12,74;$5\times 10^4$}
	\lbl[]{-12,96;$1\times 10^5$}
	\lbl[]{31,-5;$1$}
	\lbl[]{67,-5;$2$}
	\lbl[]{103,-5;$3$}
	\lbl[]{138,-5;$4$}
	\lbl[]{170,0;$\ell$}
	\lbl[l]{168,84;fixed stepsize}
	\lbl[l]{168,69;optimal stepsize}
	\lbl[l]{168,54;cosh-center of mass}
\end{lpic}
\vspace{.8cm}
\caption{Comparison of the three methods performed by \Harmony{}.}
\label{fig:graphComparison}
\end{figure}

For the second experiment (\autoref{fig:graphComparison}) we compare the convergence rate, in terms of number of iterations, of our three methods:
\begin{itemize}
 \item Discrete heat flow with fixed stepsize (see \autoref{subsec:ConvergenceOfDiscreteHeatFlow}),
 \item Discrete heat flow with optimal stepsize (see \autoref{subsec:ConvergenceOfDiscreteHeatFlow}),
 \item Cosh-center of mass method (see \autoref{subsec:CenterOfMassMethods}).
\end{itemize}
We keep the same setting as before, letting $\ell$ this time vary between $0.2$ and $4.4$. As the figure shows, the cosh-center of mass method is 
more effective than either gradient descent methods. 

\section{Center of mass methods}
\label{sec:CenterOfMass}

In this section we investigate a center of mass algorithm towards the minimization of the discrete energy.
We shall see that it is in some sense a variant of the fixed stepsize discrete heat flow. 
In the current state of our software \Harmony{}, it is the most effective method (see \autoref{subsec:ExperimentalComparison}). 

First we recall facts about centers of mass in Riemannian manifolds and investigate how they relate to harmonic maps.
We shall prove in particular a generalized mean value property for harmonic maps between Riemannian manifolds (\autoref{thm:GeneralizedMeanValueProperty}).

\subsection{Centers of mass in metric spaces and Riemannian manifolds} \label{subsec:CenterOfMass}

Let $(\Omega, \cF, \mu)$ be a probability space, $(X,d)$ be a metric space, and $h \colon \Omega \to X$ a measurable map.

\begin{definition} \label{def:CenterOfMass}
A \emph{center of mass} (or \emph{barycenter}) of $h$ is a minimizer of the function 
\begin{equation} \label{eq:DefinitionCenterOfMass}
\begin{split}
P_h \colon X  & \to \bR\\
x & \mapsto \frac{1}{2} \int_\Omega d(x,h(y))^2 \, \upd \mu(y)~.
\end{split}
\end{equation}
\end{definition}

In general, neither existence nor uniqueness of centers of mass hold. If $X$ is a Hadamard space 
and $h \in \upL^2(\Omega, X)$ then existence and uniqueness do hold 
\cite[Lemma 2.5.1]{MR1266480}.
For Riemannian manifolds we have:

\begin{theorem}[Karcher \cite{MR0442975}] \label{thm:Karcher}
Assume that $X$ is a complete Riemannian manifold and $h$ takes values in a ball $B = B(x_0, r) \subset X$ such that:
\begin{itemize}
 \item $B$ is \emph{strongly convex}: any two points of $B$ are joined by a unique minimal geodesic $\gamma \colon [0,1] \to X$, and each such geodesic maps entirely into $B$.
 \item $B$ has nonpositive sectional curvature, or $r < \frac{\pi}{4 \sqrt{K}}$ where $K > 0$ is an upper bound for the sectional 
 curvature in $B$.
\end{itemize}
Under these conditions, the function $P_h$ of $\eqref{eq:DefinitionCenterOfMass}$
only has interior minimimizers on $\bar{B}$ and is strongly convex inside $B$. Consequently, existence and uniqueness of the center of mass hold. 
\end{theorem}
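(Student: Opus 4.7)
The plan is to prove strong convexity of $P_h$ on $B$ and to rule out boundary minimizers, from which existence and uniqueness of the center of mass both follow. The central ingredient is a pointwise Hessian estimate for the squared distance, which is then integrated against $\mu$.

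First I would compute, for each fixed $y \in B$, the Hessian of $\varphi_y(x) \coloneqq \frac{1}{2} d(x,y)^2$ on $B$. Smoothness of $\varphi_y$ inside $B$ is granted by strong convexity of $B$ (no cut locus issues on $B$). A standard Jacobi field calculation, or equivalently Rauch comparison with the constant-curvature model space, yields the following: writing $D = d(x,y)$ and decomposing $v \in \upT_x X$ into components $v_\parallel, v_\perp$ along and perpendicular to the unit-speed geodesic from $x$ to $y$, one has
\begin{equation}
\Hess(\varphi_y)(v,v) \geqslant \|v_\parallel\|^2 + \|v_\perp\|^2 = \|v\|^2
\end{equation}
in the nonpositive curvature case, and
\begin{equation}
\Hess(\varphi_y)(v,v) \geqslant \|v_\parallel\|^2 + D\sqrt{K}\cot(D\sqrt{K})\,\|v_\perp\|^2
\end{equation}
in the second case. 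Under the hypothesis $r < \pi/(4\sqrt{K})$, any two points of $B$ lie at distance strictly less than $\pi/(2\sqrt{K})$, so the factor $D\sqrt{K}\cot(D\sqrt{K})$ is bounded below by an explicit constant $c(K,r) > 0$. In either regime one obtains $\Hess(\varphi_y) \geqslant c \cdot \mathrm{Id}$ on $B$, with $c > 0$ independent of $y$.

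Integrating this estimate against $\upd\mu(y)$ (differentiation under the integral is justified by the uniform bounds on $B$) yields $\Hess(P_h) \geqslant c \cdot \mathrm{Id}$ throughout $B$, so $P_h$ is strongly convex on the convex set $B$. Since $\bar{B}$ is compact and $P_h$ is continuous, $P_h$ attains its minimum on $\bar{B}$; uniqueness on $B$ then follows at once from strong convexity, as two distinct minimizers in $B$ would be joined by a minimal geodesic inside $B$ along which $P_h$ would be strictly convex, contradicting both being minima.

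To rule out boundary minimizers I would use the formula $\grad P_h(x) = -\int_\Omega \exp_x^{-1}(h(y))\,\upd\mu(y)$. For $x \in \partial B$, each vector $\exp_x^{-1}(h(y))$ belongs to the open inward half-space at $x$, since the minimal geodesic from $x$ to $h(y) \in B$ must enter $B$ immediately by strong convexity. Hence $-\grad P_h(x)$ also lies in that half-space, so $P_h$ strictly decreases along a short inward geodesic and no boundary point can be a minimizer. The main obstacle in this plan is the Hessian estimate in the positively curved regime: pinning down the correct comparison inequality and verifying that the lower bound remains positive all the way up to the critical radius $\pi/(4\sqrt{K})$ requires some care with the second-order Jacobi field equation. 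Once strong convexity is secured, the remaining steps reduce to standard convex-analysis arguments on a compact convex set.
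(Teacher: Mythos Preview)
The paper does not supply its own proof of this theorem: it is stated as a citation of Karcher \cite{MR0442975} and used as a black box. Your proposal is correct and is in fact precisely Karcher's original argument, so there is nothing to compare.
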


Note that if $X$ is a complete Riemannian manifold, any sufficiently small $r>0$ meets the requirements of \autoref{thm:Karcher}. 
The center of mass of $h$ is the unique point $G \in X$ such that
\begin{equation} \label{eq:ImplicitEquationForCenterOfMass}
 \int_\Omega \exp_G^{-1}(h(y)) \, \upd \mu(y) = 0~.
\end{equation}
This simply expresses the vanishing at $G$ of the gradient of the function $P_h \colon X \to \bR$ of \eqref{eq:DefinitionCenterOfMass}.

Of course, \autoref{def:CenterOfMass} generalizes the usual notion of center of mass 
in $\bR^n$: when $h\in\upL^2(\Omega,\bR^n)$, the center of mass given by $G = \int_\Omega h(y) \, \upd \mu(y)$.
When $X$ is not Euclidean, the center of mass is only defined implicitly, 
but one can estimate proximity to the center of mass:
\begin{lemma} \label{lem:ProximityToCenterOfMass}
Assume that the conditions of \autoref{thm:Karcher} are satisfied. In particular the center of mass $G$ of $h$ is well-defined. If $G'$ is a point in $X$ such that:
\begin{equation}
 \left\Vert \int_\Omega \exp_{G'}^{-1}(h(y)) \, \upd \mu(y) \right\Vert \leqslant \delta
\end{equation}
then
\begin{equation}
 d(G, G') \leqslant C \delta~,
\end{equation}
where $C = 1$ when $X$ has nonpositive curvature, or $C = C(K, r) > 0$ when $X$ has sectional curvature bounded above by $K > 0$ 
in a strongly convex ball of radius $r$ containing the image of $h$.
\end{lemma}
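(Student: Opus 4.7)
The plan is to interpret the hypothesis as a near-vanishing of the gradient of $P_h$ at $G'$, and then leverage strong convexity of $P_h$ (guaranteed by \autoref{thm:Karcher}) to translate this into proximity to the minimizer.

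First I would observe, exactly as in the proof of \autoref{prop:FirstVariationalFormulaGraph}, that the gradient of the function $x \mapsto \tfrac{1}{2} d(x, h(y))^2$ at $x = G'$ is $-\exp_{G'}^{-1}(h(y))$, so differentiation under the integral gives
\begin{equation*}
  \grad P_h(G') = -\int_\Omega \exp_{G'}^{-1}(h(y))\,\upd\mu(y)\,.
\end{equation*}
Thus the hypothesis says precisely $\|\grad P_h(G')\| \leqslant \delta$.

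Next, \autoref{thm:Karcher} supplies an $\alpha > 0$ such that $P_h$ is $\alpha$-strongly convex on the strongly convex ball $B$; equivalently, $\Hess P_h \geqslant \alpha \cdot \mathrm{Id}$ there (cf.~\autoref{prop:CharacConvexFunction}). I would then apply the standard Riemannian analogue of the ``gradient lower bound'' for strongly convex functions: let $\gamma \colon [0, D] \to B$ be the minimizing unit-speed geodesic from $G$ to $G'$, where $D = d(G, G')$, and set $\phi(t) = P_h(\gamma(t))$. Since $G$ is the minimizer, $\phi'(0) = 0$; strong convexity gives $\phi''(t) \geqslant \alpha$; integrating yields $\phi'(D) \geqslant \alpha D$. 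But $\phi'(D) = \langle \grad P_h(G'), \dot\gamma(D) \rangle \leqslant \|\grad P_h(G')\|$ by Cauchy--Schwarz. Combining, $d(G, G') \leqslant \delta/\alpha$.

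It remains to identify $\alpha$ in the two cases. When $X$ has nonpositive curvature, the Hessian comparison theorem gives $\Hess_{x}\bigl(\tfrac{1}{2}d(\cdot, z)^2\bigr) \geqslant \mathrm{Id}$ pointwise in $x$, for every $z$; integrating against $\mu$ gives $\alpha = 1$, hence $C = 1$. When $X$ has sectional curvature bounded above by $K > 0$ within the strongly convex ball of radius $r$ (with $r < \pi/(4\sqrt{K})$), a Rauch-type Hessian comparison against the model sphere yields $\Hess_x\bigl(\tfrac{1}{2}d(\cdot, z)^2\bigr) \geqslant \bigl(r\sqrt{K}\cot(r\sqrt{K})\bigr)\cdot\mathrm{Id}$ for all $x, z$ in the ball, and the assumption on $r$ makes the cotangent factor strictly positive. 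Integrating again gives $\alpha = r\sqrt{K}\cot(r\sqrt{K})$, and one takes $C(K, r) = 1/\alpha$. The main (minor) technicality is the bookkeeping for the Hessian comparison in the positive curvature case, but the existence of a positive $\alpha$ is already built into \autoref{thm:Karcher}, so the proof is essentially a direct combination of the gradient identification with strong convexity.
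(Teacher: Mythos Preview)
Your proof is correct and follows the same approach as the paper's: both identify the hypothesis as a bound on $\|\grad P_h(G')\|$ and invoke the strong convexity of $P_h$ supplied by \autoref{thm:Karcher} (the paper's proof is a one-sentence reference to this fact together with a pointer to Karcher's original paper). Your write-up is in fact more detailed than the paper's, spelling out the geodesic integration and the Hessian comparison; one tiny nitpick is that in the positive-curvature case the two points $x,z$ lie in a ball of radius $r$, so $d(x,z)$ may be as large as $2r$ and the sharp lower bound for $\alpha$ should carry a $2r$ rather than $r$ in the cotangent---but since the lemma only asserts existence of some $C(K,r)>0$, this does not affect the argument.
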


\begin{proof}
This is an immediate consequence of the fact that the function $P_h$ of \eqref{eq:DefinitionCenterOfMass} is $C$-strongly convex under the assumptions of the lemma. Also see \cite[Thm 1.5]{MR0442975}.
\end{proof}

\subsection{Generalized mean value property} \label{subsec:GeneralizedMeanValueProperty}

In this section we show 
a generalized mean value property for smooth harmonic maps between Riemannian manifolds.
Let $f \colon (M,g) \to (N,h)$ be a smooth map between Riemannian manifolds.

Fix $x \in M$. Denote by $S_r$ (resp.~$B_r$) the sphere (resp.~the closed ball) centered at the origin
of radius $r$ in the Euclidean vector space $(\upT_x M, g)$. Also denote $\hat{S}_r$ (resp.~$\hat{B}_r$) the sphere (resp.~the closed ball) 
 centered at $x$ of radius $r$ in $(M,g)$.
The topological space $S_r$ (resp. $B_r$) can be equipped with a natural Borel probability measure
by taking the measure induced from the Euclidean metric $g$ in $T_x M$, 
renormalized so that it has total mass $1$. Similarly, $\hat{S}_r$ (resp. $\hat{B}_r$) can be equipped with a natural Borel probability measure
by taking the measure induced from the Riemannian metric $g$.

\begin{definition} \label{def:AverageMap}
We define four functions $S_r f, B_r f, \hat{S}_r f, \hat{B}_r f \colon M \to N$ as follows. Given $x \in M$:
\begin{itemize}
 \item $S_r f(x)$ (resp.~$B_r f(x)$) is the center of mass of  
 $ f \circ \exp_x \colon S_r \to N$ (resp.~$f \circ \exp_x\colon B_r \to N$).
 \item $\hat{S}_r f(x)$ (resp.~$\hat{B}_r f(x)$) is the center of mass of $f\evalat{\hat{S}_r} \colon \hat{S}_r \to N$
 (resp.~ $f\evalat{\hat{B}_r} \colon \hat{B}_r \to N$).
\end{itemize}
\end{definition}

\begin{remark}
The four functions of \autoref{def:AverageMap} are well-defined as long as $(N,h)$ is a Hadamard manifold, or as long as $r$ is small enough and $(N,h)$ has sectional curvature bounded above and injectivity radius bounded below by a positive number (\eg{} $N$ is compact).
\end{remark}

Note that $S_r f(x)$ and $\hat{S}_r f(x)$ are different in general, as are $B_r f(x)$ and $\hat{B}_r f(x)$. However they are very close when $r$ is small:

\begin{proposition} \label{prop:ComparisonAverages}
Let $f \colon M \to N$ be a smooth map. Then for all $x \in M$:
\begin{equation}
 \begin{aligned}
  d(S_r f(x), \hat{S}_r f(x)) &= \bigO(r^4)\\
  d(B_r f(x), \hat{B}_r f(x)) &= \bigO(r^4)~.\\
 \end{aligned}
\end{equation}
\end{proposition}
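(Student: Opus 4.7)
The plan is to apply \autoref{lem:ProximityToCenterOfMass} to conclude that $G' \coloneqq \hat{S}_r f(x)$ is within $\bigO(r^4)$ of $G \coloneqq S_r f(x)$ by checking that the implicit equation \eqref{eq:ImplicitEquationForCenterOfMass} characterizing $G$ is satisfied at $G'$ up to an error of this size. The ball version will follow from exactly the same argument with the Jacobian of $\exp_x : B_r \to \hat B_r$ in place of the one on spheres, so I describe only the sphere case.

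First I would pull back the normalized Riemannian measure on $\hat S_r$ to $S_r$ via $\exp_x$. A Jacobi field expansion at $x \in M$ produces a density $J(v)$ relative to the normalized Euclidean surface measure $\upd\nu_S$ of the form
\[
J(v) = 1 + r^2 a(\omega) + \bigO(r^3), \qquad v = r\omega, \ \omega \in S^{n-1} \subset \upT_xM,
\]
where $a$ is a quadratic polynomial in $\omega$ built from $\mathrm{Ric}^M_x$. Crucially, $a$ is \emph{even} in $\omega$. Writing $F(v) \coloneqq f(\exp_x(v))$, the definitions of $G$ and $G'$ read
\[
\int_{S_r}\exp_G^{-1}(F(v))\,\upd\nu_S = 0, \qquad \int_{S_r}\exp_{G'}^{-1}(F(v))\,J(v)\,\upd\nu_S = 0,
\]
so subtracting and using $\int J\,\upd\nu_S = 1$ yields
\[
\int_{S_r}\exp_{G'}^{-1}(F(v))\,\upd\nu_S = -\int_{S_r}\exp_{G'}^{-1}(F(v))\,(J(v)-1)\,\upd\nu_S.
\]

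The heart of the proof is a parity argument bounding the right-hand side by $\bigO(r^4)$. A standard first-moment computation (the same one that underlies \autoref{thm:GeneralizedMeanValueProperty}) gives $d(G', f(x)) = \bigO(r^2)$. Setting $W \coloneqq \exp_{f(x)}^{-1}(G')$, so that $|W| = \bigO(r^2)$, and working in normal coordinates at $f(x) \in N$, Taylor expansion of the exponential map at $G'$ gives
\[
\exp_{G'}^{-1}(F(v)) = -W + d_xf(v) + \bigO(r^2),
\]
uniformly in $v$ with $|v| \leqslant r$. The leading cross term against $J(v)-1$ is $r^2 a(\omega)\cdot d_xf(v) = r^3 a(\omega)\,d_xf(\omega)$, which is \emph{odd} in $\omega$ and integrates to zero over $S^{n-1}$. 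The remaining contributions are absorbed in $\bigO(r^4)$: the constant part $-W$ paired with $(J-1) = \bigO(r^2)$ contributes $|W|\cdot \bigO(r^2) = \bigO(r^4)$, and the $\bigO(r^2)$ Taylor remainder paired with $\bigO(r^2)$ contributes another $\bigO(r^4)$. Since the function $P_h$ of \eqref{eq:DefinitionCenterOfMass} is uniformly strongly convex on a small strongly convex ball around $f(x)$, \autoref{lem:ProximityToCenterOfMass} now yields $d(G,G') = \bigO(r^4)$.

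The main technical obstacle is keeping the expansion of $\exp_{G'}^{-1}$ clean enough to make the parity cancellation transparent without getting lost in curvature corrections to the exponential map of $N$; the cleanest route is to use normal coordinates simultaneously at $x \in M$ and at $f(x) \in N$, so that the odd/even decomposition of integrands in $\omega$ can be read off directly, and to note that all remaining error terms are either quadratic (hence even) in $\omega$ of order $\bigO(r^2)$ or are $\bigO(r^3)$ or better. The ball estimate is then handled by the identical argument, replacing $J$ with the volume Jacobian $\sqrt{\det g(v)} = 1 - \tfrac16 \mathrm{Ric}^M_x(v,v) + \bigO(|v|^3)$, whose leading correction is again even in $v$.
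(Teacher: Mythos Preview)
Your proposal is correct and matches the paper's own (sketched) approach: the paper explicitly says the proof is ``derived from a Taylor expansion of the metric in normal coordinates at $x$ and one use of \autoref{lem:ProximityToCenterOfMass}'', which is precisely your strategy of expanding the pulled-back density $J(v)$ in normal coordinates, exploiting the even/odd parity of the leading terms to kill the $\bigO(r^3)$ contribution, and invoking \autoref{lem:ProximityToCenterOfMass} to pass from the $\bigO(r^4)$ defect in the implicit equation to the distance estimate. Your write-up simply supplies the details the paper omits.
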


\begin{proof}
The proof is technical but not very difficult. It is basically derived from a Taylor expansion of the metric in normal coordinates at $x$ and one use of \autoref{lem:ProximityToCenterOfMass}. We will do several similar proofs in what follows, so we skip the details for brevity.
\end{proof}

Of course, in the case where $M = \bR^m$ and $N = \bR$ (or $N = \bR^n$), $S_r f$ and $\hat{S}_r f$ (resp. $B_r f$ and $\hat{B}_r f$) coincide. We recall the celebrated 
mean property for harmonic functions in this setting:

\begin{theorem} \label{thm:ClassicalMeanProperty}
 $f \colon \bR^m \to \bR$ is harmonic if and only if $S_r f = B_r f = f$ for all $r > 0$.
\end{theorem}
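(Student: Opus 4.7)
The plan is to exploit the Euclidean structure to identify the centers of mass with integral averages, and then establish both implications by standard arguments.

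First I would observe that when $N = \bR^n$, the function $P_h$ from \eqref{eq:DefinitionCenterOfMass} is strictly convex with gradient $x \mapsto x - \int h \, d\mu$, so its unique minimizer is simply the linear average. Consequently
\begin{equation}
S_r f(x) = \fint_{\partial B(x,r)} f \, d\sigma, \qquad B_r f(x) = \fint_{B(x,r)} f \, dV~,
\end{equation}
and the claim reduces to the classical mean value property for real-valued harmonic functions on $\bR^m$.

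For the forward direction, assume $\Delta f = 0$ and set $\phi(r) = S_r f(x)$. Parametrizing $\phi(r) = \fint_{S^{m-1}} f(x+r\omega) \, d\sigma(\omega)$, differentiating under the integral, and applying the divergence theorem yields
\begin{equation}
\phi'(r) = \fint_{S^{m-1}} \nabla f(x+r\omega) \cdot \omega \, d\sigma(\omega) = \frac{1}{|\partial B(x,r)|}\int_{B(x,r)} \Delta f \, dV = 0~.
\end{equation}
Since $\phi(r) \to f(x)$ as $r \to 0$ by continuity of $f$, we conclude $S_r f(x) = f(x)$ for every $r>0$. Integrating in polar coordinates then gives $B_r f(x) = \frac{1}{|B_r|}\int_0^r |\partial B_s|\, S_s f(x)\, ds = f(x)$ as well.

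For the converse, assume $S_r f(x) = f(x)$ for all $x \in \bR^m$ and all $r > 0$. Fix $x$ and Taylor-expand to order three:
\begin{equation}
f(x+r\omega) = f(x) + r\, \nabla f(x)\cdot\omega + \tfrac{r^2}{2}\, D^2 f(x)(\omega,\omega) + \tfrac{r^3}{6}\, D^3 f(x)(\omega,\omega,\omega) + O(r^4)~.
\end{equation}
Averaging over the unit sphere, the terms of odd degree in $\omega$ vanish by the antipodal symmetry $\omega \mapsto -\omega$. Using the standard identity $\fint_{S^{m-1}} \omega_i \omega_j \, d\sigma = \frac{1}{m}\delta_{ij}$, the quadratic term contributes $\frac{r^2}{2m}\Delta f(x)$, so
\begin{equation}
S_r f(x) - f(x) = \frac{r^2}{2m} \Delta f(x) + O(r^4)~.
\end{equation}
Dividing by $r^2$ and letting $r \to 0$ forces $\Delta f(x) = 0$. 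The assumption $B_r f = f$ alone would suffice similarly, with constant $\frac{1}{2(m+2)}$ in place of $\frac{1}{2m}$. The main (mild) obstacle is verifying that the odd-order Taylor remainders truly contribute at order $O(r^4)$ rather than $O(r^3)$, which is what allows the leading $r^2$ coefficient to be identified unambiguously; this is a consequence of the antipodal symmetry applied to the cubic remainder after one additional Taylor expansion.
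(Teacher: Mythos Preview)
Your argument is correct and entirely standard. Note, however, that the paper does not actually prove \autoref{thm:ClassicalMeanProperty}: it is merely recalled as the classical mean value property (``We recall the celebrated mean property for harmonic functions in this setting''), so there is no proof in the paper to compare against. Your write-up would serve perfectly well as a self-contained justification; the only stylistic remark is that for the converse you could avoid the Taylor expansion altogether by differentiating the identity $\phi(r)=f(x)$ and reusing the divergence-theorem computation from the forward direction to get $\int_{B(x,r)}\Delta f=0$ for all $r$, hence $\Delta f(x)=0$.
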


More generally, if $M$ is any Riemannian manifold and $N = \bR$, Willmore \cite{MR0033408} proved that $\hat{S}_r f = f$ characterizes harmonic maps
if and only if $M$ is a \emph{harmonic manifold}.

The central theorem of this subsection is the following:
\begin{theorem} \label{thm:AverageMap}
Let $f \colon M \to N$ be a smooth map. For all $x \in M$, as $r \to 0$:
\begin{align}
 d\left(S_r f(x)~,~ \exp_{f(x)}\left(\frac{r^2}{2 m} \tau(f)_{x}\right)\right) &= \bigO(r^4) \label{eq:AverageMap1}\\
 d\left(B_r f(x)~,~ \exp_{f(x)}\left(\frac{r^2}{2(m+2)} \tau(f)_{x}\right)\right) &= \bigO(r^4) \label{eq:AverageMap2}~.
 \end{align}
\end{theorem}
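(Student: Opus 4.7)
The plan is to prove \eqref{eq:AverageMap1} by exhibiting a candidate point $G'$ at which the defining gradient condition for the center of mass is satisfied up to an error of order $O(r^4)$, and then invoking \autoref{lem:ProximityToCenterOfMass}. The proof of \eqref{eq:AverageMap2} will be identical modulo replacing one moment integral. First I would set up local coordinates by taking normal coordinates at $f(x)$ in $N$ and identifying a neighborhood of $f(x)$ with a neighborhood of $0 \in T_{f(x)} N$ via $\exp_{f(x)}$. Define
\[
F(v) := \exp_{f(x)}^{-1}\bigl(f(\exp_x(v))\bigr), \qquad v \in T_x M.
\]
Because Christoffel symbols of $N$ vanish at the origin of its normal chart, the second derivative of $F$ at $0$ coincides with the covariant Hessian $\nabla^2 f_x$; Taylor-expanding,
\[
F(v) = df_x(v) + \tfrac{1}{2}\nabla^2 f_x(v,v) + \tfrac{1}{6}T(v,v,v) + O(|v|^4),
\]
for a symmetric trilinear form $T$ (depending on $x$).

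Next I would choose the candidate center $G' := \exp_{f(x)}(u_0)$ with $u_0 := \tfrac{r^2}{2m}\tau(f)_x$ and estimate the mean of $\exp_{G'}^{-1}(F(v))$ over the Euclidean sphere $S_r \subset T_x M$. In the normal chart at $f(x)$, the expansion
\[
\exp_{u_0}^{-1}(w) = (w - u_0) + R(u_0, w),
\]
where $R$ arises from the Christoffels at $u_0$ (which are $O(|u_0|)$), gives $R(u_0, F(v)) = O(|u_0|(|u_0| + |F(v)|)^2) = O(r^4)$ uniformly in $v \in S_r$, since $|u_0| = O(r^2)$ and $|F(v)| = O(r)$. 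Thus
\[
\int_{S_r} \exp_{G'}^{-1}(F(v))\, d\sigma(v) = \int_{S_r} \bigl(F(v) - u_0\bigr)\, d\sigma(v) + O(r^4).
\]

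The right-hand integral is computed via the symmetry $v \mapsto -v$ of the sphere, which annihilates all odd-degree integrands: $\int_{S_r} df_x(v)\, d\sigma = 0$ and $\int_{S_r} T(v,v,v)\, d\sigma = 0$. The quadratic term uses the standard moment identity $\int_{S_r} v^i v^j\, d\sigma(v) = \tfrac{r^2}{m}\delta^{ij}$ in an orthonormal frame, yielding
\[
\int_{S_r} \tfrac{1}{2}\nabla^2 f_x(v,v)\, d\sigma(v) = \tfrac{r^2}{2m}\sum_i \nabla^2 f_x(e_i,e_i) = \tfrac{r^2}{2m}\tau(f)_x = u_0,
\]
while the $O(|v|^4)$ remainder in $F$ integrates to $O(r^4)$. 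Hence $\bigl\|\int_{S_r} \exp_{G'}^{-1}(F(v))\, d\sigma(v)\bigr\| = O(r^4)$, and \autoref{lem:ProximityToCenterOfMass} gives $d(S_r f(x), G') = O(r^4)$, which is \eqref{eq:AverageMap1}. For \eqref{eq:AverageMap2} the argument is identical with $S_r$ replaced by $B_r$, $u_0$ replaced by $\tfrac{r^2}{2(m+2)}\tau(f)_x$, and the Euclidean ball moment identity $\operatorname{vol}(B_r)^{-1}\int_{B_r} v^i v^j\, dv = \tfrac{r^2}{m+2}\delta^{ij}$ used in place of the sphere identity.

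The main obstacle is the rigorous bookkeeping of the remainder $R(u_0, w)$ in the expansion of $\exp_{u_0}^{-1}$: one must verify that the curvature/Christoffel contributions, with both arguments scaling appropriately in $r$, aggregate to $O(r^4)$ rather than something larger. A secondary technical point is ensuring that the Taylor remainder for $F$ is genuinely $O(|v|^4)$ uniformly in $v$ on a fixed neighborhood, which is standard since $f$ is smooth and the expansion is local at $x$.
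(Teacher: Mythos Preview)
Your proposal is correct and follows essentially the same strategy as the paper: show that the gradient of $P_h$ at the candidate point $G'=\exp_{f(x)}(u_0)$ is $O(r^4)$, then invoke \autoref{lem:ProximityToCenterOfMass}. The paper isolates your Taylor expansion of $F$ as a separate lemma (\autoref{lem:AverageG}) and handles the comparison between $\exp_{f(x)}^{-1}$ and $\exp_{G'}^{-1}$ via a standalone geodesic-triangle estimate (\autoref{lem:RiemannianEstimateTriangle1}) phrased in terms of parallel transport rather than the coordinate identification you use; since parallel transport along $[f(x),G']$ differs from the coordinate identification by $O(|u_0|^2|w|)=O(r^5)$, the two formulations are interchangeable here.

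One comment on your justification of the remainder bound: saying ``the Christoffels at $u_0$ are $O(|u_0|)$'' is not quite enough on its own, since along the geodesic from $u_0$ to $w$ the Christoffels are evaluated at points of size $O(|w|)$, not $O(|u_0|)$. The correct reason your bound $R(u_0,w)=O(|u_0|(|u_0|+|w|)^2)$ holds is that in normal coordinates at $0$ one has $\exp_0=\mathrm{id}$ \emph{exactly}, so every higher-order coefficient in the Taylor expansion of $\exp_{u_0}(\xi)$ in $\xi$ vanishes at $u_0=0$ and hence carries a factor of $|u_0|$ (equivalently, the totally symmetric parts $\partial^k\Gamma(0)(\xi,\ldots,\xi)$ all vanish). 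This is precisely what the paper's \autoref{lem:RiemannianEstimateTriangle2} makes explicit. You have correctly flagged this as the main point requiring care.
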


The following ``generalized mean property for harmonic functions between Riemannian manifolds'' is an immediate corollary of \autoref{thm:AverageMap}:

\begin{theorem} \label{thm:GeneralizedMeanValueProperty}
Let $f \colon M \to N$ be a smooth map. The following are equivalent:
\begin{enumerate}[(i)]
 \item $f$ is harmonic.
 \item $d(f(x), S_r f(x)) = \bigO(r^4)$ for all $x\in M$.
 \item $d(f(x), B_r f(x)) = \bigO(r^4)$ for all $x\in M$.
\end{enumerate}
\end{theorem}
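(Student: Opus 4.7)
The proof is essentially a direct corollary of \autoref{thm:AverageMap}, using the triangle inequality and the fact that the displacement $d\!\left(f(x), \exp_{f(x)}(\tfrac{r^2}{2m}\tau(f)_x)\right)$ is exactly $\tfrac{r^2}{2m}\|\tau(f)_x\|$, which has the ``wrong'' order in $r$ unless $\tau(f)_x$ vanishes. I will prove $(i)\Leftrightarrow(ii)$, with the proof of $(i)\Leftrightarrow(iii)$ being identical up to replacing $\tfrac{1}{2m}$ with $\tfrac{1}{2(m+2)}$.

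For the implication $(i)\Rightarrow(ii)$, assume $f$ is harmonic, so that $\tau(f)_x=0$ for every $x\in M$. Then $\exp_{f(x)}\!\left(\tfrac{r^2}{2m}\tau(f)_x\right)=f(x)$, and \autoref{thm:AverageMap} immediately gives $d(f(x),S_r f(x))=\bigO(r^4)$.

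For the converse $(ii)\Rightarrow(i)$, fix $x\in M$ and set $v=\tau(f)_x\in \upT_{f(x)}N$. Since $\exp_{f(x)}$ is a radial isometry along rays from the origin, we have
\begin{equation}
d\!\left(f(x),\exp_{f(x)}\!\left(\tfrac{r^2}{2m}\,v\right)\right)=\tfrac{r^2}{2m}\,\|v\|
\end{equation}
for all $r$ sufficiently small. By the triangle inequality and the assumption $d(f(x),S_r f(x))=\bigO(r^4)$, together with \autoref{thm:AverageMap}, we obtain
\begin{equation}
\tfrac{r^2}{2m}\,\|v\|\leqslant d(f(x),S_r f(x))+d\!\left(S_r f(x),\exp_{f(x)}\!\left(\tfrac{r^2}{2m}\,v\right)\right)=\bigO(r^4)
\end{equation}
as $r\to 0$. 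Since the left-hand side scales as $r^2\|v\|$ while the right-hand side is $\bigO(r^4)$, dividing by $r^2$ and letting $r\to 0$ forces $\|v\|=0$, i.e. $\tau(f)_x=0$. As $x$ was arbitrary, $f$ is harmonic.

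The equivalence $(i)\Leftrightarrow(iii)$ is proved by the same argument with $\tfrac{1}{2m}$ replaced by $\tfrac{1}{2(m+2)}$, using the second half of \autoref{thm:AverageMap}. No serious obstacle arises: all analytical content is absorbed into \autoref{thm:AverageMap}, and the present statement is really just the observation that a nonzero tension field would produce a displacement of order $r^2$, not $r^4$.
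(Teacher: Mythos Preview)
Your proof is correct and matches the paper's approach exactly: the paper simply states that \autoref{thm:GeneralizedMeanValueProperty} is an immediate corollary of \autoref{thm:AverageMap}, and your argument spells out precisely why. The only (minor) thing you add is the triangle-inequality computation making the ``immediate'' explicit, which is fine.
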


\begin{remark}
 It is an immediate consequence of \autoref{prop:ComparisonAverages} that \autoref{thm:AverageMap} and \autoref{thm:GeneralizedMeanValueProperty}
 also hold for $\hat{S}_r f(x)$ instead of $S_r f(x)$, and $\hat{B}_r f(x)$ instead of $B_r f(x)$.
\end{remark}


In the remainder of this subsection we show \autoref{thm:AverageMap}. 
We only prove \eqref{eq:AverageMap1}; the proof of \eqref{eq:AverageMap2} follows exactly the same lines. Consider a smooth map $f \colon (M,g) \to (N,h)$ and fix $x \in M$.

\begin{lemma} \label{lem:AverageG}
 Let $r > 0$. Denote by $S_r$ the Euclidean sphere of radius $r>0$ in $T_x M$ and $\sigma_r$ its area density. 
Then, as $r \to 0$, the following estimate holds:
\begin{equation} \label{eq:AverageG}
 \frac{1}{\Area(S_r)} \int_{S_r} \exp_{f(x)}^{-1} \circ f \circ \exp_x(u) \, \upd \sigma_r(u) = \frac{r^2}{2 m} \tau(f)_x + \bigO(r^4)
\end{equation}
where $m = \dim M$.
\end{lemma}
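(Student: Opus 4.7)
The plan is to reduce the lemma to a Taylor expansion on $T_xM$ and then use symmetry of the sphere. Define the smooth map
\begin{equation*}
F \colon U \subseteq \upT_x M \to \upT_{f(x)} N, \qquad F(u) = \exp_{f(x)}^{-1} \circ f \circ \exp_x(u),
\end{equation*}
where $U$ is a small neighborhood of $0$. Clearly $F(0) = 0$. The left-hand side of \eqref{eq:AverageG} is then $\frac{1}{\Area(S_r)}\int_{S_r} F(u)\,\upd\sigma_r(u)$. Rescaling $u = rv$ with $v$ on the unit sphere $S_1$ and expanding
\begin{equation*}
F(rv) = r\,F'(0)(v) + \tfrac{r^2}{2} F''(0)(v,v) + \tfrac{r^3}{6} F'''(0)(v,v,v) + \bigO(r^4),
\end{equation*}
I would integrate term by term against the rotation-invariant probability measure on $S_1$. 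The linear and cubic terms integrate to zero by odd symmetry, so only the quadratic term contributes at order $r^2$.

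The key step is the geometric identification of the derivatives of $F$ at $0$. Working in normal coordinates centered at $x$ on $M$ and at $f(x)$ on $N$, the maps $\exp_x$ and $\exp_{f(x)}$ are the identity at first order, and the Christoffel symbols of both $g$ and $h$ vanish at the respective base points. It follows that in these coordinates $F'(0) = \upd f_x$ and, because $\nabla^2 f$ reduces to the ordinary second derivative when the Christoffel symbols vanish at the origin, $F''(0) = \nabla^2 f_x$ viewed as a symmetric bilinear form on $\upT_x M$ with values in $\upT_{f(x)}N$. This is the one technical step that genuinely uses Riemannian geometry; the rest is bookkeeping.

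To finish I use the classical sphere averaging identity: for any symmetric bilinear form $B$ on a Euclidean space $(\upT_xM, g)$ of dimension $m$,
\begin{equation*}
\frac{1}{\Area(S_1)} \int_{S_1} B(v,v) \, \upd \sigma_1(v) = \frac{1}{m} \tr_g(B).
\end{equation*}
Applying this to $B = \nabla^2 f_x$, whose $g$-trace is by definition $\tau(f)_x$, yields
\begin{equation*}
\frac{1}{\Area(S_r)} \int_{S_r} F(u)\,\upd\sigma_r(u) = \frac{r^2}{2m}\tau(f)_x + \bigO(r^4),
\end{equation*}
as claimed. The main obstacle is the identification $F''(0) = \nabla^2 f_x$; once this is pinned down using normal coordinates at both endpoints, the lemma falls out of sphere symmetry. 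The proof of \eqref{eq:AverageMap2} replaces sphere averages by ball averages, and the only change is that $\frac{1}{m}$ is replaced by $\frac{1}{m+2}$ in the isotropy identity for $\int_{B_1} B(v,v)\,\upd v$.
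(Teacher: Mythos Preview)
Your proposal is correct and follows essentially the same approach as the paper: define $\hat f = \exp_{f(x)}^{-1}\circ f\circ\exp_x$, Taylor expand to third order, kill the odd terms by antipodal symmetry of $S_r$, identify the second derivative at $0$ with $(\nabla^2 f)_x$ via normal coordinates, and invoke the sphere-averaging identity for symmetric bilinear forms (stated separately as \autoref{lem:BilinearFormAverage}). The only cosmetic difference is that the paper integrates directly over $S_r$ rather than rescaling to $S_1$.
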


\begin{proof}
We write the Taylor expansion of the function $\hat{f} =  \exp_{f(x)}^{-1} \circ f \circ \exp_x \colon T_x M \to T_{f(x)} N$:
\begin{equation}
 \hat{f}(u) = \hat{f}(0) + (\upD \hat{f})_0(u) + \frac{1}{2}(\upD^2 \hat{f})_0(u,u) + \frac{1}{6}(\upD^3 \hat{f})_0(u,u,u) + \bigO(\Vert u \Vert^4)~.
\end{equation}
Let us integrate this identity over $S_r$. We have:
\begin{itemize}
 \item $\hat{f}(0) = 0$, so $\frac{1}{\Area(S_r)} \int_{S_r} \hat{f}(0) \, \upd \sigma_r(u) = 0$.
 \item $(\upD \hat{f})_0(u) = (\upd f)_x(u)$ is an odd function of $u$, so $\frac{1}{\Area(S_r)} \int_{S_r} (\upD \hat{f})_0(u) \, \upd \sigma_r(u) = 0$.
 \item It is straightforward to check that $(\upD^2 \hat{f})_0(u, u) = (\Hess f)_x(u, u)$ by definition of $\Hess f$. Moreover, since this is a quadratic
 function of $u$, we can apply \autoref{lem:BilinearFormAverage}: 
 \begin{equation}
  \frac{1}{\Area(S_r)} \int_{S_r} (\upD^2 \hat{f})_0(u, u) \, \upd \sigma_r(u) = \frac{r^2}{m}\tr((\Hess f)_x) = \frac{r^2}{m} \tau(f)_x~.
 \end{equation}
 \item $(\upD^3 \hat{f})_0(u,u,u)$ is an odd function of $u$, so $\frac{1}{\Area(S_r)} \int_{S_r} (\upD^3 \hat{f})_0(u, u, u) \, \upd \sigma_r(u) = 0$.
\end{itemize}
Putting all this together, we get \eqref{eq:AverageG}.
\end{proof}

The following lemma is required to complete the proof of \autoref{lem:AverageG}:

\begin{lemma} \label{lem:BilinearFormAverage}
Let $(V, g = \langle \cdot, \cdot \rangle)$ be a Euclidean vector space and let $B \colon V \times V \to \bR$ be a symmetric bilinear form. Denote by $S_r = S(0,r)$ the sphere centered at the origin in $V$ with radius $r > 0$, $\upd \sigma_r$ the area density on $S_r$ induced from the metric $g$ and $\Area(S_r) = \int_{S_r} \upd \sigma_r$ its area. Then:
\begin{equation}
 \frac{1}{\Area(S_r)}\int_{S_r} B(x, x) \, \upd \sigma_r = \frac{r^2}{\dim V}\tr_g(B)~.
\end{equation}
\end{lemma}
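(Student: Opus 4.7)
The plan is to reduce the computation to the unit sphere by scaling, then exploit the rotational symmetry of the sphere together with the diagonalization of the symmetric bilinear form $B$.

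First I would observe that under the substitution $x = r y$ with $y \in S_1$, we have $B(x,x) = r^2 B(y,y)$ and $\upd \sigma_r = r^{n-1} \upd \sigma_1$ (where $n = \dim V$), while $\Area(S_r) = r^{n-1}\Area(S_1)$. Hence the left-hand side equals $r^2$ times the analogous average over $S_1$, so it suffices to prove the identity for $r = 1$.

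Next, by the spectral theorem for symmetric bilinear forms on a Euclidean space, choose a $g$-orthonormal basis $(e_1, \dots, e_n)$ of $V$ in which $B$ is diagonal with eigenvalues $\lambda_1, \dots, \lambda_n$, so that $B(x,x) = \sum_{i=1}^n \lambda_i x_i^2$ when $x = \sum x_i e_i$. The key symmetry point is that the orthogonal group acts transitively on pairs of coordinate axes and preserves $\sigma_1$, so the integrals $\int_{S_1} x_i^2 \, \upd \sigma_1$ are all equal. Summing over $i$ and using that $\sum_i x_i^2 = 1$ on $S_1$, we get
\begin{equation}
n \int_{S_1} x_1^2 \, \upd \sigma_1 = \int_{S_1} \sum_{i=1}^n x_i^2 \, \upd \sigma_1 = \Area(S_1),
\end{equation}
so each $\int_{S_1} x_i^2 \, \upd \sigma_1 = \Area(S_1)/n$.

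Combining these, $\int_{S_1} B(x,x) \, \upd \sigma_1 = \sum_i \lambda_i \cdot \Area(S_1)/n = \tr_g(B) \cdot \Area(S_1)/n$, and restoring the factor $r^2$ from the scaling step yields the claim. There is no real obstacle here; the only subtlety worth pointing out is the distinction between the trace $\tr_g(B)$, defined via the metric $g$, and the naive sum of diagonal entries, but the computation above uses a $g$-orthonormal basis, so the two coincide and the identity is basis-independent.
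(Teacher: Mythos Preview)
Your proof is correct and follows essentially the same approach as the paper: diagonalize $B$ in a $g$-orthonormal basis via the spectral theorem, then use the symmetry of the sphere to show all the integrals $\int x_i^2$ coincide and compute their common value by summing. The only cosmetic difference is your preliminary reduction to $r=1$ by scaling, whereas the paper works directly on $S_r$ and uses $\sum_i x_i^2 = r^2$ there.
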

Here we have denoted by $\tr_g(B)$ the $g$-trace of $B$, \ie{} the trace of the $g$-self adjoint endomorphism of $V$ associated to $B$,
or, equivalently, the trace of a matrix representing $B$ in a $g$-orthonormal basis.

\begin{proof}
Let $(e_1,  \dots, e_n)$ be basis of $V$ which is $g$-orthonormal and $B$-orthogonal (the existence of such a basis is precisely the spectral theorem).
Let $\lambda_k =  B(e_k, e_k)$ for $k \in \{1, \dots n\}$. For any vector $x = \sum_{k=1}^n x_k e_k$, the quadratic form is given by 
$B(x,x) = \sum_{k=1}^n \lambda_k {x_k}^2$, hence:
\begin{equation}
 \int_{S_r} B(x, x) \,\upd \sigma_r = \sum_{k=1}^n \lambda_k \int_{S_r} {x_k}^2  \,\upd \sigma_r~.
\end{equation}
The integrals $I_k = \int_{S_r} {x_k}^2 \,\upd \sigma_r$ can be swiftly computed starting with the observation that any two of them are equal. 
Indeed, for $k \neq l$, one can easily find a linear isometry $\varphi$ such that ${x_k}^2 \circ \varphi = {x_l}^2$; the change of variables theorem ensures that $I_k = I_l$.
One can then write $I_k = \frac{1}{n} \sum_{l=1}^n I_l$ for any $k$. That is $I_k = \frac{1}{n}\int_{S_r} \left(\sum_{k=1}^n {x_k}^2\right) \,\upd \sigma_r$. However $\sum_{k=1}^n {x_k}^2 = g(x,x) = r^2$ for any $x \in S_r$. This yields $I_k = \frac{1}{n}\int_{S_r} r^2 \,\upd \sigma_r = \frac{r^2}{n} \Area(S_r)$.
The desired result follows.
\end{proof}

It is easy to see that \autoref{thm:AverageMap} follows immediately from \autoref{lem:AverageG} when $N = \bR^n$.
When $N$ is not Euclidean, centers of mass in $N$ are only defined implicitly (by equation \eqref{eq:ImplicitEquationForCenterOfMass}), so we have to work harder
to prove \autoref{thm:AverageMap}. The trick is to use \autoref{lem:ProximityToCenterOfMass}. 

First we need a Riemannian geometry estimate in the following general setting. Let $A$, $B$, $C$ be three points in a Riemannian manifold $(M,g)$. We assume that $B$ and $C$ are contained in a sufficiently small ball centered at $A$ for what follows to make sense. Denote by $\vec{u_A} = (\exp_A)^{-1}(B)$, $\vec{u_B} = (\exp_B)^{-1}(C)$, and $\vec{u_C} = (\exp_C)^{-1}(A)$.
If we were in a Euclidean vector space, we could write:
\begin{equation}
 \vec{u_A} + \vec{u_B} + \vec{u_C} = 0~.
\end{equation}
We would like to find an approximate version of this identity in general. Note that the sum $\vec{u_A} + \vec{u_B} + \vec{u_C}$ does not make sense, 
because these vectors are based at different points. Denote $\vec{v}$ the parallel transport
of $-\vec{u_C}$ along the geodesic segment $[C, A]$ and $\vec{w}$ the parallel transport of $\vec{u_B}$ along the geodesic segment $[B, A]$. 
Let us also write $\vec{u} = \vec{u_A}$ for aesthetics. Now the vectors $\vec{u}$, $\vec{v}$, $\vec{w}$ are all based at $A$,
and one expects that $\vec{w} = \vec{v} - \vec{u}$ up to some error term.

\begin{lemma} \label{lem:RiemannianEstimateTriangle1}
Using the setting and notations above, we have as $\Vert \vec{u} \Vert \to 0$ and $\Vert \vec{v} \Vert \to 0$: 
\begin{equation}
 \vec{w} =  \vec{v} - \vec{u} +  \bigO(\Vert \vec{u} \Vert^2 \, \Vert \vec{v} \Vert + \Vert \vec{u} \Vert \, \Vert \vec{v} \Vert^2)~.
\end{equation}
\end{lemma}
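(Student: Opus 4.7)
The plan is to work in Riemann normal coordinates centered at $A$, where $\exp_A$ is the identity and geodesics through $A$ become radial line segments. A preliminary observation: if we parametrize $C = \exp_A(\vec{v}_0)$ for some $\vec{v}_0 \in \upT_A M$, then the radial geodesic $A \to C$ and its reverse $C \to A$ share the same parallel velocity field, so the parallel transport of $-\vec{u_C} = -\exp_C^{-1}(A)$ along $[C,A]$ recovers $\vec{v}_0$ identically. Hence $\vec{v} = \vec{v}_0$, and in normal coordinates the three points read $A \leftrightarrow 0$, $B \leftrightarrow \vec{u}$, $C \leftrightarrow \vec{v}$, while the Euclidean identity $\vec{w} = \vec{v} - \vec{u}$ holds exactly in the flat model.

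I would next derive the coarse bound $\vec{w} = \vec{v} - \vec{u} + \bigO((\Vert \vec{u}\Vert + \Vert\vec{v}\Vert)^3)$ by Taylor expansion. Using the standard expansions $g_{ij}(x) = \delta_{ij} + \bigO(|x|^2)$ and $\Gamma^k_{ij}(x) = \bigO(|x|)$ valid in normal coordinates at $A$, the geodesic from $B$ to $C$ deviates from the Euclidean straight segment $t \mapsto (1-t)\vec{u} + t\vec{v}$ by a quantity controlled by $\int_0^1 \Gamma(x(t))\,\dot{x}(t)^2\,\upd t = \bigO((\Vert \vec{u}\Vert + \Vert\vec{v}\Vert) \cdot \Vert\vec{v}-\vec{u}\Vert^2)$, giving $\vec{u_B} = \vec{v} - \vec{u} + \bigO((\Vert \vec{u}\Vert + \Vert\vec{v}\Vert)^3)$. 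Since the geodesic from $B$ to $A$ is the radial line $s \mapsto (1-s)\vec{u}$ in normal coordinates, the parallel transport equation $\dot{V}^k + \Gamma^k_{ij}V^i\dot{x}^j = 0$ has right-hand side of size $\bigO(\Vert\vec{u}\Vert^2 \Vert V\Vert)$, so $\vec{w} = \vec{u_B} + \bigO(\Vert\vec{u}\Vert^2\Vert \vec{u_B}\Vert) = \vec{v} - \vec{u} + \bigO((\Vert \vec{u}\Vert + \Vert\vec{v}\Vert)^3)$.

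The main obstacle is then sharpening this cubic bound to the mixed estimate $\bigO(\Vert\vec{u}\Vert^2 \Vert\vec{v}\Vert + \Vert\vec{u}\Vert\,\Vert\vec{v}\Vert^2)$ by ruling out pure $\Vert\vec{u}\Vert^3$ and $\Vert\vec{v}\Vert^3$ contributions. For this I would exploit the two degenerate cases, which hold exactly: when $\vec{v} = 0$ (i.e.\ $C = A$), we have $\vec{u_B} = \exp_B^{-1}(A)$, which is the initial velocity at $B$ of the reverse of the geodesic $A \to B$, and parallel-transporting it back to $A$ along that same geodesic returns $-\vec{u}$ identically; when $\vec{u} = 0$ (i.e.\ $B = A$), one trivially has $\vec{w} = \vec{v}$. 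Hence the smooth remainder $E(\vec{u},\vec{v}) \coloneqq \vec{w}(\vec{u},\vec{v}) - (\vec{v} - \vec{u})$ vanishes identically on both axes $\{\vec{v} = 0\}$ and $\{\vec{u} = 0\}$ of $\upT_A M \oplus \upT_A M$, so its Taylor expansion at the origin involves only mixed monomials of bidegree $(a,b)$ with $a, b \geqslant 1$. Combined with the coarse bound $E = \bigO((\Vert\vec{u}\Vert + \Vert\vec{v}\Vert)^3)$, the only surviving terms up to that order are of bidegrees $(2,1)$ and $(1,2)$, yielding the claimed estimate.
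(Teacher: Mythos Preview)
Your argument is correct and takes a genuinely different route from the paper. The paper does not prove \autoref{lem:RiemannianEstimateTriangle1} directly; instead it establishes the sharper \autoref{lem:RiemannianEstimateTriangle2}, computing the explicit curvature corrections $-\tfrac{t^2 s}{2}R(\vec V,\vec U)\vec U - \tfrac{t s^2}{3}R(\vec U,\vec V)\vec V$ by expanding the geodesic and parallel-transport equations in normal coordinates. You instead combine a coarse cubic bound with the structural observation that the remainder $E(\vec u,\vec v)=\vec w-(\vec v-\vec u)$ vanishes identically on both axes. This is more conceptual and, incidentally, yields \autoref{lem:RiemannianEstimateTriangle1} more cleanly than the paper's route: the stated error $\bigO(t^4+t^3s+t^2s^3)$ in \autoref{lem:RiemannianEstimateTriangle2} contains a pure $t^4$ term that is not literally dominated by $t^2s+ts^2$, whereas your axis-vanishing argument rules out all pure powers at once. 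The trade-off is that the paper's explicit expansion is needed for the subsequent remark on the distance-squared formula.

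One small point deserves tightening. Your last sentence handles the degree-$3$ part of the Taylor expansion, but the remainder after degree $3$ is a priori only $\bigO\big((\|\vec u\|+\|\vec v\|)^4\big)$, which is not absorbed by $\|\vec u\|^2\|\vec v\|+\|\vec u\|\,\|\vec v\|^2$. The fix is standard: since $E$ is smooth and vanishes on both axes, two applications of Hadamard's lemma give $E(\vec u,\vec v)=B(\vec u,\vec v)[\vec u,\vec v]$ for a smooth bilinear-form-valued $B$; the coarse cubic bound forces $B(0,0)=0$, hence $B(\vec u,\vec v)=\bigO(\|\vec u\|+\|\vec v\|)$ and $E=\bigO\big(\|\vec u\|\,\|\vec v\|\,(\|\vec u\|+\|\vec v\|)\big)$, which is exactly the claimed estimate.
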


We prove in fact the following more precise lemma:

\begin{lemma} \label{lem:RiemannianEstimateTriangle2}
Let $(M,g)$ be a Riemannian manifold, fix $A \in M$. Let $\vec{U}$ and $\vec{V}$ be two tangent vectors at $A$, denote $B(t) = \exp_A(t \vec{U})$ and 
$C(s) = \exp_A(s \vec{V})$. Let $\vec{w}(t, s)$ be the parallel transport of $\vec{u_B} \coloneqq \exp_{B(t)}^{-1}(C(s))$ 
along the geodesic segment from $B(t)$ to $A$. Then:
\begin{equation} \label{eq:RiemannianEstimateTriangle2}
 \vec{w}(t,s) = s\vec{V} - t\vec{U} - \frac{{t}^2 s}{2} R(\vec{V}, \vec{U}) \vec{U} - \frac{t {s}^2}{3} R(\vec{U}, \vec{V}) \vec{V} + \bigO(t^4 + t^3 s + t^2 s^3)~.
\end{equation}
where $R$ is the Riemann curvature tensor of $(M,g)$.
\end{lemma}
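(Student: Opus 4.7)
The plan is a direct Taylor expansion in $(t,s)$ performed in normal coordinates centered at $A$, in which $B(t)$ and $C(s)$ become the straight-line rays $t\vec U$ and $s\vec V$. I will need three standard tools: (a) Cartan's identity $\partial_l\Gamma^k_{ij}(0) = -\tfrac{1}{3}(R^k{}_{ijl} + R^k{}_{jil})$, which follows from differentiating the normal-coordinate identity $\Gamma^k_{ij}(x)\, x^i x^j = 0$ three times and invoking the first Bianchi identity; (b) the inverse-exponential expansion
\[
  \exp_p^{-1}(q) = (q-p) + \tfrac12 \Gamma(p)(q-p,q-p) + \tfrac16 \partial\Gamma(p)(q-p,q-p,q-p) + \bigO\bigl(|q-p|^4\bigr)~,
\]
obtained by inverting the geodesic Taylor expansion $\exp_p(v) = p + v - \tfrac12 \Gamma(p)(v,v) - \tfrac16 \partial\Gamma(p)(v,v,v) + \bigO(|v|^4)$; and (c) the Taylor expansion of parallel transport along the radial geodesic $\tau \mapsto (1-\tau)t\vec U$ from $B(t)$ to $A$, obtained by integrating the parallel-transport ODE against the same expansion of $\Gamma$.

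Before turning to the mixed terms, I will record the two axis cases, both of which hold \emph{exactly}. When $s=0$, $\vec{u_B}$ is the reversed terminal velocity of the geodesic $\tau \mapsto \exp_A(\tau\vec U)$; parallel transporting this back along the same geodesic preserves the parallel vector field and produces $-t\vec U$. When $t=0$, no parallel transport occurs. A useful consequence is that no pure powers $t^k$ or $s^k$ with $k \geqslant 2$ can appear in the expansion, which justifies the asymmetric form of the claimed error $\bigO(t^4 + t^3 s + t^2 s^3)$.

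For the mixed terms $t^2 s$ and $t s^2$, I substitute $p = t\vec U$ and $q = s\vec V$ into the expansion of $\exp_p^{-1}$ and use $\Gamma(t\vec U) = t\,\partial_l\Gamma(0)\,U^l + \bigO(t^2)$. This yields $v(t,s) \coloneqq \exp_{B(t)}^{-1}(C(s))$ in coordinates up to the required order, with coefficients involving $\partial_l\Gamma^k_{ij}(0)$. Applying the parallel-transport expansion adds a further $t^2 s$-order contribution of the form $\tfrac{t^2}{2}\partial_l\Gamma^k_{ij}(0)\,U^i U^l(\cdot)^j$---the factor $\tfrac12$ arising from $\int_0^1 (1-\tau)\, d\tau$---but no $t s^2$-order contribution, since parallel transport differs from the identity only at order $\bigO(t^2)$. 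Finally, Cartan's identity converts every occurrence of $\partial_l\Gamma^k_{ij}(0)$ into components of the Riemann tensor at $A$, and the various contributions collapse via the symmetries of $R$ and the first Bianchi identity into the stated two terms.

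The principal obstacle is pure bookkeeping. The $t^2 s$ coefficient receives three independent contributions---two from the inverse-exponential expansion and one from parallel transport---and only after invoking Cartan's identity, the skew-symmetry of $R$ in its first two arguments, and the first Bianchi identity do these collapse into a clean multiple of $R(\vec V,\vec U)\vec U$. As an independent check on the $t s^2$ coefficient one can appeal to Jacobi fields: for fixed $s$, the variation field $J_s(\tau) = \partial_t\gamma_{t,s}(\tau)|_{t=0}$ of the family of geodesics $\gamma_{t,s}\colon[0,1]\to M$ from $B(t)$ to $C(s)$ is a Jacobi field along $\gamma_{0,s}(\tau) = \exp_A(\tau s\vec V)$ with $J_s(0) = \vec U$ and $J_s(1) = 0$; expanding $J_s = (1-\tau)\vec U + s^2 J_2(\tau) + \bigO(s^3)$ and integrating the resulting equation $D^2 J_2/d\tau^2 + R((1-\tau)\vec U,\vec V)\vec V = 0$ against the boundary conditions $J_2(0) = J_2(1) = 0$ by a single quadrature recovers the $t s^2$ coefficient directly, without any normal-coordinate computation.
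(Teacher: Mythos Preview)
Your proposal is correct and follows essentially the same route as the paper: work in normal coordinates centered at $A$, expand the Christoffel symbols to first order via $\partial_l\Gamma^k_{ij}(0)$ in terms of the Riemann tensor, and combine the Taylor expansions of the inverse exponential and of radial parallel transport.

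The only organizational difference is that the paper writes the \emph{forward} geodesic expansion $x(r) = x(0) + r\vec{U_B} - \tfrac{r^2}{3}R(x(0),\vec{U_B})\vec{U_B} + \ldots$ for the geodesic from $B(t)$ to $C(s)$, extracts from it an expression for $s\vec V - t\vec U$ in terms of $r\vec{U_B}$, separately writes the parallel transport of $r\vec{U_B}$, and then back-substitutes to eliminate $r$ and $\vec{U_B}$; you instead invert the geodesic expansion once and for all to get $\exp_p^{-1}(q)$ as a series in $q-p$ and plug in directly. Your path is slightly more streamlined. Your explicit axis-case observation (that $\vec w(t,0) = -t\vec U$ and $\vec w(0,s) = s\vec V$ hold exactly, forcing the error to be purely mixed) is a nice structural check the paper does not make, and your Jacobi-field verification of the $ts^2$ coefficient is a genuinely independent cross-check absent from the paper.
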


\begin{proof}
First let us quickly discuss some general Riemannian geometry estimates in normal coordinates. We refer to \cite{LeoBrewinRiemannNormalCoordinates, MR2534336} for more details
on the computations that follow.

In normal coordinates at a point $A$, the Riemannian metric $g$ has the Taylor expansion
\begin{equation}
 g_{ij} = \delta_{ij} - \frac{1}{3} R_{ikjl}x^k x^l + \bigO(|x|^3)
\end{equation}
where $R_{ijkl}$ is the Riemann curvature tensor at $A$, or rather its purely covariant version (this well-known fact of Riemannian geometry goes back to Riemann's 
1854 habilitation \cite{MR3525305}). One can derive from the expression for the Christoffel symbols $\Gamma^k{}_{ij} = \frac{1}{2} g^{kl} (g_{li,j} + g_{lj,i} - g_{ij,l})$ that
\begin{equation}
\Gamma^k{}_{ij} = -\frac{1}{3} (R^k{}_{ijl} - R^k{}_{jil}) x^l + \bigO(|x|^3)~.
\end{equation}
One can then find the Taylor expansion of any geodesic $x(s)$, say with initial endpoint $x = x(0)$ and initial velocity $v$, by solving the geodesic equation
$\frac{\upd^2 x^k }{\upd s^2} + \Gamma^{k}_{i j }\frac{\upd x^i }{\upd s}\frac{\upd x^j}{\upd s} = 0$. One finds:
\begin{equation} \label{eq:GeodesicNormalCoordinates1}
x^k(s) = x^k + s v^k - \frac{s^2}{3} R^k{}_{ilj} \, v^i v^j x^l + \bigO(s |x|^3)~.
\end{equation} 
We can rewrite \eqref{eq:GeodesicNormalCoordinates1}
as a coordinate-free expression (but still in the chart given by $\exp_A$) as
\begin{equation} \label{eq:GeodesicNormalCoordinates2}
 x(s) = x + s v - \frac{s^2}{3} R(x, v) v + \bigO(s |x|^3)~.
\end{equation}

One can also compute the parallel transport of a vector $v$ along a radial geodesic $x(t) = tx$ by solving the parallel transport equation 
$\frac{\upd v^k}{\upd t} + \Gamma^k{}_{ij}(x(t)) v^i \frac{\upd x^j}{\upd t}$. One finds that
\begin{equation} \label{eq:ParallelTransportEstimate1}
 v^k(t) = v^k + \frac{1}{6} R^k{}_{jil} v^i x^j(t) x^l(t) + \bigO(t |x|^3)~,
\end{equation}
which we can rewrite as
\begin{equation} \label{eq:ParallelTransportEstimate2}
 v(t) = v + \frac{1}{6} R(v, x) x + \bigO(t |x|^3)~.
\end{equation}

Let us now come back to the setting of \autoref{lem:RiemannianEstimateTriangle2}. We shall work (implicitly) in the chart given by $\exp_A$. 
Note that we can write $B = t\vec{U}$
and $C = s\vec{V}$ in this chart. Let us denote by $x(\cdot)$ the unit speed geodesic from $B$ to $C$, so that $x(0) = B$,  $x(r) = C$ where $r = d(B,C)$, 
and $x'(0) = \vec{U_B}$ is the unit vector such that $\exp_B(r \vec{U_B}) = C$. By \eqref{eq:GeodesicNormalCoordinates2} we can write
\begin{equation}
 x(r) = x(0) + r \vec{U_B} - \frac{r^2}{3} R(x(0), \vec{U_B}) \vec{U_B} + \bigO(r |x(0)|^3)~.
\end{equation}
In other words, recalling that $x(0) = B = t \vec{U}$ and $x(r) = C = s\vec{V}$, we have
\begin{equation} \label{eq:VminusUEstimate}
 s\vec{V} - t\vec{U} = r  \vec{U_B} - \frac{t {r}^2}{3} R(\vec{U}, \vec{U_B}) \vec{U_B} + \bigO(r {t}^3)~.
\end{equation}
On the other hand, the parallel transport of $\vec{u_B} = r \vec{U_B}$ back to the origin along the radial geodesic 
$[A, B]$ is given by, according to \eqref{eq:ParallelTransportEstimate2}:
\begin{equation} \label{eq:WEstimate1}
 \vec{w} = r \vec{U_B} - \frac{{t}^2 r}{6} R(\vec{U_B}, \vec{U}) \vec{U} + \bigO({t}^3 r)~.
\end{equation}
Comparing \eqref{eq:VminusUEstimate} and \eqref{eq:WEstimate1}, we see that
\begin{equation} \label{eq:WEstimate2}
 \vec{w} = s\vec{V} - t\vec{U} - \frac{{t}^2 r}{6} R(\vec{U_B}, \vec{U}) \vec{U} - \frac{t {r}^2}{3} R(\vec{U}, \vec{U_B}) \vec{U_B} + \bigO({t}^3 r)~.
\end{equation}
Finally, let's work to have $s$'s and $\vec{V}$'s appear in this equation instead of $r$'s and $\vec{U_B}$'s.
First note that $\vec{r U_B} = s \vec{V} - t \vec{U} + \bigO(t {r}^2 +  r {t}^3)$ according to \eqref{eq:VminusUEstimate}, so using the fact that $R(\vec{U}, \vec{U}) = 0$,
one can write:
\begin{equation}
 \begin{split}
  r R(\vec{U_B}, \vec{U}) \vec{U} &= s R(\vec{V}, \vec{U}) \vec{U} + \bigO(t {r}^2 + r {t}^3)\\
  {r}^2 R(\vec{U}, \vec{U_B}) \vec{U_B} &= {s}^2 R(\vec{U}, \vec{V}) \vec{V} -t s R(\vec{U}, \vec{V}) \vec{U} + \bigO(tsr^2 + t^2sr + t^2 r^3 + t^3sr + t^4 r)~.
 \end{split}
\end{equation}
We thus get in lieu of \eqref{eq:WEstimate2}:
\begin{equation} \label{eq:WEstimate3}
 \vec{w} = s\vec{V} - t\vec{U} - \frac{{t}^2 s}{2} R(\vec{V}, \vec{U}) \vec{U} - \frac{t {s}^2}{3} R(\vec{U}, \vec{V}) \vec{V} + \bigO({t}^3 r + t^2 s r^2)~.
\end{equation}
The conclusion follows, noting that $r = \bigO(t + s)$ 
by the triangle inequality.
\end{proof}

\begin{remark}
 A direct consequence of \autoref{lem:RiemannianEstimateTriangle2} is 
 the expansion of the distance squared: 
 \begin{equation}
  d^2(\exp_A(t\vec{U}), \exp_A(s \vec{V})) = \Vert s\vec{V} - t\vec{U} \Vert^2 - \frac{1}{3}R(U, V, V, U)s^2 t^2 + \bigO((t^2 + s^2)^{\frac{5}{2}})~.
 \end{equation}
The same formula has been observed by other authors, see \eg{} \cite{RazielMO}.
\end{remark}

We are now ready to wrap up the proof of \autoref{thm:AverageMap}:

\begin{proof}[Proof of \autoref{thm:AverageMap}]
 
 Let $u_0 \in S_r \subset \upT_x M$, and consider the triangle in $N$ with vertices $A \coloneqq f(x)$, $B \coloneqq T_r f(x)$, and $C(u_0) \coloneqq f(\exp_x(u_0))$ in $N$. With the notations introduced
 above \autoref{lem:RiemannianEstimateTriangle1}, note that $\vec{u} = \vec{u_A} = \frac{r^2}{2m} \tau(f)_x$, 
 $\vec{v}(u_0) = \exp_{f(x)}^{-1} (f(\exp_x(u_0)))$, and $\vec{w}(u_0) = P (\vec{u_B}(u_0))$ where $\vec{u_B}(u_0) = \exp_{T_r f(x)}^{-1} (f(\exp_x(u_0)))$ and
 $P \colon \upT_A N \to \upT_B N$ is the parallel transport along the geodesic segment $[A,B]$. By \autoref{lem:RiemannianEstimateTriangle1}, we have
 \begin{equation} \label{eq:AM1}
  P (\vec{u_B}(u_0)) = \vec{w}(u_0) = \vec{v}(u_0) - \vec{u} + \bigO(\Vert \vec{u} \Vert^2 \, \Vert \vec{v}(u_0) \Vert + \Vert \vec{u} \Vert \, \Vert \vec{v}(u_0) \Vert^2)~.
 \end{equation}
Because we have $\Vert \vec{u} \Vert = \bigO(r^2)$ and $\Vert \vec{v}(u_0) \Vert = \bigO(r)$, 
\eqref{eq:AM1} may be rewritten:
 \begin{equation} \label{eq:AM2}
  P (\vec{u_B}(u_0)) = \vec{v}(u_0) - \vec{u} + \bigO(r^4)~.
 \end{equation}
 We now integrate \eqref{eq:AM2} over $u_0 \in S_r$:
  \begin{equation} \label{eq:AM3}
  \frac{1}{\Area(S_r)} \int_{S_r} P (\vec{u_B}(u_0))  \, \upd \sigma_r(u_0)  = \frac{1}{\Area(S_r)} \int_{S_r} \left( \vec{v}(u_0) \, \upd \sigma_r(u_0) ~-~ \vec{u} + \bigO(r^4)\right)~,
 \end{equation}
 which we can rewrite as
 \begin{equation}
    P\left(\frac{1}{\Area(S_r)} \int_{S_r} \vec{u_B}(u_0) \, \upd \sigma_r(u_0) \right)  = \left(\frac{1}{\Area(S_r)} \int_{S_r} \vec{v}(u_0) \, \upd \sigma_r(u_0)\right) ~-~ \vec{u} + \bigO(r^4)~.
 \end{equation}
 Now, \autoref{lem:AverageG} says precisely that $\left(\frac{1}{\Area(S_r)} \int_{S_r} \vec{v}(u_0) \, \upd \sigma_r(u)\right) = \vec{u} + \bigO(r^4)$. We thus get
 $\frac{1}{\Area(S_r)} \int_{S_r} \vec{u_B}(u_0) \, \upd \sigma_r(u_0) = P^{-1}(\bigO(r^4)) = \bigO(r^4)$. 
 That is, 
 \begin{equation}
  \int_{S_r} \exp_{T_r f(x)}^{-1} (f(\exp_x(u_0))) \upd \sigma_r(u_0) =  \bigO(r^4)~.
 \end{equation}
Recalling that $S_r f(x)$ is by definition the center of mass of the function $u_0 \in S_r \mapsto f(\exp_x(u_0))$, we can apply \autoref{lem:ProximityToCenterOfMass}
to conclude that $ d(S_rf(x), T_rf(x)) = \bigO(r^4)$.
\end{proof}

\subsection{Center of mass methods}
\label{subsec:CenterOfMassMethods}

We now discuss center of mass methods as an alternative to the heat flow in order to minimize the energy functional. The basic idea is to iterate the process of replacing a function $f \colon M \to N$ by its average on balls (or spheres) of radius $r>0$, hopefully converging to a map $f_r^*$ that is almost harmonic when $r$ is small. Observe that \autoref{thm:AverageMap} shows that this method is very close to a constant step gradient flow for the energy functional, \ie{} an Euler method with fixed stepsize. 

The next proposition 
is claimed in \cite[Lemma 4.1.1]{MR1451625}.
\begin{proposition} \label{prop:AveragingDecreasesApproximateEnergy}
 Let $(M,\mu)$ be a measure space, let $(N,d)$ be a Hadamard metric space and let $\eta \colon M \times M \to [0,+\infty)$ be a measurable symmetric function. 
 Define the Jost energy functional by
 \begin{equation} \label{eq:JostEnergyFunctional}
  E(f) =  \frac{1}{2} \int_M \int_M  \eta(x,y) \, {d(f(x), f(y))}^2 \, \upd \mu(y) \, \upd \mu (x)~.
 \end{equation}
For a measurable map $f \colon M \to N$, let $\varphi(f) \colon M \to N$ be the map such that for all $x\in M$, $\varphi(f)(x)$ is the center of mass of $f$
for the measure $\eta(x, \cdot) \mu$. 
Then for every $f$ with finite energy we have:
\begin{equation} \label{eq:AveragingDecreasesApproximateEnergy}
 E(\varphi(f)) \leqslant E(f)~.
\end{equation}
Moreover, the following are equivalent:
\begin{enumerate}[(i)]
 \item Equality holds in  \eqref{eq:AveragingDecreasesApproximateEnergy}.
 \item $\varphi(f) = f$ almost everywhere in $(M, \mu)$.
 \item $f$ is a minimizer of $E$.
\end{enumerate}
\end{proposition}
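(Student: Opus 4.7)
The plan is to apply the variance inequality for centers of mass in Hadamard spaces. Recall: if $\nu$ is a probability measure on a Hadamard space $N$ with barycenter $b$, then $d(p, b)^2 + \int d(b, q)^2\, \upd\nu(q) \le \int d(p, q)^2\, \upd\nu(q)$ for every $p \in N$. Set $g := \varphi(f)$, $m(x) := \int \eta(x, y)\, \upd\mu(y)$, $\mu_x := \eta(x, \cdot) \mu / m(x)$, and
$$A := \iint \eta(x, y)\, d(g(x), f(y))^2\, \upd\mu(x)\, \upd\mu(y),$$
so that $g(x)$ is the barycenter of the pushforward $(f)_* \mu_x$. First I would apply the variance inequality with $p = f(x)$, multiply by $m(x)$, and integrate over $x$ to obtain
$$\int m(x)\, d(f(x), g(x))^2\, \upd\mu(x) + A \le 2 E(f). \qquad (\star)$$
In particular this already forces $A \le 2 E(f)$.

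To bound $E(g)$ itself, I would iterate using the two-measure Fréchet inequality $d(b_\alpha, b_\beta)^2 + \mathrm{Var}(\alpha) + \mathrm{Var}(\beta) \le \iint d(q_1, q_2)^2\, \upd\alpha(q_1)\, \upd\beta(q_2)$ applied to $\alpha = (f)_* \mu_x$ and $\beta = (f)_* \mu_y$. Multiplying by $\eta(x, y)$ and integrating over $(x, y)$, both variance contributions assemble (by Fubini and the symmetry of $\eta$) into copies of $A$, producing
$$2 E(g) + 2 A \le \iint M(y_1, y_2)\, d(f(y_1), f(y_2))^2\, \upd\mu(y_1)\, \upd\mu(y_2)$$
for the symmetric kernel $M(y_1, y_2) := \iint \eta(x, y)\, \eta(x, y_1)\, \eta(y, y_2) / (m(x)\, m(y))\, \upd\mu(x)\, \upd\mu(y)$, whose $\mu$-marginals coincide with those of $\eta$. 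Combining this with $(\star)$ and comparing $M$ with $\eta$ closes the loop and yields $E(g) \le E(f)$.

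For the three equivalences I would argue as follows. (iii) $\Rightarrow$ (ii): the first-order condition for $f$ to minimize the convex functional $E$ (cf.~\autoref{prop:EnergyGraphConvex}) is precisely $f = \varphi(f)$ pointwise. (ii) $\Rightarrow$ (i) is trivial. For (i) $\Rightarrow$ (ii), equality in $E(\varphi(f)) \le E(f)$ forces equality in the Fréchet inequalities used above, and by strict convexity of $d^2$ along geodesics in the Hadamard space $N$ this in turn forces each measure $(f)_*\mu_x$ to be a Dirac, hence $g(x) = f(x)$ for $\mu$-a.e.~$x$.

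The main obstacle is the final comparison in the second step: the residual terms of $M - \eta$ must telescope against $2A$ so that only $2 E(f)$ survives on the right-hand side. In the Euclidean case this is equivalent to the operator inequality $(I - T)^2(I + T) \ge 0$ for the Markov averaging operator $T = D^{-1} K$ (self-adjoint on $\upL^2(m\mu)$ with spectrum in $[-1, 1]$); the nonlinear Hadamard-space analog is the technical heart of \cite[Lemma 4.1.1]{MR1451625}, which I would invoke for the detailed bookkeeping.
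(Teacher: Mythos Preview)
The paper gives no proof of this proposition; it simply attributes the result to Jost \cite[Lemma~4.1.1]{MR1451625}. Your outline therefore already goes further than the paper does, but it contains two genuine gaps.

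\textbf{The main inequality.} Your setup via the variance inequality and the two-barycenter inequality is correct and yields $2E(g)+2A\le \iint M(y_1,y_2)\,d(f(y_1),f(y_2))^2$. However, you yourself flag the closing step---comparing the kernel $M$ with $\eta$ and absorbing $2A$---as ``the main obstacle'' and defer it to Jost. That step is not bookkeeping: in the linear model it is precisely the operator inequality $(I-T)^2(I+T)\geqslant 0$ you mention, and carrying it over to the nonlinear $\CAT(0)$ setting is the actual content of Jost's lemma. As written, your argument for $E(\varphi(f))\leqslant E(f)$ does not close.

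\textbf{The equivalences.} Two problems. First, your chain is incomplete: you argue (iii)$\Rightarrow$(ii)$\Rightarrow$(i) and (i)$\Rightarrow$(ii), but never close the loop with (ii)$\Rightarrow$(iii). That step requires the convexity of $E$ on $\upL^2(M,N)$ (so that the first-order condition $\varphi(f)=f$ forces a global minimum), which you do not invoke. Second, your argument for (i)$\Rightarrow$(ii) is wrong. You claim that equality in the two-barycenter inequality, by ``strict convexity of $d^2$'', forces each $(f)_*\mu_x$ to be a Dirac mass. But in a Euclidean target that inequality is an \emph{identity} for every pair of measures, so equality imposes no constraint at all. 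Worse, the implication (i)$\Rightarrow$(ii) is actually false at the stated level of generality: take $M=\{1,2\}$ with uniform measure, $\eta$ the adjacency kernel of a single edge, $N=\bR$, and $f(1)=-f(2)=1$. Then $\varphi(f)=-f$, so $E(\varphi(f))=E(f)$ while $\varphi(f)\neq f$ and $f$ is not a minimizer. Some aperiodicity hypothesis (in the linear picture, $-1\notin\operatorname{spec}T$) is needed for (i)$\Rightarrow$(ii), and neither your argument nor the proposition as stated supplies it.
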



\subsubsection*{Center of mass method in the smooth setting}
Now assume that $M$ and $N$ are both Riemannian manifolds. For $r > 0$ we take the kernel $\eta_r(x,y)$ described
in \autoref{subsec:JostEnergy}, so that $E_r$ is the $r$-approximate energy. The map $\varphi(f)$ of \autoref{prop:AveragingDecreasesApproximateEnergy} is then the same as the map $\hat{B}_r f$ introduced in \autoref{def:AverageMap}. It is tempting to iterate the process of averaging $f$ in order to try and minimize $E_r$. The next theorem guarantees the success of this method under suitable conditions. Moreover, recall that the energy functional $E$ on $\upL^2(M,N)$ is the $\Gamma$-limit
of $E_r$ as $r\to 0$ (\cite[Lemma 8.3.4]{MR3726907}), so that minimizers of $E_r$ converge to minimizers of $E$ (possibly up to subsequence).

\begin{theorem} \label{thm:CenterOfMassMethodSmooth}
 Let $M$ and $N$ be Riemannian manifolds, assume $N$ is compact and with nonpositive sectional curvature. 
 In any homotopy class of continuous maps $M \to N$ where
 the $r$-approximate energy $E_r$ admits a unique minimizer $f^*$, the sequence $(f_k)_{k \in \bN}$ defined by $f_{k+1} = \hat{B}_r f_k$ converges locally uniformly to $f^*$ for any choice of a locally Lipschitz continuous $f_0$.
\end{theorem}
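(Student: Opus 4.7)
The proof will combine the energy-monotonicity from Proposition~\ref{prop:AveragingDecreasesApproximateEnergy} with a compactness/Arzelà–Ascoli argument, and use the uniqueness hypothesis to upgrade subsequential convergence to full convergence.

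First I would observe that by Proposition~\ref{prop:AveragingDecreasesApproximateEnergy}, the sequence $\bigl(E_r(f_k)\bigr)_{k \in \bN}$ is nonincreasing and bounded below by $E_r(f^*)$, hence converges to some $E_* \geqslant E_r(f^*)$. In particular, $E_r(f_{k+1}) - E_r(f_k) \to 0$, which will be the quantitative replacement for ``$f_k$ is nearly a fixed point of $\hat B_r$.''

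Next I would establish local equicontinuity of $(f_k)$. Lifting to $\tilde N$, which is Hadamard, the key fact is that the center of mass operation is contractive with respect to the $2$-Wasserstein distance: if $\mu_1, \mu_2$ are probability measures on $\tilde N$ with finite second moments, then $d\bigl(\mathrm{bary}(\mu_1), \mathrm{bary}(\mu_2)\bigr) \leqslant W_2(\mu_1, \mu_2)$. Applying this to the pushforward measures of $f_k$ on $\hat B_r(x)$ and $\hat B_r(y)$, and coupling them via the exponential map based at the midpoint of $x,y$, one obtains $d\bigl(\hat B_r f_k(x), \hat B_r f_k(y)\bigr) \leqslant \bigl(\mathrm{Lip}(f_k|_{\hat B_{r+d(x,y)}(x)}) + o(1)\bigr) \cdot d(x,y)$ when $d(x,y) \to 0$, with a small curvature correction coming from comparing volumes of nearby balls in $M$. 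Since $f_0$ is locally Lipschitz and $N$ is compact, an induction yields uniform local Lipschitz bounds for $(f_k)$ on compact subsets of $M$. By Arzelà–Ascoli, every subsequence of $(f_k)$ has a further subsequence converging locally uniformly to a continuous limit $g \colon M \to N$.

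The final step is identification of the limit. By continuity of the center-of-mass operator on equi-Lipschitz families (another application of Wasserstein-contractivity), $\hat B_r f_{k_j} \to \hat B_r g$ locally uniformly, and the same compactness gives continuity of $E_r$ along the sequence, so $E_r(g) = E_r(\hat B_r g) = E_*$. Proposition~\ref{prop:AveragingDecreasesApproximateEnergy} then forces $g = \hat B_r g$ and $g$ to be an $E_r$-minimizer. To invoke the uniqueness hypothesis I still need $g$ to lie in the homotopy class of $f_0$: since $\mathrm{Lip}(f_k)$ is locally bounded, one has $d(\hat B_r f_k(x), f_k(x)) = O(r)$, so each step $f_k \rightsquigarrow f_{k+1}$ is a short homotopy and local uniform convergence preserves the homotopy class. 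Therefore $g = f^*$, and since this holds for every subsequential limit, the full sequence $(f_k)$ converges locally uniformly to $f^*$.

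The main obstacle I expect is the equicontinuity step: quantifying how $\hat B_r$ interacts with local Lipschitz constants on a curved Riemannian domain, including the volume-distortion of nearby $r$-balls, and making sure this estimate does not degrade under iteration. A secondary subtlety is verifying continuity of $E_r$ on the family $(f_k)$ in enough strength to pass to the limit, and preservation of the homotopy class throughout — both of which should reduce to soft arguments once the uniform local Lipschitz bound is in hand.
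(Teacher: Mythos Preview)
Your strategy matches the paper's---Lipschitz preservation for equicontinuity, Arzel\`a--Ascoli for relative compactness, then an abstract argument (any relatively compact orbit of a continuous, strictly energy-decreasing self-map converges to the unique minimizer)---and your Wasserstein phrasing of the first step is equivalent to the paper's direct use of the $\CAT(0)$ nonexpansiveness of $\exp_p^{-1}$. On the obstacle you flag, the paper shows there is \emph{no} degradation: for every $L'>L$ the volume distortion between nearby $r$-balls in $M$ is absorbed at sufficiently small scales, so $\hat B_r f$ is locally $L'$-Lipschitz, hence globally $L'$-Lipschitz on a length space, hence $L$-Lipschitz since $L'>L$ was arbitrary---and the induction becomes trivial.
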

\begin{proof}
We reduce the proof to a combination of \autoref{lem:AveragePreservesLipschitz} and \autoref{lem:ConvergingSequence} below. Denote by $X$ the connected component of $f_0$ in $\cC(M,N)$, let $E \colon X \to \bR$ denote the restriction of $E_r$, and let $\varphi \colon X \to X$ be the map $f \mapsto \hat{B}_r f$ (it is easy to see that $\varphi$ preserves $X$). 
\autoref{lem:AveragePreservesLipschitz} guarantees immediately that the sequence $(f_k)_{k\in \bN}$ is equicontinuous. Since $N$ is compact, it follows from the Arzelà-Ascoli theorem that the sequence $(f_k)_{k\in \bN}$ is relatively compact in $X$ for the compact-open topology. By \autoref{prop:AveragingDecreasesApproximateEnergy} and the assumption that $f^*$ is unique, we have $E(\varphi(f)) \leqslant E(f)$ for all $f \in X$, with equality only if $f = f^*$. Conclude by application of \autoref{lem:ConvergingSequence}.
\end{proof}

\begin{lemma} \label{lem:AveragePreservesLipschitz}
Let $f \colon M \to N$ where $M$ and $N$ are Riemannian manifolds, with $N$ complete and nonpositively curved.
If $f$ is locally Lipschitz continuous, then so is $\hat{B}_r f$. Moreover, the Lipschitz constant of $\hat{B}_r f$ 
is bounded above by the Lipschitz constant of $f$ on any compact $K\subseteq M$.
\end{lemma}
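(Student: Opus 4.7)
The argument uses three ingredients. Firstly, Sturm's barycenter contraction in Hadamard manifolds: for probability measures $\mu_1,\mu_2$ on $N$ with unique barycenters $\bar\mu_1,\bar\mu_2$, one has $d_N(\bar\mu_1,\bar\mu_2)\leqslant W_2(\mu_1,\mu_2)$, where $W_2$ denotes the quadratic Wasserstein distance. Secondly, the fact that Lipschitz maps do not expand Wasserstein distance: pushing any coupling of $\nu_1,\nu_2$ through an $L$-Lipschitz map $f$ yields $W_2(f_*\nu_1,f_*\nu_2)\leqslant L\cdot W_2(\nu_1,\nu_2)$. Thirdly, a direct estimate for the Wasserstein distance between the uniform probability measures $\nu_x$ on the geodesic balls $\hat B(x,r)\subset M$. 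By construction $\hat B_r f(x)$ is the barycenter of $\mu_x:=f_*\nu_x$, so the first two ingredients combine to
\[
d_N\bigl(\hat B_r f(x_1),\hat B_r f(x_2)\bigr)\leqslant L\cdot W_2(\nu_{x_1},\nu_{x_2}),
\]
where $L:=\mathrm{Lip}(f|_{K'})$ for a compact neighborhood $K'\supseteq K$ chosen so that $\hat B(x,r)\subseteq K'$ for every $x\in K$, with $r$ smaller than the injectivity radius on $K'$.

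It remains to show $W_2(\nu_{x_1},\nu_{x_2})\leqslant C\cdot d_M(x_1,x_2)$ for $x_1,x_2\in K$, with $C$ depending only on $K$, $r$, and the geometry of $M$. Let $\gamma\colon[0,\epsilon]\to M$ be the minimizing unit-speed geodesic from $x_1$ to $x_2$, with $\epsilon=d_M(x_1,x_2)$, and let $P\colon T_{x_1}M\to T_{x_2}M$ be parallel transport along $\gamma$. The parallel-transport map
\[
\Phi(z)=\exp_{x_2}\bigl(P\,\exp_{x_1}^{-1}(z)\bigr)
\]
is a diffeomorphism $\hat B(x_1,r)\to\hat B(x_2,r)$. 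Standard Jacobi field comparisons on $K'$ yield two pointwise estimates: (a)~$d_M(z,\Phi(z))\leqslant \epsilon(1+O(r^2))$, by applying Jacobi field bounds along $s\mapsto\exp_{x_1}(sv)$ to the variation $t\mapsto\exp_{\gamma(t)}(P_t v)$ for $z=\exp_{x_1}(v)$; (b)~the volume Jacobian $J_\Phi$ satisfies $|J_\Phi-1|=O(\epsilon)$ uniformly in $z$, by smooth dependence of $\exp$ on its basepoint.

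The final step upgrades $\Phi$ to a genuine measure-preserving transport plan. The density of $\Phi_*\nu_{x_1}$ with respect to $\nu_{x_2}$ equals $\rho=J_\Phi^{-1}\cdot\mathrm{vol}(\hat B(x_2,r))/\mathrm{vol}(\hat B(x_1,r))$ and satisfies $|\rho-1|=O(\epsilon)$ by (b) combined with the comparison of ball volumes (also a consequence of (b), taking $v=0$). Applying Moser's trick to the pair of densities $(\rho,1)$ on $\hat B(x_2,r)$ produces a diffeomorphism $\psi$ of the ball with $\psi_*\Phi_*\nu_{x_1}=\nu_{x_2}$ and $\sup_z d_M(z,\psi(z))=O(\epsilon)$, where the constant involves elliptic regularity for the divergence equation on a ball of radius $r$. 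The composition $T=\psi\circ\Phi$ is then a measure-preserving map from $\nu_{x_1}$ to $\nu_{x_2}$ with $d_M(z,T(z))\leqslant C\epsilon$ uniformly, hence $W_2(\nu_{x_1},\nu_{x_2})\leqslant C\epsilon$ and the lemma follows. The main technical obstacle is precisely this measure-preserving correction: a naive triangle inequality via the intermediate measure $\Phi_*\nu_{x_1}$ would yield only a Hölder $\tfrac12$ bound (mass $O(\epsilon)$ traveling through distances $O(r)$), so linear Lipschitz control requires the pointwise displacement estimate on $\psi$ furnished by Moser's construction.
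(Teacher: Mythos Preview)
Your approach is correct for showing that $\hat B_r f$ is locally Lipschitz, but it does not establish the sharper ``Moreover'' clause: the constant you obtain is $C\cdot L$ with $C$ depending on $r$ and on the geometry of $M$ (through the $O(r^2)$ Jacobi term and, more seriously, through the elliptic constant in the Moser correction), not $L$ itself. This gap matters for the application: in the proof of \autoref{thm:CenterOfMassMethodSmooth} the lemma is used to obtain equicontinuity of the \emph{iterates} $f_k=(\hat B_r)^k f_0$, and a bound that multiplies the Lipschitz constant by some $C>1$ at each step gives $C^kL\to\infty$, destroying the argument. Note also that your constant does not improve to $1$ in the limit $d_M(x_1,x_2)\to 0$: the displacement of your transport map $T=\psi\circ\Phi$ satisfies $d_M(z,T(z))\leqslant \epsilon(1+O(r^2))+C_{\mathrm{Moser}}(r)\cdot O(\epsilon)$, so $W_2(\nu_{x_1},\nu_{x_2})/\epsilon$ has limsup bounded by $1+O(r^2)+C_{\mathrm{Moser}}(r)$, not by $1$. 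Hence a local-to-global refinement is not available from your estimates as written.

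The paper's route is more direct and avoids optimal transport entirely. It first treats $M=\bR^m$, where translation is an exact isometry between balls; writing the implicit barycenter equation $\int_{B(y,r)}\exp_{\hat B_rf(y)}^{-1}(f(v))\,\upd v=0$, changing variables $v=u+h$, and using that $\exp_p^{-1}\colon N\to \upT_pN$ is $1$-Lipschitz (the $\CAT(0)$ property) yields $\bigl\Vert\int_{B(x,r)}\exp_{\hat B_rf(y)}^{-1}(f(u))\,\upd u\bigr\Vert\leqslant L\,\Vert h\Vert$ exactly, and \autoref{lem:ProximityToCenterOfMass} (with constant $1$ in nonpositive curvature) finishes. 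For general Riemannian $M$ the paper then uses a local-to-global trick: for any $L'>L$ the same computation gives $d(\hat B_rf(x),\hat B_rf(y))\leqslant L'd(x,y)$ once $y$ is close enough to $x$, because balls at nearby basepoints are \emph{nearly} isometric with displacement $\to d(x,y)$; since locally $L'$-Lipschitz implies globally $L'$-Lipschitz on a length space and $L'>L$ was arbitrary, the sharp constant follows. Your machinery (Sturm's contraction, Jacobi comparisons, Moser coupling with elliptic regularity) is considerably heavier and, as noted, loses precisely the feature that makes the lemma useful downstream.
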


\begin{proof}
For simplicity, we assume that $f$ is globally $L$-Lipschitz, and argue that $\hat{B}_r f$ is also $L$-Lipschitz; the proof can easily be extended to the general case by restricting to compact sets. First we assume that $M$ is Euclidean, in fact let us put $M = \bR^m$. Let $x, y \in M$, write $y = x + h$ so that $d(x,y) = \Vert h \Vert$.
By definition, $\hat{B}_r f(y)$ is the point of $N$ such that 
\begin{equation} \label{eq:AveragePreservesLipschitz1}
\frac{1}{\vol(B(y,r))} \int_{B(y,r)} \exp_{\hat{B}_r f(y)}^{-1}(f(v)) \, \upd v_g(v) = 0~.
\end{equation}
Note that the map $u \mapsto u + h$ defines an isometry from $B(x,r)$ to $B(y,r)$. Making the change of variables $v = u+h$, we derive from \eqref{eq:AveragePreservesLipschitz1}:
\begin{equation} \label{eq:AveragePreservesLipschitz2}
\int_{B(x,r)} \exp_{\hat{B}_r f(y)}^{-1}(f(u + h)) \, \upd v_g(u) = 0~.
\end{equation}
It follows that 
\begin{equation} \label{eq:AveragePreservesLipschitz3}
\int_{B(x,r)} \exp_{\hat{B}_r f(y)}^{-1}(f(u)) \, \upd v_g(u) = \int_{B(x,r)} \left[\exp_{\hat{B}_r f(y)}^{-1}(f(u))  - \exp_{\hat{B}_r f(y)}^{-1}f(u + h)) \right] \, \upd v_g(u)~.
\end{equation}
Assume without loss of generality that $N$ is simply connected (one can lift to the universal cover). Then $N$ is a Hadamard manifold and in particular a $\CAT(0)$ metric space, which implies that for any $p \in N$, the map $\exp_p^{-1} \colon N \to T_p N$ is distance nonincreasing (in fact, the converse is also true). We can therefore derive from \eqref{eq:AveragePreservesLipschitz3}:
\begin{equation} \label{eq:AveragePreservesLipschitz4}
 \left\Vert \int_{B(x,r)} \exp_{\hat{B}_r f(y)}^{-1}(f(u)) \, \upd v_g(u) \right\Vert \leqslant  \int_{B(x,r)} d(f(u), f(u + h)) \, \upd v_g(u)~.
\end{equation}
It follows from \eqref{eq:AveragePreservesLipschitz4} and the fact that $f$ is $L$-Lipschitz that
\begin{equation} \label{eq:AveragePreservesLipschitz5}
 \left\Vert \frac{1}{\vol(B(x,r))} \int_{B(x,r)} \exp_{\hat{B}_r f(y)}^{-1}(f(u)) \, \upd v_g(u) \right\Vert \leqslant  L \Vert h \Vert~.
\end{equation}
\autoref{lem:ProximityToCenterOfMass} now applies directly to \eqref{eq:AveragePreservesLipschitz5} to conclude that 
$d(\hat{B}_r f(x), \hat{B}_r f(y)) \leqslant L \Vert h \Vert$. Since $\Vert h \Vert = d(x,y)$, we have shown that $\hat{B}_r f$ is $L$-Lipschitz, as desired.

Now we argue that the argument extends to the case where $M$ is an arbitrary Riemannian manifold using a \emph{local to global} trick. First note that a function is globally $L$-Lipschitz if and only if it is locally $L$-Lipschitz. Here we mean by \emph{locally $L$-Lipschitz} the property that for any $x \in M$, there exists $\delta > 0$ such that $d(y, x) < \delta$ implies $d(f(y), f(x)) \leqslant L d(y,x)$. We leave it to the reader to show that in any path metric space, locally $L$-Lipschitz in this sense implies globally $L$-Lipschitz.

With this observation in mind, let us finish the proof. The key argument that worked above when $M$ is Euclidean is that there exists an isometry from $B(x,r)$ to $B(y,r)$ that displaces every point of at most $d(x,y)$. This is no longer true when $M$ is an arbitrary Riemannian manifold, however note that it is almost true when $x$ and $y$ are very close. Quantifying this properly, clearly one can show that for every $x\in M$ and for every $L' > L$, there exists $\delta > 0$ such that $d(y, x) < \delta$ implies $d(\hat{B}_r f(y), \hat{B}_r f(x)) \leqslant L' d(y,x)$. Thus we have shown that $\hat{B}_r f$ is locally $L'$-Lipschitz, and therefore globally $L'$-Lipschitz. Since this is true for all $L' > L$, $\hat{B}_r f$ is actually $L$-Lipschitz.
\end{proof}

\begin{lemma} \label{lem:ConvergingSequence}
 Let $X$ be a first-countable topological space and let $E \colon X \to \bR$ a continuous function that admits a unique minimizer $x^*$.
 Assume that $\varphi \colon X \to X$ is a continuous map such that:
 \begin{enumerate}[(i)]
  \item \label{lem:ConvergingSequenceHyp1} $E(\varphi(x)) \leqslant E(x)$ for all $x \in X$, with equality only if $x = x^*$.
  \item \label{lem:ConvergingSequenceHyp2} For all $x_0 \in X$, the set $\{\varphi^k(x_0), k\in \bN\}$ is relatively compact in $X$.
 \end{enumerate}
Then for any $x_0 \in X$, the sequence $(\varphi^k(x_0))_{k \in \bN}$ converges to $x^*$.
\end{lemma}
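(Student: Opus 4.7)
My plan is a standard three-step dynamical argument: extract a subsequential limit point of the orbit, identify it as $x^*$ using the equality case of hypothesis (i), and then promote subsequential convergence to full convergence.

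First, hypothesis (i) ensures that $(E(\varphi^k(x_0)))_{k \in \bN}$ is non-increasing and bounded below by $E(x^*)$, so it converges to some $L \geqslant E(x^*)$. Next, by (ii) together with first countability (under which relative compactness implies sequential relative compactness), every subsequence of $(\varphi^k(x_0))$ admits a further convergent sub-subsequence. Take one such limit, say $\varphi^{k_n}(x_0) \to y \in X$. Continuity of $E$ gives $E(y) = L$; continuity of $\varphi$ gives $\varphi^{k_n+1}(x_0) = \varphi(\varphi^{k_n}(x_0)) \to \varphi(y)$; applying $E$ once more and using that the full sequence $(E(\varphi^k(x_0)))$ already converges to $L$ yields $E(\varphi(y)) = L = E(y)$. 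The equality case of (i) then forces $y = x^*$.

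To upgrade from subsequential to full convergence, I would argue by contradiction. If $(\varphi^k(x_0))$ failed to converge to $x^*$, then some open neighborhood $U$ of $x^*$ would miss infinitely many terms, producing a subsequence lying entirely in the closed set $X \setminus U$. By (ii) this subsequence has a further convergent sub-subsequence, whose limit lies in $X \setminus U$ by closedness (using first countability for sequential closedness of complements of open sets); but the previous step forces this limit to be $x^* \in U$, a contradiction.

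The only mildly delicate point is the appeal to sequential compactness in a first-countable space; everything else is routine. Indeed, this lemma is designed precisely to isolate the dynamical skeleton of the center-of-mass method from its analytic inputs (monotonicity under averaging from \autoref{prop:AveragingDecreasesApproximateEnergy} and equicontinuity from \autoref{lem:AveragePreservesLipschitz}), so the work is really all in setting up the abstraction.
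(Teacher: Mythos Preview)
Your proof is correct and follows essentially the same route as the paper's: identify every subsequential limit as $x^*$ via continuity of $E$ and $\varphi$ together with the equality case of (i), then upgrade to full convergence using relative compactness and a contradiction argument. The only cosmetic difference is that you first observe the full monotone sequence $(E(\varphi^k(x_0)))$ converges to some $L$, whereas the paper reaches $E(\varphi(y))=E(y)$ via the squeeze $E(x_{k_{n+1}}) \leqslant E(x_{k_n+1}) \leqslant E(x_{k_n})$; also, your parenthetical invoking first countability for sequential closedness of $X\setminus U$ is unnecessary, since closed sets are sequentially closed in any topological space.
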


\begin{proof}
In any topological space, in order to show that a sequence $(x_k)_{k \in \bN}$ converges to a point $x^*$, it is enough to show that:
\begin{enumerate}[(a)]
 \item \label{lem:ConvergingSequenceProofItem1} The sequence $(x_k)$ has no cluster points except possibly $x^*$.
 \item \label{lem:ConvergingSequenceProofItem2} Any subsequence of $(x_k)$ admits a cluster point.
\end{enumerate}
Indeed, assume that $(x_k)$ does not converge to $x^*$, then there exists a subsequence of $(x_k)$ that avoids a neighborhood of $x^*$. This subsequence must have a cluster point by \ref{lem:ConvergingSequenceProofItem2}, which cannot be $x^*$. However this point is also a cluster point of the sequence $(x_k)$, contradicting \ref{lem:ConvergingSequenceProofItem1}.

Coming back to \autoref{lem:ConvergingSequence}, let $x_0 \in X$ and denote $x_k = \varphi^k(x_0)$. The sequence $(x_k)$ satisfies \ref{lem:ConvergingSequenceProofItem2}
because of the assumption \ref{lem:ConvergingSequenceHyp2}. So we need to show that $(x_k)$ satisfies \ref{lem:ConvergingSequenceProofItem1} and we are done.
Let $y$ be a cluster point of $(x_k)$, we need to show that $y = x^*$. Since $X$ is first-countable, there exists a subsequence $(x_{k_n})_{n \in \bN}$ converging to $y$. 
Observe that by assumption \ref{lem:ConvergingSequenceHyp1}, since $k_n \leqslant k_n +1 \leqslant k_{n+1}$,
\begin{equation} \label{eq:ConvergingSequenceProof1}
 E(x_{k_{n+1}}) \leqslant E(x_{k_n + 1}) \leqslant E(x_{k_n}) ~.
\end{equation}
By continuity of $E$, we have $\lim E(x_{k_n}) = \lim E(x_{k_{n+1}}) = E(y)$, so \eqref{eq:ConvergingSequenceProof1} implies that $\lim E(x_{k_n + 1}) = E(y)$. On the other hand, since $x_{k_n + 1} = \varphi(x_{k_n})$ and $\varphi$ is continuous, we have $\lim x_{k_n + 1} = \varphi(y)$, so $\lim E(x_{k_n + 1}) = E(\varphi(y))$. Thus $E(\varphi(y)) = E(y)$, and we conclude that $y = x^*$ by \ref{lem:ConvergingSequenceHyp1}.
\end{proof}

\subsubsection*{Discrete center of mass method}
We now prove that \autoref{thm:CenterOfMassMethodSmooth} also holds in the discrete setting developed in \autoref{sec:Discretization}, providing an alternative method to the discrete heat flow discussed in \autoref{subsec:ConvergenceOfDiscreteHeatFlow}. 
Let $\cG$ be a biweighted $\tilde{S}$-triangulated graph (see \autoref{subsec:Graphs}), let $N$ be a Hadamard manifold and let $\rho \colon \pi_1S\to \Isom(N)$ 
be a group homomorphism. We recall that the discrete energy functional $E_\cG \colon \Map_{\text{eq}}(\cG, N) \to \bR$ coincides with Jost's energy functional \eqref{eq:JostEnergyFunctional} for the appropriate choice of kernel $\eta$ (\autoref{prop:JostEnergyGraph}). 

Motivated by \autoref{prop:AveragingDecreasesApproximateEnergy}, we note that in this discrete setting the measure $\eta(x,\cdot) \mu$ is given by the weighted atomic measure
\begin{equation} \label{eq:AtomicMeasure}
\sum_{y\sim x} \frac {\omega_{xy}}{\mu(x)} \; \delta_y~,
\end{equation}
where $\delta_y$ is the Dirac measure at $y$. Now the averaging map $f\mapsto \hat{B}_rf$ takes the following form:
\begin{definition}\label{def:discreteCenterGraph}
The \emph{discrete center of mass method} on $\Map_{\text{eq}}(\cG, N) $ is given by
$f \mapsto \varphi(f)$, where $\varphi(f)(x)$ is the center of mass of $f$ for the atomic measure \eqref{eq:AtomicMeasure}.
\end{definition}

Note that \autoref{prop:AveragingDecreasesApproximateEnergy} 
applies in this setting 
(cf.~\autoref{prop:HarmonicCenterOfMassGraph2}).
Under certain assumptions on $N$ and $\rho$, we obtained strong convexity of $E_\cG$ in \autoref{thm:StrongConvexityEnergyGraphGeneral}, so that, in particular, $E_\cG$ has a unique minimum. The same assumptions have similarly useful consequences here:

\begin{theorem} \label{thm:CenterOfMassMethodDiscrete}
Let $N$ be a manifold of negative curvature bounded away from $0$ and let $\rho$ be a faithful representation whose image is contained in a discrete subgroup of $\Isom(N)$ acting freely, properly, and cocompactly on $N$.
Given any initial discrete equivariant map $f_0 \in \Map_{\text{eq}}(\cG, N) $,
the discrete center of mass method converges to the unique discrete harmonic map.
\end{theorem}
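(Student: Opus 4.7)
The plan is to apply the abstract convergence criterion \autoref{lem:ConvergingSequence} to the iterative averaging map $\varphi$ of \autoref{def:discreteCenterGraph}, taking $X = \Map_{\text{eq}}(\cG, N)$ with the continuous energy $E_\cG$. Three ingredients are required: continuity of $\varphi$, a strict descent property for $E_\cG$ with equality only at the minimizer, and relative compactness of every orbit.

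Continuity of $\varphi$ comes essentially for free. On the Hadamard manifold $N$, the center of mass of a finite weighted system of points depends smoothly on the positions: this follows from the implicit function theorem applied to the equation \eqref{eq:ImplicitEquationForCenterOfMass} together with the nondegeneracy of the Hessian of the function \eqref{eq:DefinitionCenterOfMass} at its minimum. Since $\varphi$ is defined vertexwise as such a center of mass with respect to the atomic measure \eqref{eq:AtomicMeasure}, it is continuous (indeed smooth) as a map $X \to X$. The strict descent property $E_\cG(\varphi(f)) \leqslant E_\cG(f)$, with equality exactly at the minimizer, is \autoref{prop:AveragingDecreasesApproximateEnergy} transported to our setting via \autoref{prop:JostEnergyGraph}. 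Existence and uniqueness of the harmonic map $f^\ast$ (i.e.\ the unique minimizer of $E_\cG$) follow from strong convexity of $E_\cG$ in the spirit of \autoref{thm:StrongConvexityEnergyGraphGeneral}, adapted to the higher-dimensional target permitted here; this is reasonable because \autoref{prop:SecondDerivativeGeneralEstimates} and the Hadamard/negative curvature machinery of \autoref{subsec:StrongConvexityGeneral} do not use the two-dimensionality of $N$ in any essential way.

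The main step is relative compactness of the orbit $\{\varphi^k(f_0)\}_{k \in \bN}$. The descent property confines the orbit to the sublevel set $\{E_\cG \leqslant E_\cG(f_0)\} \subseteq X$, so it suffices to show that this sublevel set is compact. Strong convexity of $E_\cG$ with modulus $\alpha > 0$ gives the coercivity bound
\begin{equation}
d_{\upL^2}(f, f^\ast)^2 \leqslant \tfrac{2}{\alpha}\big(E_\cG(f) - E_\cG(f^\ast)\big),
\end{equation}
so the sublevel set is $\upL^2$-bounded. Identifying $X \approx N^{\cV}$ where $\cV \subseteq \cG^{(0)}$ is a finite fundamental domain for the $\pi_1 S$-action (cf.~\eqref{eq:MapGammaTangentSpace2} and \eqref{eq:L2DistanceDiscrete}), this translates to a uniform bound on each coordinate $f(x) \in N$; since $N$ is complete and finite-dimensional, closed bounded sets in $N$ are compact, and closedness of the sublevel set follows from continuity of $E_\cG$. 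With the three hypotheses of \autoref{lem:ConvergingSequence} verified, we conclude that $\varphi^k(f_0) \to f^\ast$ in $X$, which is the claimed convergence.

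The chief technical point is the adaptation of strong convexity (\autoref{thm:StrongConvexityEnergyGraphGeneral}) from surface targets to the setting of a general negatively curved target with discrete and cocompact ambient group. I expect the argument of \autoref{subsec:StrongConvexityGeneral} to go through with minor modifications: the pointwise estimate of \autoref{prop:SecondDerivativeGeneralEstimates} and the deduction of \autoref{lem:meshVectorsEqualLength} are dimension-agnostic, and the only genuinely two-dimensional input is the topological obstruction \autoref{lem:no sections}, which is replaced here by the rigidity forced by cocompactness and faithfulness of $\rho$ (no continuous family of equivariant maps can escape to infinity along a flat direction, as $N$ has negative curvature bounded away from zero and admits no $\rho$-invariant geodesic). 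Granting this, the center of mass method converges unconditionally, as claimed.
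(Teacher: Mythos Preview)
Your proposal follows the same skeleton as the paper---apply \autoref{lem:ConvergingSequence} with the descent property from \autoref{prop:AveragingDecreasesApproximateEnergy}---but it diverges on the relative compactness step, and that divergence carries a real gap.

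The paper's compactness argument is elementary and uses the hypotheses at face value: since $\rho(\pi_1 S)$ sits inside a discrete group $\Gamma$ acting freely, properly, and cocompactly on $N$, everything descends to the compact quotient $N/\Gamma$. A fundamental domain for the $\pi_1 S$-action on $\cG^{(0)}$ is finite, so $\Map_{\text{eq}}(\cG,N)$ modulo $\Gamma$ is a finite product of copies of a compact manifold, and pointwise relative compactness of $\{\varphi^k(f_0)\}$ is automatic. No convexity of any kind is invoked for this step.

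Your route instead passes through strong convexity of $E_\cG$ to obtain coercivity and bounded sublevel sets. The trouble is that strong convexity is established in the paper only for two-dimensional targets (\autoref{thm:StrongConvexityEnergyGraphH2} and \autoref{thm:StrongConvexityEnergyGraphGeneral}), where the proof hinges on the Euler-class obstruction of \autoref{lem:no sections}. Your final paragraph acknowledges this and proposes to replace that topological input by some unspecified rigidity coming from cocompactness and curvature bounded away from zero, but this is left as a conjecture (``I expect\ldots Granting this\ldots''). That is not a proof, and it is not at all clear what the substitute argument would be in higher dimensions. As written, your compactness step---and with it the whole argument---rests on an unproved extension of \autoref{thm:StrongConvexityEnergyGraphGeneral}.

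The fix is to drop the detour entirely: use the cocompactness hypothesis directly, as the paper does. It is there precisely so that one need not confront strong convexity for general targets. (For uniqueness of the minimizer, which \autoref{lem:ConvergingSequence} also requires, strict convexity suffices and follows from negative curvature much more cheaply than a quantitative modulus.)
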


\begin{proof}
The proof is a similar but easier version of the proof of \autoref{thm:CenterOfMassMethodSmooth}. 
Let $f_0 \in \Map_{\text{eq}}(\cG, N) $, and define the sequence $(f_k)_{k \in \bN}$ by $f_{k+1} = \varphi(f_k)$.
The assumption on $\rho$ means that we can work
in a compact quotient of $N$, making the sequence $(f_k)$ pointwise relatively compact. Since the action of $\pi_1S$ on $\cG$ is cofinite, it is easy to see that the condition that the family $\{f_k\}$ is equicontinuous is vacuous. Hence the family $\{f_k\}$ is relatively compact in $\Map_{\text{eq}}(\cG, N)$. 
Since \autoref{prop:AveragingDecreasesApproximateEnergy} holds in this setting, all the requirements are met to conclude with \autoref{lem:ConvergingSequence}.
\end{proof}

\begin{remark}
In \cite{MR2346504}, Jost-Todjihounde describe an iterative process to obtain a discrete harmonic map from an edge-weighted triangulated graph $\cG$ to a target space that admits centers of mass (\eg{} Hadamard spaces).
\autoref{thm:CenterOfMassMethodDiscrete} can be viewed as a strengthened version of their result in two respects: For one, we avoid Jost-Todjihounde's passage to a subsequence of $(f_k)$. 
Moreover, Jost-Todjihounde start by subdividing $\cG$ and pursuing centers of mass in two phases, separately for vertices and midpoints of edges. 
Our discrete center of mass method requires no such subdivision.
\end{remark}

\subsection{\texorpdfstring{$\cosh$-center of mass}{cosh-center of mass}} \label{subsec:CoshCenterOfMass}

\autoref{thm:CenterOfMassMethodDiscrete} provides an effective method to compute discrete equivariant harmonic maps, alternative to the discrete heat flow (see \autoref{subsec:ConvergenceOfDiscreteHeatFlow}), as long as one is able to compute centers of mass.
Unfortunately, non-Euclidean centers of mass are not easily accessible. 
Even finding the barycenter of three points in the hyperbolic plane is a nontrivial task. 
While it is possible to use gradient descent method (see \cite{MR3057324}), it is computationally expensive and, 
in any case, 
not possible to do precisely in finite time. 
We present a clever variant to barycenters, well-suited to hyperbolic space $\bH^n$, that avoids this issue. 
We thank Nicolas Tholozan for bringing this idea to our attention.

\begin{definition} \label{def:CoshCenterOfMass}
Let $(\Omega, \cF, \mu)$ be a probability space, $(X,d)$ be a metric space, and $h \colon \Omega \to X$ a measurable map. A \emph{$\cosh$-center of mass} of $h$ is a minimizer of the function 
\begin{equation} \label{eq:DefinitionCoshCenterOfMass}
\begin{split}
P_h \colon X  & \to \bR\\
x & \mapsto \int_\Omega \left(\cosh d(x,h(y))-1 \right)   \, \upd \mu(y)~.
\end{split}
\end{equation}
\end{definition}


When $X$ is a Riemannian manifold, a $\cosh$-center of mass $G$ is characterized by
\begin{equation} \label{eq:ImplicitEquationForCoshCenterOfMass}
 \int_\Omega \sinhc d(G,h(y)) \exp_G^{-1}(h(y)) \, \upd \mu(y) = 0~,
\end{equation}
where $\sinhc(x) = \sinh(x)/x$ is the cardinal hyperbolic sine function. 

Equation \eqref{eq:ImplicitEquationForCoshCenterOfMass} implies that if $\supp(h_*\mu)$ is contained in a strongly convex region $U$ (\eg{} a ball of small enough radius), then any $\cosh$-center of mass is contained in $U$ as well: if $x$ is outside $U$, then each vector $\exp_x^{-1}(h(y))$, for $y\in \supp(h_*\mu)$, is contained in an open half-space in $\upT_x X$ containing $\exp_x^{-1}(U)$, and \eqref{eq:ImplicitEquationForCoshCenterOfMass} cannot be satisfied.

Let us now specialize to the case where $X = \bH^n$ is the hyperbolic $n$-space. In this setting the function $F(x) = \cosh(d(x_0,x)) -1$ is especially amenable to computations: 
\begin{equation}
 \begin{aligned}
  &\grad F(x) = \sinhc(d(x_0, x)) \exp_x^{-1}(x_0)\\
  &\Hess(F)_x(v,v) = F(x) \Vert v \Vert^2~.
 \end{aligned}
\end{equation}
In particular, $F$ is a strongly convex function on $\bH^n$ with modulus of strong convexity $\alpha = 1$. Existence and uniqueness of the center of mass of any 
function $h \in \upL^2(\Omega, \bH^n)$ quickly follows.

The main advantage of the $\cosh$-center of mass is that it admits an explicit description, much like the Euclidean barycenter. For this we work in the hyperboloid model for $\bH^n$, \ie{}
\begin{equation}
\cH = \{x\in\bR^{n,1}: \langle x,x \rangle=-1,x_{n+1}>0\}~,
\end{equation}
where Minkowski space $\bR^{n,1}$ is defined as $\bR^{n+1}$ equipped with the indefinite inner product 
\begin{equation}
\langle x, y \rangle = x_1 y_1 + \dots + x_n y_n - x_{n+1} y_{n+1}~.
\end{equation}
This inner product induces a Riemannian metric on $\cH$ of constant curvature $-1$.

\begin{proposition}
\label{prop:CoshBarycenter}
The $\cosh$-center of mass in $\bH^n \approx \cH$ is equal the orthogonal projection of the Euclidean barycenter in Minkowski space $\bR^{n,1}$ to the hyperboloid $\cH \subset \bR^{n,1}$.
\end{proposition}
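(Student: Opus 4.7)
The plan is to translate the problem into Minkowski space via the standard identity $\cosh d(x, p) = -\langle x, p\rangle$ for $x, p \in \cH$, which is a defining feature of the hyperboloid model. Substituting into \eqref{eq:DefinitionCoshCenterOfMass} and using that $\mu$ is a probability measure, the functional becomes
\[
P_h(x) = -\langle x, \bar{p}\rangle - 1, \qquad \text{where } \bar{p} = \int_\Omega h(y)\,\upd\mu(y) \in \bR^{n,1}
\]
is the Euclidean barycenter. Minimizing $P_h$ on $\cH$ thus reduces to maximizing the linear functional $x \mapsto \langle x, \bar p\rangle$ subject to $\langle x, x\rangle = -1$ and $x_{n+1} > 0$.

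Before optimizing, I verify that $\bar p$ is future-directed timelike, so that the radial projection $\bar p \mapsto \bar p / \sqrt{-\langle \bar p, \bar p\rangle}$ is well-defined and lands in $\cH$. Positivity of $\bar p_{n+1}$ is immediate from the definition. Timelikeness follows from Fubini and the basic inequality $\langle p, q\rangle = -\cosh d(p,q) \leqslant -1$ for $p, q \in \cH$:
\[
\langle \bar p, \bar p\rangle = \iint \langle p, q\rangle\,\upd\mu(p)\,\upd\mu(q) \leqslant -1 < 0.
\]

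A Lagrange multiplier computation finishes the optimization: extremizing $\langle \cdot, \bar p\rangle$ on the level set $\{\langle x, x\rangle + 1 = 0\}$ yields the condition $\bar p = 2\lambda x$, so the only critical points are scalar multiples of $\bar p$. The normalization $\langle x, x\rangle = -1$ together with future-directedness picks out the unique critical point
\[
x^* = \frac{\bar p}{\sqrt{-\langle \bar p, \bar p\rangle}} \in \cH.
\]
Since $\langle x, \bar p\rangle \to -\infty$ as $x$ escapes to the ideal boundary of $\cH$ (the strong convexity of $P_h$ guarantees this), this critical point is the global maximum of $\langle \cdot, \bar p\rangle$, hence the unique minimizer $G$ of $P_h$.

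Finally, I identify $x^*$ as the orthogonal projection of $\bar p$ to $\cH$ in the Minkowski sense. Since $\cH$ is a level set of $q(x) = \langle x, x\rangle$, the Minkowski-normal direction at any $x \in \cH$ is parallel to $x$ itself (equivalently $T_x \cH = x^\perp$ for the Minkowski form), so the Minkowski-orthogonal projection of $\bar p$ onto $\cH$ is characterized by $\bar p - x$ being parallel to $x$; this is precisely the equation satisfied by $x^*$. The only real obstacle here is a notational one — clarifying that \emph{orthogonal projection} refers to the Minkowski-orthogonal (equivalently radial) projection rather than the Euclidean projection to the hyperboloid, which would give a different point. Once this convention is fixed the entire proposition reduces to the two-line calculation above.
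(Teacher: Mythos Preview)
Your proof is correct and takes a genuinely different route from the paper's. The paper verifies the first-order condition \eqref{eq:ImplicitEquationForCoshCenterOfMass} directly at the radial projection $q$: it projects the Euclidean barycentric identity $\sum_i w_i(p_i - p) = 0$ onto the tangent hyperplane $T_q\cH$ via the affine Minkowski-orthogonal projection, and then checks by a length computation that each projected term $q_i - q$ equals $\sinhc d(q,p_i)\,\exp_q^{-1}(p_i)$. You instead exploit the hyperboloid-model identity $\cosh d(x,p) = -\langle x,p\rangle$ to rewrite $P_h$ as the restriction to $\cH$ of the affine functional $x \mapsto -\langle x,\bar p\rangle - 1$, and then locate the minimizer by a Lagrange-multiplier argument.

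Your approach is cleaner and avoids the parallel-transport bookkeeping; it also makes explicit why the projection is well defined, by checking that $\bar p$ is future-directed timelike (a point the paper's proof passes over). The paper's approach, on the other hand, verifies the gradient equation in a form that ties back transparently to the general Riemannian characterization \eqref{eq:ImplicitEquationForCoshCenterOfMass}, and does not rely on knowing in advance that $P_h$ has a minimum.
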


\begin{proof}
We prove \autoref{prop:CoshBarycenter} for a finite collection of points for comfort; the generalization to any probability measure is immediate.
Consider points $p_1,\dots, p_n\in\cH$ with weights $w_1,\dots, w_n$ satisfying $\sum_i w_i=1$, let $p$ be their Euclidean barycenter in $\bR^{n,1}$, and 
let $q$ indicate the orthogonal (\ie{} radial) projection of $p$ to $\cH$. By \eqref{eq:ImplicitEquationForCoshCenterOfMass}, it suffices to check that 
\begin{equation} \label{eq:CoshBarycenterProof}
\sum_i w_i \sinhc d(q,p_i) \exp_q^{-1}(p_i) = 0~.
\end{equation}
Let $P$ be the tangent plane to $\cH$ at $q$, \ie{} the affine plane in $\bR^{n,1}$ which is orthogonal to the line $\bR q$ through $q$.
The orthogonal projection $\pi \colon \bR^{n,1} \to P$ is an affine map, so the identity $\sum_i w_i (p_i - p) = 0$ projects
to $\sum_i w_i (q_i - q) = 0$ on $P$, where $q_i = \pi(p_i)$. It is straightforward to compute $q = \frac{p}{\sqrt{-\langle p, p\rangle}}$
and $q_i = p_i + q + \frac{\langle p_i, p \rangle}{-\langle p, p \rangle}p$.

Geodesics in the hyperboloid are intersections of 2-dimensional subspaces of $\bR^{n,1}$ with $\cH$, so $q_i - q$ is a vector in $\upT_q\cH$ pointing towards $p_i$, 
and we can compute its length:
\begin{equation}
\begin{aligned}
 \Vert q_i - q \Vert^2 &= \langle p_i, q\rangle^2 - 1\\
 &=\sinh^2 d_{\cH}(q,p_i)
 \end{aligned}
\end{equation}
Thus we proved that 
\begin{equation}
q_i - q = \frac{\sinh d_{\cH}(q,p_i)}{d_{\cH}(q,p_i)}{\exp_q}^{-1}(q_i)
\end{equation}
and we get \eqref{eq:CoshBarycenterProof} as desired.
\end{proof}

Another useful feature of the $\cosh$-center of mass is that it is a good approximation of the center of mass for small distances. 
This will be important in \cite{Gaster-Loustau-Monsaingeon2}.

\begin{proposition}
\label{prop:CoshCMvsCM}
If a probability measure is supported in a ball of radius $r$, then its center of mass $p$ and its $\cosh$-center of mass $q$ and  are within $O(r^3)$ of each other.
\end{proposition}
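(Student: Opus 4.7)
The approach is to treat $Q_h(x) \coloneqq \int (\cosh d(x, h(y)) - 1)\, d\mu(y)$ as a small perturbation of the half-distance-squared functional $P_h(x) = \frac{1}{2}\int d(x,h(y))^2\, d\mu(y)$ and then use strong convexity of $Q_h$ to pass from a small gradient at $p$ to a small displacement $d(p,q)$.

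First I would record the key Taylor estimates in hyperbolic space: from the definitions, the gradients are
\begin{equation}
\grad P_h(x) = -\int \exp_x^{-1}(h(y))\, d\mu(y), \qquad \grad Q_h(x) = -\int \sinhc\bigl(d(x,h(y))\bigr)\, \exp_x^{-1}(h(y))\, d\mu(y),
\end{equation}
where $\sinhc(s) = 1 + s^2/6 + O(s^4)$. By the discussion following Definition~\ref{def:CoshCenterOfMass}, both $p$ and $q$ lie in the (convex hull of the) support of $h_*\mu$, hence within a ball of radius $r$, so that $d(x, h(y)) \leqslant 2r$ uniformly for $x \in \{p,q\}$ and $y$ in the support. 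The defining equation for the center of mass $p$ gives $\int \exp_p^{-1}(h(y))\, d\mu(y) = 0$, and therefore
\begin{equation}
\grad Q_h(p) = -\int \bigl[\sinhc(d(p,h(y))) - 1\bigr]\, \exp_p^{-1}(h(y))\, d\mu(y).
\end{equation}
Bounding the bracket by $O(r^2)$ and the vector by $d(p,h(y)) = O(r)$ yields $\|\grad Q_h(p)\| = O(r^3)$.

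Second, I would exploit that $Q_h$ is strongly convex with a uniform lower bound on the Hessian. As observed in the excerpt, each summand $F_y(x) = \cosh d(x, h(y)) - 1$ satisfies $\Hess(F_y)_x(v,v) = (1+F_y(x))\|v\|^2 \geqslant \|v\|^2$, so integrating gives $\Hess(Q_h) \geqslant \mathrm{Id}$ in the $L^2$ sense; in particular $Q_h$ is $1$-strongly convex. A routine argument (the same as in Lemma~\ref{lem:ProximityToCenterOfMass}, applied here to $Q_h$ instead of $P_h$) then upgrades $\|\grad Q_h(p)\| = O(r^3)$ to $d(p,q) \leqslant \|\grad Q_h(p)\| = O(r^3)$.

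The only subtlety I anticipate is keeping the constants uniform and genuinely independent of the measure $\mu$: one must verify that (a) the support estimate $d(p, h(y)) \leqslant 2r$ holds for the $\cosh$-center $q$ as well as for the center $p$, which follows from the half-space argument given after \eqref{eq:ImplicitEquationForCoshCenterOfMass}, and (b) the strong convexity constant $1$ of $Q_h$ is indeed global and not just local. Both are straightforward in the hyperbolic setting, so no substantive obstacle remains beyond the Taylor expansion of $\sinhc$ and the strong convexity inequality.
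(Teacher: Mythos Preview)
Your proof is correct and mirrors the paper's argument with the roles of $p$ and $q$ swapped. The paper starts from the defining equation of the $\cosh$-center $q$, rewrites it as
\[
\int \exp_q^{-1}(h(y))\,d\mu(y) = \int \bigl(1-\sinhc d(q,h(y))\bigr)\exp_q^{-1}(h(y))\,d\mu(y),
\]
bounds the right-hand side by $2r(\sinhc(2r)-1)=O(r^3)$, and then applies Lemma~\ref{lem:ProximityToCenterOfMass} directly (strong convexity of the distance-squared functional $P_h$) to conclude $d(p,q)=O(r^3)$. You instead evaluate $\grad Q_h$ at $p$ and invoke the $1$-strong convexity of $Q_h$. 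The two arguments are formally dual, but the paper's choice has one concrete advantage: it only uses strong convexity of $P_h$, which is what Lemma~\ref{lem:ProximityToCenterOfMass} provides in any Riemannian manifold with suitable curvature bounds, and the paper explicitly notes after the proof that the result therefore holds beyond $\bH^n$. Your argument relies on the identity $\Hess(\cosh d(\cdot,y))_x = \cosh d(x,y)\,g$, which is specific to hyperbolic space, so as written it does not immediately generalize.
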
 

\begin{proof} Assume $\mu$ has finite support $\{p_1, \dots, p_n\}$ for comfort and denote $w_i = \mu(\{p_i\})$ the weights.
By \eqref{eq:ImplicitEquationForCoshCenterOfMass} we can write:
\begin{equation} \label{eq:CoshCMvsCM1}
 \sum_i w_i \exp_q^{-1}(p_i)  = \sum_i  w_i\left( 1- \sinhc d(p_i, q) \right) \exp_q^{-1}(p_i)~.
\end{equation}
Because $q$ must be contained in the same ball of radius $r$ as $\{p_i\}$, we find that $d(p_i, q) < 2 r$ for each $i$. 
Given that $\sinhc$ is a nondecreasing function we derive from \eqref{eq:CoshCMvsCM1}
\begin{align}
\left\Vert \sum_i w_i \exp_q^{-1}(p_i) \right\Vert
& \leqslant  \sum_i w_i \left(\sinhc d(p_i, q) -1\right) \left\Vert \exp_q^{-1}(p_i) \right\Vert \\
& \leqslant \sum_i w_i \left(\sinhc (2 r) - 1\right) \cdot (2 r) = 2 r \left(\sinhc (2 r) - 1\right)~.
\end{align}
\autoref{lem:ProximityToCenterOfMass} now implies that
\begin{equation}
 d(p,q) \leqslant 2 r \left(\sinhc (2 r) - 1\right)~.
\end{equation}
The conclusion follows, since $2 r \left(\sinhc (2 r) - 1\right) = \frac{4 r^3}{3} + O( r^5)$.
\end{proof}
Note that we did not use in the proof of \autoref{prop:CoshCMvsCM} that we are working in $\bH^n$: this proposition holds in any Riemannian manifold.



\section{Computer implementation: \Harmony{}}
\label{sec:Harmony}

\subsection{Computer description and availability}

\Harmony{} is a computer program developed by the first two authors (Jonah Gaster and Brice Loustau). 
It is a cross-platform software with a graphical user interface written in \Cpp{} code using the Qt framework. In its current state, it totals about $14,000$ lines of code. 

\Harmony{} is a free and open source software under the GNU General Public License. It is available on GitHub at \url{https://github.com/seub/Harmony}.

For more information, including install instructions and a quick guide to get started using the software, please visit the dedicated web site:
\url{https://www.brice.loustau.eu/harmony/}.

\subsection{Algorithms} 
\label{subsec:Algorithms}

We provide a brief overview of \Harmony{}'s algorithms allowing effective computation of discrete equivariant harmonic maps $\bH^2 \to \bH^2$ with respect to a pair of Fuchsian representations. A flowchart showing how the main algorithms fit into the program is pictured in \autoref{fig:Flowchart}.

We fix an identification of the closed oriented topological surface $S$ of genus $g$ as $P_0/\sim$, where $P_0$ is a topological $4g$-gon with oriented sides 
labelled $a_1$, $b_1$, $a_1^{-1}$, $b_1^{-1}$, $a_2$, \etc{}.

We parametrize hyperbolic structures on $S$ using the famous Fenchel-Nielsen coordinates. This requires choosing a pants decomposition of $S$. 
\Harmony{} is equipped to make such choices for arbitrary $g$ in a way that minimizes future error propagation. 

The input is a pair of Fenchel-Nielsen coordinates for hyperbolic structures $X$ and $Y$ on $S$, the domain and target hyperbolic surfaces respectively. These can be entered by the user in a `Fenchel-Nielsen selector' window: see \autoref{fig:FNselector}.

\subsubsection*{Step 1: Construct the fundamental group and pants decomposition}

After getting the genus $g$ as input, \Harmony{} constructs the fundamental group of the surface as an abstract structure. It then chooses a pants decomposition of the surface, yielding a decomposition of the fundamental group in terms of amalgamated products and HNN extensions of fundamental groups of pairs of pants. This is done recursively on the genus using a binary tree structure.

\subsubsection*{Step 2: Construct representations $\rho_X$ and $\rho_Y$}

This step performs the translation of Fenchel-Nielsen coordinates to Fuchsian representations. \Harmony{} starts by computing the representation of the fundamental group of each pair of pants using formulas that can be found in \eg{} \cite[Prop. 2.3]{MR1288062} or \cite{MR1703565,MR1833242}. It then computes the representation of the whole fundamental group using its decomposition discussed in Step 1.

\subsubsection*{Step 3: Construct fundamental domains $P_X$ and $P_Y$}

This step computes polygonal fundamental domains $P_X$ and $P_Y$ in $\bH^2$ for the Fuchsian groups in the images of $\rho_X$ and $\rho_Y$.
These polygons should be `as convex as possible' in order to ensure good behavior of the discrete heat flow.

Both $P_X$ and $P_Y$ come with $\pi_1S$-equivariant identifications to the topological $4n$-gon $P_0$ that record side pairings. Because the vertices of $P_0$ are all in the same $\pi_1S$-orbit, $P_X$ is determined by a single point in $\bH^2$. 
With this combinatorial setup, a best choice of polygon is obtained minimizing an adequate cost function $F:\bH^2\to \bR^+$. 
This is done with a straightforward Newton method. 


\subsubsection*{Step 4: Construct a triangulation of $P_X$}

This step computes a triangulation of the fundamental domain $P_X$. Finer and finer meshes can then be obtained by subdivision (see \autoref{def:Refinement}).
As explained in \cite{Gaster-Loustau-Monsaingeon2}, it is crucial to keep the smallest angle of the triangulation as large as possible. 

Unfortunately, $P_X$ already typically has very small angles. In order to avoid subdividing these angles further, we first introduce new \emph{Steiner vertices} evenly spaced along the sides of $P_X$. 
The resulting polygon is triangulated with a greedy recursive algorithm maximizing the smallest angle. See \autoref{fig:FNselector} for a sample output. 

\begin{remark}
The algorithm seems to always produce acute triangulations, a necessary condition for the definition of the edge weights (see \autoref{subsec:Meshes}), but we do not know whether this always holds. Acute triangulations of surfaces are part of a fascinating area of current research \cite{MR1064872,MR3016971
}.
\end{remark}

\subsubsection*{Step 5: Construct the $\rho_X$-invariant mesh $\cM$}

The mesh $\cM$ consists of a list of meshpoints $\cM^{(0)} \subset \bH^2$, each of which is equipped with a list of references to its neighboring meshpoints, 
and possibly side-pairing information. This data is initially recorded from the triangulated polygon $P_X$. Then, given a user chosen \emph{mesh depth} $k\geqslant 1$, $\cM$ is replaced with the $k$th iterated midpoint refinement of $\cM$ (see \autoref{def:Refinement}).

\begin{remark}
 Constructing the adequate data structure to efficiently store the mesh data is a difficult challenge: the corresponding \Cpp{} classes are the most sophisticated in the code of \Harmony{}.
\end{remark}

\subsubsection*{Step 6: Initialize an equivariant map}
\label{triangulate Y}
The triangulation of $P_X$ may be transported to one for $P_Y$ via the $\pi_1S$-equivariant maps 
\begin{equation}
P_X \approx P_0 \approx P_Y~. 
\end{equation}
Because the mesh is built from midpoint refinements, this identification
provides an initial discrete equivariant map $f_0 \colon \bH^2 \to \bH^2$. A sample initial map is showed in \autoref{fig:InitialFunctionPic}.

\subsubsection*{Step 7: Run the discrete flow}
\Harmony{} is ready to run: either the discrete heat flow with fixed or optimal stepsize (see \autoref{subsec:ConvergenceOfDiscreteHeatFlow}) or the $\cosh$-center of mass method (\autoref{subsec:CenterOfMassMethods}, \autoref{subsec:CoshCenterOfMass}). \Harmony{} uses several threads so that the flow is displayed ``live'' as it is being computed. See \autoref{fig:MidFlow} for a screenshot of \Harmony{} mid-flow. 

The flow is iterated until the error reaches a preset tolerance, or when it is stopped by the user. Both the discrete heat flow and the center of mass method typically converge very well. Refer to \autoref{subsec:ExperimentalComparison} for a comparison of the methods. \autoref{fig:Output} shows a sample output equivariant harmonic map.

\subsection{High energy harmonic maps between hyperbolic surfaces} \label{subsec:HighEnergy}
 
In this final subsection we briefly explain how \Harmony{} provides visual confirmation of a qualitative phenomenon that is well-known in Teichmüller theory. We hope that future development of the program will allow many more experimental investigation of theoretical aspects. We refer to \cite{MR2349668} for more details on what follows.

As mentioned in \autoref{subsec:harmonicMapsSurfaces}, taking the Hopf differential of harmonic maps allows one to parametrize Teichmüller space by holomorphic quadratic differentials. More precisely, given a closed oriented $S$ of genus $\geqslant 2$, let $\FS$ denote the Fricke-Klein space of $S$, \ie{} the deformation space of hyperbolic structures on $S$ up to isotopically trivial diffeomorphisms. 
Fix a complex structure $X$ on $S$, and denote $Q(X)$ the vector space of holomorphic quadratic differentials on $X$. For any $\sigma \in \FS$, there is a unique harmonic map $f_\sigma \colon X \to (S, \sigma)$. Taking its Hopf differential yields a map
\begin{equation} \label{eq:WolfParam}
\begin{split} 
 H \colon \FS &\to Q(X)\\
 \sigma & \mapsto \varphi_{f_\sigma}~.
 \end{split}
\end{equation}
Wolf \cite{MR982185} proved that the map $H$ is a global diffeomorphism from $\FS$ to $Q(X)$. 
Furthermore, $H$ continuously extends to the ``boundaries at infinity'':
\begin{equation} \label{eq:WolfParamCompactified}
 \partial H \colon \partial\FS \to \partial Q(X)~.
\end{equation}
Here the boundary $\partial\FS$ compactifying Fricke-Klein space is the \emph{Thurston boundary} $\partial \FS \coloneqq \mathcal{PMF}(S)$, the projective space 
of measured foliations on $S$. The boundary of the vector space $Q(X)$ is simply its projectivization: $\partial Q(X) \coloneqq \bP(Q(X))$. It can be identified
to the projective space of measured laminations $\mathcal{PML}(S)$ by assigning to any holomorphic quadratic differential its horizontal foliation.
Thus the boundary map $\partial H$ of \eqref{eq:WolfParamCompactified} may be described as a map
\begin{equation}
 \partial H \colon \mathcal{PMF}(S) \to \mathcal{PML}(S)~.
\end{equation}
This map has a nice geometric interpretation as the well-known ``pull tight'' map which assigns to each non-singular leaf
of a measured foliation the unique geodesic in its homotopy class.

Concretely, this means that a high energy harmonic map $f \colon X \to (S, \sigma)$ typically has a specific behavior dictated by its Hopf differential $\varphi$, namely:
the zeros of $\varphi$ are blown up to ideal hyperbolic triangles, while the rest of the surface is compressed onto the measured lamination dual to $\varphi$. This behavior is well verified by \Harmony{}. As an example, \autoref{fig:HighEnergy} shows the image of a high energy harmonic map: observe how the most contracted (darker) regions approach a geodesic lamination, while the most dilated (lighter) regions approach a union of ideal triangles.


\newpage
\newgeometry{top=0.09\paperheight, bottom=0.11\paperheight, left=0.06\paperheight, right=0.06\paperheight}
\subsection{Illustrations of \Harmony{}}

\begin{figure}[!ht]
	\centering
	\includegraphics[width=\textwidth]{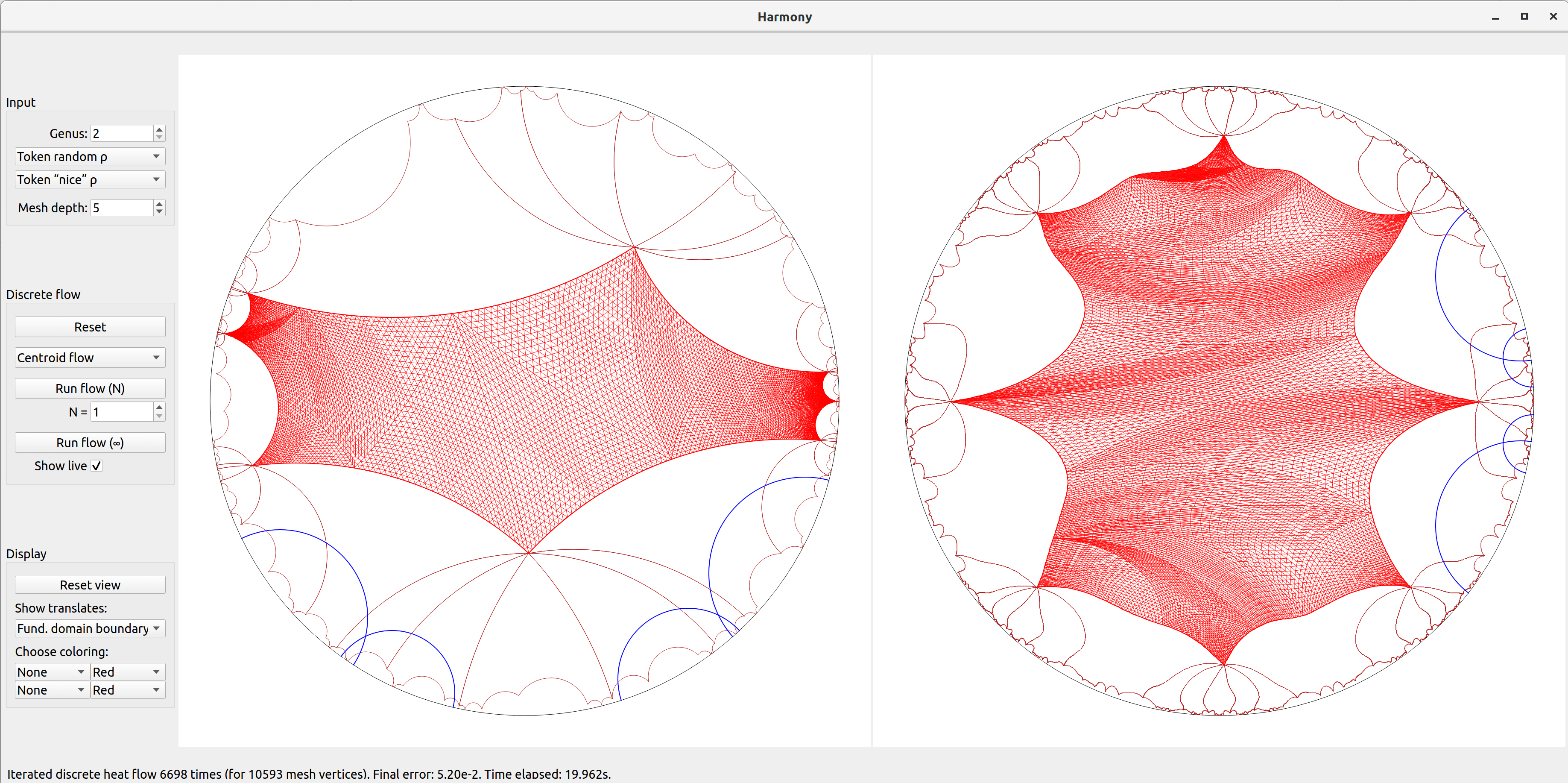}
	\caption{\Harmony{}'s main user interface.}
	\label{fig:HarmonyUserInterface}
\end{figure}

\begin{figure}[!ht]
	\centering
	\includegraphics[width=0.79\textwidth]{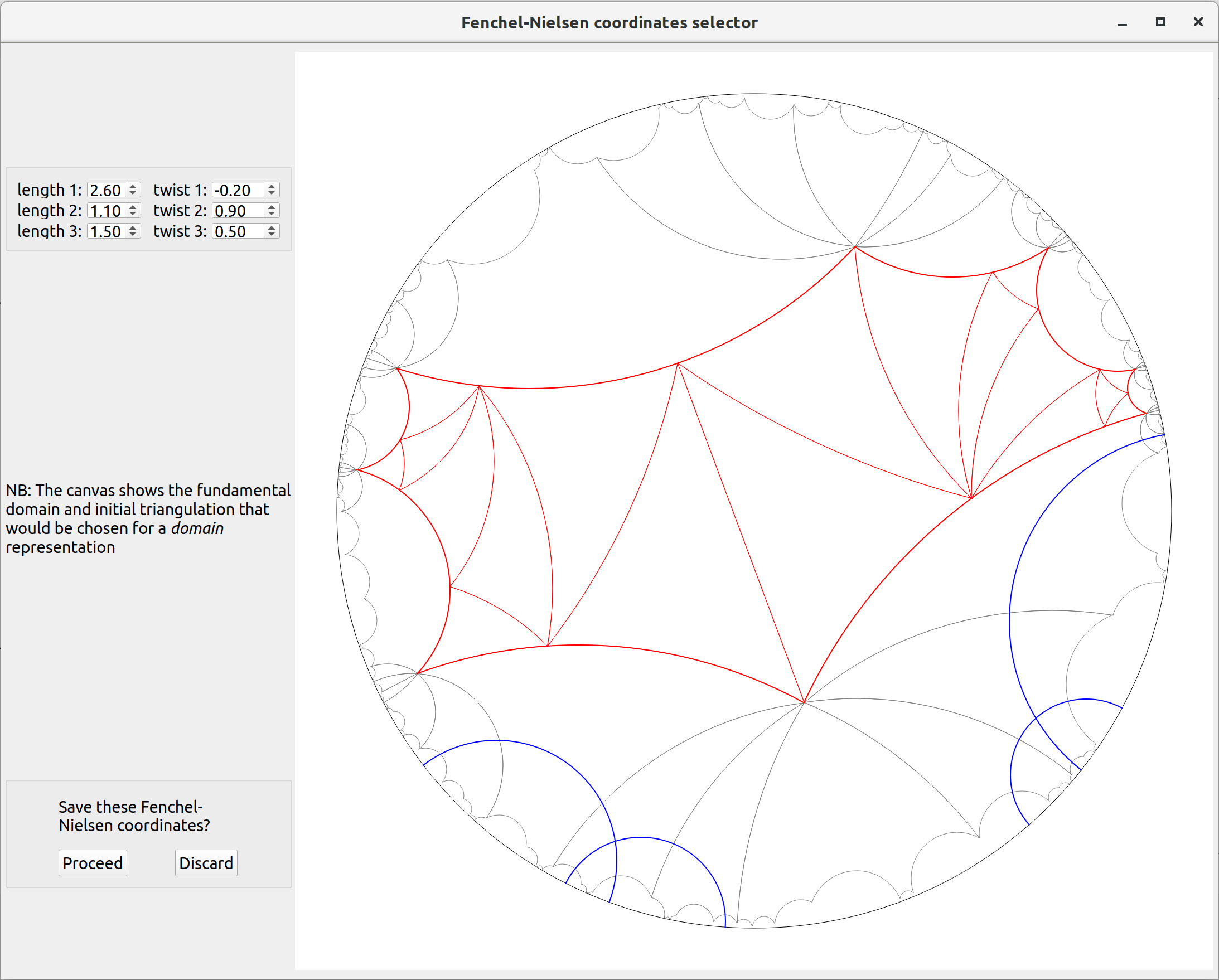}
	\caption{Fenchel-Nielsen coordinates selector.}
	\label{fig:FNselector}
\end{figure}

\begin{figure}[!ht]
	\centering
	\includegraphics[width=1.0\textwidth]{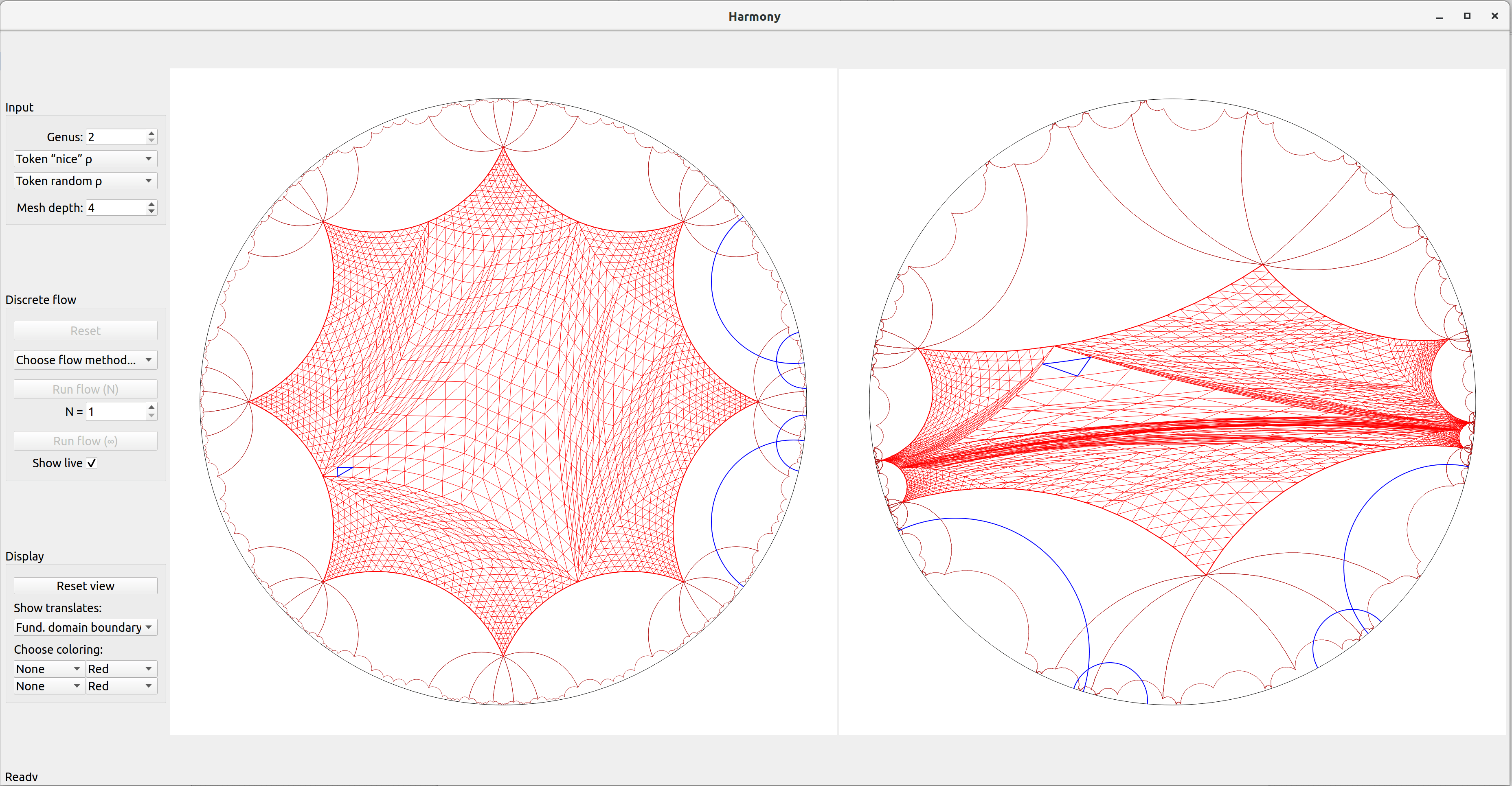}
	\caption{An initial discrete equivariant map $f_0$. The highlighted blue triangles are matched.}
	\label{fig:InitialFunctionPic}
\end{figure}

\begin{figure}[!ht]
	\centering
	\includegraphics[width=\textwidth]{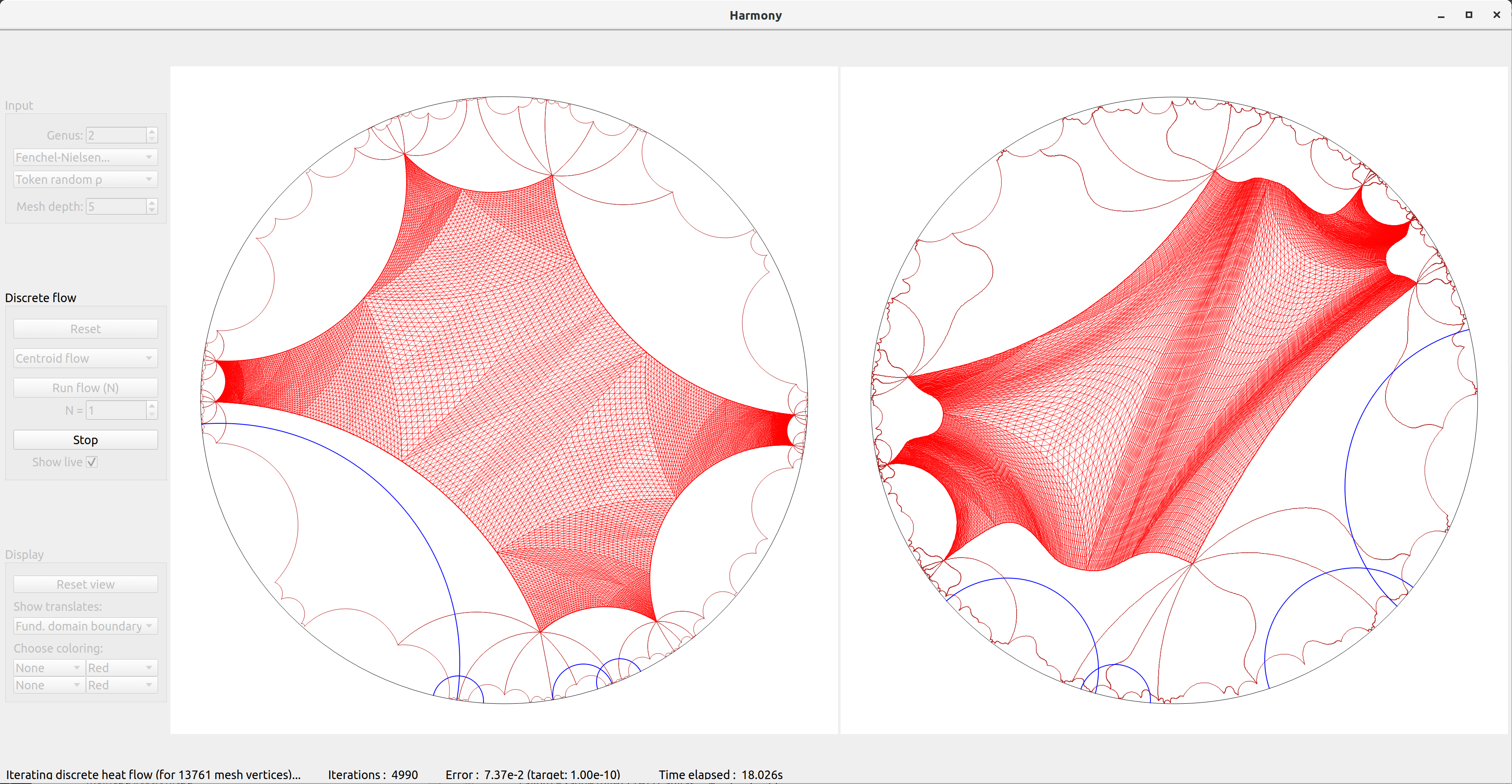}
	\caption{\Harmony{} mid-flow.}
	\label{fig:MidFlow}
\end{figure}

\begin{figure}[!htp]
	\centering
	\includegraphics[width=\textwidth]{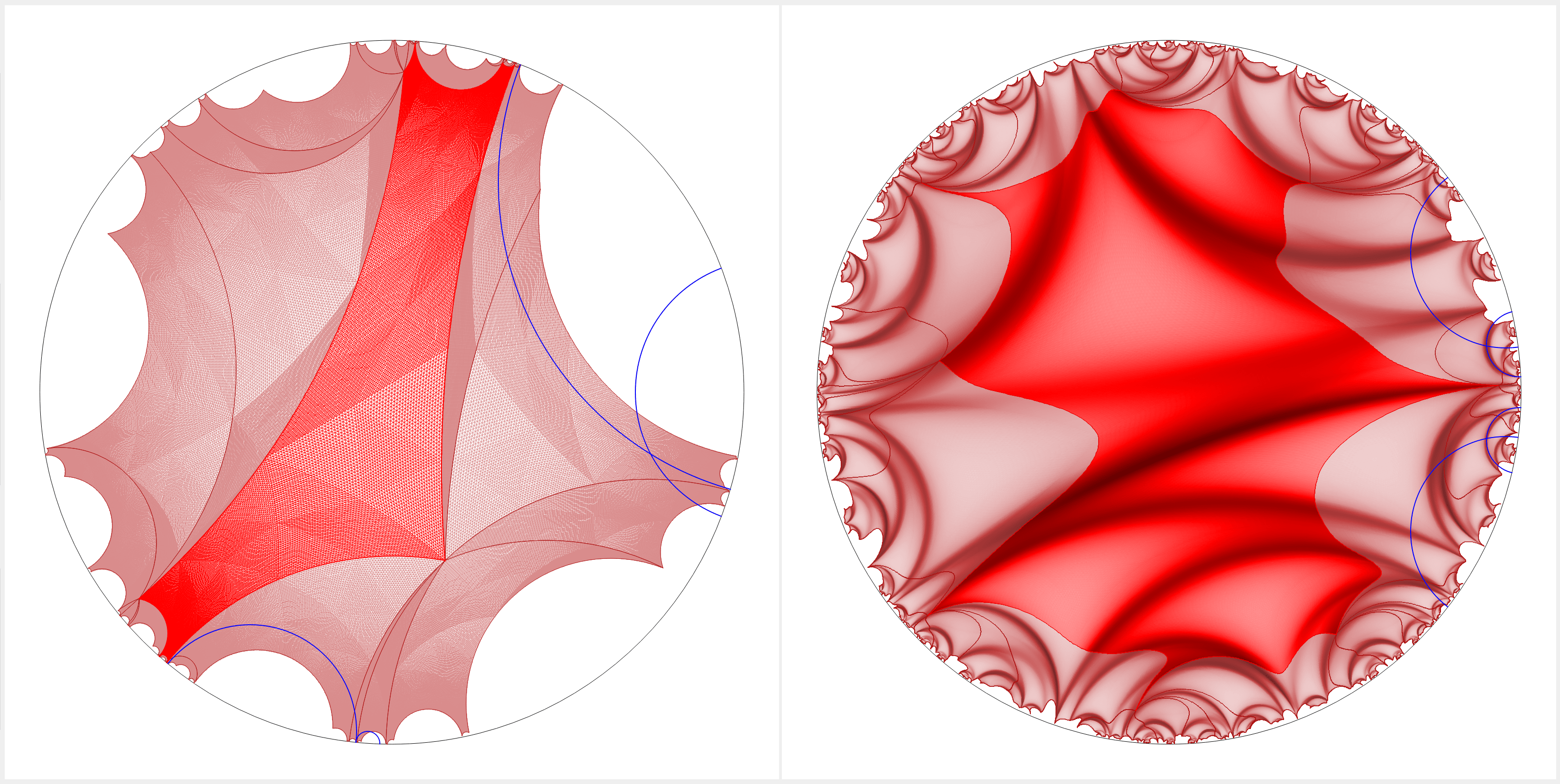}
	\caption{Sample output harmonic map. The brighter central regions are fundamental domains.}
	\label{fig:Output}
\end{figure}

\begin{figure}[!htp]
	\centering
	\includegraphics[width=0.73\textwidth]{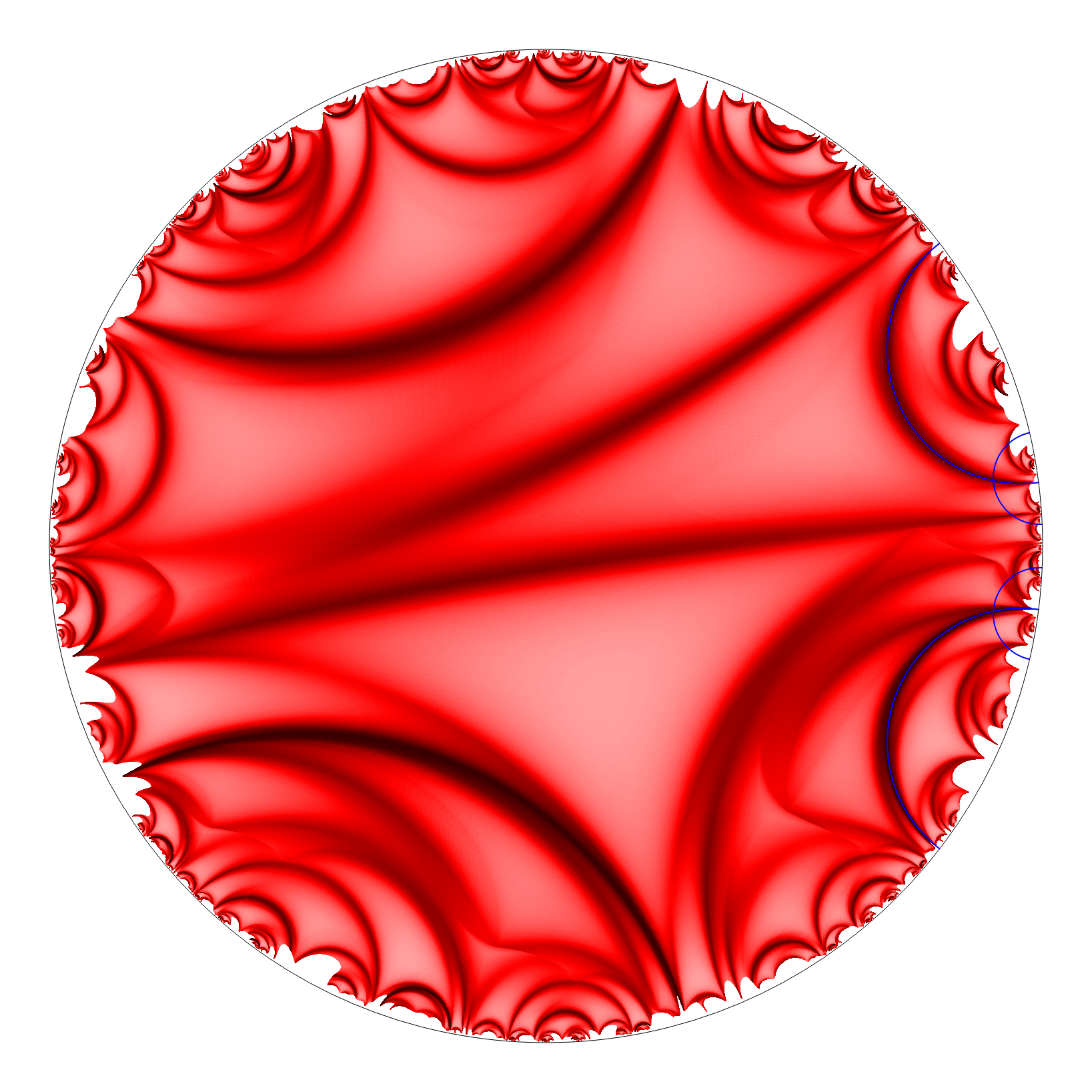}
	\caption{Sample output high energy equivariant harmonic map.}
	\label{fig:HighEnergy}
\end{figure}

 \begin{figure}[!ht]
 	\centering
 	\includegraphics[width=0.75\textwidth]{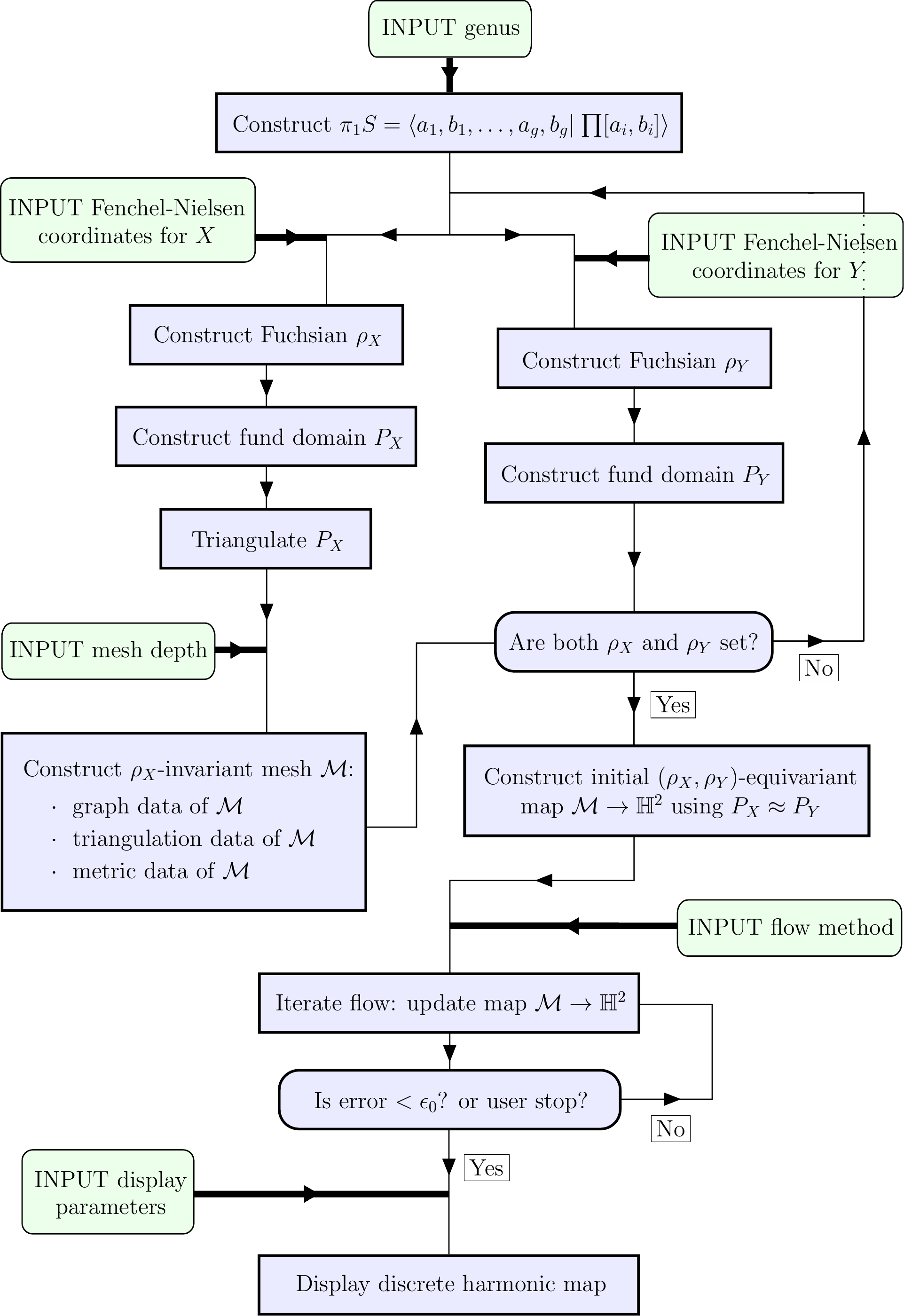}
 	\caption{Flowchart representing \Harmony{}'s main algorithms.}
 	\label{fig:Flowchart}
 \end{figure}

\restoregeometry

\cleardoublepage\phantomsection 
\bibliographystyle{alpha}
{\small \bibliography{biblio}}

\end{document}